\documentclass[12pt,reqno]{amsart}
\usepackage{amsmath, amsthm, amssymb, stmaryrd}

\topmargin 1cm
\advance \topmargin by -\headheight
\advance \topmargin by -\headsep
     
\setlength{\paperheight}{270mm}%
\setlength{\paperwidth}{192mm}%
\textheight 22.5cm
\oddsidemargin 1cm
\evensidemargin \oddsidemargin
\marginparwidth 1.25cm
\textwidth 14cm
\setlength{\parskip}{0.05cm}

\newtheorem{theorem}{Theorem}[section]
\newtheorem{lemma}[theorem]{Lemma}
\newtheorem{corollary}[theorem]{Corollary}

\theoremstyle{definition}

\theoremstyle{remark}

\numberwithin{equation}{section}

\newcommand{\mmod}[1]{\,\,(\text{\rm mod}\,\,#1)}
\newcommand{\nmod}[1]{\,\,\text{\rm mod}\,\,#1}

\def\bfa{{\mathbf a}}

\def\bfd{{\mathbf d}}  

\def\bfh{{\mathbf h}}

\def\bfr{{\mathbf r}}
\def\bft{{\mathbf t}}
\def\bfu{{\mathbf u}}
\def\bfv{{\mathbf v}}
\def\bfw{{\mathbf w}}
\def\bfx{{\mathbf x}}
\def\bfy{{\mathbf y}}
\def\bfz{{\mathbf z}}

\def\calB{{\mathcal B}}

 \def\Ktil{{\widetilde K}}

\def\calN{{\mathcal N}}

\def\calP{{\mathcal P}}

\def\calZ{{\mathcal Z}}

\def\Gtil{\widetilde G}\def\Ktil{\widetilde K}

\def\dbC{{\mathbb C}}\def\dbD{{\mathbb D}}
\def\dbF{{\mathbb F}}\def\dbK{{\mathbb K}}
\def\dbN{{\mathbb N}} \def\dbO{{\mathbb O}} 
\def\dbR{{\mathbb R}}\def\dbT{{\mathbb T}}
\def\dbZ{{\mathbb Z}}\def\dbQ{{\mathbb Q}}

\def\gra{{\mathfrak a}}\def\bfgra{{\boldsymbol \gra}}
\def\grb{{\mathfrak b}}\def\grB{{\mathfrak B}}\def\bfgrb{{\boldsymbol \grb}}
\def\grc{{\mathfrak c}}\def\bfgrc{{\boldsymbol \grc}}
\def\grd{{\mathfrak d}}
\def\grf{{\mathfrak f}}
\def\grg{{\mathfrak g}}
\def\grh{{\mathfrak h}}\def\grH{{\mathfrak H}}
\def\grJ{{\mathfrak J}}

\def\grm{{\mathfrak m}}\def\grM{{\mathfrak M}}
\def\grN{{\mathfrak N}}\def\grn{{\mathfrak n}}
\def\grO{{\mathfrak O}}
\def\grS{{\mathfrak S}}
\def\grB{{\mathfrak B}}
\def\grp{{\mathfrak p}}

\def\alp{{\alpha}} \def\bfalp{{\boldsymbol \alpha}}
\def\bet{{\beta}}  \def\bfbet{{\boldsymbol \beta}}
\def\gam{{\gamma}} \def\Gam{{\Gamma}}
\def\bfgam{{\boldsymbol \gam}}
\def\del{{\delta}} \def\Del{{\Delta}} 
\def\zet{{\zeta}}  
 
\def\tet{{\theta}} \def\bftet{{\boldsymbol \theta}} 
\def\Tet{{\Theta}} \def\bfTet{{\boldsymbol \Tet}}
\def\kap{{\kappa}}
\def\lam{{\lambda}} \def\Lam{{\Lambda}}

\def\Rho{{\mathrm P}}
\def\sig{{\sigma}}  

\def\Ups{{\Upsilon}} \def\bfUps{{\boldsymbol \Ups}}
\def\bfPhi{{\boldsymbol \Phi}} \def\bfvarphi{{\boldsymbol \varphi}}
\def\bfchi{{\boldsymbol \chi}}
\def\bfpsi{{\boldsymbol \psi}} \def\bfPsi{{\boldsymbol \Psi}}
\def\ome{{\omega}} \def\Ome{{\Omega}}

\def\eps{\varepsilon}
\def\implies{\Rightarrow}

\def\le{\leqslant} \def\ge{\geqslant}

\def\d{{\,{\rm d}}}

\newcommand{\rrbracketsub}[1]{\rrbracket_{\textstyle{_#1}}}

\begin{document}
\title[Nested efficient congruencing]{Nested efficient congruencing and relatives of 
Vinogradov's mean value theorem}
\author[Trevor D. Wooley]{Trevor D. Wooley}
\address{School of Mathematics, University of Bristol, University Walk, Clifton, Bristol BS8 
1TW, United Kingdom}
\email{matdw@bristol.ac.uk}
\subjclass[2010]{11L15, 11L07, 11P55}
\keywords{Exponential sums, Hardy-Littlewood method, Waring's problem, Strichartz 
inequalities, congruences}
\date{}
\begin{abstract} We apply a nested variant of multigrade efficient congruencing to 
estimate mean values related to that of Vinogradov. We show that when 
$\varphi_j\in \dbZ[t]$ $(1\le j\le k)$ is a system of polynomials with non-vanishing 
Wronskian, and $s\le k(k+1)/2$, then for all complex sequences $(\gra_n)$, and for each 
$\eps>0$, one has
$$\int_{[0,1)^k}\Bigl| \sum_{|n|\le X}\gra_n e(\alp_1\varphi_1(n)+\ldots 
+\alp_k\varphi_k(n))\Bigr|^{2s}\d\bfalp \ll X^\eps \biggl( \sum_{|n|\le X}
|\gra_n|^2\biggr)^s.$$
As a special case of this result, we confirm the main conjecture in Vinogradov's mean 
value theorem for all exponents $k$, recovering the recent conclusions of the author (for 
$k=3$) and Bourgain, Demeter and Guth (for $k\ge 4$). In contrast with the 
$l^2$-decoupling method of the latter authors, we make no use of multilinear Kakeya
estimates, and thus our methods are of sufficient flexibility to be applicable in algebraic 
number fields, and in function fields. We outline such extensions.\end{abstract}
\maketitle

\section{Introduction} This memoir is devoted to a general class of exponential sums and 
their mean values. We demonstrate how the efficient congruencing method, developed by 
the author and others starting in late 2010 (see \cite{Woo2012}) in the context of 
Vinogradov systems, may be adapted to handle relatives of Vinogradov's mean value 
theorem. Indeed, this {\it nested efficient congruencing method} succeeds in establishing 
the main conjecture in Vinogradov's mean value theorem for all degrees, a conclusion 
obtained first by the author in the cubic case (see \cite{Woo2016} and arXiv:1401.3150) 
and subsequently for degrees exceeding three by Bourgain et al.~(see \cite{BDG2016} 
and arXiv:1512.01565v3). In contrast with the $l^2$-decoupling method of the latter 
authors, nested efficient congruencing makes no use of multilinear Kakeya estimates or other 
tools apparently intertwined with harmonic analysis in the real setting, and thus our methods 
are of sufficient flexibility to be applicable in algebraic number fields, and in function fields. 
We outline such extensions. Further discussion requires that we introduce some notation in 
order that we pass from descriptive statements to concrete technicalities.\par

Given $k\in \dbN$, we consider polynomials $\varphi_j\in \dbZ[t]$ $(1\le j\le k)$ and the 
associated Wronskian
\begin{equation}\label{1.1}
W(t;\bfvarphi)=\det \left( \varphi_j^{(i)}(t)\right)_{1\le i,j\le k}.
\end{equation}
Here, following the usual convention, we write $\varphi_j^{(r)}(t)$ for the $r$-th derivative 
${\rm d}^r\varphi_j(t)/{\rm d}t^r$. A measure of the independence of this system of 
polynomials $\bfvarphi$ is given by whether or not $W(t;\bfvarphi)=0$. Our first conclusion 
supplies an estimate of Strichartz type. As is usual, we write $e(z)$ for $e^{2\pi iz}$.

\begin{theorem}\label{theorem1.1} Suppose that $\varphi_j\in \dbZ[t]$ $(1\le j\le k)$ is a 
system of polynomials with $W(t;\bfvarphi)\ne 0$. Let $s$ be a positive real number with 
$s\le k(k+1)/2$. Also, suppose that $(\gra_n)_{n\in \dbZ}$ is a sequence of complex 
numbers. Then for each $\eps>0$, one has
\begin{equation}\label{1.2}
\int_{[0,1)^k}\biggl| \sum_{|n|\le X}\gra_ne(\alp_1\varphi_1(n)+\ldots 
+\alp_k\varphi_k(n))\biggl|^{2s}\d\bfalp \ll X^\eps \biggl( \sum_{|n|\le X}
|\gra_n|^2\biggr)^s.
\end{equation}
In particular, under these conditions, one has
\begin{equation}\label{1.3}
\int_{[0,1)^k}\biggl| \sum_{1\le n\le X}e(\alp_1\varphi_1(n)+\ldots +\alp_k\varphi_k(n))
\biggl|^{2s}\d\bfalp \ll X^{s+\eps}.
\end{equation}
\end{theorem}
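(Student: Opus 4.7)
The plan is to recast \eqref{1.2} as a weighted counting estimate and then attack it by a nested variant of the efficient congruencing method. By H\"older's inequality it is enough to consider integer values of $s$ with $1\le s\le k(k+1)/2$. Expanding the $2s$-th power on the left of \eqref{1.2} and invoking orthogonality rewrites the integral as the weighted count
\[
\sum_{(\bfn,\bfm)}\gra_{n_1}\cdots\gra_{n_s}\overline{\gra_{m_1}\cdots\gra_{m_s}},
\]
the sum being taken over $(\bfn,\bfm)\in([-X,X]\cap\dbZ)^{2s}$ satisfying the Vinogradov-type system $\sum_{i=1}^s\varphi_j(n_i)=\sum_{i=1}^s\varphi_j(m_i)$ for $1\le j\le k$. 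It thus suffices to bound this weighted count by $X^\eps\bigl(\sum_{|n|\le X}|\gra_n|^2\bigr)^s$; the specialization $\gra_n\equiv 1$ then yields \eqref{1.3}.

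To extract savings, I would adapt the efficient congruencing machinery of \cite{Woo2012}. For each prime $p$ and residue class $\xi$ modulo $p^a$, introduce restricted generating sums
\[
\grf_s(\bfalp;\xi,p^a)=\sum_{\substack{|n|\le X\\ n\equiv\xi\pmod{p^a}}}\gra_n e\bigl(\alp_1\varphi_1(n)+\ldots+\alp_k\varphi_k(n)\bigr),
\]
choose an auxiliary prime $p\asymp X^{1/k}$, and effect the translation $n=\xi+p^a m$. Taylor expansion gives
\[
\varphi_j(\xi+p^a m)=\sum_{r=0}^{\deg\varphi_j}\frac{(p^a m)^r}{r!}\varphi_j^{(r)}(\xi),
\]
so the local transition is controlled by the matrix $\bigl(\varphi_j^{(r)}(\xi)\bigr)_{1\le j,r\le k}$, whose determinant is exactly $W(\xi;\bfvarphi)$. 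The hypothesis $W(t;\bfvarphi)\not\equiv 0$ ensures that for each prime $p$ outside a finite set, all but $O_k(1)$ residues $\xi\pmod{p}$ satisfy $W(\xi;\bfvarphi)\not\equiv 0\pmod{p}$, so the change of variables is invertible modulo $p$ and the spanning information of $\varphi_1,\ldots,\varphi_k$ can be recovered from its values along the progression. This reduces the local problem to the Vandermonde-type shape that drives the pure monomial case $\varphi_j(t)=t^j$.

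The iteration itself proceeds through a H\"older-type comparison that bounds a mean value at one congruence scale by a product of mean values at a finer scale; the \emph{nested} feature handles several such scales in parallel rather than strictly sequentially, so that the gains at successive levels compound without introducing prohibitive powers of $X^\eps$. Iterated sufficiently often, this bootstraps the trivial bound down to the sharp exponent $s=k(k+1)/2$. The principal obstacle, I expect, lies in the bookkeeping: one must verify that each congruencing step incurs no worse than an $X^\eps$ loss, that the $O_k(1)$ residues with $W(\xi;\bfvarphi)\equiv 0\pmod{p}$ contribute negligibly, and, above all, that the weighted $\ell^2$ norm of $(\gra_n)$ is preserved as the natural quantity conserved at every level, so that the final estimate reads in terms of $\sum|\gra_n|^2$ rather than some more delicate weighted moment. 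The departure from the monomial case further demands that the admissible exponent pattern in the nested scheme be chosen in accordance with the generic Wronskian hypothesis rather than the specific Vandermonde identity exploited in classical Vinogradov.
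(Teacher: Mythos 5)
Your proposal has the right general flavour---reduce to integer $s$ via H\"older, interpret the integral as a weighted count via orthogonality, apply Taylor expansion with the Wronskian controlling invertibility of the local transition matrix, and run an efficient-congruencing iteration---but two essential devices are missing, and without them the plan fails.

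The first concerns the bad residues. You fix a single prime $p\asymp X^{1/k}$ and then say that the $O_k(1)$ classes $\xi\pmod p$ with $W(\xi;\bfvarphi)\equiv 0\pmod p$ contribute negligibly. They do not: each bad class contains $\asymp X/p$ integers $|n|\le X$, and since the estimate must hold for \emph{every} complex sequence $(\gra_n)$, a sequence supported on a single bad class shows the bad set cannot simply be discarded. The paper instead uses a covering argument over a \emph{family} of primes: it takes $\calP$ to consist of all primes in the window $((\log X)^2,3(\log X)^2]$, observes that for any solution $(\bfx,\bfy)$ with nonvanishing Wronskians the integer $\prod_i W(x_i;\bfvarphi)W(y_i;\bfvarphi)$ is nonzero of modulus at most $CX^D$, and concludes by the prime number theorem that some $p\in\calP$ divides none of these Wronskians. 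This pigeonhole covers the full mean value by $O((\log X)^2)$ copies each carrying a nonvanishing-Wronskian condition modulo a specific $p$; no variable is ever thrown away.

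The second concerns the scale of $p$ and the nature of the nesting. The core theorem here (Theorem 3.1, via Corollary 3.2 and Theorem 11.1) is intrinsically a statement about mean values over $\dbZ/p^B\dbZ$ of exponential sums built from $p^c$-spaced systems, with $c\gg\tau B$; returning to the archimedean inequality requires $B\asymp k\log X/\log p$, which forces $p$ to be polylogarithmic so that $B$ is large enough to run the iteration. The choice $p\asymp X^{1/k}$, inherited from the older efficient-congruencing framework, gives $B\asymp k^2$ and no room. Relatedly, the ``nested'' feature is not parallelism across scales but an induction on the number $k$ of equations: inside the congruence-extraction step (Lemma 7.1) one invokes the validity of the same Theorem 3.1 for $r<k$ equations, in place of the explicit multivariable Hensel-type lemmas of earlier work, and it is precisely this recursion that closes the gap to $s=k(k+1)/2$ (the non-nested multigrade method stalls at $s\le k(k+1)/2-k/3+O(k^{2/3})$). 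Your sketch gestures at both points but does not articulate either mechanism, and both are load-bearing.
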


We emphasise that here and elsewhere, unless indicated otherwise, the implicit constants in 
Vinogradov's notation $\ll$ and $\gg$ may depend on $\eps$, $s$, $k$ and the coefficients 
of $\bfvarphi$. It follows via orthogonality that when $s$ is a positive integer, then the mean 
value on the left hand side of (\ref{1.3}) counts the number of integral solutions of the 
system of equations
\begin{equation}\label{1.4}
\sum_{i=1}^s\left( \varphi_j(x_i)-\varphi_j(y_i)\right)=0\quad (1\le j\le k),
\end{equation}
with $1\le x_i,y_i\le X$ $(1\le i\le s)$. Variants of our methods would yield analogues of 
Theorem \ref{theorem1.1} in which the polynomials $\varphi_j\in \dbZ[x]$ are replaced 
by rational functions lying in $\dbQ(t)$, or suitably smooth real or $p$-adic valued 
functions, with equations replaced by inequalities as appropriate. Likewise, the summands 
$x$ in (\ref{1.3}) could be replaced by discretely spaced sets of real or $p$-adic numbers. 
We have chosen to provide the most accessible exposition here rather than explore the most 
general and least transparent framework available to our methods.\par

By putting $\varphi_j(t)=t^{d_j}$ $(1\le j\le k)$, we obtain a conclusion on Vinogradov 
systems in which missing slices are permitted.

\begin{corollary}\label{corollary1.2} Let $d_1,\ldots ,d_k$ be distinct positive integers, and 
let $s$ be a real number with $0<s\le k(k+1)/2$. Then for each $\eps>0$, one has
\begin{equation}\label{1.5}
\int_{[0,1)^k}\biggl| \sum_{1\le x\le X}e(\alp_1x^{d_1}+\ldots +\alp_kx^{d_k})
\biggr|^{2s}\d\bfalp \ll X^{s+\eps}.
\end{equation}
\end{corollary}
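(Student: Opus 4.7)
The plan is to deduce Corollary \ref{corollary1.2} directly from the special case \eqref{1.3} of Theorem \ref{theorem1.1}, applied to the system of monomials $\varphi_j(t)=t^{d_j}$ $(1\le j\le k)$. Since \eqref{1.5} is literally \eqref{1.3} for this choice of $\bfvarphi$, the only substantive task is to check the hypothesis, namely that the Wronskian $W(t;\bfvarphi)$ is not identically zero.

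For the Wronskian computation, I would write $\varphi_j^{(i)}(t)=c(d_j,i)t^{d_j-i}$, where $c(d,i)=d(d-1)\cdots(d-i+1)$ denotes the falling factorial (with the convention that $c(d,i)=0$ whenever $i>d$, in which case the corresponding monomial $t^{d-i}$ plays no role). Working with the entries as Laurent monomials, one formally factors $t^{d_j}$ from column $j$ and $t^{-i}$ from row $i$ to obtain
$$W(t;\bfvarphi)=t^{d_1+\ldots+d_k-k(k+1)/2}\det\bigl(c(d_j,i)\bigr)_{1\le i,j\le k}.$$
Next, exploiting the fact that $x(x-1)\cdots(x-i+1)$ is a monic polynomial in $x$ of degree $i$ with zero constant term, I would factor the matrix $C=(c(d_j,i))$ as $C=SV$, where $S$ is lower-triangular with unit diagonal (its entries being signed Stirling numbers of the first kind) and $V=(d_j^i)_{1\le i,j\le k}$. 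Then $\det C=\det V=d_1d_2\cdots d_k\prod_{1\le j<j'\le k}(d_{j'}-d_j)$, which is nonzero because the $d_j$ are distinct positive integers.

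Consequently, $W(t;\bfvarphi)$ is a nonzero monomial in $t$ (in particular, the Wronskian does not vanish identically), so the hypotheses of Theorem \ref{theorem1.1} are satisfied. Invoking \eqref{1.3} for this $\bfvarphi$ yields \eqref{1.5} for every real $s$ with $0<s\le k(k+1)/2$. There is essentially no obstacle here: the entire argument is a verification that the Vandermonde-type determinant attached to distinct exponents is nonzero, together with a bookkeeping check that the unbalanced powers of $t$ combine into a single monomial. The only mild care required is in the edge cases where some $d_j<k$, so that certain entries $\varphi_j^{(i)}(t)$ vanish identically; but the factorisation $C=SV$ sidesteps this issue at the level of constants, since it never relied on the individual entries being nonzero.
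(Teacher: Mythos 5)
Your proposal is correct and follows essentially the same route as the paper: specialise $\varphi_j(t)=t^{d_j}$, factor the Wronskian as $t^D\det(\Omega)$ with $\Omega=(d_j(d_j-1)\cdots(d_j-i+1))_{i,j}$, and reduce $\det(\Omega)$ to the Vandermonde-type determinant $\det(d_j^i)=d_1\cdots d_k\prod_{i<j}(d_j-d_i)\neq 0$ by row operations (your lower-triangular $S$ with Stirling-number entries is exactly the paper's "appropriate linear combinations of the rows"). The only cosmetic difference is that you make the $C=SV$ factorisation and the treatment of the edge cases $d_j<k$ explicit, which the paper leaves implicit.
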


Again, when $s$ is a positive integer, it follows via orthogonality that the mean value on 
the left hand side of (\ref{1.5}) counts the number of integral solutions of the system
$$\sum_{i=1}^s(x_i^{d_j}-y_i^{d_j})=0\quad (1\le j\le k),$$
with $1\le x_i,y_i\le X$ $(1\le i\le s)$. Aside from recent progress on the Vinogradov 
system with $d_j=j$ $(1\le j\le k)$, previous published progress on such systems has 
fallen far short of achieving the range for $s$ delivered by Corollary \ref{corollary1.2}. The 
estimate (\ref{1.5}) was established for $s\le k+1$ in \cite[Theorem 1]{Woo1993} via 
polynomial identities and divisor sum estimates, and indeed such ideas were extended to 
the setting of Theorem \ref{theorem1.1} for the same range of $s$ in 
\cite[Theorem 1]{PW2002}. We note, however, that extensions to this range have been 
announced previously. Thus, in 2014 the author announced the proof\footnote{See the 
talk https://www.youtube.com/watch?v=Q5gcLVYqEks from the ELEFANT Workshop, Bonn, 
July 2014.} (via multigrade efficient congruencing) of the upper bound (\ref{1.5}) in the 
range $s\le k(k+1)/2-k/3+o(k)$. In addition, Bourgain \cite[equation (6.6)]{Bou2017} has 
implicitly announced a result tantamount to the conclusion of Corollary \ref{corollary1.2} in 
the special case in which $(d_1,\ldots ,d_k)=(1,2,\ldots ,k-1,d)$, with $d\ge k$. Although 
one of our purposes in this memoir is to provide a complete published proof for these 
earlier assertions, we go considerably beyond this earlier work. We remark further that 
when the degrees $d_j$ are suitably large, the conclusion of Corollary \ref{corollary1.2} 
follows in a potentially wider range via enhancements of the determinant method of 
Heath-Brown. Thus, as a consequence of work of the author joint with Salberger (see 
\cite[Theorems 1.3 and 5.2]{SW2010}), one has
$$\int_{[0,1)^k}\biggl| \sum_{1\le x\le X}e(\alp_1x^{d_1}+\ldots 
+\alp_kx^{d_k})\biggr|^{2s}\d\bfalp =s!X^s+O(X^{s-1/2}),$$
provided only that the exponents $d_j$ are distinct and satisfy the condition
$$d_1\cdots d_k\ge (2s)^{4s}.$$

\par The special case of Corollary \ref{corollary1.2} in which $(d_1,\ldots ,d_k)=
(1,2,\ldots ,k)$ corresponds to the Vinogradov system
\begin{equation}\label{1.6}
\sum_{i=1}^s(x_i^j-y_i^j)=0\quad (1\le j\le k).
\end{equation}
Adopting standard notation, we write $J_{s,k}(X)$ for the number of integral solutions of 
the system (\ref{1.6}) with $1\le x_i,y_i\le X$ $(1\le i\le s)$. More generally, when $s$ is 
not necessarily an integer, we put
\begin{equation}\label{1.7}
J_{s,k}(X)=\int_{[0,1)^k}\biggl| \sum_{1\le x\le X}e(\alp_1x+\ldots +\alp_kx^k)
\biggr|^{2s}\d\bfalp .
\end{equation}
The {\it main conjecture} in Vinogradov's mean value theorem asserts that for each 
$\eps>0$, one has
\begin{equation}\label{1.8}
J_{s,k}(X)\ll_{\eps,s,k} X^{s+\eps}+X^{2s-k(k+1)/2}.
\end{equation}
Here, we have deviated very slightly from the formulation of the main conjecture asserted 
in earlier work (see for example \cite[equation (1.4)]{Woo2012}) by omitting the term 
$\eps$ from the exponent in the second summand on the right hand side of (\ref{1.8}). 
This merely recognises the observation, well-known for more than half a century, that the 
validity of the estimate (\ref{1.8}) for $s=k(k+1)/2$ implies its validity for all positive real 
numbers $s$. Such is evident from an application of H\"older's inequality when 
$s<k(k+1)/2$, and is immediate from an application of the circle method for $s>k(k+1)/2$. 
A transparent consequence of Corollary \ref{corollary1.2} yields the main conjecture in full.

\begin{corollary}\label{corollary1.3}
The main conjecture holds in Vinogradov's mean value theorem. Thus, for each $\eps>0$, 
one has $J_{s,k}(X)\ll X^{s+\eps}+X^{2s-k(k+1)/2}$. Indeed, when $k\ge 3$ and 
$s>k(k+1)/2$, one has the asymptotic formula
$$J_{s,k}(X)\sim C_{s,k}X^{2s-k(k+1)/2},$$
where $C_{s,k}$ is a positive number depending at most on $s$ and $k$.
\end{corollary}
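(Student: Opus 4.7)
The plan is to deduce Corollary~\ref{corollary1.3} from Corollary~\ref{corollary1.2} specialised to $(d_1,\ldots,d_k)=(1,2,\ldots,k)$, combining the resulting critical-case mean value estimate with a routine application of the Hardy--Littlewood circle method. Corollary~\ref{corollary1.2} in this specialisation immediately yields $J_{s,k}(X)\ll X^{s+\eps}$ for $0<s\le k(k+1)/2$, which is already the desired upper bound in that range. At the critical exponent $s=k(k+1)/2$ it supplies the central estimate $J_{k(k+1)/2,k}(X)\ll X^{k(k+1)/2+\eps}$, and this is the only genuinely new input required for everything else.

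For $s>k(k+1)/2$ I would establish the asymptotic formula by circle-method analysis, from which the matching upper bound $J_{s,k}(X)\ll X^{2s-k(k+1)/2}$ is immediate. Writing $f(\bfalp)=\sum_{1\le x\le X}e(\alp_1x+\ldots +\alp_kx^k)$, I would partition $[0,1)^k$ into major arcs $\grM$ consisting of narrow boxes around rationals with denominators at most $X^\del$ (for a small $\del>0$) and minor arcs $\grm=[0,1)^k\setminus\grM$. The major-arc pruning argument, in the form developed in Vaughan's monograph and subsequent refinements, produces a main term $C_{s,k}X^{2s-k(k+1)/2}$ with a power-saving error, where the positivity of $C_{s,k}$ for $k\ge 3$ and $s>k(k+1)/2$ follows from the standard singular-series and singular-integral computations, the system (\ref{1.6}) being trivially locally soluble via diagonal solutions.

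For the minor arcs I would apply the interpolation
$$\int_{\grm}|f(\bfalp)|^{2s}\d\bfalp \le \Bigl(\sup_{\bfalp\in\grm}|f(\bfalp)|\Bigr)^{2s-k(k+1)}\int_{[0,1)^k}|f(\bfalp)|^{k(k+1)}\d\bfalp ,$$
bounding $\sup_{\bfalp\in\grm}|f(\bfalp)|\ll X^{1-\sig}$ via Weyl's inequality applied to the leading monomial $\alp_kx^k$ (with some $\sig=\sig(k)>0$), and using the critical-case bound from Corollary~\ref{corollary1.2} to control the remaining factor by $X^{k(k+1)/2+\eps}$. Since $s>k(k+1)/2$ strictly, the exponent $2s-k(k+1)$ is positive, so choosing $\eps$ sufficiently small in terms of $\sig$ and $s$ yields a saving of a positive power of $X$ relative to $X^{2s-k(k+1)/2}$, confirming that the minor arcs contribute $o(X^{2s-k(k+1)/2})$.

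The main obstacle is essentially nil once Corollary~\ref{corollary1.2} is granted: every further ingredient---Weyl's inequality, the major-arc asymptotic expansion, the positivity of $C_{s,k}$ for $k\ge 3$---is entirely classical. The deduction merely repackages the critical-exponent mean value into both the upper bound asserted in the main conjecture and its companion asymptotic formula, and the genuine mathematical work is concentrated in the nested efficient congruencing argument underlying Corollary~\ref{corollary1.2}.
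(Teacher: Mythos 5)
Your proposal is essentially the same as the paper's proof: the upper bound is read off from the critical case of Theorem~\ref{theorem1.1} (the paper passes through Corollary~\ref{corollary1.4} with $(\gra_n)=(1)$, you pass through Corollary~\ref{corollary1.2}; the two are interchangeable for this purpose), and the asymptotic formula for $s>k(k+1)/2$ is then a routine circle-method exercise using the critical-exponent bound $J_{k(k+1)/2,k}(X)\ll X^{k(k+1)/2+\eps}$ to dominate the minor arcs.

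One small imprecision worth flagging: with major arcs defined (as both you and the paper do) by simultaneous rational approximation of \emph{all} coordinates $\alp_1,\ldots,\alp_k$, Weyl's inequality applied to the leading monomial $\alp_kx^k$ alone is not quite enough to bound $\sup_{\grm}|f|$. The Weyl-differenced bound controls $|f(\bfalp)|$ only in terms of the approximability of $\alp_k$, so there remain points of $\grm$ where $\alp_k$ has a small-denominator approximation but some lower-degree $\alp_j$ does not, and for those the crude Weyl bound is vacuous. What is actually needed is a Weyl-type dichotomy for the full tuple: if $|f(\bfalp)|\ge X^{1-\sigma}$ then there is a single $q$ with $\|q\alp_j\|$ small for every $j$. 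The paper invokes exactly this in the form $\sup_{\bfalp\in\grm}|f(\bfalp;X)|\ll X^{1-\tau+\eps}$ with $\tau^{-1}=8k^2$, citing \cite[equation (7.1)]{Woo2017a} (which traces back to \cite[\S 9]{Woo2012} and ultimately to \cite[Theorem 1.6]{Woo2012}). That refined estimate is standard and available in the references you cite, so the gap is more one of phrasing than of substance; but as stated, ``Weyl's inequality applied to the leading monomial'' undersells the input required. The major-arc analysis and the positivity of $C_{s,k}$ (the paper gets positivity from the trivial lower bound $J_{s,k}(X)\gg X^{2s-k(k+1)/2}$ rather than from explicit local solubility, but both work) are as you describe.
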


Theorem \ref{theorem1.1} also delivers the expected Strichartz inequality established for 
$s\ge k(k+1)$ in \cite{Woo2017a}, and subsequently in full in \cite{BDG2016}.

\begin{corollary}\label{corollary1.4}
Suppose that $k\in \dbN$, that $s$ is a positive number, and $(\gra_n)_{n\in \dbZ}$ is a 
complex sequence. Then, for each positive number $\eps$, one has
$$\int_{[0,1)^k}\biggl| \sum_{|n|\le X}\gra_ne(n\alp_1+\ldots +n^k\alp_k)\biggr|^{2s}
\d\bfalp \ll X^\eps (1+X^{s-k(k+1)/2})\biggl( \sum_{|n|\le X}|\gra_n|^2\biggr)^s.$$
\end{corollary}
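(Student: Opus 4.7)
The plan is to deduce Corollary \ref{corollary1.4} directly from Theorem \ref{theorem1.1} by specialising $\varphi_j(t)=t^j$ for $1\le j\le k$, and then bootstrap to the range $s>k(k+1)/2$ using a trivial pointwise bound combined with Cauchy--Schwarz.

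First I would verify the Wronskian hypothesis in this specialisation. For $\varphi_j(t)=t^j$, the derivative $\varphi_j^{(i)}(t)=\frac{j!}{(j-i)!}t^{j-i}$ vanishes whenever $i>j$, so the matrix $(\varphi_j^{(i)}(t))_{1\le i,j\le k}$ is upper triangular with diagonal entries $\varphi_j^{(j)}(t)=j!$. Thus $W(t;\bfvarphi)=\prod_{j=1}^k j!\ne 0$, and Theorem \ref{theorem1.1} applies to give, for any $s$ with $0<s\le k(k+1)/2$ and any complex sequence $(\gra_n)$,
\begin{equation*}
\int_{[0,1)^k}\Bigl| \sum_{|n|\le X}\gra_ne(n\alp_1+\ldots +n^k\alp_k)\Bigr|^{2s}\d\bfalp \ll X^\eps \Bigl( \sum_{|n|\le X}|\gra_n|^2\Bigr)^s.
\end{equation*}
When $s\le k(k+1)/2$ we have $1+X^{s-k(k+1)/2}\ge 1$, so this already yields the corollary in that range.

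For the remaining range $s>s_0:=k(k+1)/2$, write $s=s_0+t$ with $t>0$. By the triangle inequality and Cauchy--Schwarz,
\begin{equation*}
\Bigl| \sum_{|n|\le X}\gra_ne(n\alp_1+\ldots +n^k\alp_k)\Bigr|^{2t}\le \Bigl( \sum_{|n|\le X}|\gra_n|\Bigr)^{2t}\ll X^t\Bigl( \sum_{|n|\le X}|\gra_n|^2\Bigr)^t.
\end{equation*}
Extracting this factor from the integrand and applying the conclusion of Theorem \ref{theorem1.1} with $s$ replaced by $s_0$ to the remaining $2s_0$-th power yields
\begin{equation*}
\int_{[0,1)^k}\Bigl| \sum_{|n|\le X}\gra_ne(n\alp_1+\ldots +n^k\alp_k)\Bigr|^{2s}\d\bfalp \ll X^{t+\eps}\Bigl( \sum_{|n|\le X}|\gra_n|^2\Bigr)^s=X^{s-k(k+1)/2+\eps}\Bigl( \sum_{|n|\le X}|\gra_n|^2\Bigr)^s,
\end{equation*}
which is dominated by the right-hand side of the desired inequality.

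Combining the two ranges and absorbing a harmless factor into $X^\eps$ delivers the stated estimate uniformly in $s>0$. There is no serious obstacle here: all the analytic content is already encoded in Theorem \ref{theorem1.1}, and the only subtlety is the straightforward verification that the monomial Wronskian is a non-zero constant, so that the theorem applies to the pure Vinogradov setting $\varphi_j(t)=t^j$.
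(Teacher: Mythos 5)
Your argument is correct and matches the paper's proof essentially step for step: verifying that the monomial Wronskian is the nonzero triangular determinant $\prod_{j=1}^k j!$, invoking Theorem \ref{theorem1.1} for $0<s\le k(k+1)/2$, and then bootstrapping the supercritical range $s>k(k+1)/2$ by extracting $|\grh(\bfalp)|^{2(s-k(k+1)/2)}\ll\bigl(X\sum_{|n|\le X}|\gra_n|^2\bigr)^{s-k(k+1)/2}$ via Cauchy--Schwarz and applying the critical case to what remains. The only cosmetic difference is that the paper records the pointwise bound as $|\grh(\bfalp)|^2\le(2X+1)\sum|\gra_n|^2$ and raises it to the appropriate power, whereas you phrase the same estimate through the triangle inequality followed by Cauchy--Schwarz.
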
 

As we have already noted, the main conjecture (\ref{1.8}) follows for all $s$ from the 
special case in which $s=k(k+1)/2$. The validity of (\ref{1.8}) in the case $k=1$ is of 
course trivial, and when $k=2$ the asymptotic formula
$$J_{3,2}(X)\sim \frac{18}{\pi^2}X^3\log X$$
follows via classical methods (see Rogovskaya \cite{Rog1986}, and Blomer and 
Br\"udern \cite{BB2010} for sharp versions of this formula). When $k\ge 3$ it is expected 
that the upper bound (\ref{1.8}) should hold with $\eps=0$, though at present such is 
known only when $s\le k+1$ (see \cite[Theorem 1]{VW1997}) and $s>k(k+1)/2$ (see 
Corollary \ref{corollary1.3}). The main conjecture (\ref{1.8}) was established in full for 
$k=3$ in the author's previous work 
\cite[Theorem 1.1, Theorem 8.1 and its proof]{Woo2016} (see also \cite{HB2015} for an 
account with certain simplifications). When $k>3$, the multigrade efficient congruencing 
method established (\ref{1.8}) in the range 
$s\le k(k+1)/2-k/3+O(k^{2/3})$, missing the critical exponent $s=k(k+1)/2$ by roughly 
$k/3$ variables (see \cite[Theorem 1.3]{Woo2017}). This defect was later remedied in the 
work of Bourgain et al.~\cite{BDG2016} by means of their $l^2$-decoupling method (the 
reader might refer to \cite{Pie2017} for the status of developments at the end of 2016). The 
nested variant of the multigrade efficient congruencing method that we outline in \S2 now 
also remedies this defect. Work prior to 2010 preceding the efficient congruencing methods 
was, meanwhile, far weaker (see for example \cite{Hua1965, Vin1947, Woo1992}).\par

We have already expended considerable space on recording our main conclusions and 
describing previous results, without pausing to explain the importance of Vinogradov's mean 
value theorem and its relatives. The recent burst of activity surrounding efficient 
congruencing, $l^2$-decoupling, and Vinogradov's mean value theorem offers some 
justification for this concentration on mean value estimates rather than applications. This 
is an opportune moment, however, to highlight the central position of Vinogradov's mean 
value theorem across a large swath of analytic number theory. Current approaches to the 
asymptotic formula in Waring's problem, the sharpest available estimates for the zero-free 
region of the Riemann zeta function, and the investigation of equidistribution modulo $1$ of 
polynomial sequences, all depend for their success on estimates associated with 
Vinogradov's mean value theorem (see \cite{Bou2017, Woo2012b}, \cite{For2002}, 
\cite{Bak1986, Woo2016a}, respectively). We direct the reader to \cite{Woo2014a} for an 
overview of several other applications and an account of recent developments. In \S\S13 and 
14 we record some applications of Theorem \ref{theorem1.1} and its corollaries to Waring's 
problem and cognate applications.\par

Several commentators have described the work of Bourgain et al.~\cite{BDG2016} 
concerning Vinogradov's mean value theorem as inherently analytic in nature, contrasting it 
to earlier number-theoretic methods. A comparison of the efficient congruencing methods 
(see especially \cite{Woo2012, Woo2015, Woo2016, Woo2017} and the present paper) with 
the $l^2$-decoupling approach \cite{BDG2016}, however, shows the core of both methods 
to be strikingly similar. The former applies $p$-adic short intervals (which is to say, 
congruence class restrictions) to achieve a $p$-adic concentration argument via a 
multiscale iteration, whereas the latter applies real short intervals to achieve the same 
effect. Number theorists will have little difficulty in translating arguments over $\dbQ_p$ to 
analogous arguments over $\dbR$ (which is to say, over $\dbQ_\infty$), and vice versa. 
One of the important messages of the present memoir is that the efficient congruencing 
ideas are sufficiently flexible that they may be applied to estimate mean values associated 
with discrete sets of points in a wide variety of fields and their localisations. Examples of 
such situations include, but are not limited to:
\begin{enumerate}
\item[(i)] discrete sets of points in $\dbQ$ and its localisations $\dbQ_p$ and 
$\dbR=\dbQ_\infty$;
\item[(ii)] discrete sets of points in a number field $K$ and its localisations;
\item[(iii)] discrete sets of points in $\dbF_q(t)$ and its localisations;
\item[(iv)] discrete sets of points in a function field defined by a curve over a finite field, 
and its localisations.
\end{enumerate}

\par In order to illustrate that generalisations of our principal conclusions are easily 
accessible to our methods, in \S15 we establish an analogue of Theorem \ref{theorem1.1} 
and its corollaries for rings of integers in an arbitrary number field. We also consider function 
field analogues of our principal conclusions in their most basic form in \S17. In both 
instances, we exploit the relative simplicity of the nested efficient congruencing method as 
compared to the $l^2$-decoupling method of Bourgain et al.~\cite{BDG2016}. We are able, 
for example, to avoid any discussion of multilinear Kakeya estimates, the nature of which 
would be necessarily more mysterious (and presently unknown) in the setting of number 
fields and function fields. Our approach is also bilinear rather than multilinear, leading to a 
considerable streamlining of detail. Finally, one inductive aspect of our methods (concerning 
the number of equations or congruences in play) obviates the need for detailed knowledge 
of systems of congruences in many variables, as previously supplied by \cite{Woo1996}. 
Thus, the much simpler theory associated with a single congruence in one variable underpins 
the wider theory without further elaboration. We refer the reader to \S\S15 and 17 for 
details, and defer applications to a future paper. We finish by noting that a comprehensive 
analogue of Theorem \ref{theorem1.1} in the function field setting is the subject of 
forthcoming work of the author joint with Yu-Ru Liu.

\par This memoir is organised as follows. We provide a crude outline of the nested efficient 
congruencing method in \S2. Here, readers will find an outline of the ideas new to the 
nested variant of the multigrade efficient congruencing method, as well as an overview of 
the efficient congruencing strategy in the large. Sections 3 to 10 inclusive provide a 
detailed account of the proof of Theorem \ref{theorem3.1}, the basic nested inductive 
step, via the nested efficient congruencing method. The necessary infrastructure is 
introduced in \S3, with a discussion of the implications for translation-dilation invariant 
families in \S4. The nested structure is built inductively, and we introduce the foundation 
for this induction with the case of a single equation in \S5. The general situation is 
addressed in \S\S6-9. In \S6 we prepare our bilinear structure with some initial conditioning. 
Then we employ the approximate translation-dilation invariance in \S7 so as to generate 
strong congruence constraints efficiently. In the language of harmonic analysis, this 
constitutes a {\it multiscale} aspect to the method. These congruence constraints must be 
employed iteratively, as described in \S8, the analysis of which in \S9 prepares the ground 
for the proof of Theorem \ref{theorem3.1} in \S10. Then, in \S11, we derive some 
consequences of Theorem \ref{theorem3.1} for problems involving the number of solutions 
of congruences in short intervals. In \S12 we show how Theorem \ref{theorem3.1} may be 
employed to deliver Theorem \ref{theorem1.1} and its corollaries. A brief excursion in \S13 
explores the consequences of Corollary \ref{corollary1.3} for Tarry's problem. An account of 
the implications of Theorem \ref{theorem1.1} for relatives of Vinogradov's mean value 
theorem analogous to Hua's lemma is provided in \S14, and here we numerically refine some 
recent work of Bourgain \cite{Bou2017} concerning the asymptotic formula in Waring's 
problem, adding extra details to that exposition. In \S15 we indicate how to establish the 
main conjecture in Vinogradov's mean value theorem for number fields, and we provide 
some consequences of these conclusions for multivariable analogues of Vinogradov's mean 
value theorem in \S16. Finally, in \S17, we confirm the main conjecture in Vinogradov's mean 
value theorem for function fields in its most basic form.\par

A couple of notational conventions already deserve mention. Throughout, we make liberal 
use of vector notation in settings not always conventional in nature. Thus, for example, we 
write $1\le \bfx\le X$ to denote that every coordinate $x_i$ of $\bfx$ satisfies 
$1\le x_i\le X$, and $\bfx\equiv \xi\mmod{p^h}$ to denote that $x_i\equiv \xi\mmod{p^h}$ 
for all indices $i$. Also, we adopt the convention that when $F:[0,1)^n\rightarrow \dbC$ is 
integrable, then
\begin{equation}\label{1.9}
\oint F(\bfalp)\d\bfalp=\int_{[0,1)^n}F(\bfalp)\d\bfalp .
\end{equation}
Finally, given a situation in which the parameter $\eps$ has not already been fixed, in any 
statement involving the letter $\eps$ it is implicitly asserted that the statement holds for 
each $\eps>0$. In such circumstances, implicit constants in Vinogradov's notation $\ll$ and 
$\gg$ may depend on $\eps$.
\vskip.1cm

\noindent{\bf Acknowledgements:} The bulk of this work was completed in February 2016. 
A period of heavy administration at the University of Bristol slowed the final production of 
this memoir, but also permitted an evolution of ideas that has greatly simplified several 
aspects. The author is grateful to the Fields Institute in Toronto for excellent working 
conditions and support that made the completion of this work possible during the Thematic 
Program on Unlikely Intersections, Heights, and Efficient Congruencing in the first half of 
2017. Further work was supported by the National Science Foundation under Grant 
No.~DMS-1440140 while the author was in residence at the Mathematical Sciences 
Research Institute in Berkeley, California, during the Spring 2017 semester. The author's 
work was supported by a European Research Council Advanced Grant under the European 
Union's Horizon 2020 research and innovation programme via grant agreement No.~695223. 
Without the release time from the University of Bristol funded by the latter grant, it is 
difficult to envision that completion of this memoir would have been feasible, and the 
author wishes to express his gratitude to the ERC for this support.

\section{A crude outline of nested efficient congruencing} Granted latitude to be economical 
with technical details and mathematical rigour, we begin in this section by outlining in broad 
terms the key features of the method central to this paper. The starting point is the 
translation-dilation invariant (TDI) system of equations (\ref{1.6}). This TDI property is 
made evident by the observation that, whenever $\xi\in \dbZ$ and $q\in \dbN$, the pair 
$\bfx,\bfy$ satisfies (\ref{1.6}) if and only if this $2s$-tuple also satisfies the system of 
equations
$$\sum_{i=1}^s\left( (qx_i+\xi)^j-(qy_i+\xi)^j\right)=0\quad (1\le j\le k).$$
An application of the binomial theorem rapidly confirms such to be the case.\par

In the basic version of efficient congruencing (see \cite{Woo2012}), one relates the mean 
value $J_{s,k}(X)$ defined in (\ref{1.7}) to associated mean values equipped with 
additional congruence conditions. Write
$$\grg_c(\bfalp;\xi)=\sum_{\substack{1\le x\le X\\ x\equiv \xi\mmod{p^c}}}
e(\alp_1x+\ldots +\alp_kx^k),$$
in which $p$ is a preselected auxiliary prime number of size having order given by a small 
power of $X$. Further, for $0\le r\le k$, define the auxiliary mean value
\begin{equation}\label{2.1}
K_{a,b}^r(X)=\max_{\xi\not \equiv \eta\mmod{p}}\oint
|\grg_a(\bfalp;\xi)^{2r}\grg_b(\bfalp;\eta)^{2s-2r}|\d\bfalp .
\end{equation}
Then it follows via an application of H\"older's inequality that a prime $p$ may be chosen 
with 
$$J_{s,k}(X)\ll p^{2s-2r}K_{0,1}^r(X).$$

\par It is convenient to possess notation which makes transparent the extent to which the 
auxiliary mean value $K_{a,b}^r(X)$ exceeds its anticipated magnitude. With this goal in 
mind, when $s>k(k+1)/2$, we put
$$\llbracket K_{a,b}^r(X)\rrbracket =\frac{K_{a,b}^r(X)}{(X/p^a)^{2r-k(k+1)/2}
(X/p^b)^{2s-2r}},$$
and when $r\le s\le k(k+1)/2$ we instead define
$$\llbracket K_{a,b}^r(X)\rrbracket =\frac{K_{a,b}^r(X)}{(X/p^a)^r(X/p^b)^{s-r}}.$$
If, for a given value of $s$, the mean value $J_{s,k}(X)$ grows approximately like
$$X^\Lam (X^s+X^{2s-k(k+1)/2}),$$
with $\Lam>0$, then we can infer that for a small number $\eps>0$ one has 
$\llbracket K_{0,1}^r(X)\rrbracket \gg X^{\Lam-\eps}$. Here, we have made use of the 
implicit assumption that $p$ has size of order $X^\tet$, where $\tet>0$ is sufficiently small 
in terms of $\Lam$. Our strategy is now to concentrate this over-abundance of solutions 
underlying the mean value, relative to the expectation suggested by the main conjecture, 
through a sequence of auxiliary mean values $K_{a_n,b_n}^{r_n}(X)$ $(n\ge 1)$. The 
situation is simplest to describe when $s=k(k+1)$. Here, roughly speaking, one shows that 
for each $\eps>0$ one has
\begin{equation}\label{2.2}
\llbracket K_{a_n,b_n}^k(X)\rrbracket \gg X^{\Lam-\eps}(p^{\psi_n})^\Lam ,
\end{equation}
with $a_n\approx k^{n-1}$, $b_n\approx k^n$ and $\psi_n\approx nk^{n-1}(k-1)$. 
Provided that $\Lam>0$, then by permitting $n$ to become arbitrarily large sufficiently 
slowly, one finds that this lower bound for $\llbracket K_{a_n,b_n}^k(X)\rrbracket$ vastly 
exceeds even a trivial estimate for its upper bound, yielding a contradiction. Thus, we are 
forced to conclude that $\Lam=0$, and the main conjecture follows for $s\ge k(k+1)$.\par

The lower bound (\ref{2.2}) is obtained iteratively. By orthogonality, the mean value on the 
right hand side of (\ref{2.1}) counts the number of integral solutions of the system
\begin{equation}\label{2.3}
\sum_{i=1}^r(x_i^j-y_i^j)=\sum_{l=1}^{s-r}
\left( (p^bu_l+\eta)^j-(p^bv_l+\eta)^j\right) \quad (1\le j\le k),
\end{equation}
with $1\le \bfx,\bfy\le X$ and $(1-\eta)/p^b\le \bfu,\bfv\le (X-\eta)/p^b$, subject to the 
condition $\bfx\equiv \bfy\equiv \xi\mmod{p^a}$. The TDI property of the system 
(\ref{1.6}) ensures that (\ref{2.3}) is equivalent to the system of equations
$$\sum_{i=1}^r\left( (x_i-\eta)^j-(y_i-\eta)^j\right)=p^{jb}\sum_{l=1}^{s-r}
\left( u_l^j-v_l^j\right) \quad (1\le j\le k),$$
and thus one obtains the strong congruence condition
\begin{equation}\label{2.4}
\sum_{i=1}^r(x_i-\eta)^j\equiv \sum_{i=1}^r(y_i-\eta)^j\mmod{p^{jb}}\quad 
(1\le j\le k).
\end{equation}
One of the technical details suppressed here is the need to condition variables so that 
$x_1,\ldots ,x_r$ lie in distinct congruence classes modulo $p^{a+1}$. This guarantees 
a level of non-singularity in the solution set that may be exploited via Hensel's lemma.\par

In the simplest set-up with $s=k(k+1)$ and $r=k$, one shows that for a fixed choice of the 
$k$-tuple $\bfy$ modulo $p^{kb}$, there are at most $k!(p^{a+b})^{k(k-1)/2}$ possible 
choices for the $k$-tuple $\bfx$ modulo $p^{kb}$. Since $x_i\equiv \xi\mmod{p^a}$, each 
variable $x_i$ has at most $p^{kb-a}$ possible choices for its residue class modulo 
$p^{kb}$. Reinterpreting the system (\ref{2.3}) via orthogonality, and applying H\"older's 
inequality with care, one deduces that
\begin{align}
K_{a,b}^k(X)&\ll (p^{a+b})^{k(k-1)/2}(p^{kb-a})^k
\max_{\zeta\not\equiv \eta\mmod{p}}\oint |\grg_{kb}(\bfalp;\zeta)^{2k}
\grg_b(\bfalp;\eta)^{2s-2k}|\d\bfalp \notag\\
&\ll p^{\frac{1}{2}k(k-1)(a+b)+k(kb-a)}K_{b,kb}^k(X)^{\tfrac{k}{s-k}}
J_{s,k}(X/p^b)^{\tfrac{s-2k}{s-k}}.\label{2.5}
\end{align}
Here, we have applied the TDI property of the system (\ref{1.6}) to infer that
$$\oint |\grg_b(\bfalp;\eta)|^{2s}\d\bfalp \ll J_{s,k}(X/p^b).$$
The relation (\ref{2.5}) may be unwound with the estimate
$$J_{s,k}(X/p^b)\ll (X/p^b)^{2s-k(k+1)/2+\Lam+\eps}.$$
Thus one sees that
\begin{equation}\label{2.6}
\llbracket K_{a,b}^k(X)\rrbracket \ll X^\eps \llbracket K_{b,kb}^k(X)
\rrbracket ^{\tfrac{k}{s-k}}(X/p^b)^{\tfrac{s-2k}{s-k}\Lam},
\end{equation}
and a relation of the shape (\ref{2.2}) follows.\par

In the multigrade variant of efficient congruencing, the parameter $r$ in the congruences 
(\ref{2.4}) is varied, and we interpret this system in the form
\begin{equation}\label{2.7}
\sum_{i=1}^r(x_i-\eta)^j\equiv \sum_{i=1}^r(y_i-\eta)^j\mmod{p^{(k-r+1)b}}\quad 
(k-r+1\le j\le k).
\end{equation}
It follows via Hensel's lemma that, when $x_1,\ldots ,x_r$ lie in distinct congruence classes 
modulo $p^{a+1}$, then they are essentially congruent to a permutation of the residue 
classes $y_1,\ldots ,y_r$ modulo $p^{(k-r+1)b-(r-1)a}$. A variant of this observation plays 
a role in the work of the author \cite{FW2014} joint with Ford. Fixing a parameter $R$ with 
$1\le R\le k$, this new congruence information may be exploited in a similar manner to that 
delivering the relation (\ref{2.6}). One now finds that when $1\le r\le R$ and 
$R<s\le k(k+1)/2$, then one has
\begin{equation}\label{2.8}
\llbracket K_{a,b}^r(X)\rrbracket \ll X^\eps \llbracket K_{a,b}^{r-1}(X)
\rrbracket^{\tfrac{s-R-r}{s-R-r+1}}\llbracket K_{b,b'}^R(X)
\rrbracket^{\tfrac{1}{s-R-r+1}},
\end{equation}
where $b'=(k-r+1)b-(r-1)a$. Here, we note that when $r=1$, we have 
$K_{a,b}^{r-1}(X)=K_{a,b}^0(X)\ll J_{s,k}(X/p^b)$, using the TDI property of the system 
(\ref{1.6}).\par

The analysis of this new multigrade efficient congruencing method is necessarily more 
complicated than with (\ref{2.6}), since we have numerous quantities $K_{a,b}^r(X)$, with 
$1\le r\le R$, in play. The treatment of the iteration involves a complicated tree of possible 
outcomes. The analysis of \cite{Woo2015, Woo2016, Woo2017} iterates the relation 
(\ref{2.8}) to obtain a relation of the shape
$$\llbracket K_{a,b}^R(X)\rrbracket \ll X^\eps (X/p^b)^{\Lam \phi_0}\prod_{r=1}^R
\llbracket K_{b,b_r}^R(X)\rrbracket^{\phi_r},$$
in which $b_r=(k-r+1)b-(r-1)a$ and the exponents $\phi_r$ are appropriate positive 
numbers with $\phi_0+\ldots +\phi_R=1$. From here one can analyse the tree structure of 
the iterative process by weighting the outcomes, simplifying to a situation in which one has 
an averaged relation of the shape
$$\llbracket K_{a,b}^R(X)\rrbracket \ll X^\eps 
\llbracket K_{b,b_r}^R(X)\rrbracket^{\psi b/b_r}(X/p^b)^{\Lam \phi_0},$$
for some integer $r$ with $1\le r\le R$, with $\psi$ a positive number determined by the 
averaging process, and depending on $s$ and $R$. Of critical importance is whether or not 
the exponent $\psi$ exceeds $1$. If $\psi>1$, then the $p$-adic concentration argument is 
successful in delivering a contradiction to the assertion that $\Lam>0$, much as before, and 
the main conjecture follows for the value of $s$ in question. On the other hand, if $\psi<1$, 
then the iteration fails to deliver a contradiction. It transpires that a choice for $R$ may be 
made which permits $s$ to be as large as $k(k+1)/2-k/3+O(k^{2/3})$, and which allows 
the main conjecture to be proved for $J_{s,k}(X)$ in this way.\par

It is at this point that nested efficient congruencing enters the scene. The key observation 
is that the above iterative processes may be applied without alteration when the system of 
{\it equations} (\ref{1.6}) is replaced by a corresponding system of {\it congruences}
\begin{equation}\label{2.9}
\sum_{i=1}^s(x_i^j-y_i^j)\equiv 0\mmod{p^B}\quad (1\le j\le k),
\end{equation}
in which $B$ should be interpreted as a large integral parameter. At least, such is the case 
so long as two subsets of the variables $x_i$ and $y_i$ are restricted to congruence 
classes modulo $p^a$ and $p^b$, respectively, in which $a$ and $b$ are sufficiently small 
that the limitation to a mod $p^B$ environment plays no role in the above 
arguments. Such is assured in the system (\ref{2.9}) when $kb\le B$. Thus, when 
$s\le k(k+1)/2-k/3+O(k^{2/3})$, then in the multigrade argument just described, one may 
essentially conclude from (\ref{2.9}) that in an average sense the variables $\bfx$ and 
$\bfy$ are automatically constrained by the additional condition 
$x_i\equiv y_i\mmod{p^H}$, with $H=\lfloor B/k\rfloor$. More is true. If one has a system 
of polynomials $\varphi_1,\ldots ,\varphi_k\in \dbZ[t]$ with 
$\varphi_i(t)\equiv t^j\mmod{p^c}$, for some reasonably large parameter $c$, then the 
relation (\ref{2.9}) remains approximately true, since it holds modulo 
$p^{\text{min}\{c,B\}}$. This may be exploited to show that, in an average sense, one 
has $x_i\equiv y_i\mmod{p^h}$ with $h=\lfloor c/k\rfloor$. This additional information 
refines the approximation to the relation (\ref{2.9}) in a manner similar to the 
conventional proof of Hensel's lemma. By iterating this idea, one finds even in this more 
general situation that in an average sense one has $x_i\equiv y_i\mmod{p^H}$.\par

The observation just sketched may be employed as a substitute for Hensel's lemma in 
congruence systems of the shape
$$\sum_{i=1}^u(x_i-\eta)^j\equiv \sum_{i=1}^u(y_i-\eta)^j\mmod{p^{(k-r+1)b}}\quad 
(k-r+1\le j\le k),$$
analogous to (\ref{2.7}), though with $u$ as large as $r(r+1)/2$. The resulting congruence 
information on the variables $x_i$ and $y_i$ modulo $p^{b'}$, with 
$$b'=\lfloor (k-r+1)b/r\rfloor,$$
though weaker than in our earlier treatment, is spread out over many more variables. This, 
it transpires, offers a sufficient advantage that the earlier defect of $k/3$ variables may be 
remedied, thereby upgrading the applicability of the multigrade method so as to establish 
the main conjecture for $J_{s,k}(X)$ when $s\le k(k+1)/2$. The basic approach to the 
$p$-adic concentration argument and analysis of the tree of possible outcomes makes use 
of the same circle of ideas as in the basic multigrade approach. The detailed prosecution of 
this method will occupy our attention throughout \S\S3--12.

\section{The infrastructure for nested efficient congruencing} We begin by introducing the 
apparatus required for our proof of Theorem \ref{theorem1.1} via nested efficient 
congruencing. Although analogous to that of our previous work (see especially 
\cite{Woo2012, Woo2015, Woo2016, Woo2017}) concerning Vinogradov's mean value 
theorem, we deviate significantly from our previous path. In particular, we incorporate ideas 
from our work on discrete restriction theory \cite{Woo2017a} into the method.\par

Let $k$ be an integer with $k\ge 1$, and consider polynomials $\varphi_1,\ldots ,\varphi_k
\in \dbZ[t]$. Throughout our discussion, we have in mind a fixed prime number $p$ with 
$p>k$, and a large positive integer $B$. We require the polynomials $\varphi_j$ to be 
sufficiently independent for $1\le j\le k$. This is achieved by imposing the condition that the 
system of polynomials $\bfvarphi$ be $p^c$-{\it spaced} for an appropriate positive integer 
$c$, meaning that
\begin{equation}\label{3.1}
\varphi_j(t)\equiv t^j\mmod{p^c}\quad (1\le j\le k).
\end{equation}
For such a $p^c$-spaced system of polynomials $\bfvarphi$, one finds that the Wronskian 
$W(t;\bfvarphi)$ defined in (\ref{1.1}) is necessarily non-zero. Indeed, one has
$$W(t;\bfvarphi)\equiv \det 
\left( j(j-1)\cdots (j-i+1) t^{j-i}\right)_{1\le i,j\le k}=\prod_{i=1}^kj!
\not\equiv 0\mmod{p},$$
whence $W(t;\bfvarphi)\ne 0$. It transpires that the restriction to $p^c$-spaced systems 
of polynomials is easily accommodated when establishing such conclusions as Theorem 
\ref{theorem1.1}.\par

We next define the exponential sums and mean values central to our arguments. Consider 
a complex sequence $(\gra_n)_{n\in \dbZ}$ with $\sum_{n\in \dbZ}|\gra_n|<\infty$. We 
impose the latter condition for convenience, since this ensures that the Fourier series 
employed in our arguments are absolutely convergent, and hence that the moments of such 
series are finite. Formally speaking, all of our arguments apply under the assumption only 
that $\sum_{n\in \dbZ}|\gra_n|^r<\infty$ for some $r<2$, though this requires some 
interpretation. We have in mind the device of normalising our exponential sums. To this end, 
when $h$ is a non-negative integer and $\xi\in \dbZ$, we define 
$\rho_h(\xi)=\rho_h(\xi;\bfgra)$ by putting
\begin{equation}\label{3.2}
\rho_h(\xi;\bfgra)=\biggl( \sum_{n\equiv \xi\mmod{p^h}}|\gra_n|^2\biggr)^{1/2}.
\end{equation}
Here, we suppress the implicit assumption that the sum is taken over $n\in \dbZ$. For 
concision, we write $\rho_0=\rho_0(\bfgra)$ for 
$\rho_0(1;\bfgra)=\sum_{n\in \dbZ}|\gra_n|^2$. Note that, for all $h$ and $\xi$, one has 
$\rho_h(\xi)\le \rho_0<\infty$.\par

Next, write
\begin{equation}\label{3.3}
\psi(n;\bfalp)=\alp_1\varphi_1(n)+\ldots +\alp_k\varphi_k(n).
\end{equation}
When $h$ is a non-negative integer and $\xi\in \dbZ$, we define the exponential sum 
$\grf_h(\bfalp;\xi)=\grf_h(\bfalp;\xi;\bfgra;\bfvarphi)$ as follows. When $\rho_h(\xi)>0$, we 
put
\begin{equation}\label{3.4}
\grf_h(\bfalp;\xi)=\rho_h(\xi)^{-1}\sum_{n\equiv \xi\mmod{p^h}}\gra_ne(\psi(n;\bfalp)),
\end{equation}
and otherwise, when $\rho_h(\xi)=0$, we instead put $\grf_h(\bfalp;\xi)=0$. Of course, in 
the second of these alternatives, one has $\gra_n=0$ for each $n\in \dbZ$ with 
$n\equiv \xi\mmod{p^h}$, and the summation in (\ref{3.4}) is necessarily $0$. It is 
occasionally useful to abbreviate $\grf_0(\bfalp;\xi)$ to $f_\bfgra (\bfalp)$. Note that in 
the situation in which
$$\gra_n=\begin{cases} 1,&\text{when $1\le n\le N$},\\
0,&\text{otherwise},\end{cases}$$
one has
$$N^{1/2}f_\bfgra(\bfalp)=\sum_{1\le n\le N}e(\alp_1\varphi_1(n)+\ldots 
+\alp_k\varphi_k(n)).$$

\par In order to define the mean values of interest to us, we introduce some concise 
notation to ease our exposition. We extend the notation (\ref{1.9}) to accommodate implicit 
congruences as follows. Thus, when $B$ is a positive integer, we write
\begin{equation}\label{3.5}
\oint_{p^B}F(\bfalp)\d\bfalp =p^{-kB}\sum_{u_1\nmod{p^B}}\ldots 
\sum_{u_k\nmod{p^B}}F(\bfu/p^B),
\end{equation}
where the summations are taken over complete sets of residues modulo $p^B$. We then 
define the mean value $U_{s,k}^B(\bfgra)=U_{s,k}^{B,\bfvarphi}(\bfgra)$ by putting
\begin{equation}\label{3.6}
U_{s,k}^B(\bfgra)=\oint_{p^B}|f_\bfgra(\bfalp)|^{2s}d\bfalp .
\end{equation}
Note that, by orthogonality, the mean value $U_{s,k}^B(\bfgra)$ counts the integral 
solutions of the simultaneous congruences
\begin{equation}\label{3.7}
\sum_{i=1}^s\left( \varphi_j(x_i)-\varphi_j(y_i)\right) \equiv 0\mmod{p^B}\quad 
(1\le j\le k),
\end{equation}
with $\bfx,\bfy\in \dbZ$, and with each solution $\bfx,\bfy$ being counted with weight
$$\rho_0^{-2s}\prod_{i=1}^s\gra_{x_i}{\overline \gra_{y_i}}.$$

\par We consider also a mean value related to $U_{s,k}^B(\bfgra)$, though with 
underlying variables restricted to a common congruence class. Thus, we define 
$U_{s,k}^{B,h}(\bfgra)=U_{s,k}^{B,h,\bfvarphi}(\bfgra)$ by putting
\begin{equation}\label{3.8}
U_{s,k}^{B,h}(\bfgra)=\rho_0^{-2}\sum_{\xi\nmod{p^h}}\rho_h(\xi)^2\oint_{p^B}
|\grf_h(\bfalp;\xi)|^{2s}\d\bfalp .
\end{equation}
In this instance, it follows via orthogonality that the integral on the right hand side of 
(\ref{3.8}) counts the integral solutions $\bfx,\bfy$ of the system (\ref{3.7}) satisfying 
$\bfx\equiv \bfy\equiv \xi \mmod{p^h}$ with weight
$$\rho_h(\xi)^{-2s}\prod_{i=1}^s\gra_{x_i}{\overline \gra_{y_i}}.$$

\par Observe that when $H\ge 1$, one may decompose the exponential sum 
$f_\bfgra(\bfalp)$ according to residue classes modulo $p^H$. Thus, it follows from 
(\ref{3.4}) that
\begin{align*}
f_\bfgra(\bfalp)&=\rho_0^{-1}\sum_{\xi\nmod{p^H}}\,\sum_{n\equiv \xi\mmod{p^H}}
\gra_ne(\psi(n;\bfgra))\\
&=\rho_0^{-1}\sum_{\xi\nmod{p^H}}\rho_H(\xi)\grf_H(\bfalp;\xi).
\end{align*}
An application of H\"older's inequality therefore reveals that
$$|f_\bfgra(\bfalp)|^{2s}\le \rho_0^{-2s}\biggl( \sum_{\xi\nmod{p^H}}1\biggr)^s
\biggl( \sum_{\xi\nmod{p^H}}\rho_H(\xi)^2\biggr)^{s-1}\sum_{\xi\nmod{p^H}}
\rho_H(\xi)^2|\grf_H(\bfalp;\xi)|^{2s}.$$
Since it follows from (\ref{3.2}) that
$$\sum_{\xi\nmod{p^H}}\rho_H(\xi)^2=\sum_{n\in \dbZ}|\gra_n|^2=\rho_0^2,$$
we deduce that
\begin{equation}\label{3.9}
|f_\bfgra(\bfalp)|^{2s}\le \rho_0^{-2}p^{sH}\sum_{\xi\nmod{p^H}}\rho_H(\xi)^2
|\grf_H(\bfalp;\xi)|^{2s}.
\end{equation}
We consequently deduce from (\ref{3.6}) and (\ref{3.8}) that
\begin{equation}\label{3.10}
U_{s,k}^B(\bfgra)\le p^{sH}U_{s,k}^{B,H}(\bfgra).
\end{equation}
This relation motivates us to seek an exponent $\lam$ having the property that, for each 
$\eps>0$, one has
\begin{equation}\label{3.11}
U_{s,k}^B(\bfgra)\ll (p^H)^{\lam+\eps}U_{s,k}^{B,H}(\bfgra),
\end{equation}
with as much uniformity in the various parameters as is feasible. It is already apparent from 
(\ref{3.10}) that (\ref{3.11}) holds for some positive number $\lam$ satisfying the 
condition $\lam\le s$.

\par In order to make sense of the goal just enunciated, we make some simplifying 
observations. Observe first that the definition (\ref{3.4}) of $\grf_h(\bfalp;\xi)$ is scale 
invariant with respect to the sequence $(\gra_n)$. Thus, if $\gam>0$ and the sequence 
$(\gra_n)$ is replaced by $(\gam \gra_n)$, then the exponential sum $\grf_h(\bfalp;\xi)$ 
remains unchanged, and likewise therefore the mean value $U_{s,k}^B(\bfgra)$ defined 
in (\ref{3.6}) remains unchanged. Denote by $\dbD$ the set of sequences 
$(\gra_n)_{n\in \dbZ}$ with $|\gra_n|\le 1$ $(n\in \dbZ)$ and 
$$0<\sum_{n\in \dbZ}|\gra_n|<\infty.$$
Also, write $\dbD_0=\dbD\cup \{{\mathbf 0}\}$. Then we see that there is no loss of 
generality in restricting the sequences $(\gra_n)$ under consideration to lie in $\dbD$.\par

Next, when $\tau>0$, denote by $\Phi_\tau(B)$ the set of $p^c$-spaced $k$-tuples of 
polynomials $\bfvarphi$, with $c\ge \tau B$. The relation (\ref{3.10}) ensures that for 
each fixed $B\in \dbN$ and $\bfvarphi\in \Phi_\tau(B)$, one has
$$\sup_{(\gra_n)\in \dbD}\frac{\log (U_{s,k}^B(\bfgra)/U_{s,k}^{B,H}(\bfgra))}
{\log (p^H)}\le s$$
for every non-negative integer $H$. On the other hand, by considering a sequence 
$(\grb_n)\in \dbD$ with $\grb_n=0$ whenever $n\not\equiv 0\mmod{p^H}$, one discerns 
from (\ref{3.8}) that $U_{s,k}^B(\bfgrb)=U_{s,k}^{B,H}(\bfgrb)$, whence
$$\sup_{(\gra_n)\in \dbD}\frac{\log (U_{s,k}^B(\bfgra)/U_{s,k}^{B,H}(\bfgra))}
{\log (p^H)}\ge 0.$$
Given $s>0$, $\tet\ge 1$ and $\tau>0$, write $H=\lceil B/\tet\rceil$, and define
\begin{equation}\label{3.12}
\lam^*(s,\tet;\tau)=\limsup_{B\rightarrow \infty}\sup_{\bfvarphi\in \Phi_\tau(B)}
\sup_{(\gra_n)\in \dbD}\frac{\log (U_{s,k}^B(\bfgra)/U_{s,k}^{B,H}(\bfgra))}
{\log (p^H)}.
\end{equation}
Then we may be assured that $0\le \lam^*(s,\tet;\tau)\le s$. Finally, we define the limiting 
exponent
\begin{equation}\label{3.13}
\lam(s,\tet)=\limsup_{\tau\rightarrow 0}\lam^*(s,\tet;\tau),
\end{equation}
noting that, once again, one has
\begin{equation}\label{3.14}
0\le \lam(s,\tet)\le s.
\end{equation}

\par We are now equipped to announce a pivotal estimate that underpins the main inductive 
step in our argument.

\begin{theorem}\label{theorem3.1} Suppose that $k\in \dbN$ and that $p$ is a prime 
number with $p>k$. Then one has $\lam(k(k+1)/2,k)=0$.
\end{theorem}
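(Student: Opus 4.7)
The plan is to argue by contradiction. Suppose $\Lambda := \lambda(k(k+1)/2,k) > 0$. By the definitions (\ref{3.12}) and (\ref{3.13}), for every $\eps>0$ there exist arbitrarily small $\tau>0$, arbitrarily large integers $B$, a $p^c$-spaced system $\bfvarphi$ with $c\ge \tau B$, and a sequence $(\gra_n)\in \dbD$ with (writing $s=k(k+1)/2$ and $H=\lceil B/k\rceil$)
\[
U_{s,k}^B(\bfgra)\gg (p^H)^{\Lambda-\eps}U_{s,k}^{B,H}(\bfgra).
\]
The goal is to derive a contradiction to this by concentrating the excess mass through a sequence of auxiliary bilinear mean values, in the spirit of \S2 but adapted to congruence systems modulo $p^B$.

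First I would introduce normalised bilinear mean values $K_{a,b}^r(\bfgra)$ counting solutions of (\ref{3.7}) in which $r$ variables on each side lie in a fixed class modulo $p^a$ and the remaining $s-r$ variables on each side lie in a different class modulo $p^b$, together with normalisations $\llbracket K_{a,b}^r\rrbracket$ calibrated so that the main conjecture would imply $\llbracket K_{a,b}^r\rrbracket \ll X^\eps$. A H\"older argument with the well-spacing step (distinct classes modulo $p^{a+1}$) would convert the lower bound on $U_{s,k}^B/U_{s,k}^{B,H}$ into $\llbracket K_{0,1}^r\rrbracket \gg (p^H)^{\Lambda-\eps}$ for each admissible $r$. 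The key point here is that the conditioning on distinct residues modulo $p^{a+1}$ must be done in the $p^B$-congruence environment, but this is harmless so long as $a+1 \ll B$.

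Next I would exploit the $p^c$-spacing of $\bfvarphi$: since $\varphi_j(t)\equiv t^j\mmod{p^c}$, the congruences (\ref{3.7}) are, modulo $p^{\min(c,B)}$, translation-dilation invariant. This yields the nested analogue of (\ref{2.7}): when $rb\le \min(c,B)$ and one has $u$ variables on each side with $x_i$ in distinct classes modulo $p^{a+1}$, the $u$-fold congruence system
\[
\sum_{i=1}^u (x_i-\eta)^j \equiv \sum_{i=1}^u (y_i-\eta)^j \mmod{p^{(k-r+1)b}} \quad (k-r+1\le j\le k)
\]
holds. Taking $u$ as large as $r(r+1)/2$ --- this is the nested improvement over the earlier multigrade approach --- a Hensel-type analysis (substituted by a further internal congruencing step within the inductive framework) forces, in an averaged sense, $x_i\equiv y_i \mmod{p^{b'}}$ with $b'=\lfloor (k-r+1)b/r\rfloor$. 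Re-interpreting this via orthogonality through a $p^{b'}$-congruence classes gives a recurrence of the form
\[
\llbracket K_{a,b}^r\rrbracket \ll (p^H)^\eps \llbracket K_{a,b}^{r-1}\rrbracket^{\alpha_r}\llbracket K_{b,b'}^R\rrbracket^{\beta_r},\qquad \alpha_r+\beta_r=1,
\]
with $R=k$, paralleling (\ref{2.8}).

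Finally I would iterate this recurrence, tracking the tree of outcomes as $r$ ranges over $1,\ldots,k$ and as $(a,b)$ is replaced by $(b,b')$ at each step. Averaging over the tree as in \cite{Woo2017}, one obtains an averaged relation
\[
\llbracket K_{a,b}^R\rrbracket \ll (p^H)^\eps \llbracket K_{b,b_*}^R\rrbracket^{\psi b/b_*}
\]
for some $b_*$ determined by $(a,b)$ and the weights. The crucial combinatorial point --- and the \emph{main obstacle} of the proof --- is to verify that the nested enlargement $u=r(r+1)/2$ produces $\psi>1$ at the critical exponent $s=k(k+1)/2$, precisely making up the previous deficit of $k/3$ variables. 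Granted $\psi>1$, iterating $n$ times gives $\llbracket K_{a_n,b_n}^R\rrbracket\gg (p^H)^{\Lambda(1+\psi+\cdots+\psi^{n-1})}$, while the trivial upper bound gives $\llbracket K_{a_n,b_n}^R\rrbracket \ll (p^H)^{O(1)}$. Letting $n\to\infty$ slowly (with $b_n\ll c$ at each stage, which is ensured by the $p^c$-spacing with $c\ge \tau B$ and by choosing $\tau$ small), one reaches a contradiction unless $\Lambda=0$, establishing Theorem \ref{theorem3.1}.
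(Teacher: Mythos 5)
Your outline captures the right high-level strategy: the contradiction argument starting from $\Lam>0$, the auxiliary bilinear mean values $K_{a,b}^r$, the use of $p^c$-spacing to render the congruences approximately translation-dilation invariant, the nested enlargement of the conditioned variable set from $r$ to $r(r+1)/2$, and a final $p$-adic concentration. But there is a genuine gap at the central step, and you also mislocate the main obstacle.

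The backbone of the paper's proof is an \emph{induction on $k$}, with base case $k=1$ (Lemma~\ref{lemma5.1}), where a genuine Hensel-style iteration works because there is a single congruence in a single variable. For $k\ge 2$, the heavy lifting is Lemma~\ref{lemma7.1}: after extracting a degree-$r$ Vinogradov-type congruence system (equation~(\ref{7.12})) in $R=r(r+1)/2$ variables on each side, one applies \emph{the inductive hypothesis} --- Corollary~\ref{corollary3.2} for the smaller exponent $r<k$ --- to get the averaged near-diagonal concentration $U_{R,r}^{B'}(\bfgrc)\ll p^{B'\eps^2}U_{R,r}^{B',H'}(\bfgrc)$. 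Your phrase ``Hensel-type analysis (substituted by a further internal congruencing step within the inductive framework)'' alludes to this but is not yet an argument: with $r(r+1)/2$ variables on each side and only $r$ congruences, the system is wildly underdetermined and nothing forces $\bfu\equiv\bfz\mmod{p^{b'}}$ pointwise. The correct substitute for Hensel is the full strength of the theorem itself at degree $r$, applied to the critical moment $R=r(r+1)/2$; this is precisely what makes the enlargement to $r(r+1)/2$ variables both possible and productive. Your proposal needs to make this induction on $k$ explicit for the extraction step to be meaningful --- this recursive re-entry of Theorem~\ref{theorem3.1} into its own proof at smaller degree is what ``nested'' means.

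Second, you call the verification that $\psi>1$ ``the crucial combinatorial point --- and the \emph{main obstacle} of the proof.'' In the paper this is an easy algebraic check: the proof of Lemma~\ref{lemma9.3} gives $\rho b'\ge b$, i.e.~$\psi\ge1$, by a one-line chain of inequalities, and the concentration argument in~\S10 closes with $\psi\ge1$, not $\psi>1$. Each iteration contributes a definite multiplicative loss factor $(p^{-b})^{\Lam/(2k)}$ with $b\gg\mu H$, so the cumulative loss after $N$ steps grows linearly in $N$; taking $N\gg sk/\Lam$ iterations already beats the crude bound of Lemma~\ref{lemma4.2}, forcing $\Lam=0$. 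The geometric picture $1+\psi+\cdots+\psi^{N-1}$ with $\psi>1$ strictly is the mechanism of the \emph{earlier} multigrade method (with $r$ rather than $r(r+1)/2$ conditioned variables and a genuine Hensel step), and is not needed here. The real heavy lifting lives in Lemma~\ref{lemma7.1}, not in the arithmetic of $\psi$.
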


Our methods would show, in fact, that for each natural number $k$ and positive number 
$s$, one has
\begin{equation}\label{3.15}
\lam(s,k)=\max\{ 0,s-k(k+1)/2\}.
\end{equation}
Our main discussion restricts attention to the special case $s=k(k+1)/2$ recorded in 
Theorem \ref{theorem3.1}. This case may be described in a more accessible manner, and is 
all that is required for the proof of the conclusions recorded in the introduction.

Theorem \ref{theorem3.1} may appear difficult to interpret, and for this reason we present 
the following corollary.

\begin{corollary}\label{corollary3.2} Suppose that $k\in \dbN$ and that $p$ is a prime 
number with $p>k$. In addition, let $\tau>0$ and $\eps>0$. Finally, let $B$ be sufficiently 
large in terms of $k$, $\tau$ and $\eps$. Put $s=k(k+1)/2$ and $H=\lceil B/k\rceil$. 
Then for every $\bfvarphi\in \Phi_\tau(B)$, and every sequence $(\gra_n)\in \dbD_0$, one 
has
\begin{equation}\label{3.16}
U_{s,k}^B(\bfgra)\ll p^{B\eps}U_{s,k}^{B,H}(\bfgra).
\end{equation}
\end{corollary}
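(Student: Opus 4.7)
The plan is to deduce Corollary \ref{corollary3.2} directly by unpacking the definitions (\ref{3.12}) and (\ref{3.13}) in combination with Theorem \ref{theorem3.1}, with no substantial analysis beyond that already embedded in Theorem \ref{theorem3.1}. First, if $(\gra_n) = \mathbf{0}$ then (\ref{3.4}) forces $f_\bfgra \equiv 0$ and $\grf_h(\bfalp;\xi) \equiv 0$ for every $h$ and $\xi$, so both $U_{s,k}^B(\bfgra)$ and $U_{s,k}^{B,H}(\bfgra)$ vanish and (\ref{3.16}) holds trivially. Henceforth I assume $(\gra_n) \in \dbD$.

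The first substantive step is to promote Theorem \ref{theorem3.1} to a statement valid for each fixed $\tau > 0$, rather than only in the limit $\tau \to 0^+$. I observe that the family $\Phi_\tau(B)$ is monotone in $\tau$: whenever $0 < \tau_1 \le \tau_2$, the constraint $c \ge \tau_2 B$ is stronger than $c \ge \tau_1 B$, and therefore $\Phi_{\tau_2}(B) \subseteq \Phi_{\tau_1}(B)$. Consequently the inner double supremum in (\ref{3.12}) is non-increasing in $\tau$, and hence so is $\lam^*(s, k; \tau)$. Since $\lam^*(s, k; \tau) \ge 0$, as recorded immediately after (\ref{3.12}), the monotone limit $\lim_{\tau \to 0^+}\lam^*(s, k; \tau)$ exists and coincides with $\sup_{\tau > 0} \lam^*(s, k; \tau)$; by (\ref{3.13}) and Theorem \ref{theorem3.1} this supremum equals zero, so in fact $\lam^*(s, k; \tau) = 0$ for every $\tau > 0$.

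The second and final step converts this vanishing into the explicit inequality (\ref{3.16}). Given $\tau > 0$ and $\eps > 0$ as in the hypothesis, pick an auxiliary parameter $\delta$ with $0 < \delta < k\eps$. Because $\lam^*(s, k; \tau) = 0 < \delta$, the $\limsup$ defining $\lam^*$ in (\ref{3.12}) furnishes a threshold $B_0 = B_0(k, \tau, \delta)$ such that for every $B \ge B_0$, every $\bfvarphi \in \Phi_\tau(B)$, and every $(\gra_n) \in \dbD$, one either has $U_{s,k}^{B,H}(\bfgra) = 0$ (in which case (\ref{3.10}) forces $U_{s,k}^B(\bfgra) = 0$ and (\ref{3.16}) is trivially satisfied) or else $U_{s,k}^B(\bfgra)/U_{s,k}^{B,H}(\bfgra) < p^{H\delta}$. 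Since $H = \lceil B/k \rceil \le B/k + 1$ one has $H\delta \le B\delta/k + \delta$, and enlarging $B_0$ so that $\delta \le B(\eps - \delta/k)$ for all $B \ge B_0$ secures $H\delta \le B\eps$, yielding (\ref{3.16}).

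The deduction is entirely formal, so there is no genuine obstacle. The only subtleties worth flagging are the monotonicity of $\lam^*$ in $\tau$, which is required because Theorem \ref{theorem3.1} controls the $\tau \to 0^+$ limit rather than a fixed positive value of $\tau$, and the small arithmetic slack needed to absorb the additive constant coming from $H = \lceil B/k\rceil \le B/k + 1$ when converting the natural bound $p^{H\delta}$ produced by (\ref{3.12}) into the cleaner bound $p^{B\eps}$ demanded by the corollary.
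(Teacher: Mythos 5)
Your proposal is correct and takes essentially the same route as the paper, which likewise deduces the corollary by unwinding definitions (\ref{3.12}) and (\ref{3.13}) together with Theorem \ref{theorem3.1}. You make explicit two small points the paper treats tersely --- the monotonicity of $\lam^*(s,k;\tau)$ in $\tau$ (needed because the corollary allows arbitrary $\tau>0$ while (\ref{3.13}) only controls the limit $\tau\to 0^+$) and the arithmetic absorbing the $+1$ from $H=\lceil B/k\rceil$ --- and both clarifications are sound.
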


We emphasise that the implicit constant in (\ref{3.16}) may depend on $k$, $\tau$ and 
$\eps$, but is independent of $\bfvarphi$. We note also that the validity of the relation 
(\ref{3.15}) implies that (\ref{3.16}) holds for all positive numbers $s\le k(k+1)/2$. The 
conclusion of Corollary \ref{corollary3.2} provides an essentially cost-free concentration 
towards the diagonal when $s\le k(k+1)/2$. For then the weighted count of solutions of the 
system (\ref{3.7}) is dominated by a diagonal contribution in which one may suppose, 
essentially speaking, that the variables are subject to the additional condition that 
$\bfx\equiv \bfy\mmod{p^{\lceil B/k\rceil}}$. In order to be more concrete concerning this 
phenomenon, consider the mean value
\begin{equation}\label{3.17}
\oint_{p^B}|\grf_H(\bfalp;\xi)|^{2s}\d\bfalp 
\end{equation}
occurring in the definition (\ref{3.8}) of $U_{s,k}^{B,H}(\bfgra)$. Should this mean value 
exhibit square-root cancellation, then in view of the normalisation visible in (\ref{3.4}), it 
follows from (\ref{3.8}) that $U_{s,k}^{B,H}(\bfgra)\ll 1$. The conclusion of Corollary 
\ref{corollary3.2} then shows that $U_{s,k}^B(\bfgra)\ll p^{B\eps}$, which is tantamount 
to square-root cancellation in the mean value
\begin{equation}\label{3.18}
\oint_{p^B}|f_\bfgra(\bfalp)|^{2s}\d\bfalp .
\end{equation}
Yet, a priori, the mean value (\ref{3.17}) is more likely to exhibit square-root cancellation 
than is (\ref{3.18}), since the former constrains its underlying variables to an arithmetic 
progression modulo $p^H$.\par

We remark that it would be possible to eliminate the condition $p>k$. Here, two 
approaches are possible. On the one hand, one could incorporate coefficients divisible by 
$p$ arising from the extraction of derivatives directly, taking account of the extent to which 
this inflates subsequent estimates. The impact is modest. Alternatively, one could replace 
the powers $t^j$ occurring in our definition of $p^c$-spaced systems by the binomial 
polynomials $\binom{t+j-1}{j}$.

\par We next introduce certain auxiliary mean values that play a key role in our 
arguments. Throughout, we fix $s=k(k+1)/2$. Let $a$, $b$, $c$ and $\nu$ be non-
negative integers. We consider a $p^c$-spaced $k$-tuple of polynomials 
$\bfvarphi\in \dbZ[t]^k$, and we fix a complex sequence $(\gra_n)\in \dbD_0$. When 
$0\le r\le k$, we define the mean value 
$K_{a,b}^r=K_{a,b,c}^{r,\bfvarphi,\nu}(\bfgra)$ by putting
\begin{equation}\label{3.19}
K_{a,b,c}^{r,\bfvarphi,\nu}(\bfgra)=\rho_0^{-4}\sum_{\xi\nmod{p^a}}
\sum_{\substack{\eta\nmod{p^b}\\ \xi\not\equiv \eta \mmod{p^\nu}}}\rho_a(\xi)^2
\rho_b(\eta)^2K_{a,b,c}^{r,\bfvarphi,\nu}(\bfgra;\xi,\eta),
\end{equation}
in which
\begin{equation}\label{3.20}
K_{a,b,c}^{r,\bfvarphi,\nu}(\bfgra;\xi,\eta)=\oint_{p^B}|\grf_a(\bfalp;\xi)^{2R}
\grf_b(\bfalp;\eta)^{2s-2R}|\d\bfalp 
\end{equation}
and $R=r(r+1)/2$. By orthogonality, the mean value 
$K_{a,b,c}^{r,\bfvarphi,\nu}(\bfgra;\xi,\eta)$ counts the 
integral solutions of the simultaneous congruences
\begin{equation}\label{3.21}
\sum_{i=1}^R\left(\varphi_j(x_i)-\varphi_j(y_i)\right)\equiv \sum_{l=1}^{s-R}
\left( \varphi_j(v_l)-\varphi_j(w_l)\right) \mmod{p^B}\quad (1\le j\le k),
\end{equation}
satisfying
\begin{equation}\label{3.22}
\bfx\equiv \bfy\equiv \xi\mmod{p^a}\quad \text{and}\quad \bfv\equiv 
\bfw\equiv \eta\mmod{p^b},
\end{equation}
with each solution being counted with weight
\begin{equation}\label{3.23}
\rho_a(\xi)^{-2R}\rho_b(\eta)^{2R-2s}\biggl( \prod_{i=1}^R\gra_{x_i}
{\overline \gra_{y_i}}\biggr) \biggl( \prod_{l=1}^{s-R}\gra_{v_l}
{\overline \gra_{w_l}}\biggr) .
\end{equation}

\par As in our previous work on efficient congruencing, our arguments are considerably 
simplified by making transparent the relationship between various mean values, on the 
one hand, and their anticipated magnitudes, on the other. We therefore consider 
normalised versions of these mean values $K_{a,b}^r$ as follows. When $1\le r\le k-1$ and 
$\Del$ is a positive number, we define
\begin{equation}\label{3.24}
\llbracket K_{a,b,c}^{r,\bfvarphi,\nu}(\bfgra)\rrbracketsub{\Del}=
\left( \frac{K_{a,b,c}^{r,\bfvarphi,\nu}(\bfgra)}{p^{\Del H}U_{s,k}^{B,H}(\bfgra)}
\right)^{\textstyle{\frac{k-1}{r(k-r)}}}.
\end{equation}
For much of our discussion, the choices of $\bfgra$, $\bfvarphi$, $\nu$ and $c$ will be 
considered fixed, and in such circumstances we suppress mention of them from our 
notation. We note that the presence of the exponent
$$\frac{k-1}{r(k-r)}$$
is designed to equalise the weights with which the mixed mean values $K_{a,b}^r$ 
occur within our arguments. The utility of this device will become apparent in due course.

\section{Translation-dilation invariant families} Consider a $p^c$-spaced system of 
polynomials $\bfvarphi$. In general, of course, the system of congruences
\begin{equation}\label{4.1}
\sum_{i=1}^s\varphi_j(x_i)\equiv \sum_{j=1}^s\varphi_j(y_i)\mmod{p^B}\quad 
(1\le j\le k)
\end{equation}
will not be translation-dilation invariant. However, in such special cases as the system 
given by $\varphi_j(t)=t^j$ $(1\le j\le k)$, it follows via an application of the binomial 
theorem that whenever $a\in \dbZ$ and $q\in \dbN$, and 
$(\bfx,\bfy)=(q\bfu+a,q\bfv+a)$ is a solution of (\ref{4.1}), then so too is $(\bfx,\bfy)=
(\bfu,\bfv)$. This translation-dilation invariance property may be preserved in 
{\it families} of systems $\bfvarphi$ which are not individually translation-dilation 
invariant. Thus, given $a\in \dbZ$ and $b\in \dbN$, and a solution $(\bfx,\bfy)=
(p^b\bfu+a,p^b\bfv+a)$ of the system (\ref{4.1}), one finds that $(\bfu,\bfv)$ is a 
solution of the system
$$\sum_{i=1}^s\psi_j(u_i)\equiv \sum_{i=1}^s\psi_j(v_i)\mmod{p^{B'}}\quad 
(1\le j\le k),$$
for some other $p^c$-spaced system of polynomials $\bfpsi$, with $B'$ an integer 
depending on $B$ and $b$. It is our goal in this section to establish estimates making this 
property explicit.\par

Our first lemma shows that restriction to arithmetic progressions modulo $p^h$ leads to 
bounds on mean values of the type $U_{s,k}^B(\bfgra)$ that scale appropriately with 
respect to the height of the arithmetic progression.

\begin{lemma}\label{lemma4.1} Suppose that $k\in \dbN$ and that $p$ is a prime 
number with $p>k$. Let $\tau$, $\eps$ and $\del$ be positive numbers with 
$\eps<\tau<\del<1$, and let $B$ be sufficiently large in terms of $s$, $k$ and $\eps$. 
Write $H=\lceil B/k\rceil$. Then for every $\bfvarphi\in \Phi_\tau(B)$, every sequence 
$(\gra_n)\in \dbD_0$, and every non-negative integer $h$ with $h\le (1-\del )H$, one has
$$U_{s,k}^{B,h}(\bfgra)\ll (p^{H-h})^{\lam(s,k)+\eps}U_{s,k}^{B,H}(\bfgra).$$
\end{lemma}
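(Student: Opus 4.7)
The plan is to reduce Lemma \ref{lemma4.1} to the definition of $\lam(s,k)$ via a $p$-adic dilation. For each residue $\xi\mmod{p^h}$ with $\rho_h(\xi)>0$, substitute $n=\xi+p^hm$ in the defining sum (\ref{3.4}) of $\grf_h(\bfalp;\xi)$. Writing $\hat{\gra}_m=\gra_{\xi+p^hm}$ and Taylor expanding $\varphi_j(\xi+p^hm)$, the constant $\varphi_j(\xi)$ is absorbed into a unimodular phase and the remaining sum reorganises as an exponential sum for the Vinogradov system $\hat{\varphi}_l(m)=m^l$ evaluated at a linearly transformed argument: $|\grf_h(\bfalp;\xi)|^{2s}=|f_{\hat{\bfgra},\hat{\bfvarphi}}(\bfbet)|^{2s}$ where $\bet_l=p^{hl}\gam_l$ and $\gam_l=\sum_j\frac{\varphi_j^{(l)}(\xi)}{l!}\alp_j$. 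By the $p^c$-spacing hypothesis together with $p>k$, the matrix $\bigl(\frac{\varphi_j^{(l)}(\xi)}{l!}\bigr)_{j,l}$ has determinant $W(\xi;\bfvarphi)/\prod_l l!$, which is a $p$-adic unit, so $\bfalp\mapsto\bfgam$ is a bijection of $(\dbZ/p^B)^k$.

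The residual diagonal scaling $\gam_l\mapsto p^{hl}\gam_l$ is $p^{hk(k+1)/2}$-to-one from $(\dbZ/p^B)^k$ onto the mixed lattice $\prod_l(\dbZ/p^{B-hl})$. Tracking this covering and using that for $B$ sufficiently large (guaranteed by $\tau<\del$, which forces $B-hk\ge \del B$) both the mixed $p$-adic average and the uniform one at modulus $p^{B-hk}$ coincide with the continuous integral $\int_{[0,1)^k}|f_{\hat{\bfgra},\hat{\bfvarphi}}|^{2s}\,{\rm d}\bfbet$, one obtains the key identity
$$\oint_{p^B}|\grf_h(\bfalp;\xi)|^{2s}\,{\rm d}\bfalp = U_{s,k}^{B-hk,\hat{\bfvarphi}}(\hat{\bfgra}).$$
Since $\hat{\bfvarphi}$ is trivially $p^c$-spaced for every $c$ (hence lies in $\Phi_{\tau'}(B-hk)$ for every $\tau'>0$) and $\lceil(B-hk)/k\rceil=H-h$, the definition (\ref{3.13}) of $\lam(s,k)$ yields
$$U_{s,k}^{B-hk,\hat{\bfvarphi}}(\hat{\bfgra})\ll (p^{H-h})^{\lam(s,k)+\eps}\, U_{s,k}^{B-hk,H-h,\hat{\bfvarphi}}(\hat{\bfgra}).$$

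The same substitution applied at height $H$ (via $\eta=\xi+p^h\mu$, noting $\rho_H(\eta)=\rho_{H-h}(\mu;\hat{\bfgra})$ and that $\mu\leftrightarrow\eta$ biject between residues $\mu\mmod{p^{H-h}}$ and residues $\eta\mmod{p^H}$ with $\eta\equiv\xi\mmod{p^h}$) reverses the dilation on the right, identifying $U_{s,k}^{B-hk,H-h,\hat{\bfvarphi}}(\hat{\bfgra})$ with $\rho_h(\xi)^{-2}\sum_{\eta\equiv\xi\,(p^h)}\rho_H(\eta)^2\oint_{p^B}|\grf_H(\bfalp;\eta)|^{2s}\,{\rm d}\bfalp$. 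Multiplying by $\rho_h(\xi)^2$ and summing over $\xi\mmod{p^h}$ then assembles the two sides of the claimed inequality. The main obstacle is the displayed key identity: establishing it rigorously requires either passing through the continuous integral (cleanest when $B$ is large relative to the effective support of $(\gra_n)$) or a direct combinatorial matching of weighted solution counts across the mixed and uniform $p$-adic congruence systems. The slack $\tau<\del$ is essential here, since it forces $B$ and $B-hk$ to grow proportionally, so both comfortably satisfy the ``sufficiently large'' hypotheses required to invoke the definition of $\lam(s,k)$.
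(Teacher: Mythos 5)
Your proposal fails because the central claimed identity
$$\oint_{p^B}|\grf_h(\bfalp;\xi)|^{2s}\,\d\bfalp = U_{s,k}^{B-hk,\hat{\bfvarphi}}(\hat{\bfgra})$$
is false, and neither of the two repairs you suggest can make it true. The two sides count solutions of genuinely different congruence systems. After the substitution $n=p^hy+\xi$ and the diagonalisation you (and the paper) perform, the left-hand side counts solutions of a \emph{staircase} system
$$\sum_{i}\bigl(\Phi_j(y_i)-\Phi_j(z_i)\bigr)\equiv 0\pmod{p^{B-jh}}\quad(1\le j\le k),$$
where the modulus decreases with $j$, and where $\Phi_j(t)=t^j+p^{c+h}t^{k+1}\Ups_j(t)$ is a $p^{c+h}$-spaced polynomial, not the bare monomial $t^j$ (the tail coming from the $p^ct^{k+1}\psi_j(t)$ part of a $p^c$-spaced system does not vanish under Taylor expansion; it is merely $p$-adically small). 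Your right-hand side, by contrast, imposes the single uniform modulus $p^{B-hk}$ for all $j$ and uses the pure Vinogradov polynomials $\hat\varphi_j(t)=t^j$. Since $B-jh>B-kh$ for $j<k$, the staircase system is strictly more restrictive: the left side counts a proper subset of what the right side counts.

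A concrete counterexample, where the monomial issue does not even arise: take $k=s=2$, $\varphi_j(t)=t^j$, $p=3$, $h=1$, $\xi=0$, and $\gra_n=1$ precisely for $n\in\{0,\,3,\,3+3^{B-1}\}$, with $B\ge 3$. The sequence $\hat{\bfgra}$ is then supported on $\{0,\,1,\,1+3^{B-2}\}$. The left side imposes $y_1+y_2\equiv z_1+z_2\pmod{3^{B-1}}$, which forces exact multiset equality $\{y_1,y_2\}=\{z_1,z_2\}$ and yields a raw count of $15$. Your right side imposes $y_1+y_2\equiv z_1+z_2\pmod{3^{B-2}}$, under which the residues $1$ and $1+3^{B-2}$ coincide, producing a raw count of $33$. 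The weights are identical, so the two mean values are different.

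Your proposed fixes cannot bridge the gap. The ``pass through the continuous integral'' route requires $B-hk$ to be large relative to the effective support of $(\gra_n)$; but the hypotheses only make $B$ large in terms of $s$, $k$ and $\eps$, while the support of $(\gra_n)\in\dbD_0$ is completely unconstrained beyond $\sum_n|\gra_n|<\infty$. A ``direct combinatorial matching'' is impossible since, as above, the counts genuinely differ. Moreover the direction is unhelpful: at the first step your uniform-modulus expression over-counts (so one could at least write ``$\le$''), but at the dilation-reversal step the same phenomenon makes the corresponding inequality point the \emph{wrong} way for an upper bound.

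The paper's proof is structurally different and never collapses the staircase. It introduces an auxiliary $\oint_{p^{B-kh}}\cdot\,\d\bfbet$ for the $p^{c+h}$-spaced system $\bfPhi$ and observes in (4.12) that this extra integral imposes only constraints already implied by the outer system, so it is free. It then applies the definition of $\lam(s,k)$ to the \emph{inner} $\bfbet$-integral for each fixed $\bfalp$: the sequence $\bfgrc(\bfalp)$ depends on $\bfalp$, but the $\lam$-estimate is uniform over sequences in $\dbD_0$ and over $p^{c'}$-spaced systems, which is precisely what the definitions (\ref{3.12})--(\ref{3.13}) were engineered to provide. The outer $\oint_{p^B}\d\bfalp$ survives intact, which is what ultimately allows the dropping of the redundant $\bfbet$-constraints and the assembly of $U_{s,k}^{B,H}(\bfgra)$.
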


\begin{proof} Consider a $p^c$-spaced $k$-tuple of polynomials $\bfvarphi$ with $c\ge 
\tau B$, and a complex sequence $(\gra_n)\in \dbD_0$, with associated parameters 
satisfying the hypotheses of the statement of the lemma. Also, let $h$ be an integer with 
$0\le h\le (1-\del)H$. From the definition (\ref{3.4}) of the exponential sum 
$\grf_h(\bfalp;\xi)$, one has
\begin{equation}\label{4.2}
\grf_h(\bfalp;\xi)=\rho_h(\xi)^{-1}\sum_{y\in \dbZ}\grb_y(\xi)e(\psi(p^hy+\xi;\bfalp)),
\end{equation}
in which $\psi(n;\bfalp)$ is given by (\ref{3.3}), and the coefficients $\grb_y=\grb_y(\xi)$ 
are defined by putting
\begin{equation}\label{4.3}
\grb_y(\xi)=\gra_{p^hy+\xi}\quad (y\in \dbZ).
\end{equation}
By orthogonality, the integral on the right hand side of (\ref{3.8}) counts the integral 
solutions $\bfy,\bfz$ of the simultaneous congruences
\begin{equation}\label{4.4}
\sum_{i=1}^s\varphi_j(p^hy_i+\xi)\equiv \sum_{i=1}^s\varphi_j(p^hz_i+\xi)
\mmod{p^B}\quad (1\le j\le k),
\end{equation}
with each solution being counted with weight
\begin{equation}\label{4.5}
\rho_h(\xi)^{-2s}\prod_{i=1}^s\grb_{y_i}{\overline \grb_{z_i}}.
\end{equation}

\par The polynomial system $\bfvarphi$ is $p^c$-spaced, and hence it follows from 
(\ref{3.1}) that for suitable polynomials $\psi_j\in \dbZ[t]$, one may write
$$\varphi_j(t)=\sum_{i=1}^k\ome_{ij}t^i+p^ct^{k+1}\psi_j(t)\quad (1\le j\le k),$$
for some integral coefficient matrix $A_1=(\ome_{ij})_{1\le i,j\le k}$ congruent modulo 
$p^c$ to the $k\times k$ identity matrix $I_k$. Since $A_1\equiv I_k\mmod{p^c}$, it 
follows that $A_1$ possesses a multiplicative inverse $A_1^{-1}$ modulo $p^B$ having 
integral coefficients. By replacing $\bfvarphi$ by $A_1^{-1}\bfvarphi$ and $\bfpsi$ by 
$A_1^{-1}\bfpsi$, which amounts to taking suitable integral linear combinations of the 
congruences comprising (\ref{4.4}), we discern that there is no loss of generality in 
supposing that for $1\le j\le k$, one has
\begin{equation}\label{4.6}
\varphi_j(t)=t^j+p^ct^{k+1}\psi_j(t)\quad (1\le j\le k).
\end{equation}

\par With the latter assumption in hand, we apply the binomial theorem to (\ref{4.6}). Thus, 
when $1\le j\le k$, one finds that for suitable polynomials $\Psi_j\in \dbZ[t]$, one 
has $\varphi_j(p^hy+\xi)-\varphi_j(\xi)=\Phi_j(p^hy)$, in which
$$\Phi_j(t)=\sum_{i=1}^k\Ome_{ij}\xi^{j-i}t^i+p^ct^{k+1}\Psi_j(t)\quad (1\le j\le k),$$
and the integral coefficients $\Ome_{ij}$ satisfy
$$\Ome_{ij}\equiv \binom{j}{i}\mmod{p^c}\quad (1\le i,j\le k).$$
Here, we adopt the convention that the binomial coefficient $\binom{j}{i}$ is zero for 
$i>j$. Our hypothesis that $p>k$ ensures that the matrix 
$A_2=(\Ome_{ij})_{1\le i,j\le k}$ possesses a multiplicative inverse $A_2^{-1}$ modulo 
$p^B$ having integral coefficients, since it is triangular modulo $p^c$ with diagonal 
entries all equal to $1$. We now replace $\bfPhi$ by $A_2^{-1}\bfPhi$ and $\bfPsi$ by 
$A_2^{-1}\bfPsi$. Again, this amounts to taking suitable integral linear combinations of the 
congruences comprising (\ref{4.4}), and we see that there is no loss of generality in 
supposing that the coefficient matrix $A_2$ is equal to $I_k$. Hence, there exist polynomials 
$\Ups_j\in \dbZ[t]$ having the property that whenever the system (\ref{4.4}) is satisfied, 
then
\begin{equation}\label{4.7}
\sum_{i=1}^s(p^h)^j\left( \Phi_j(y_i)-\Phi_j(z_i)\right) \equiv 0\mmod{p^B}\quad 
(1\le j\le k),
\end{equation}
in which
\begin{equation}\label{4.8}
\Phi_j(t)=t^j+p^{c+h}t^{k+1}\Ups_j(t).
\end{equation}
The integral solutions $\bfy,\bfz$ of the original system of congruences (\ref{4.4}), 
counted with the weight (\ref{4.5}) associated with the definition (\ref{3.8}) of 
$U_{s,k}^{B,h}(\bfgra)$, are therefore constrained by the additional system of 
congruences
\begin{equation}\label{4.9}
\sum_{i=1}^s\Phi_j(y_i)\equiv \sum_{i=1}^s\Phi_j(z_i)\mmod{p^{B-kh}}\quad 
(1\le j\le k).
\end{equation}

\par In order to incorporate the extra condition (\ref{4.9}) into the mean value 
$U_{s,k}^{B,h}(\bfgra)$, we introduce the exponential sum
\begin{equation}\label{4.10}
\grg_h(\bfalp,\bfbet;\xi)=\rho_h(\xi)^{-1}\sum_{y\in \dbZ}\grc_y(\xi;\bfalp)
e(\psi^*(y;\bfbet)),
\end{equation}
where
\begin{equation}\label{4.11}
\grc_y(\xi;\bfalp)=\grb_y(\xi)e(\psi(p^hy+\xi;\bfalp))
\end{equation}
and
$$\psi^*(y;\bfbet)=\bet_1\Phi_1(y)+\ldots +\bet_k\Phi_k(y).$$
Equipped with this notation, it follows via orthogonality that
\begin{equation}\label{4.12}
\oint_{p^B}|\grf_h(\bfalp;\xi)|^{2s}\d\bfalp =\oint_{p^B}\oint_{p^{B-kh}}
|\grg_h(\bfalp,\bfbet;\xi)|^{2s}\d\bfbet \d\bfalp.
\end{equation}

\par Note that the polynomial system $\bfPhi$ defined via (\ref{4.8}) is 
$p^{c+h}$-spaced. Since we may assume that $h\le (1-\del)H$, moreover, one has
$$B-kh\ge B-k\lceil B/k\rceil +k\del \lceil B/k\rceil \ge \del B-k,$$
so that $B-kh$ may be assumed to be sufficiently large in terms of $s$, $k$ and $\eps$. 
It therefore follows from the definitions (\ref{3.12}) and (\ref{3.13}) that
$$U_{s,k}^{B-kh}(\bfgrc)\ll (p^{H-h})^{\lam(s,k)+\eps}U_{s,k}^{B-kh,H-h}(\bfgrc).$$
In view of the definition (\ref{4.10}), we therefore deduce that
\begin{equation}\label{4.13}
\oint_{p^{B-kh}}|\grg_h(\bfalp,\bfbet;\xi)|^{2s}\d\bfbet \ll 
(p^{H-h})^{\lam(s,k)+\eps}I_1,
\end{equation}
where
$$I_1=\rho_h(\xi)^{-2}\sum_{\eta\nmod{p^{H-h}}}\rho_{H-\eta}(p^h\eta+\xi)^2
\oint_{p^{B-kh}}|\grg^*_{H-h}(\bfalp,\bfbet;\xi,\eta)|^{2s}\d\bfbet ,$$
and
$$\grg^*_{H-h}(\bfalp,\bfbet;\xi,\eta)=\rho_{H-h}(p^h\eta +\xi)^{-1}
\sum_{y\equiv \eta\mmod{p^{H-h}}}\grc_y(\xi;\bfalp)e(\psi^*(y;\bfbet)).$$
Observe that there is a correspondence between residues $\zet$ modulo $p^H$ and 
pairs $(\xi,\eta)$ of residues modulo $p^h$, and modulo $p^{H-h}$, respectively. 
Indeed, if $1\le \zet \le p^H$, then we may identify $\xi$ and $\eta$ via the relations 
$\xi\equiv \zet\mmod{p^h}$ and $\eta\equiv (\zet-\xi)p^{-h}\mmod{p^{H-h}}$. 
Likewise, given $(\xi,\eta)$, we put $\zet=p^h\eta+\xi$. With this correspondence in 
mind, we find from (\ref{4.3}) and (\ref{4.11}) that 
$\grg^*_{H-h}(\bfalp,\bfbet;\xi,\eta)$ is equal to
$$\rho_H(\zet)^{-1}\sum_{p^hy+\xi\equiv \zet\mmod{p^H}}\gra_{p^hy+\xi}
e(\psi(p^hy+\xi;\bfalp)+\psi^*(y;\bfbet)).$$
In particular, we deduce from (\ref{4.12}) and (\ref{4.13}) that
\begin{align}
\sum_{\xi\nmod{p^h}}&\rho_h(\xi)^2\oint_{p^B}|\grf_h(\bfalp;\xi)|^{2s}
\d\bfalp \notag \\
&\ll (p^{H-h})^{\lam(s,k)+\eps}\sum_{\substack{\zet\nmod{p^H}\\ \zet=p^h\eta+\xi}}
\rho_H(\zet)^2\oint_{p^B}\oint_{p^{B-kh}}|\grg^*_{H-h}(\bfalp,\bfbet;\xi,\eta)|^{2s}
\d\bfbet\d\bfalp .\label{4.14}
\end{align}

\par By orthogonality, the mean value on the right hand side of (\ref{4.14}) counts integral 
solutions of the system of congruences (\ref{4.4}), with each solution being counted with 
weight (\ref{4.5}), but subject to the additional condition that
$$p^h\bfy+\xi\equiv p^h\bfz+\xi\equiv \zet \mmod{p^H},$$
and further subject to the congruence conditions (\ref{4.9}). The latter congruence 
conditions are generated by the integral over $\bfbet$ in (\ref{4.14}). However, the 
conditions (\ref{4.9}) are implied by (\ref{4.4}), as we have shown in the discussion above. 
We note also that when $\zet=p^h\eta+\xi$, then
$$\grg_{H-h}^*(\bfalp,{\mathbf 0};\xi,\eta)=\rho_H(\zet)^{-1}
\sum_{n\equiv \zet \mmod{p^H}}\gra_ne(\psi(n;\bfalp))=\grf_H(\bfalp;\zet).$$
Then on recalling the definition (\ref{3.8}), and noting that the conditions (\ref{4.9}) may be 
omitted, we deduce that
\begin{align*}
U_{s,k}^{B,h}(\bfgra)&\ll (p^{H-h})^{\lam(s,k)+\eps}\rho_0^{-2}\sum_{\zet\nmod{p^H}}
\rho_H(\zet)^2\oint_{p^B}|\grf_H(\bfalp;\zet)|^{2s}\d\bfalp \\
&=(p^{H-h})^{\lam(s,k)+\eps}U_{s,k}^{B,H}(\bfgra).
\end{align*}
This completes the proof of the lemma.
\end{proof}

It is tempting to replace the proof of Lemma \ref{lemma4.1} with an informal argument 
appealing to Taylor expansions, rescaling and an appeal to translation invariance. However, 
as should be apparent from our proof above, there are subtle technical issues that arise in 
a detailed argument that threaten to sabotage the desired conclusion. We therefore offer 
no apology (beyond this remark) for expending space on the detailed account above.\par

The estimate supplied by Lemma \ref{lemma4.1} is easily transformed into a crude 
estimate for the mean value $K_{a,b,c}^{r,\bfvarphi,\nu}(\bfgra)$ which nonetheless 
has considerable utility.

\begin{lemma}\label{lemma4.2} Suppose that $k\in \dbN$ and that $p$ is a prime number 
with $p>k$. Let $\tau$, $\eps$ and $\del$ be positive numbers with $\eps<\tau<\del<1$, 
and let $B$ be sufficiently large in terms of $s$, $k$ and $\eps$. Write $H=\lceil B/k\rceil$ 
and suppose that $r$ and $\nu$ are non-negative integers with $1\le r\le k-1$. Then for 
every $\bfvarphi\in \Phi_\tau(B)$, every sequence $(\gra_n)\in \dbD_0$, and all 
non-negative integers $a$ and $b$ satisfying $\max\{a,b\}\le (1-\del)H$, one has
$$\llbracket K_{a,b,c}^{r,\bfvarphi,\nu}(\bfgra)\rrbracketsub{0}\ll (p^H)^{\lam(s,k)+\eps}.
$$
\end{lemma}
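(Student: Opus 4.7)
The plan is to establish this bound by combining H\"older's inequality, applied to the bilinear mean value $K_{a,b,c}^{r,\bfvarphi,\nu}(\bfgra;\xi,\eta)$, with the scaling estimate provided by Lemma \ref{lemma4.1}. Observe that this is labelled a \emph{crude} bound: it asks essentially that the mixed moment does not exceed what one gets by decoupling the two exponential sums and bounding each pure $2s$-th moment by Lemma \ref{lemma4.1}.

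First I would apply H\"older's inequality to the frequency integral in (\ref{3.20}), using exponents $s/R$ and $s/(s-R)$, where $R=r(r+1)/2$ and $s=k(k+1)/2$. Since $r\le k-1$ gives $R\le k(k-1)/2<s$, these exponents are valid, and one obtains
\begin{equation*}
K_{a,b,c}^{r,\bfvarphi,\nu}(\bfgra;\xi,\eta)\le M_a(\xi)^{R/s}M_b(\eta)^{(s-R)/s},
\end{equation*}
where $M_a(\xi)=\oint_{p^B}|\grf_a(\bfalp;\xi)|^{2s}\d\bfalp$ and $M_b(\eta)$ is defined analogously. Next, I would insert this into (\ref{3.19}), drop the constraint $\xi\not\equiv \eta\mmod{p^\nu}$ (which only enlarges the sum), and factor the double sum over $(\xi,\eta)$. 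A second application of H\"older in each outer sum, using $\sum_\xi\rho_a(\xi)^2=\rho_0^2$ (and likewise for $\eta$), yields
\begin{equation*}
\sum_\xi \rho_a(\xi)^2 M_a(\xi)^{R/s} \le \rho_0^{2(s-R)/s}\Bigl(\sum_\xi \rho_a(\xi)^2 M_a(\xi)\Bigr)^{R/s} = \rho_0^2\bigl(U_{s,k}^{B,a}(\bfgra)\bigr)^{R/s},
\end{equation*}
where I recognise the inner sum from the definition (\ref{3.8}) of $U_{s,k}^{B,a}(\bfgra)$. An identical manipulation handles the $\eta$-sum, and combining with the prefactor $\rho_0^{-4}$ in (\ref{3.19}) gives the decoupled estimate
\begin{equation*}
K_{a,b,c}^{r,\bfvarphi,\nu}(\bfgra)\le \bigl(U_{s,k}^{B,a}(\bfgra)\bigr)^{R/s}\bigl(U_{s,k}^{B,b}(\bfgra)\bigr)^{(s-R)/s}.
\end{equation*}

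Since $a,b\le(1-\del)H$, I may invoke Lemma \ref{lemma4.1} to bound each factor $U_{s,k}^{B,a}(\bfgra)$ and $U_{s,k}^{B,b}(\bfgra)$ by $(p^H)^{\lam(s,k)+\eps}U_{s,k}^{B,H}(\bfgra)$, using the trivial inequalities $H-a\le H$ and $H-b\le H$. Using $R/s+(s-R)/s=1$, these combine to yield
\begin{equation*}
K_{a,b,c}^{r,\bfvarphi,\nu}(\bfgra)\ll (p^H)^{\lam(s,k)+\eps}U_{s,k}^{B,H}(\bfgra).
\end{equation*}
Raising to the power $(k-1)/(r(k-r))$ demanded by the normalisation (\ref{3.24}) and noting that $r(k-r)\ge k-1$ for $1\le r\le k-1$ (since $r(k-r)-(k-1)=(r-1)(k-r-1)\ge 0$), the resulting exponent on $p^H$ does not exceed $\lam(s,k)+\eps$, delivering the claim. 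There is no substantive obstacle beyond careful bookkeeping of the H\"older exponents and the weights $\rho_a,\rho_b,\rho_0$; all the analytical content lies in Lemma \ref{lemma4.1}.
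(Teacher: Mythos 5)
Your proof is correct and follows essentially the same route as the paper: Hölder to decouple the mixed mean value into the two pure $2s$-th moments $U_{s,k}^{B,a}$ and $U_{s,k}^{B,b}$, then Lemma~\ref{lemma4.1} on each factor, then normalisation using $r(k-r)\ge k-1$. The only cosmetic difference is that you apply Hölder in two stages (first to the inner frequency integral, then to each outer residue sum) whereas the paper applies it once to the whole sum-integral; the resulting bound $K_{a,b,c}^{r,\bfvarphi,\nu}(\bfgra)\le (U_{s,k}^{B,a})^{R/s}(U_{s,k}^{B,b})^{1-R/s}$ is identical.
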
 

\begin{proof} An application of H\"older's inequality delivers the estimate
\begin{equation}\label{4.15}
\rho_0^{-4}\sum_{\xi\nmod{p^a}}\sum_{\eta\nmod{p^b}}\rho_a(\xi)^2\rho_b(\eta)^2
\oint_{p^B}|\grf_a(\bfalp;\xi)^{2R}\grf_b(\bfalp;\eta)^{2s-2R}|\d\bfalp \le 
I_1^{R/s}I_2^{1-R/s},
\end{equation}
where
$$I_1=\rho_0^{-4}\sum_{\xi\nmod{p^a}}\sum_{\eta\nmod{p^b}}\rho_a(\xi)^2
\rho_b(\eta)^2\oint_{p^B}|\grf_a(\bfalp;\xi)|^{2s}\d\bfalp $$
and
$$I_2=\rho_0^{-4}\sum_{\xi\nmod{p^a}}\sum_{\eta\nmod{p^b}}\rho_a(\xi)^2
\rho_b(\eta)^2\oint_{p^B}|\grf_b(\bfalp;\eta)|^{2s}\d\bfalp .$$
By reference to (\ref{3.2}) and (\ref{3.8}), one finds that
$$I_1=\rho_0^{-2}\biggl( \sum_{\eta\nmod{p^b}}\rho_b(\eta)^2\biggr) 
U_{s,k}^{B,a}(\bfgra)=U_{s,k}^{B,a}(\bfgra),$$
and likewise one sees that $I_2=U_{s,k}^{B,b}(\gra)$. Consequently, on applying Lemma 
\ref{lemma4.1}, we deduce from (\ref{3.19}) and (\ref{4.15}) that
\begin{equation}\label{4.16}
K_{a,b,c}^{r,\bfvarphi,\nu}(\bfgra)\ll \left( (p^{H-a})^{\lam(s,k)+\eps}\right)^{R/s}
\left( (p^{H-b})^{\lam(s,k)+\eps}\right)^{1-R/s}U_{s,k}^{B,H}(\bfgra).
\end{equation}

\par Next we recall (\ref{3.24}). This conveys us from the estimate (\ref{4.16}) to the 
corresponding normalised bound
$$\llbracket K_{a,b,c}^{r,\bfvarphi,\nu}(\bfgra)
\rrbracketsub{0}^{\textstyle{\frac{r(k-r)}{k-1}}}\ll 
\left( (p^{H-a})^{R/s}(p^{H-b})^{1-R/s}\right)^{\lam(s,k)+\eps}.$$
But when $1\le r\le k-1$, one finds that $r(k-r)\ge k-1$, whence
$$\frac{r(k-r)}{k-1}\ge 1.$$
Thus we conclude that
$$\llbracket K_{a,b,c}^{r,\bfvarphi,\nu}(\bfgra)\rrbracketsub{0}\ll 
\left( p^{H(\lam(s,k)+\eps)}\right)^{\textstyle{\frac{k-1}{r(k-r)}}}\ll 
(p^H)^{\lam(s,k)+\eps}.$$
This completes the proof of the lemma.
\end{proof}

\section{The base of the induction: the trivial case $k=1$} Most of the basic infrastructure 
and the skeletal properties of the key mean values are now in place. Our overarching 
strategy is to establish Theorem \ref{theorem3.1} by induction. Thus, assuming the validity 
of Theorem \ref{theorem3.1} for exponents smaller than $k$, we seek to establish its 
conclusion for the exponent $k$. We begin in this section by establishing the base case 
$k=1$. Although essentially trivial, the underlying ideas are instructive. We remark that the 
passing similarity with the argument underlying the conventional proof of Hensel's lemma is 
not accidental.

\begin{lemma}\label{lemma5.1} For any prime number $p$, one has $\lam(1,1)=0$.
\end{lemma}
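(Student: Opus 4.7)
The plan is to prove this by direct computation, exploiting the fact that a $p^c$-spaced polynomial in the sense of \eqref{3.1} is essentially linear after rescaling, so the relevant congruence is equivalent to a congruence on the variables themselves.

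First I would unwind the definitions \eqref{3.12} and \eqref{3.13}. With $k=\tet=1$, one has $s=k(k+1)/2=1$ and $H=\lceil B/\tet\rceil=B$. Thus it suffices to show that for every fixed $\tau>0$, and every $\bfvarphi\in \Phi_\tau(B)$ with $B$ large in terms of $\tau$, one has $U_{1,1}^B(\bfgra)=U_{1,1}^{B,B}(\bfgra)$ for every $(\gra_n)\in \dbD_0$. This will force $\lam^*(1,1;\tau)\le 0$ for each $\tau$, and the trivial lower bound $\lam(1,1)\ge 0$ from (\ref{3.14}) completes the argument.

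Next I would extract the key algebraic lemma. By (\ref{3.1}), one may write $\varphi_1(t)=t+p^ct^2\psi_1(t)$ for some $\psi_1\in \dbZ[t]$, with $c\ge \tau B\ge 1$ once $B\ge 1/\tau$. Since the polynomial $t^2\psi_1(t)-u^2\psi_1(u)$ is divisible by $t-u$ in $\dbZ[t,u]$, we obtain a factorisation
$$\varphi_1(x)-\varphi_1(y)=(x-y)Q(x,y),\quad \text{with }Q(x,y)\equiv 1\mmod{p^c}.$$
In particular $Q(x,y)$ is a unit modulo $p^B$, so the congruence $\varphi_1(x)\equiv \varphi_1(y)\mmod{p^B}$ is equivalent to $x\equiv y\mmod{p^B}$.

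Finally I would expand both mean values by orthogonality. From (\ref{3.6}) one finds
$$U_{1,1}^B(\bfgra)=\rho_0^{-2}\sum_{\substack{x,y\in \dbZ\\ \varphi_1(x)\equiv \varphi_1(y)\mmod{p^B}}}\gra_x\overline{\gra_y}=\rho_0^{-2}\sum_{\xi\nmod{p^B}}\biggl| \sum_{n\equiv \xi\mmod{p^B}}\gra_n\biggr|^2,$$
using the equivalence established above. On the other hand, (\ref{3.8}) with $h=H=B$ reduces (after the diagonal identification) to precisely the same expression:
$$U_{1,1}^{B,B}(\bfgra)=\rho_0^{-2}\sum_{\xi\nmod{p^B}}\rho_B(\xi)^2\cdot \rho_B(\xi)^{-2}\biggl| \sum_{n\equiv \xi\mmod{p^B}}\gra_n\biggr|^2,$$
with the convention that terms having $\rho_B(\xi)=0$ vanish. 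Hence the two mean values are identical, the ratio in (\ref{3.12}) equals $1$, and its logarithm is $0$, yielding $\lam(1,1)=0$. There is no genuine obstacle here: the entire content is that a $p^c$-spaced univariate polynomial separates residues modulo $p^B$ in the same way as the identity map, which is the one-variable shadow of the Hensel-type extraction that will drive the inductive argument for $k\ge 2$.
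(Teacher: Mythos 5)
Your proposal is correct and reaches the same conclusion as the paper, but the key algebraic step is done differently. The paper establishes that the congruence $\varphi_1(x)\equiv \varphi_1(y)\mmod{p^B}$ forces $x\equiv y\mmod{p^B}$ by an iterative Hensel-style bootstrap: it first infers $x\equiv y\mmod{(p^c,p^B)}$, then uses $x-y\mid \psi(x)-\psi(y)$ to upgrade to $(p^{2c},p^B)$, and repeats at most $\lceil 1/\tau\rceil$ times. You instead observe at the outset that $\varphi_1(x)-\varphi_1(y)=(x-y)Q(x,y)$ with $Q\in\dbZ[x,y]$ satisfying $Q\equiv 1\mmod{p^c}$, so $Q(x,y)$ is coprime to $p$ and hence a unit modulo $p^B$, making the equivalence immediate in one step. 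Your route buys a simpler proof with no dependence on $\tau$ in the number of iterations, and it isolates the unit-cofactor phenomenon more explicitly; the paper's iterative phrasing is deliberately chosen to mirror the Hensel-type extraction performed in the inductive step for $k\ge 2$, which is why the author calls the resemblance "not accidental." The subsequent orthogonality expansion showing $U_{1,1}^B(\bfgra)=U_{1,1}^{B,B}(\bfgra)$ is the same in both treatments, and your conclusion that $\lam^*(1,1;\tau)=0$ for each small $\tau$, hence $\lam(1,1)=0$, is sound.
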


\begin{proof} Let $\tau $ be a small positive number, and let $B$ be a positive integer 
sufficiently large in terms of $\tau$. Consider any sequence $(\gra_n)\in \dbD$ and 
polynomial $\varphi\in \Phi_\tau(B)$. We may suppose that $\varphi$ is a $p^c$-spaced 
polynomial for some $c\ge \tau B$, so that $\varphi(t)=t+p^c\psi(t)$ for a polynomial 
$\psi\in \dbZ[t]$. The mean value $U_{1,1}^B(\bfgra)$ counts the integral solutions of the 
congruence
\begin{equation}\label{5.1}
\varphi(x)\equiv \varphi(y)\mmod{p^B},
\end{equation}
with each solution $x,y$ being counted with weight $\rho_0^{-2}\gra_x{\overline \gra_y}$. 
The congruence (\ref{5.1}) is in fact the relation
\begin{equation}\label{5.2}
x+p^c\psi(x)\equiv y+p^c\psi(y)\mmod{p^B},
\end{equation}
from which we infer that $x\equiv y\mmod{(p^c,p^B)}$. Here, we take the liberty of writing 
$p^{\min\{c,B\}}$ as the highest common factor $(p^c,p^B)$, this being cosmetically 
slightly less awkward.\par

Since $x-y$ divides $\psi(x)-\psi(y)$, it follows that the highest common factor $(p^c,p^B)$ 
divides $\psi(x)-\psi(y)$. Substituting this relation back into (\ref{5.2}), we deduce that 
$x\equiv y\mmod{(p^{2c},p^B)}$. By repeating this argument no more than 
$\lceil 1/\tau\rceil $ times, we conclude that $x\equiv y\mmod{p^B}$. We may therefore 
classify the solutions of the congruence (\ref{5.2}) according to the common congruence 
class $\xi$ modulo $p^B$ of $x$ and $y$. On recalling the definitions (\ref{3.4}), (\ref{3.6}) 
and (\ref{3.8}), we infer via orthogonality that
$$U_{1,1}^B(\bfgra)=\rho_0^{-2}\sum_{1\le \xi\le p^B}\rho_B(\xi)^2\oint 
|\grf_B(\bfalp;\xi)|^2\d\bfalp =U_{1,1}^{B,B}(\bfgra).$$
Thus we conclude that
$$\log \left( U_{1,1}^B(\bfgra)/U_{1,1}^{B,B}(\bfgra)\right) =0,$$
and it is immediate from (\ref{3.12}) that $\lam^*(1,1;\tau)=0$. We therefore conclude 
from (\ref{3.13}) that $\lam(1,1)=0$, completing the proof of the lemma.
\end{proof}

\section{The initial conditioning process} We now move on to our main inductive task of 
establishing Theorem \ref{theorem3.1} for the exponent $k\ge 2$, assuming its validity 
for exponents smaller than $k$. The initial step in the estimation of $U_{s,k}^B(\bfgra)$ 
is to bound it in terms of a mean value of the shape $K_{a,b,c}^{1,\bfvarphi,\nu}(\bfgra)$. 
In this section we describe the initial set-up to be applied in later sections, as well as the 
conditioning of the variables underlying the mean value 
$K_{a,b,c}^{1,\bfvarphi,\nu}(\bfgra)$.\par
 
Recall the definitions (\ref{3.12}) and (\ref{3.13}). We fix a prime number $p$ with 
$p>k$ and we also fix $s=k(k+1)/2$, and then we seek to show that $\lam(s,k)=0$. 
This we achieve by deriving a contradiction to the assumption that $\lam(s,k)=\Lam>0$. 
Note here that in view of (\ref{3.14}), there is no loss of generality in assuming that
\begin{equation}\label{6.1}
0<\Lam\le s.
\end{equation}
We next introduce a hierarchy of sufficiently small positive numbers $\eps$, $\tau$, $\del$ 
and $\mu$ with
\begin{equation}\label{6.2}
\eps<\tau<\del<\mu<1.
\end{equation}
We suppose that each element in the hierarchy is sufficiently small in terms of $k$, 
$\Lam$, and the larger elements of the hierarchy. It follows from (\ref{3.13}) that there is 
no loss of generality in supposing that $\lam^*(s,k;\tau)\ge \Lam-\eps/2$. Thus, in view 
of (\ref{3.12}), there exists a sequence $(B_m)_{m=1}^\infty $, with 
$B_m\rightarrow \infty$ as $m\rightarrow \infty$, having the property that whenever $m$ 
is sufficiently large, then there exists $\bfvarphi_m\in \Phi_\tau(B_m)$ and 
$(\gra_n^{(m)})\in \dbD$ for which
\begin{equation}\label{6.3}
U_{s,k}^{B_m}(\bfgra^{(m)})\ge (p^{H_m})^{\Lam-\eps}
U_{s,k}^{B_m,H_m}(\bfgra^{(m)}),
\end{equation}
where we have written $H_m=\lceil B_m/k\rceil$. We now fix such an integer $m$ with 
with the property that $B=B_m$ is sufficiently large in terms of $k$, $\Lam$, $\mu$, $\del$, 
$\tau$ and $\eps$. In our discussion to come, we may henceforth omit mention 
of the subscript or superscript $m$ in $\bfvarphi_m$, $(\gra_n^{(m)})$, $H_m$. Observe 
that since $B$ is sufficiently large in terms of the basic parameters, then we may suppose 
also that $\eps B$ is also sufficiently large in terms of $k$, $\Lam$, $\mu$, $\tau$, 
$\del$ and $\eps$.\par

Also, by virtue of Lemma \ref{lemma4.1}, we may assume that for every non-negative 
integer $h$ with $h\le (1-\del )H$, uniformly in $\bfvarphi'\in \Phi_\tau(B)$ and 
$(\gra_n')\in \dbD_0$, one has
\begin{equation}\label{6.4}
U_{s,k}^{B,h}(\bfgra')\le (p^{H-h})^{\Lam+\eps}U_{s,k}^{B,H}(\bfgra').
\end{equation}

\par We may now announce our first bound for $U_{s,k}^B(\bfgra)$ in terms of mean 
values of the shape $K_{a,b,c}^{1,\bfvarphi,\nu}(\bfgra)$. Here, we fix a choice for the 
parameter $\nu$ for future use by setting
\begin{equation}\label{6.5}
\nu=\lceil 4\eps H\Lam^{-1}\rceil .
\end{equation}
Since we may suppose that $\bfvarphi\in \Phi_\tau(B)$, it follows that there exists an 
integer $c$ with $c\ge \tau B$ with the property that $\bfvarphi$ is $p^c$-spaced.

\begin{lemma}\label{lemma6.1} One has $U_{s,k}^B(\bfgra)\ll p^{s\nu}
K_{\nu,\nu,c}^{1,\bfvarphi,\nu}(\bfgra)$.
\end{lemma}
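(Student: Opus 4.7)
I would attempt to establish this lemma by decomposing $f_\bfgra(\bfalp)$ along residue classes modulo $p^\nu$ and analysing the resulting expansion of $|f_\bfgra(\bfalp)|^{2s}$. Writing
$$f_\bfgra(\bfalp) = \rho_0^{-1}\sum_{\xi\nmod{p^\nu}}\rho_\nu(\xi)\grf_\nu(\bfalp;\xi),$$
we obtain $|f_\bfgra(\bfalp)|^2 = \rho_0^{-2}(D(\bfalp) + O(\bfalp))$, where $D(\bfalp) = \sum_\xi \rho_\nu(\xi)^2|\grf_\nu(\bfalp;\xi)|^2$ is the non-negative diagonal and $O(\bfalp)$ is the real-valued off-diagonal remainder. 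The pointwise inequality $(D+O)^s \le 2^{s-1}(D^s + |O|^s)$ then separates the integral $U^B_{s,k}(\bfgra)$ into a diagonal contribution in $\oint D^s\,\d\bfalp$ and an off-diagonal contribution in $\oint|O|^s\,\d\bfalp$.

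For the diagonal contribution, an application of Minkowski's inequality to $\|D\|_s$ followed by H\"older's inequality gives
$$\rho_0^{-2s}\oint_{p^B} D(\bfalp)^s\,\d\bfalp \le U^{B,\nu}_{s,k}(\bfgra),$$
a bound that crucially contains \emph{no} power of $p^\nu$. For the off-diagonal contribution, the goal is to establish
$$\rho_0^{-2s}\oint_{p^B} |O(\bfalp)|^s\,\d\bfalp \ll p^{s\nu} K^{1,\bfvarphi,\nu}_{\nu,\nu,c}(\bfgra).$$
My strategy here would be to identify, pointwise in $\bfalp$, a dominant residue class $\eta(\bfalp)$ achieving $\max_\zeta\rho_\nu(\zeta)|\grf_\nu(\bfalp;\zeta)|$, and to pair it asymmetrically against distinct subdominant residues $\xi\ne\eta$: the dominant $\eta$ supplies the factor $|\grf_\nu(\bfalp;\eta)|^{2s-2}$ and the subdominant $\xi$ supplies $|\grf_\nu(\bfalp;\xi)|^2$, matching exactly the integrand of $K^{1,\bfvarphi,\nu}_{\nu,\nu,c}$. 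A judicious H\"older inequality, together with Cauchy--Schwarz used to symmetrise the sum over ordered pairs $\xi\ne\eta$, then produces the required bound with sharp loss $p^{s\nu}$.

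Having combined the two estimates as
$$U^B_{s,k}(\bfgra)\le 2^{s-1}U^{B,\nu}_{s,k}(\bfgra) + Cp^{s\nu} K^{1,\bfvarphi,\nu}_{\nu,\nu,c}(\bfgra),$$
the final step absorbs the diagonal term on the left. From (6.3) we have $U^{B,H}_{s,k}(\bfgra)\le p^{-H(\Lam-\eps)}U^{B}_{s,k}(\bfgra)$, while Lemma \ref{lemma4.1}, as recorded in (6.4) with $h=\nu$, yields $U^{B,\nu}_{s,k}(\bfgra)\le p^{(H-\nu)(\Lam+\eps)}U^{B,H}_{s,k}(\bfgra)$. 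The two together give
$$U^{B,\nu}_{s,k}(\bfgra) \le p^{-\nu\Lam-\nu\eps+2H\eps}U^{B}_{s,k}(\bfgra),$$
and the choice $\nu = \lceil 4\eps H\Lam^{-1}\rceil$ from (6.5) ensures $\nu\Lam\ge 4\eps H$, hence $U^{B,\nu}_{s,k}(\bfgra)\le p^{-2H\eps}U^B_{s,k}(\bfgra)$. Since $B$ is sufficiently large in terms of the basic parameters, this is arbitrarily small, so $2^{s-1}U^{B,\nu}_{s,k}(\bfgra)\le \tfrac{1}{2}U^B_{s,k}(\bfgra)$, which is absorbed on the left to deliver the lemma.

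The main obstacle is the sharp execution of the off-diagonal bound: a naive Cauchy--Schwarz treatment of the $p^{2\nu}$ off-diagonal pairs incurs a loss of $p^{2\nu(s-1)}$ rather than $p^{s\nu}$, while the naive pointwise bound $|O|\le p^\nu D$ collapses the off-diagonal structure altogether and leaves one only with $p^{s\nu} U^{B,\nu}_{s,k}(\bfgra)$, which cannot be absorbed. Threading the needle between these two extremes---so as to retain the asymmetric $(2,2s-2)$ integrand structure of $K^{1,\bfvarphi,\nu}_{\nu,\nu,c}$ while keeping the total $p^\nu$-loss at exactly $p^{s\nu}$---is the technically hardest aspect of the argument.
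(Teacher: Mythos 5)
Your overall skeleton --- decomposing $f_\bfgra(\bfalp)$ along residue classes modulo $p^\nu$ into a diagonal part and an off-diagonal part, bounding the diagonal integral by $U^{B,\nu}_{s,k}(\bfgra)$, and then absorbing that term via (6.3), (6.4) and the choice (6.5) of $\nu$ --- matches the paper's, and the absorption calculation at the end is carried out correctly. The genuine gap is that the off-diagonal estimate, which is the real content of the lemma, is not proved: you name it as the technically hardest step but leave it as an aspiration. Worse, the mechanism you do sketch (select a pointwise-in-$\bfalp$ dominant residue $\eta(\bfalp)$ maximising $\rho_\nu(\zeta)|\grf_\nu(\bfalp;\zeta)|$, and pair it against subdominant residues) does not lead where you want. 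Since $\eta(\bfalp)$ varies with $\bfalp$, the quantity $|\grf_\nu(\bfalp;\xi)^2\grf_\nu(\bfalp;\eta(\bfalp))^{2s-2}|$ is not of the fixed-pair form appearing in the definition (3.19)--(3.20) of $K_{\nu,\nu,c}^{1,\bfvarphi,\nu}(\bfgra)$, so one is forced to replace the maximum by a sum over $\eta$ before integrating --- at which point the maximum bought nothing. One would also need to handle the degenerate case $\eta(\bfalp)=\xi$ (passing to the second-largest residue) to respect the constraint $\eta\not\equiv\xi\mmod{p^\nu}$, and no symmetrisation via Cauchy--Schwarz is needed anywhere.

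The paper's off-diagonal argument is a single, short H\"older step and should be done directly. Writing
$$T_2(\bfalp)=\rho_0^{-2}\sum_{\xi\nmod{p^\nu}}\sum_{\substack{\eta\nmod{p^\nu}\\ \eta\not\equiv\xi\mmod{p^\nu}}}\rho_\nu(\xi)\rho_\nu(\eta)|\grf_\nu(\bfalp;\xi)\grf_\nu(\bfalp;\eta)|,$$
factor each summand as
$$1\cdot\bigl(\rho_\nu(\xi)^2\rho_\nu(\eta)^2\bigr)^{\frac{s-2}{2s}}\cdot\bigl(\rho_\nu(\xi)^2\rho_\nu(\eta)^2|\grf_\nu(\bfalp;\xi)^2\grf_\nu(\bfalp;\eta)^{2s-2}|\bigr)^{\frac1{2s}}\cdot\bigl(\rho_\nu(\xi)^2\rho_\nu(\eta)^2|\grf_\nu(\bfalp;\eta)^2\grf_\nu(\bfalp;\xi)^{2s-2}|\bigr)^{\frac1{2s}},$$
and apply H\"older with exponents $2$, $2s/(s-2)$, $2s$, $2s$. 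This yields $T_2(\bfalp)^{2s}\le\rho_0^{-4s}T_3^sT_4^{s-2}T_5T_6$, where $T_3=p^{2\nu}$, $T_4=\rho_0^4$, and $T_5$, $T_6$ are the two asymmetric off-diagonal sums, which differ only by swapping $\xi$ and $\eta$ and hence are equal. Thus $T_2(\bfalp)^s\le\rho_0^{-4}p^{\nu s}T_5(\bfalp)$, and integrating over $\bfalp$ gives $\oint_{p^B}T_2(\bfalp)^s\,\d\bfalp\le p^{\nu s}K^{1,\bfvarphi,\nu}_{\nu,\nu,c}(\bfgra)$. The sharp loss $p^{\nu s}$ appears automatically as $T_3^{s/(2s)}\cdot(\text{same})=T_3^{1/2}$ inside $T_2^s$; there is no needle to thread. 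Combined with your diagonal bound and absorption step, this completes the proof.
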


\begin{proof} In view of the definition of $f_\bfgra(\bfalp)$ and the definition (\ref{3.4}), 
one has
$$\rho_0^2f_\bfgra(\bfalp)^2=\sum_{\xi\nmod{p^\nu}}\rho_\nu(\xi)\grf_\nu(\bfalp;\xi)
\left( \rho_0f_\bfgra(\bfalp)\right) .$$
Moreover, for each given residue $\xi$ modulo $p^\nu$, one has
$$\rho_0f_\bfgra(\bfalp)=\rho_\nu(\xi)f_\nu(\bfalp;\xi)+
\sum_{\substack{\eta\nmod{p^\nu}\\ \eta\not \equiv \xi\mmod{p^\nu}}}\rho_\nu(\eta)
f_\nu(\bfalp;\eta).$$
Thus
\begin{equation}\label{6.6}
|f_\bfgra(\bfalp)|^2\le T_1(\bfalp)+T_2(\bfalp),
\end{equation}
where
$$T_1(\bfalp)=\rho_0^{-2}\sum_{\xi\nmod{p^\nu}}\rho_\nu(\xi)^2|f_\nu(\bfalp;\xi)|^2$$
and
$$T_2(\bfalp)=\rho_0^{-2}\sum_{\xi\nmod{p^\nu}}\sum_{\substack{\eta\nmod{p^\nu}\\
\eta\not\equiv \xi\mmod{p^\nu}}}\rho_\nu(\xi)\rho_\nu(\eta)|f_\nu(\bfalp;\xi)
f_\nu(\bfalp;\eta)|.$$

\par In much the same manner as in the derivation of the relation (\ref{3.9}), an application 
of H\"older's inequality reveals that
\begin{align}
T_1(\bfalp)^s&\le \rho_0^{-2s}\biggl( \sum_{\xi\nmod{p^\nu}}\rho_\nu(\xi)^2
\biggr)^{s-1}\sum_{\xi\nmod{p^\nu}}\rho_\nu(\xi)^2|\grf_\nu(\bfalp;\xi)|^{2s}\notag \\
&=\rho_0^{-2}\sum_{\xi\nmod{p^\nu}}\rho_\nu(\xi)^2|\grf_\nu(\bfalp;\xi)|^{2s}.
\label{6.7}
\end{align}
Meanwhile, again applying H\"older's inequality, one finds that
$$T_2(\bfalp)^{2s}\le \rho_0^{-4s}T_3^sT_4^{s-2}T_5T_6,$$
where
\begin{align*}
T_3&=\sum_{\xi\nmod{p^\nu}}\sum_{\eta\nmod{p^\nu}}1,\\
T_4&=\sum_{\xi\nmod{p^\nu}}\sum_{\eta\nmod{p^\nu}}\rho_\nu(\xi)^2\rho_\nu(\eta)^2,
\\
T_5&=\sum_{\xi\nmod{p^\nu}}
\sum_{\substack{\eta\nmod{p^\nu}\\ \eta\not\equiv \xi\mmod{p^\nu}}}\rho_\nu(\xi)^2
\rho_\nu(\eta)^2|\grf_\nu(\bfalp;\xi)^2\grf_\nu(\bfalp;\eta)^{2s-2}|,\\
T_6&=\sum_{\xi\nmod{p^\nu}}
\sum_{\substack{\eta\nmod{p^\nu}\\ \eta\not \equiv \xi\mmod{p^\nu}}}\rho_\nu(\xi)^2
\rho_\nu(\eta)^2|\grf_\nu(\bfalp;\eta)^2\grf_\nu(\bfalp;\xi)^{2s-2}|.
\end{align*}
We have $T_3=p^{2\nu}$ and $T_4=\rho_0^4$. Also, by symmetry, one has 
$T_5=T_6$. We therefore deduce that
\begin{equation}\label{6.8}
T_2(\bfalp)^s\le \rho_0^{-4}p^{\nu s}\sum_{\xi \nmod{p^\nu}}
\sum_{\substack{\eta\nmod{p^\nu}\\ \eta\not \equiv \xi\mmod{p^\nu}}}\rho_\nu(\xi)^2
\rho_\nu(\eta)^2|\grf_\nu (\bfalp;\xi)^2\grf_\nu(\bfalp;\eta)^{2s-2}|\d\bfalp .
\end{equation}

\par On recalling the definitions (\ref{3.8}) and (\ref{3.19}), we deduce from (\ref{6.7}) 
and (\ref{6.8}) that
$$\oint_{p^B}T_1(\bfalp)^s\d\bfalp =U_{s,k}^{B,\nu}(\bfgra)\quad \text{and}\quad 
\oint_{p^B}T_2(\bfalp)^s\d\bfalp \le p^{\nu s}K_{\nu,\nu,c}^{1,\bfvarphi,\nu}
(\bfgra).$$
Since it follows from (\ref{6.6}) that
$$|f_\bfgra(\bfalp)|^{2s}\ll |T_1(\bfalp)|^s+|T_2(\bfalp)|^s,$$
we deduce from the definition (\ref{3.6}) that
\begin{equation}\label{6.9}
U_{s,k}^B(\bfgra)\ll U_{s,k}^{B,\nu}(\bfgra)+p^{\nu s}
K_{\nu,\nu,c}^{1,\bfvarphi,\nu}(\bfgra).
\end{equation}

\par Next we make use of the estimate (\ref{6.4}) to obtain the bound
$$U_{s,k}^{B,\nu}(\bfgra)\le (p^{H-\nu})^{\Lam+\eps}U_{s,k}^{B,H}(\bfgra).$$
Since the definition (\ref{6.5}) ensures that
$$(\Lam+\eps)(H-\nu)-(\Lam-\eps)H=2\eps H-(\Lam+\eps)\nu<-2\eps H,$$
we conclude via (\ref{6.3}) that
$$U_{s,k}^{B,\nu}(\bfgra)\le p^{-2\eps H}(p^H)^{\Lam-\eps}U_{s,k}^{B,H}(\bfgra)
\le p^{-2\eps H}U_{s,k}^B(\bfgra).$$
Thus we infer from (\ref{6.9}) that
$$U_{s,k}^B(\bfgra)\ll p^{-2\eps H}U_{s,k}^B(\bfgra)+p^{\nu s}
K_{\nu,\nu,c}^{1,\bfvarphi,\nu}(\bfgra),$$
whence
$$U_{s,k}^B(\bfgra)\ll p^{\nu s}K_{\nu,\nu,c}^{1,\bfvarphi,\nu}(\bfgra).$$
This completes the proof of the lemma.
\end{proof}

The congruence condition implicit in the mean value 
$K_{\nu,\nu,c}^{1,\bfvarphi,\nu}(\bfgra)$ is not yet strong enough to pursue our main 
iteration, and so we pause to strengthen it before proceeding further. It is at this point that 
the parameter $\mu$ enters the scene. We put
\begin{equation}\label{6.10}
\tet=\lceil \mu H\rceil .
\end{equation}
We briefly pause to record a useful, though essentially trivial, upper bound permitting us to 
relate exponential sums restricted to congruences associated with differing powers of $p$.

\begin{lemma}\label{lemma6.2} Suppose that $a$ and $b$ are non-negative integers 
with $a\le b$. Then for any positive number $w$ and any integer $\xi$, one has
$$\rho_a(\xi)^2|\grf_a(\bfalp;\xi)|^{2w}\le (p^{b-a})^w
\sum_{\substack{\zet\nmod{p^b}\\ \zet\equiv \xi\mmod{p^a}}}\rho_b(\zet)^2
|\grf_b(\bfalp;\zet)|^{2w}.$$
\end{lemma}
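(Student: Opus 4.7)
The plan is to deduce this inequality from an elementary identity combined with Cauchy--Schwarz and Jensen's inequality. The first step is to unwind the definition (\ref{3.4}) of $\grf_h$: partitioning the residue class $\xi\mmod{p^a}$ into its $p^{b-a}$ sub-residue classes $\zet\mmod{p^b}$ with $\zet\equiv\xi\mmod{p^a}$, one sees that both sides of
\begin{equation*}
\rho_a(\xi)\grf_a(\bfalp;\xi) = \sum_{\substack{\zet\nmod{p^b}\\ \zet\equiv\xi\mmod{p^a}}} \rho_b(\zet)\grf_b(\bfalp;\zet)
\end{equation*}
reduce to $\sum_{n\equiv\xi\mmod{p^a}} \gra_n e(\psi(n;\bfalp))$. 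The same partition, applied to the $\ell^2$ sum in (\ref{3.2}), yields the compatibility relation $\rho_a(\xi)^2=\sum_\zet\rho_b(\zet)^2$, which will supply the probability normalisation in the next step.

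The second step is to apply the Cauchy--Schwarz inequality to this sum of $N=p^{b-a}$ terms, which gives
$$\rho_a(\xi)^2|\grf_a(\bfalp;\xi)|^2 \le N\sum_\zet\rho_b(\zet)^2|\grf_b(\bfalp;\zet)|^2,$$
already establishing the case $w=1$ of the lemma. For general $w\ge 1$, I would raise this bound to the $w$-th power, introduce the probability weights $q_\zet=\rho_b(\zet)^2/\rho_a(\xi)^2$ (which sum to unity by the compatibility relation), and invoke convexity of $t\mapsto t^w$ via Jensen's inequality to obtain
\begin{equation*}
\biggl(\sum_\zet\rho_b^2|\grf_b|^2\biggr)^w = \rho_a^{2w}\biggl(\sum_\zet q_\zet|\grf_b|^2\biggr)^w \le \rho_a^{2w} \sum_\zet q_\zet |\grf_b|^{2w} = \rho_a^{2w-2}\sum_\zet\rho_b^2|\grf_b|^{2w}.
\end{equation*}
Combining this with the $w$-th power of the Cauchy--Schwarz estimate and cancelling the factor $\rho_a^{2w-2}$ then produces the desired bound.

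The one mild subtlety is the range $0<w<1$, in which Jensen's inequality for the now concave map $t\mapsto t^w$ reverses direction and so the clean CS--Jensen argument above fails. This case requires a separate interpolation-type argument, but it does not affect the principal applications of the lemma, since the essential content lies in the regime $w\ge 1$, which is where all the exponents relevant to the efficient congruencing iteration will fall.
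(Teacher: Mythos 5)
Your argument is correct and follows essentially the same route as the paper: the paper applies Hölder's inequality with three exponents to the decomposed sum in a single step, which is exactly equivalent to your two-step Cauchy--Schwarz followed by Jensen's inequality applied to the convex map $t\mapsto t^w$ with the probability weights $q_\zet=\rho_b(\zet)^2/\rho_a(\xi)^2$. Your caution about $0<w<1$ is well placed, and one can in fact say more: the stated inequality is \emph{false} in that range (take a single surviving sub-class $\zet_1$ with $|\grf_b(\bfalp;\zet_1)|=1$ and $\rho_b(\zet_1)=\eps$, another with $\rho_b=1$ and $\grf_b=0$, and let $\eps\to 0$), so no interpolation can repair it; but since every invocation in the paper has $w\ge 1$, nothing is affected.
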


\begin{proof} By applying H\"older's inequality in combination with (\ref{3.4}), one obtains
\begin{align}
\rho_a(\xi)^{2w}|f_a(\bfalp;\xi)|^{2w}&=\biggl| \sum_{\substack{\zet\nmod{p^b}\\ 
\zet\equiv \xi\mmod{p^a}}}\rho_b(\zet)\grf_b(\bfalp;\zet)\biggr|^{2w}\notag \\
&\le U_1^wU_2^{w-1}\sum_{\substack{\zet\nmod{p^b}\\ \zet\equiv \xi\mmod{p^a}}}
\rho_b(\zet)^2|\grf_b(\bfalp;\zet)|^{2w},\label{6.11}
\end{align}
where
$$U_1=\sum_{\substack{\zet\nmod{p^b}\\ \zet\equiv \xi\mmod{p^a}}}1=p^{b-a}$$
and
$$U_2=\sum_{\substack{\zet\nmod{p^b}\\ \zet\equiv \xi\mmod{p^a}}}
\rho_b(\zet)^2=\sum_{n\equiv \xi\mmod{p^a}}|\gra_n|^2=\rho_a(\xi)^2.$$
The desired conclusion is immediate on substituting the latter relations into (\ref{6.11})
\end{proof}

\begin{lemma}\label{lemma6.3} One has $U_{s,k}^B(\bfgra)\ll p^{s\tet}
K_{\tet,\tet,c}^{1,\bfvarphi,\nu}(\bfgra)$.
\end{lemma}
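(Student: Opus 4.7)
My plan is to deduce Lemma \ref{lemma6.3} directly from Lemma \ref{lemma6.1} by using Lemma \ref{lemma6.2} to refine the congruence modulus from $p^\nu$ up to $p^\tet$. First, observe that the hierarchy (\ref{6.2}) together with (\ref{6.1}) and (\ref{6.5}), (\ref{6.10}) ensures $\nu\le \tet$, since $\mu\Lam>4\eps$ once $\eps$ is taken sufficiently small relative to $\mu$ and $\Lam$. This means Lemma \ref{lemma6.2} is applicable with parameters $a=\nu$ and $b=\tet$.

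Starting from the conclusion of Lemma \ref{lemma6.1}, I would examine the integrand of
$$K_{\nu,\nu,c}^{1,\bfvarphi,\nu}(\bfgra)=\rho_0^{-4}\sum_{\xi\nmod{p^\nu}}\sum_{\substack{\eta\nmod{p^\nu}\\ \xi\not\equiv\eta\mmod{p^\nu}}}\rho_\nu(\xi)^2\rho_\nu(\eta)^2\oint_{p^B}|\grf_\nu(\bfalp;\xi)^2\grf_\nu(\bfalp;\eta)^{2s-2}|\d\bfalp,$$
and apply Lemma \ref{lemma6.2} with $w=1$ to the factor involving $\xi$, and with $w=s-1$ to the factor involving $\eta$. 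This produces the bounds
$$\rho_\nu(\xi)^2|\grf_\nu(\bfalp;\xi)|^2\le p^{\tet-\nu}\sum_{\substack{\zet\nmod{p^\tet}\\ \zet\equiv\xi\mmod{p^\nu}}}\rho_\tet(\zet)^2|\grf_\tet(\bfalp;\zet)|^2,$$
$$\rho_\nu(\eta)^2|\grf_\nu(\bfalp;\eta)|^{2s-2}\le p^{(\tet-\nu)(s-1)}\sum_{\substack{\zet'\nmod{p^\tet}\\ \zet'\equiv\eta\mmod{p^\nu}}}\rho_\tet(\zet')^2|\grf_\tet(\bfalp;\zet')|^{2s-2}.$$
Multiplying these estimates gives a total factor of $p^{s(\tet-\nu)}$.

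Next I would reorganize the double summation. Each pair $(\zet,\zet')$ of residues modulo $p^\tet$ with $\zet\not\equiv \zet'\mmod{p^\nu}$ arises exactly once from a unique pair $(\xi,\eta)$ of residues modulo $p^\nu$ with $\xi\not\equiv\eta\mmod{p^\nu}$, via reduction modulo $p^\nu$; conversely the residue condition $\xi\not\equiv\eta\mmod{p^\nu}$ in the outer sum translates to the condition $\zet\not\equiv \zet'\mmod{p^\nu}$ on the refined variables, matching exactly the definition (\ref{3.19})--(\ref{3.20}) of $K_{\tet,\tet,c}^{1,\bfvarphi,\nu}(\bfgra)$. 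Thus one obtains
$$K_{\nu,\nu,c}^{1,\bfvarphi,\nu}(\bfgra)\le p^{s(\tet-\nu)}K_{\tet,\tet,c}^{1,\bfvarphi,\nu}(\bfgra),$$
and combining with Lemma \ref{lemma6.1} yields $U_{s,k}^B(\bfgra)\ll p^{s\nu}\cdot p^{s(\tet-\nu)}K_{\tet,\tet,c}^{1,\bfvarphi,\nu}(\bfgra)=p^{s\tet}K_{\tet,\tet,c}^{1,\bfvarphi,\nu}(\bfgra)$, as required.

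The only real care required is in the bookkeeping of the non-congruence condition under refinement, since the ``off-diagonal'' restriction is measured at the lower modulus $p^\nu$ in both the starting and target mean values; there is no genuine analytic obstacle here, as the conclusion follows mechanically from Lemma \ref{lemma6.2}. The ingenuity, such as it is, sits already in the hierarchy of parameters which guarantees $\nu\le \tet$ and in the choice to keep the spacing condition at the fixed modulus $p^\nu$ throughout.
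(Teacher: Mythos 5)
Your proof is correct and is essentially identical to the paper's own argument: the paper likewise applies Lemma \ref{lemma6.2} twice (to the $\grf_\nu(\bfalp;\xi)^2$ factor and to the $\grf_\nu(\bfalp;\eta)^{2s-2}$ factor) to refine the mean value $K_{\nu,\nu,c}^{1,\bfvarphi,\nu}(\bfgra;\xi,\eta)$ up to modulus $p^\tet$ at a cost of $p^{s(\tet-\nu)}$, then reassembles the double sum over refined residues and feeds the result back into Lemma \ref{lemma6.1}. Your bookkeeping of the off-diagonal condition $\xi'\not\equiv\eta'\mmod{p^\nu}$ and the observation that the hierarchy guarantees $\nu\le\tet$ match the paper's treatment.
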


\begin{proof} On recalling (\ref{3.20}), we deduce via two applications of Lemma 
\ref{lemma6.2} that the mean value $K_{\nu,\nu,c}^{1,\bfvarphi,\nu}(\bfgra;\xi,\eta)$ is 
bounded above by
$$\rho_\nu(\xi)^{-2}\rho_\nu(\eta)^{-2}(p^{\tet-\nu})^s
\sum_{\xi',\eta'\nmod{p^\tet}}\rho_\tet(\xi')^2\rho_\tet(\eta')^2
\oint_{p^B}|\grf_\tet(\bfalp;\xi')^2\grf_\tet(\bfalp;\eta')^{2s-2}|\d\bfalp ,$$
where the summation over $\xi'$ and $\eta'$ is subject to the conditions
$$\xi'\equiv\xi\mmod{p^\nu}\quad \text{and}\quad \eta'\equiv \eta \mmod{p^\nu}.$$
Thus $K_{\nu,\nu,c}^{1,\bfvarphi,\nu}(\bfgra;\xi,\eta)$ is at most
$$\rho_\nu(\xi)^{-2}\rho_\nu(\eta)^{-2}(p^{\tet-\nu})^s\sum_{\xi',\eta'\nmod{p^\tet}}
\rho_\tet(\xi')^2\rho_\tet(\eta')^2K_{\tet,\tet,c}^{1,\bfvarphi,\nu}(\bfgra;\xi',\eta').$$
In view of the definition (\ref{3.19}), we find that
\begin{align*}
K_{\nu,\nu,c}^{1,\bfvarphi,\nu}(\bfgra)&\le \rho_0^{-4}(p^{\tet-\nu})^s
\sum_{\xi'\nmod{p^\tet}}\sum_{\substack{\eta'\nmod{p^\tet}\\ \eta'\not \equiv \xi' 
\mmod{p^\nu}}}\rho_\tet(\xi')^2\rho_\tet(\eta')^2K_{\tet,\tet,c}^{1,\bfvarphi,\nu}
(\bfgra;\xi',\eta')\\
&=(p^{\tet-\nu})^sK_{\tet,\tet,c}^{1,\bfvarphi,\nu}(\bfgra).
\end{align*}
By substituting this bound into the estimate supplied by Lemma \ref{lemma6.1}, we arrive 
at the bound
$$U_{s,k}^B(\bfgra)\ll p^{s(\tet-\nu)+s\nu}K_{\tet,\tet,c}^{1,\bfvarphi,\nu}(\bfgra),$$
and the conclusion of the lemma follows at once.
\end{proof}

\section{Harnessing approximate translation-dilation invariance} This is the section of the 
paper that does the heavy lifting, for we now initiate our iterative process. Throughout, we 
suppose that $k\ge 2$, and we assume the validity of Theorem \ref{theorem3.1} for 
exponents smaller than $k$. Our strategy is to approximate the system of congruences 
underlying the mean value $K_{a,b,c}^{r,\bfvarphi, \nu}(\bfgra)$ by a corresponding 
system of Vinogradov type. As in our earlier efficient congruencing methods, we are able to 
extract a strong congruence condition on the underlying variables. Throughout, the 
conventions of the previous section remain in play, and in particular we write $R=r(r+1)/2$.

\begin{lemma}\label{lemma7.1} Suppose that $a$, $b$ and $r$ are integers with 
\begin{equation}\label{7.1}
1\le r\le k-1,\quad a\ge \del \tet,\quad b\ge \del \tet\quad \text{and}\quad ra\le 
(k-r+1)b\le B.
\end{equation}
Put
\begin{equation}\label{7.2}
b^\prime =\lceil (k-r+1)b/r\rceil.
\end{equation}
Then one has
$$K_{a,b,c}^{r,\bfvarphi,\nu}(\bfgra)\ll p^{k^2\nu}
K_{b^\prime ,b,c}^{r,\bfvarphi,\nu}(\bfgra).$$
\end{lemma}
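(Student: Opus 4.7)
The strategy is to use the approximate translation--dilation invariance afforded by the $p^c$-spaced hypothesis on $\bfvarphi$ to extract, from the congruence system (\ref{3.21}) underlying $K_{a,b,c}^{r,\bfvarphi,\nu}(\bfgra;\xi,\eta)$, an auxiliary system of $r$ shifted power-sum congruences in $R=r(r+1)/2$ variables, and then to apply a nested Hensel-type substitute---powered by the inductive hypothesis that Theorem \ref{theorem3.1} holds for exponents below $k$---to conclude that the $x_i$ and $y_i$ are essentially pinned down modulo $p^{b'}$. Following the normalisation step already used in the proof of Lemma \ref{lemma4.1}, I would exploit the $p^c$-spaced structure (together with $c\ge \tau B$, which ensures that all relevant matrix inversions are valid modulo $p^B$) to reduce to the case $\varphi_j(t)=t^j+p^c t^{k+1}\psi_j(t)$. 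Shifting by $\eta$ and using that $\bfv\equiv \bfw\equiv \eta\mmod{p^b}$---so that the $\ell$-th order Taylor contribution on the right of (\ref{3.21}) is divisible by $p^{\ell b}$---the top $r$ components of (\ref{3.21}), after inverting the near-triangular transformation relating $\{\varphi_j^{(\ell)}(\eta)\}$ to the binomial coefficients, yield
\[
\sum_{i=1}^R (x_i-\eta)^j \equiv \sum_{i=1}^R (y_i-\eta)^j \mmod{p^{(k-r+1)b}} \quad (k-r+1\le j\le k),
\]
precisely the shape previewed in \S2 for the nested argument; here the hypothesis $(k-r+1)b\le B$ ensures that the modulus $p^{(k-r+1)b}$ does not exceed $p^B$, while the bounds $a,b\ge \del \tet$ give enough room to ignore the lower-order errors coming from the $p^c t^{k+1}\psi_j(t)$ perturbation.

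Next, I would analyse this $r$-equation, $R$-variable system by a conditioning step followed by a Hensel-type inversion. The conditioning forces $R$ of the $x_i$ (and $R$ of the $y_i$) to occupy distinct residue classes modulo $p^{a+\nu}$; this costs no more than $p^{k^2\nu}$, with the residual collision cases collapsing to a smaller-$r$ configuration that can be absorbed by H\"older. The separation $\xi\not\equiv \eta\mmod{p^\nu}$ built into the very definition of $K_{a,b,c}^{r,\bfvarphi,\nu}$ simultaneously guarantees the Wronskian-type non-degeneracy needed for the subsequent lifting. With these conditions in hand, the critical input is Corollary \ref{corollary3.2} applied at the smaller exponent $r<k$---available via the overarching induction on $k$---which concentrates solutions of the shifted power-sum system at the scale $p^{\lceil (k-r+1)b/r\rceil}=p^{b'}$. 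Re-expressing the resulting weighted count via orthogonality, as in (\ref{3.19})--(\ref{3.20}), upgrades the mod-$p^a$ constraint on $(x_i,y_i)$ to a mod-$p^{b'}$ constraint, producing $K_{b',b,c}^{r,\bfvarphi,\nu}(\bfgra)$ and the claimed bound.

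The principal obstacle is the nested Hensel step: proving rigorously that a system of $r$ shifted power-sum congruences in $R=r(r+1)/2$ variables, with the degree range $k-r+1,\ldots,k$, admits the same diagonal concentration (at scale $p^{b'}$) that the induction hypothesis would provide for a bona fide degree-$r$ Vinogradov system. This demands careful bookkeeping of the error terms arising both from the $p^c$-spaced perturbation of the $\varphi_j$ and from the conditioning losses, in the spirit of the $k=1$ base case treated in Lemma \ref{lemma5.1}, where a single congruence was iteratively lifted by repeated use of the $p^c$-spaced relation; here the same idea must be cascaded through $r$ simultaneous congruences whilst keeping the total overhead comfortably below $p^{k^2\nu}$.
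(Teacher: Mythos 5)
Your outline of the first phase is correct and matches the paper: reduce via the $p^c$-spaced hypothesis to $\varphi_j(t)=t^j+p^ct^{k+1}\psi_j(t)$, shift by $\eta$ to obtain $\sum_i\Phi_j(x_i-\eta)\equiv\sum_i\Phi_j(y_i-\eta)\pmod{(p^{jb},p^B)}$, substitute $x_i=p^au_i+\xi$, $y_i=p^az_i+\xi$, keep only the top $r$ congruences, and invert the binomial-coefficient matrix to produce a $p^c$-spaced system $\bfPsi$ of degree $r$ in $2R$ variables; the key subsequent step is Corollary \ref{corollary3.2} applied at exponent $r<k$, which is exactly what the paper does.

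However, your ``conditioning step followed by a Hensel-type inversion'' is a wrong turn, and it is not in the paper. There is no conditioning to distinct residue classes modulo $p^{a+\nu}$ anywhere in this proof, and no Hensel-type argument. That was a feature of the \emph{older} multigrade method (cf. the heuristic around (\ref{2.7})), where one had $r$ congruences in $r$ solution variables, so forcing distinct classes gave a nonsingular Jacobian and Hensel's lemma pinned the variables down. Here the auxiliary system (\ref{7.12}) has $r$ congruences but $R=r(r+1)/2$ variables on each side, so Hensel lifting alone cannot determine them even after conditioning; and the claimed cost ``no more than $p^{k^2\nu}$'' and the claim that collision cases ``collapse to a smaller-$r$ configuration absorbable by H\"older'' are asserted without support and would require a separate, nontrivial argument. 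The whole point of the nested method, stated explicitly in \S1 and \S2, is that the inductive hypothesis (Theorem \ref{theorem3.1} for exponent $r$, via Corollary \ref{corollary3.2}) \emph{replaces} Hensel's lemma: one recognises $J(\bfalp)=U_{R,r}^{B'}(\bfgrc)$ and applies Corollary \ref{corollary3.2} directly, for an arbitrary sequence $\bfgrc\in\dbD_0$ and with no non-degeneracy hypothesis, to concentrate modulo $p^{H'}$ with $H'=\lceil B'/r\rceil$; a final application of Lemma \ref{lemma6.2} upgrades $a+H'$ to $b'$. You also misattribute the source of the Wronskian non-degeneracy: the invertibility of the matrix $A=(\Ome_{il})$ modulo $p$ comes from $p>k$ together with the $p^c$-spaced structure (the determinant reduces to a Vandermonde-type expression nonzero modulo $p$), while the condition $\xi\not\equiv\eta\pmod{p^\nu}$ is used only to guarantee that $\ome=(\xi-\eta)p^{-\gam}$ is a $p$-adic unit so that the factor $(\ome p^\gam)^{k-r}$ can be cleared. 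Finally, you omit the dichotomy on $B'=(k-r+1)b-ra-(k-r)\gam$: the inductive appeal is only valid when $B'$ is large, and the complementary regime $B'\le\nu$ must be dispatched separately (and trivially) via Lemma \ref{lemma6.2}, since (\ref{7.1}) permits $ra$ to be close to $(k-r+1)b$.
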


\begin{proof} In view of the definition (\ref{3.19}), we focus initially on the mean value 
$K_{a,b,c}^{r,\bfvarphi,\nu}(\bfgra;\xi,\eta)$, in which we may suppose that 
$\eta\not\equiv \xi\mmod{p^\nu}$. The latter condition permits us the hypothesis 
$p^\gam \| (\xi-\eta)$ for some non-negative integer $\gam$ with $\gam<\nu$. We define 
the integer $\ome$ by putting $\ome=(\xi-\eta)p^{-\gam}$, so that one has 
$(\ome ,p)=1$. It is convenient for future use to introduce the parameter
\begin{equation}\label{7.3}
B^\prime=(k-r+1)b-ra-(k-r)\gam.
\end{equation}
It transpires that in our main argument it is useful to have a little extra room in which to 
work. We therefore begin by exploring the situation in which $B'\le \nu$. In such 
circumstances, it follows from (\ref{7.3}) that
$$(k-r+1)b-ra\le \nu +(k-r)\gam\le k\nu.$$
On recalling (\ref{7.2}), one then finds that
\begin{equation}\label{7.4}
b^\prime -a\le 1+k\nu/r,
\end{equation}
and an application of Lemma \ref{lemma6.2} leads to the bound
$$\rho_a(\xi)^2|\grf_a(\bfalp;\xi)|^{2R}\le (p^{b^\prime -a})^R
\sum_{\substack{\zet\nmod{p^{b^\prime}}\\ \zet\equiv \xi\mmod{p^a}}}
\rho_{b^\prime}(\zet)^2|\grf_{b^\prime}(\bfalp;\zet)|^{2R}.$$
Thus we discern from (\ref{3.19}) and (\ref{3.20}) that
$$K_{a,b,c}^{r,\bfvarphi,\nu}(\bfgra)\le (p^{b^\prime-a})^R
K_{b^\prime,b,c}^{r,\bfvarphi,\nu}(\bfgra).$$
Consequently, in view of (\ref{7.4}) and the relation $R=r(r+1)/2<kr/2$, we obtain the 
estimate
$$K_{a,b,c}^{r,\bfvarphi,\nu}(\bfgra)\le p^{k^2\nu}
K_{b^\prime,b,c}^{r,\bfvarphi,\nu}(\bfgra).$$
This bound delivers the conclusion of the lemma when $B^\prime \le \nu$.\par

We now focus on the alternate situation with
\begin{equation}\label{7.5}
B^\prime >\nu.
\end{equation}
We recall that, by orthogonality and the definition (\ref{3.20}), the mean value 
$K_{a,b,c}^{r,\bfvarphi,\nu}(\bfgra;\xi,\eta)$ counts the integral solutions of the 
simultaneous congruences (\ref{3.21}), with their attendant conditions (\ref{3.22}), and with 
each solution being counted with weight (\ref{3.23}). We temporarily focus exclusively on 
the nature of these congruences.\par

In the first phase of our argument, we seek to extract from the right hand side of 
(\ref{3.21}) a system resembling one of Vinogradov type. Observe that $\bfvarphi$ is a 
$p^c$-spaced system, so it follows just as in the argument of the proof of Lemma 
\ref{lemma4.1} leading to (\ref{4.6}) that, in the first instance, there is no loss of 
generality in supposing that
$$\varphi_j(t)=t^j+p^ct^{k+1}\psi_j(t)\quad (1\le j\le k),$$
for appropriate polynomials $\psi_j\in \dbZ[t]$. In view of the constraints (\ref{3.22}), we 
may substitute
$$v_l=p^bu_l+\eta\quad \text{and}\quad w_l=p^bz_l+\eta\quad (1\le l\le s-R),$$
and (\ref{3.21}) is transformed into the new system
\begin{align}
\sum_{i=1}^R\left(\varphi_j(x_i)-\varphi_j(y_i)\right)\equiv \sum_{l=1}^{s-R}
\left( \varphi_j(p^bu_l+\eta)-\varphi_j(p^bz_l+\eta)\right) &\mmod{p^B}\notag \\
&(1\le j\le k).\label{7.6}
\end{align}
Next, as in the argument of the proof of Lemma \ref{lemma4.1} leading from (\ref{4.6}) to 
(\ref{4.7}) and (\ref{4.8}), we may take appropriate integral linear combinations of the 
congruences comprising (\ref{7.6}) to confirm that there exist polynomials 
$\Psi_j\in \dbZ[t]$ for which
\begin{align*}
\sum_{i=1}^R\left(\Phi_j(x_i-\eta)-\Phi_j(y_i-\eta)\right)\equiv \sum_{l=1}^{s-R}\left( 
\Phi_j(p^bu_l)-\Phi_j(p^bz_l)\right) &\mmod{p^B}\\
&\ (1\le j\le k),
\end{align*}
in which
\begin{equation}\label{7.7}
\Phi_j(t)=t^j+p^ct^{k+1}\Psi_j(t).
\end{equation}
In particular, the system $\bfPhi$ is $p^c$-spaced, and one has
\begin{equation}\label{7.8}
\sum_{i=1}^R\Phi_j(x_i-\eta)\equiv \sum_{i=1}^R\Phi_j(y_i-\eta)\mmod{(p^{jb},p^B)}
\quad (1\le j\le k).
\end{equation}

\par In the second phase of our analysis, we seek to extract from (\ref{7.8}) a system 
resembling one of Vinogradov type. With a limited supply of variable names available, it will 
be expedient to recycle letters in circumstances where confusion is easily avoided. We recall 
from (\ref{3.22}) that in the solutions of (\ref{3.21}) of interest to us, we have 
$\bfx\equiv \bfy\equiv \xi\mmod{p^a}$. Thus we may substitute
\begin{equation}\label{7.9}
x_i=p^au_i+\xi\quad \text{and}\quad y_i=p^az_i+\xi\quad (1\le i\le R).
\end{equation}
We recall that $\xi-\eta=\ome p^\gam$, and that we may suppose that $B\ge (k-r+1)b$. 
Thus, by dropping mention in (\ref{7.8}) of those congruences with index $j$ satisfying 
$1\le j\le k-r$, and substituting (\ref{7.9}) into (\ref{7.8}), we deduce that
\begin{align}\label{7.10}
\sum_{i=1}^R\Phi_j(p^au_i+\ome p^\gam)\equiv \sum_{i=1}^R\Phi_j(p^az_i+\ome 
p^\gam)&\mmod{p^{(k-r+1)b}}\notag \\
&\ (k-r+1\le j\le k).
\end{align}

\par The situation in (\ref{7.10}) is somewhat akin to that encountered in the argument 
leading from (\ref{4.6}) to (\ref{4.7}) and (\ref{4.8}). We apply the binomial theorem within 
(\ref{7.7}). In this way, when $1\le l\le r$, we find that for suitable polynomials 
$\Tet_l\in \dbZ[t]$, one has
$$\Phi_{k-r+l}(p^ay+\ome p^\gam)-\Phi_{k-r+l}(\ome p^\gam)=\Ups_l(p^ay),$$
in which
$$\Ups_l(t)=\sum_{i=1}^r\Ome_{il}(\ome p^\gam)^{k-r+l-i}t^i+t^{r+1}\Tet_l(t),$$
and the integral coefficients $\Ome_{il}$ satisfy the congruence
$$\Ome_{il}\equiv \binom {k-r+l}{i}\mmod{p^c}\quad (1\le i,l\le r).$$
Much as in the argument of the proof of \cite[Lemma 3.2]{Woo2013}, our hypothesis $p>k$ 
ensures that the matrix $A=(\Ome_{il})_{1\le i,l\le r}$ satisfies 
$\det(A)\not\equiv 0\mmod{p}$, and hence that $A$ possesses a multiplicative inverse 
$A^{-1}$ modulo $p^{(k-r+1)b}$ having integral coefficients. Indeed, it is apparent that 
$(1!\cdot 2!\cdots r!)\det(A)$ is congruent modulo $p$ to
$$\det \left( (k-j+1)\cdots (k-j+1-(i-1))\right)_{1\le i,j\le r}=
\det \left( (k-j+1)^i\right)_{1\le i,j\le r},$$
and hence also to
$$\biggl( \prod_{l=1}^r(k-l+1)\biggr) \biggl( \prod_{1\le i<j\le r}\left( (k-j+1)-(k-i+1)
\right) \biggr) \not\equiv 0\mmod{p}.$$
Here, we have corrected an inconsequential oversight in the evaluation of a determinant in 
the proof of \cite[Lemma 3.2]{Woo2013}.\par

We now replace $\bfUps$ by $A^{-1}\bfUps$ and $\bfTet$ by $A^{-1}\bfTet$. This once 
again amounts to taking suitable integral linear combinations of the congruences comprising 
(\ref{7.10}). In this way, we see that there is no loss of generality in supposing that the 
coefficient matrix $A$ is equal to $I_r$. Since $(\ome,p)=1$, moreover, there is an integral 
multiplicative inverse $\ome^{-1}$ for $\ome$ modulo $p^{(k-r+1)b}$. Hence, there exist 
polynomials $\Xi_l\in \dbZ[t]$ having the property that whenever the system (\ref{7.10}) is 
satisfied, then
$$(\ome p^\gam)^{k-r}\sum_{i=1}^R(p^a)^l\left( \Psi_l(u_i)-\Psi_l(z_i)\right) \equiv 0
\mmod{p^{(k-r+1)b}}\quad (1\le l\le r),$$
in which
\begin{equation}\label{7.11}
\Psi_l(t)=t^l+p^{a-(k-r)\gam}\Xi_l(t).
\end{equation}
Notice here that, since we may suppose from (\ref{7.1}) that $a\ge \del \tet$, and 
$\gam\le \nu$, our hypothesis concerning the hierarchy (\ref{6.2}) combines with (\ref{6.5}) 
and (\ref{6.10}) to give
$$k\gam\le k\lceil 4\eps H\Lam^{-1}\rceil <\del^2\lceil \mu H\rceil \le \del a,$$
and hence
$$a-(k-r)\gam>(1-\del)a\ge \del(1-\del)\mu H>\tau B.$$
The exponent of $p$ on the right hand side of (\ref{7.11}) is therefore positive, and indeed 
the system $\bfPsi$ is $p^c$-spaced for some $c>\tau(k-r+1)b$.\par

The integral solutions $\bfx=p^a\bfu+\xi$, $\bfy=p^a\bfz+\xi$, $\bfv$, $\bfw$ of the 
original system of congruences (\ref{3.21}), counted with weight (\ref{3.23}) associated 
with the definition (\ref{3.20}) of $K_{a,b,c}^{r,\bfvarphi,\nu}(\bfgra;\xi,\eta)$, are 
therefore constrained by the additional system of congruences
\begin{equation}\label{7.12}
\sum_{i=1}^R\Psi_l(u_i)\equiv \sum_{i=1}^R\Psi_l(z_i)\mmod{p^{B^\prime}}\quad 
(1\le l\le r),
\end{equation}
in which $B^\prime$ is defined by (\ref{7.3}).\par

Our goal at this point is to apply the inductive hypothesis, meaning the conclusion of 
Theorem \ref{theorem3.1} with $k$ replaced by $r$, so as to extract congruence 
information from the relations (\ref{7.12}). We recall at this point that in view of 
(\ref{7.5}), we may suppose that $B^\prime >\nu$. Write 
$H^\prime =\lceil B^\prime /r\rceil $. Then in view of (\ref{7.2}) and (\ref{7.3}), one has
\begin{equation}\label{7.13}
b^\prime-H^\prime\le a+1+(k-r)\gam/r.
\end{equation}
We note from (\ref{6.5}) that $B'>4\eps H\Lam^{-1}$, so our hypotheses concerning $B$ 
and the hierarchy (\ref{6.2}) ensure that $B^\prime$ is sufficiently large in terms of $k$, 
$\Lam$, $\mu$, $\tau$, $\del$ and $\eps$. We reinterpret the definition (\ref{3.4}) of 
$\grf_h(\bfalp;\xi)$ in the shape (\ref{4.2}), so that
\begin{equation}\label{7.14}
\grf_a(\bfalp;\xi)=\rho_a(\xi)^{-1}\sum_{y\in \dbZ}\grc_y(\bfalp),
\end{equation}
where
$$\grc_y(\bfalp)=\gra_{p^ay+\xi}e(\psi(p^ay+\xi;\bfalp))\quad (y\in \dbZ),$$
and $\psi(n;\bfalp)$ is defined by (\ref{3.3}). Notice that
\begin{equation}\label{7.15}
\rho_0(1;\bfgrc)^2=\sum_{y\in \dbZ}|\grc_y(\bfalp)|^2=\sum_{y\in \dbZ}
|\gra_{p^ay+\xi}|^2=\rho_a(\xi)^2.
\end{equation}
Finally, define the exponential sum
\begin{equation}\label{7.16}
\grg_\bfgrc(\bfalp,\bfbet)=\rho_0(1;\bfgrc)^{-1}\sum_{y\in \dbZ}\grc_y(\bfalp) 
e(\bet_1\Psi_1(y)+\ldots +\bet_r\Psi_r(y)),
\end{equation}
and the mean value
$$J(\bfalp)=\oint_{p^{B^\prime}}|\grg_\bfgrc(\bfalp,\bfbet)|^{2R}\d\bfbet .$$

\par By orthogonality, the mean value $J(\bfalp)$ counts the integral solutions of the 
system of congruences (\ref{7.12}), with each solution $\bfu$, $\bfz$ being counted with 
weight
\begin{align*}
&\rho_0(1;\bfgrc)^{-2R}\prod_{i=1}^R\grc_{u_i}(\bfalp){\overline \grc_{z_i}(\bfalp)}\\
&=\rho_a(\xi)^{-2R}\biggl( \prod_{i=1}^R\gra_{p^au_i+\xi}{\overline \gra_{p^az_i+\xi}}
\biggr) e\biggl( \sum_{i=1}^R \left( \psi(p^au_i+\xi;\bfalp)-\psi(p^az_i+\xi;\bfalp )
\right)\biggr) .
\end{align*}
But the conditions (\ref{7.12}) on $\bfx=p^a\bfu+\xi$ and $\bfy=p^a\bfz+\xi$ are implied 
by (\ref{3.21}), as we have seen. Thus, on noting that from (\ref{7.14})-(\ref{7.16}), one 
has
$$\grg_\bfgrc(\bfalp,{\mathbf 0})=\grf_a(\bfalp;\xi),$$
we deduce that
\begin{equation}\label{7.17}
\oint_{p^B}|\grf_a(\bfalp;\xi)^{2R}\grf_b(\bfalp;\eta)^{2s-2R}|\d\bfalp=
\oint_{p^B}\oint_{p^{B^\prime}}|\grg_\bfgrc(\bfalp,\bfbet)^{2R}
\grf_b(\bfalp;\eta)^{2s-2R}|\d\bfbet \d\bfalp,
\end{equation}
whence
\begin{equation}\label{7.18}
K_{a,b,c}^{r,\bfvarphi,\nu}(\bfgra;\xi,\eta)=
\oint J(\bfalp)|\grf_b(\bfalp;\eta)|^{2s-2R}\d\bfalp .
\end{equation}
The point here is that the inner integral over $\bfbet$ on the right hand side of (\ref{7.17}) 
is essentially redundant, since the integral over $\bfalp$ already restricts the variables 
underlying the term $|\grg_\bfgrc(\bfalp,\bfbet)|^{2R}$ to satisfy the system of 
congruences (\ref{7.12}).\par

The mean value $J(\bfalp)$ is of the type estimated in Corollary \ref{corollary3.2} to 
Theorem \ref{theorem3.1}. The system $\bfPsi$ is $p^c$-spaced for some 
$c\ge \tau B^\prime$ and $B^\prime$ is sufficiently large in terms of $k$, $\tau$ and 
$\eps^2$. Hence
\begin{align}
J(\bfalp)&=U_{R,r}^{B^\prime}(\bfgrc)\ll p^{B^\prime \eps^2}
U_{R,r}^{B^\prime,H^\prime}(\bfgrc)\notag \\
&=p^{B^\prime \eps^2}\rho_0(1;\bfgrc)^{-2}\sum_{\zet\nmod{p^{H^\prime}}}
\rho_{H^\prime}(\zet;\bfgrc)^2\oint_{p^{B^\prime}}|\grg_{\bfgrc^\prime}(\bfalp,
\bfbet)|^{2R}\d\bfbet ,\label{7.19}
\end{align}
in which the sequence $\bfgrc^\prime =(\grc^\prime_n(\bfalp))_{n\in \dbZ}$ is defined by 
putting
$$\grc^\prime_n(\bfalp)=\begin{cases} \grc_n(\bfalp),&\text{when 
$n\equiv \zet\mmod{p^{H^\prime}}$},\\
0,&\text{otherwise}.\end{cases}$$
We have $\rho_0(1;\bfgrc)=\rho_a(\xi)$. Also, on writing $\kap=p^a\zet+\xi$, we see that
$$\rho_{H^\prime}(\zet;\bfgrc)^2=\sum_{y\equiv \zet\mmod{p^{H^\prime}}}
|\gra_{p^ay+\xi}|^2=\sum_{n\equiv \kap\mmod{p^{a+H^\prime}}}|\gra_n|^2
=\rho_{a+H^\prime}(\kap)^2.$$
Thus, on substituting (\ref{7.19}) into (\ref{7.18}), we conclude that
\begin{align}
&\rho_a(\xi)^2K_{a,b,c}^{r,\bfvarphi,\nu}(\bfgra;\xi,\eta)\notag \\
&\ll p^{B^\prime \eps^2}\sum_{\substack{\kap\nmod{p^{a+H^\prime}}\\ 
\kap\equiv \xi\mmod{p^a}}}\rho_{a+H^\prime}(\kap)^2
\oint_{p^B}\oint_{p^{B^\prime}}|\grg_{\bfgrc^\prime}(\bfalp,\bfbet)^{2R}
\grf_b(\bfalp;\eta)^{2s-2R}|\d\bfbet \d\bfalp .\label{7.20}
\end{align}

\par We next unravel the mean value occurring on the right hand side of (\ref{7.20}), 
reversing our previous course. The mean value
$$\oint_{p^{B^\prime}}|\grg_{\bfgrc^\prime}(\bfalp,\bfbet)|^{2R}\d\bfbet $$
counts the integral solutions of the system of congruences (\ref{7.12}), with each solution 
$\bfu$, $\bfz$ being counted with weight
\begin{align*}
&\rho_0(1;\bfgrc^\prime)^{-2R}\prod_{i=1}^R\grc^\prime_{u_i}(\bfalp)
{\overline \grc^\prime_{z_i}(\bfalp)}\\
&=\rho_{a+H^\prime}(\kap)^{-2R}\biggl( \prod_{i=1}^R\gra_{p^au_i+\xi}
{\overline \gra_{p^az_i+\xi}}\biggr) 
e\biggl( \sum_{i=1}^R \left( \psi(p^au_i+\xi;\bfalp)-\psi(p^az_i+\xi;\bfalp )
\right)\biggr) ,
\end{align*}
but now subject to the constraint $\bfu\equiv \bfz\equiv \zet\mmod{p^{H^\prime}}$. The 
conditions (\ref{7.12}) on $\bfx=p^a\bfu+\xi$ and $\bfy=p^a\bfz+\xi$ are again implied by 
(\ref{3.21}). Then since
$$\grg_{\bfgrc^\prime}(\bfalp;{\mathbf 0})=\grf_{a+H^\prime}(\bfalp;\kap),$$
we deduce that
\begin{align}
\oint_{p^B}\oint_{p^{B^\prime}}|\grg_{\bfgrc^\prime}(\bfalp,\bfbet)^{2R}&
\grf_b(\bfalp;\eta)^{2s-2R}|\d\bfbet \d\bfalp \notag \\
&=\oint_{p^B}|\grf_{a+H^\prime}(\bfalp;\kap)^{2R}\grf_b(\bfalp;\eta)^{2s-2R}|
\d\bfalp .\label{7.21}
\end{align}
Moreover, on making use of Lemma \ref{lemma6.2}, one sees that
\begin{equation}\label{7.22}
\rho_{a+H^\prime}(\kap)^2|\grf_{a+H^\prime}(\bfalp;\kap)|^{2R}\le 
\left( p^{b^\prime -(a+H^\prime)}\right)^R 
\sum_{\substack{\xi^\prime \nmod{p^{b^\prime}}\\ \xi^\prime \equiv 
\kap\mmod{p^{a+H^\prime}}}}\rho_{b^\prime}(\xi^\prime)^2
|\grf_{b^\prime}(\bfalp;\xi^\prime)|^{2R}.
\end{equation}
Thus, on substituting (\ref{7.22}) into (\ref{7.21}) and thence into (\ref{7.20}), we obtain 
the bound
\begin{equation}\label{7.23}
\rho_a(\xi)^2K_{a,b,c}^{r,\bfvarphi,\nu}(\bfgra;\xi,\eta)\ll 
p^{B^\prime \eps^2+R(b^\prime -a-H^\prime)}
\sum_{\substack{\xi^\prime \nmod{p^{b^\prime}}\\ \xi^\prime \equiv \xi\mmod{p^a}}}
\rho_{b^\prime}(\xi^\prime)^2K_{b^\prime,b,c}^{r,\bfvarphi,\nu}(\bfgra;\xi^\prime,\eta).
\end{equation}
Next, substituting (\ref{7.23}) into (\ref{3.19}), we conclude that
$$\rho_0^4K_{a,b,c}^{r,\bfvarphi,\nu}(\bfgra)\ll 
p^{B^\prime \eps^2+R(b^\prime -a-H^\prime )}
\sum_{\xi^\prime \nmod{p^{b^\prime}}}\sum_{\substack{\eta\nmod{p^b}\\ 
\eta \not \equiv \xi^\prime \mmod{p^\nu}}}\rho_{b^\prime}(\xi^\prime)^2\rho_b(\eta)^2
K_{b^\prime,b,c}^{r,\bfvarphi,\nu}(\bfgra;\xi^\prime,\eta),$$
whence
\begin{equation}\label{7.24}
K_{a,b,c}^{r,\bfvarphi,\nu}(\bfgra)\ll 
p^{B^\prime \eps^2+R(b^\prime -a-H^\prime )}K_{b^\prime,b,c}^{r,\bfvarphi,\nu}
(\bfgra).
\end{equation}

\par In order to complete the proof of the lemma, it remains only to note that, from 
(\ref{7.13}), one has
\begin{align*}
R(b^\prime -a-H^\prime)&= r(r+1)(b^\prime-a-H^\prime)/2\\
&\le (r+1)(r+(k-r)\gam)/2<k^2\nu/2.
\end{align*}
Thus, in view of the definition (\ref{6.5}) and our hierarchy (\ref{6.2}),
$$B^\prime \eps^2+R(b^\prime -a-H^\prime)<\nu +k^2\nu/2<k^2\nu .$$
The conclusion of the lemma therefore follows from (\ref{7.24}) in this second situation with 
$B^\prime >\nu$.
\end{proof}

\section{The iterative step} The work of the previous section shows how to generate 
powerful congruence constraints on the variables underlying the mean value 
$K_{a,b,c}^{r,\bfvarphi,\nu}(\bfgra)$. As in earlier efficient congruencing methods, we 
must now interchange the roles of the two sets of variables so that these congruence 
constraints may be employed anew to generate yet stronger constraints. In our earlier 
multigrade method, this was achieved by employing H\"older's inequality in a somewhat 
greedy manner. On this occasion, we take a slightly more measured approach, though the 
underlying ideas remain the same. At this point, the parameters $\bfvarphi$, $\nu$ and $c$ 
have ceased to possess any particular significance, and we omit mention of them in our 
various notations. Also, throughout this section, we suppose that $a$, $b$ and $r$ are 
integers satisfying (\ref{7.1}), and we define $b^\prime$ via (\ref{7.2}).

\begin{lemma}\label{lemma8.1} When $r\ge 2$, one has
\begin{equation}\label{8.1}
K_{a,b}^r(\bfgra)\ll p^{k^2\nu}K_{b,b^\prime}^{k-r}(\bfgra)^{\tfrac{1}{k-r+1}}
K_{b^\prime,b}^{r-1}(\bfgra)^{\tfrac{k-r}{k-r+1}}.
\end{equation}
Meanwhile, when $r=1$, one instead has
\begin{equation}\label{8.2}
K_{a,b}^1(\bfgra)\ll p^{k^2\nu}K_{b,kb}^{k-1}(\bfgra)^{1/k}
U_{s,k}^{B,b}(\bfgra)^{1-1/k}.
\end{equation}
\end{lemma}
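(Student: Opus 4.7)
The plan is to combine Lemma~\ref{lemma7.1} with two applications of H\"older's inequality. Lemma~\ref{lemma7.1} immediately yields
$$K_{a,b,c}^{r,\bfvarphi,\nu}(\bfgra) \ll p^{k^2\nu} K_{b',b,c}^{r,\bfvarphi,\nu}(\bfgra),$$
so it suffices to bound $K_{b',b}^{r}(\bfgra)$ by the right-hand sides in (\ref{8.1}) and (\ref{8.2}). The heart of the argument is a pointwise H\"older split of the integrand of $K_{b',b}^{r}(\bfgra;\xi,\eta)$. Writing $R=r(r+1)/2$, $R'=(r-1)r/2$ and $R''=(k-r)(k-r+1)/2$, I would apply H\"older with weights $\theta_1=1/(k-r+1)$ and $\theta_2=(k-r)/(k-r+1)$, splitting $|\grf_{b'}(\bfalp;\xi)|^{2R}|\grf_b(\bfalp;\eta)|^{2s-2R}$ as the product of $\bigl(|\grf_b(\bfalp;\eta)|^{2R''}|\grf_{b'}(\bfalp;\xi)|^{2s-2R''}\bigr)^{\theta_1}$ and $\bigl(|\grf_{b'}(\bfalp;\xi)|^{2R'}|\grf_b(\bfalp;\eta)|^{2s-2R'}\bigr)^{\theta_2}$. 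The exponent check, that the respective total powers of $\grf_{b'}$ and $\grf_b$ are $2R$ and $2s-2R$, reduces to the elementary identity $k(k+1)-r(r-1)=(k-r+1)(k+r)$. Integrating in $\bfalp$ then yields
$$K_{b',b}^{r}(\bfgra;\xi,\eta) \le K_{b,b'}^{k-r}(\bfgra;\eta,\xi)^{\theta_1}\, K_{b',b}^{r-1}(\bfgra;\xi,\eta)^{\theta_2},$$
where the swap $\xi\leftrightarrow\eta$ in the first factor reflects which variable now carries the larger exponent.

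The second step is to apply H\"older once more to the weighted sum (\ref{3.19}) defining $K_{b',b}^{r}(\bfgra)$, factoring the weights $\rho_0^{-4}\rho_{b'}(\xi)^2\rho_b(\eta)^2$ as the $\theta_1$-th and $\theta_2$-th powers of themselves. Since the constraint $\xi\not\equiv\eta\mmod{p^\nu}$ is symmetric in its two arguments, a relabelling $\xi\leftrightarrow\eta$ in the resulting first factor identifies it with $K_{b,b'}^{k-r}(\bfgra)$ (its first argument now sitting at level $b$, as required by the convention in (\ref{3.20})), and the second factor is exactly $K_{b',b}^{r-1}(\bfgra)$. This produces (\ref{8.1}) for $r\ge 2$.

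For the case $r=1$, one has $R=1$ and $b'=kb$, and the factor $K_{b',b}^{r-1}$ degenerates to $\oint_{p^B}|\grf_b(\bfalp;\eta)|^{2s}\d\bfalp$, since there is no congruence restriction on the first variable. Summing the resulting pointwise H\"older bound against the weights (and discarding the non-congruence condition for an upper bound), the $\xi$-sum collapses via $\sum_{\xi\nmod{p^{kb}}}\rho_{kb}(\xi)^2=\rho_0^2$, while the $\eta$-sum is identified with $\rho_0^2 U_{s,k}^{B,b}(\bfgra)$ through the defining relation (\ref{3.8}). Combining with the factor of $p^{k^2\nu}$ from Lemma~\ref{lemma7.1} gives (\ref{8.2}). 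The sole obstacle is bookkeeping: correctly tracking which of $\xi,\eta$ is at which $p$-adic height and verifying that the pointwise exponents balance; once these identities are confirmed, both the averaged H\"older and the identification with $U_{s,k}^{B,b}(\bfgra)$ in the case $r=1$ are routine.
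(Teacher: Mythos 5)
Your proposal is correct and matches the paper's own argument. The pointwise Hölder split you describe, applied with weights $1/(k-r+1)$ and $(k-r)/(k-r+1)$ to the integrand $|\grf_{b'}|^{2R}|\grf_b|^{2s-2R}$, is exactly the Hölder application the paper makes at the level of the integrals $U_1$ and $U_2$, followed by the same second Hölder to pass from $K_{b',b}^r(\bfgra;\xi,\eta)$ to $K_{b',b}^r(\bfgra)$; the $r=1$ case is also handled identically, with $K_{b',b}^0(\bfgra)\le U_{s,k}^{B,b}(\bfgra)$ arising from the $R=0$ degeneration.
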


\begin{proof} Observe that
\begin{align*}
k(k+1)-r(r+1)&=(k+r)(k-r)+(k-r)\\
&=\frac{(k-r)(k-r+1)}{k-r+1}+\left( k(k+1)-r(r-1)\right) \cdot \frac{k-r}{k-r+1}.
\end{align*}
Then for any integers $\zet$ and $\eta$, it is a consequence of H\"older's inequality that 
when $1\le r\le k-1$, one has
$$\oint_{p^B}|\grf_{b^\prime}(\bfalp;\zet)^{r(r+1)}\grf_b(\bfalp;\eta)^{k(k+1)-r(r+1)}|
\d\bfalp \le U_1^{\tfrac{1}{k-r+1}}U_2^{\tfrac{k-r}{k-r+1}},$$
where
$$U_1=\oint_{p^B}|\grf_b(\bfalp;\eta)^{(k-r)(k-r+1)}\grf_{b^\prime}(\bfalp;\zet)^{
k(k+1)-(k-r)(k-r+1)}|\d\bfalp $$
and
$$U_2=\oint_{p^B}|\grf_{b^\prime}(\bfalp;\zet)^{r(r-1)}\grf_b(\bfalp;\eta)^{
k(k+1)-r(r-1)}|\d\bfalp .$$
Thus, on recalling (\ref{3.19}) and (\ref{3.20}), we deduce first that
$$K_{b^\prime,b}^r(\bfgra;\zet,\eta)\le K_{b,b^\prime}^{k-r}(\bfgra;\eta,\zet)^{
\tfrac{1}{k-r+1}}K_{b^\prime,b}^{r-1}(\bfgra;\zet,\eta)^{\tfrac{k-r}{k-r+1}},$$
and hence, by another application of H\"older's inequality, that
\begin{equation}\label{8.3}
K_{b^\prime,b}^r(\bfgra)\le K_{b,b^\prime}^{k-r}(\bfgra)^{\tfrac{1}{k-r+1}}
K_{b^\prime,b}^{r-1}(\bfgra)^{\tfrac{k-r}{k-r+1}}.
\end{equation}
Since Lemma \ref{lemma7.1} shows that $K_{a,b}^r(\bfgra)\ll p^{k^2\nu}
K_{b^\prime,b}^r(\bfgra)$, the conclusion (\ref{8.1}) of the lemma is immediate from 
(\ref{8.3}) in the case $r\ge 2$.\par

In order to handle the case $r=1$, we begin by noting that from (\ref{3.19}) and 
(\ref{3.20}), one has
$$K_{b^\prime,b}^0(\bfgra)=\rho_0^{-4}\sum_{\xi\nmod{p^{b^\prime}}}
\sum_{\substack{\eta\nmod{p^b}\\ \xi\not \equiv \eta\mmod{p^\nu}}}\rho_{b^\prime}
(\xi)^2\rho_b(\eta)^2\oint_{p^B}|\grf_b(\bfalp;\eta)|^{2s}\d\bfalp .$$
Since it follows from (\ref{3.8}) that
$$\sum_{\eta\nmod{p^b}}\rho_b(\eta)^2\oint_{p^B}|\grf_b(\bfalp;\eta)|^{2s}
\d\bfalp =\rho_0^2U_{s,k}^{B,b}(\bfgra)$$
and
$$\sum_{\xi\nmod{p^{b^\prime}}}\rho_{b^\prime}(\xi)^2=\rho_0^2,$$
it follows that $K_{b^\prime,b}^0(\bfgra)=U_{s,k}^{B,b}(\bfgra)$, and so the desired 
conclusion (\ref{8.2}) again follows from (\ref{8.3}) when $r=1$.
\end{proof}

We next interpret the conclusion of Lemma \ref{lemma8.1} in terms of the anticipated 
order of magnitude normalisation defined in (\ref{3.24}).

\begin{lemma}\label{lemma8.2} When $r\ge 2$, one has
\begin{equation}\label{8.4}
\llbracket K_{a,b}^r(\bfgra)\rrbracketsub{\Lambda}\ll p^{k^2\nu}\llbracket 
K_{b,b^\prime }^{k-r}(\bfgra)\rrbracketsub{\Lam}^{1/(k-r+1)}
\llbracket K_{b^\prime,b}^{r-1}(\bfgra)\rrbracketsub{\Lam}^{1-1/r}.
\end{equation}
Meanwhile, when $r=1$ one instead has
\begin{equation}\label{8.5}
\llbracket K_{a,b}^1(\bfgra)\rrbracketsub{\Lam}\ll p^{2k^2\nu}\llbracket K_{b,kb}^{k-1}
(\bfgra)\rrbracketsub{\Lam}^{1/k}(p^{-b})^{\Lam (1-1/k)}.
\end{equation}
\end{lemma}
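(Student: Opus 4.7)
The plan is to derive both bounds by direct substitution of the normalising identity (3.24) into the estimates of Lemma 8.1, followed by balancing the exponents of $p^{\Lambda H}U_{s,k}^{B,H}(\bfgra)$ on each side. Writing $E(t)=t(k-t)/(k-1)$, which enjoys the symmetry $E(t)=E(k-t)$, the definition (3.24) reads
$$K_{c,d}^t(\bfgra)=\llbracket K_{c,d}^t(\bfgra)\rrbracketsub{\Lambda}^{E(t)}\cdot p^{\Lambda H}U_{s,k}^{B,H}(\bfgra),$$
so that once this identity is substituted, the global factors $p^{\Lambda H}U_{s,k}^{B,H}(\bfgra)$ will cancel and the residual inequality will only need to be raised to the power $1/E(r)$.

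For the case $r\ge 2$, I would substitute this identity for $K_{a,b}^r$, $K_{b,b'}^{k-r}$ and $K_{b',b}^{r-1}$ into (8.1). The combined exponent of $p^{\Lambda H}U_{s,k}^{B,H}(\bfgra)$ on the right-hand side is $\tfrac{1}{k-r+1}+\tfrac{k-r}{k-r+1}=1$, matching the left-hand side, so these global factors indeed cancel. Using $E(k-r)=E(r)$, the resulting inequality is
$$\llbracket K_{a,b}^r(\bfgra)\rrbracketsub{\Lambda}^{E(r)}\ll p^{k^2\nu}\llbracket K_{b,b'}^{k-r}(\bfgra)\rrbracketsub{\Lambda}^{E(r)/(k-r+1)}\llbracket K_{b',b}^{r-1}(\bfgra)\rrbracketsub{\Lambda}^{(k-r)E(r-1)/(k-r+1)}.$$
Raising to the power $1/E(r)=(k-1)/(r(k-r))$ then yields (8.4), since the exponent on $\llbracket K_{b,b'}^{k-r}\rrbracketsub{\Lambda}$ becomes $1/(k-r+1)$, that on $\llbracket K_{b',b}^{r-1}\rrbracketsub{\Lambda}$ simplifies via $(k-r)(r-1)(k-r+1)/((k-r+1)r(k-r))=(r-1)/r=1-1/r$, and the prefactor $p^{k^2\nu\cdot(k-1)/(r(k-r))}$ is majorised by $p^{k^2\nu}$.

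For $r=1$, the factor $U_{s,k}^{B,b}(\bfgra)$ appearing in (8.2) is not yet in normalised form, so I would first invoke (6.4), which under our hierarchy and the hypothesis (7.1) applies with $h=b$ to give $U_{s,k}^{B,b}(\bfgra)\ll (p^{H-b})^{\Lambda+\eps}U_{s,k}^{B,H}(\bfgra)$. Substituting this together with (3.24) into (8.2), and noting $E(1)=E(k-1)=1$, the $U_{s,k}^{B,H}(\bfgra)$ factors cancel and the residual powers of $p$ simplify to $p^{k^2\nu}\cdot p^{(1-1/k)(\eps H-(\Lambda+\eps)b)}$. By (6.5) one has $\eps H\le\Lambda\nu/4\le k^2\nu$, so the positive part $p^{(1-1/k)\eps H}$ is absorbed into an overall $p^{2k^2\nu}$; the factor $p^{-(1-1/k)\eps b}\le 1$ is harmless; and the remaining $p^{-(1-1/k)\Lambda b}$ provides the claimed $(p^{-b})^{\Lambda(1-1/k)}$.

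The principal obstacle is purely bookkeeping: verifying that the various fractional exponents emerging from (3.24) recombine exactly to the exponents stated in (8.4) and (8.5), and checking that the hypothesis $b\le(1-\delta)H$ required by (6.4) is met under (7.1) and the hierarchy (6.2). No further analytic input beyond Lemma 8.1 and (6.4) is required.
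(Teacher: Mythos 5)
Your proposal is correct and follows essentially the same route as the paper: both substitute the normalising identity (3.24) into the estimates of Lemma 8.1, cancel the common factor $p^{\Lam H}U_{s,k}^{B,H}(\bfgra)$, exploit the symmetry $E(k-r)=E(r)$ and the bound $E(r)^{-1}=\tfrac{k-1}{r(k-r)}\le 1$ to tame the $p^{k^2\nu}$ prefactor, and in the $r=1$ case invoke (6.4) together with (6.1) and (6.5) to convert $U_{s,k}^{B,b}(\bfgra)$ into the factor $(p^{-b})^{\Lam(1-1/k)}$ at the cost of an extra $p^{k^2\nu}$. The small presentational difference — your systematic use of the function $E(t)=t(k-t)/(k-1)$ versus the paper's direct display of $V_1$ and $V_2$ — is purely cosmetic.
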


\begin{proof} By reference to (\ref{3.24}), we deduce from Lemma \ref{lemma8.1} that 
for $r\ge 2$, one has
$$\llbracket K_{a,b}^r(\bfgra)\rrbracketsub{\Lam}\ll 
(p^{k^2\nu})^{\textstyle{\frac{k-1}{r(k-r)}}}V_1^{\textstyle{\frac{1}{k-r+1}}}
V_2^{\textstyle{\frac{k-r}{k-r+1}}},$$
where
$$V_1=\left( \frac{K_{b,b^\prime}^{k-r}(\bfgra)}{p^{\Lam H}U_{s,k}^{B,H}(\bfgra)}
\right)^{\textstyle{\frac{k-1}{r(k-r)}}}=\llbracket K_{b,b^\prime}^{k-r}(\bfgra)
\rrbracketsub{\Lam},$$
and
$$V_2=\left( \frac{K_{b^\prime,b}^{r-1}(\bfgra)}{p^{\Lam H}U_{s,k}^{B,H}(\bfgra)}
\right)^{\textstyle{\frac{k-1}{r(k-r)}}}=\llbracket K_{b^\prime,b}^{r-1}(\bfgra)
\rrbracketsub{\Lam}^{\textstyle{\left(\frac{r-1}{r}\right) \left( \frac{k-r+1}{k-r}\right) }}.
$$
The first conclusion (\ref{8.4}) is now immediate for $r\ge 2$. Here, we have made use of 
the elementary fact that
$$\max_{1\le r\le k-1}\frac{k-1}{r(k-r)}=1.$$
\par

When $r=1$, on the other hand, one finds in like manner that
\begin{equation}\label{8.6}
\llbracket K_{a,b}^1(\bfgra)\rrbracketsub{\Lam} \ll p^{k^2\nu}V_3^{1/k}V_4^{1-1/k},
\end{equation}
where
\begin{equation}\label{8.7}
V_3=\left( \frac{K_{b,kb}^{k-1}(\bfgra)}{p^{\Lam H}U_{s,k}^{B,H}(\bfgra)}\right) 
=\llbracket K_{b,kb}^{k-1}(\bfgra)\rrbracketsub{\Lam}
\end{equation}
and
$$V_4=\frac{U_{s,k}^{B,b}(\bfgra)}{p^{\Lam H}U_{s,k}^{B,H}(\bfgra)}.$$
In view of the upper bound (\ref{6.4}), one has
$$U_{s,k}^{B,b}(\bfgra)\le (p^{H-b})^{\Lam+\eps}U_{s,k}^{B,H}(\bfgra),$$
and hence, on recalling (\ref{6.1}) and (\ref{6.5}), one finds that
$$V_4\le p^{(H-b)\eps -\Lam b}\le p^{\Lam \nu -\Lam b}\le p^{s\nu-\Lam b}.$$
On substituting this bound together with (\ref{8.7}) into (\ref{8.6}), we conclude that
$$\llbracket K_{a,b}^1(\bfgra)\rrbracketsub{\Lam}\ll p^{(k^2+s)\nu}\llbracket 
K_{b,kb}^{k-1}(\bfgra)\rrbracketsub{\Lam}^{1/k}(p^{-b})^{\Lam (1-1/k)},$$
and the conclusion (\ref{8.5}) of the lemma follows on recalling that
$$s=k(k+1)/2\le k^2.$$
\end{proof}

\section{Distilling multigrade combinations into monograde processes} We turn our 
attention next to the problem of analysing the impact of applying Lemma \ref{lemma8.2} 
iteratively so as to estimate $K_{a,b}^r(\bfgra)$ in terms of a tree of possible outcomes. 
Such processes seem, at first appearance, difficult to control. However, the multigrade 
efficient congruencing method of \cite{Woo2015, Woo2016, Woo2017} offers the tools to 
accommodate such an analysis. In broad terms, we weight the possible outcomes in such a 
manner that one can follow any single path through the tree, and compute the outcome 
without reference to the multitude of alternate paths available.\par

In this section and the next, in the interests of concision, we write
$$\Ktil_{a,b}^r=\llbracket K_{a,b}^r(\bfgra)\rrbracketsub{\Lam}.$$
Also, when $1\le j\le k-1$, we write
$$\rho_j=\frac{j}{k-j+1}\quad \text{and}\quad b_j=\left\lceil \frac{b}{\rho_j}\right\rceil .$$

\begin{lemma}\label{lemma9.1} Suppose that
\begin{equation}\label{9.1}
1\le r\le k-1,\quad a\ge \del \tet,\quad b\ge k\del \tet\quad \text{and}\quad ra\le (k-r+1)b.
\end{equation}
Then whenever $kb\le B$, one has
\begin{equation}\label{9.2}
\Ktil_{a,b}^r\ll p^{(r+1)k^2\nu}(p^{-b})^{(1-1/k)\Lam/r}
\prod_{j=1}^r\left( \Ktil_{b,b_j}^{k-j}\right)^{\rho_j/r}.
\end{equation}
\end{lemma}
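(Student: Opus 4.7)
The plan is to prove Lemma 9.1 by induction on $r$ for $1 \le r \le k-1$. The base case $r=1$ is essentially immediate from inequality (8.5) of Lemma 8.2, which reads
$$\Ktil_{a,b}^1 \ll p^{2k^2\nu} (\Ktil_{b,kb}^{k-1})^{1/k} (p^{-b})^{\Lam(1-1/k)};$$
noting that $b_1 = \lceil (k-1+1)b/1 \rceil = kb$, that $\rho_1/1 = 1/k$, and that $2 \le 1+1$, this coincides with (9.2) at $r=1$.

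For the inductive step with $r \ge 2$, I first apply inequality (8.4) of Lemma 8.2 in the form
$$\Ktil_{a,b}^r \ll p^{k^2\nu} (\Ktil_{b,b_r}^{k-r})^{1/(k-r+1)} (\Ktil_{b_r, b}^{r-1})^{(r-1)/r},$$
identifying $b' = \lceil (k-r+1)b/r \rceil$ with $b_r$ and $1/(k-r+1)$ with $\rho_r/r$. Then I apply the inductive hypothesis to the factor $\Ktil_{b_r, b}^{r-1}$. This requires checking conditions (9.1) with $(a, b, r)$ replaced by $(b_r, b, r-1)$; the only non-trivial point is the inequality $(r-1)b_r \le (k-r+2)b$, which, since $b_r \le (k-r+1)b/r + 1$, reduces to $(k+1)b/r \ge r-1$, and this holds comfortably given $b \ge k\del\tet$. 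The remaining inequalities follow at once, the assumption $b \ge k\del\tet$ (rather than merely $b \ge \del\tet$) being tailored precisely to ensure that $b_r \ge \del\tet$.

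Substituting the inductive bound for $\Ktil_{b_r, b}^{r-1}$ and raising to the power $(r-1)/r$, the algebraic identity $\rho_j/(r-1) \cdot (r-1)/r = \rho_j/r$ collapses the exponents on $\Ktil_{b,b_j}^{k-j}$ for $j=1, \ldots, r-1$ to the target value $\rho_j/r$; together with the direct factor $(\Ktil_{b,b_r}^{k-r})^{\rho_r/r}$, this assembles the full product $\prod_{j=1}^r (\Ktil_{b,b_j}^{k-j})^{\rho_j/r}$. The same scaling transforms $(p^{-b})^{\Lam(1-1/k)/(r-1)}$ into $(p^{-b})^{\Lam(1-1/k)/r}$, while the $p^\nu$ contributions accumulate as $p^{k^2\nu + (r-1)k^2\nu} = p^{rk^2\nu}$, comfortably within the claimed $p^{(r+1)k^2\nu}$. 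The main obstacle, beyond the routine verification of (9.1) under recursion, is the exponent bookkeeping: here the normalisation (3.24), which was engineered with the factor $(k-1)/(r(k-r))$ precisely to balance such weights, is what allows the monograde pieces $\Ktil_{b,b_j}^{k-j}$ to appear with path-independent exponents $\rho_j/r$, making the inductive combination transparent.
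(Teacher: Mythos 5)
Your proposal is correct and follows essentially the same route as the paper's own proof: induction on $r$, with the base case read off from the $r=1$ branch of Lemma 8.2 and the inductive step obtained by feeding the estimate for $\Ktil_{b_r,b}^{r-1}$ back into the $r\ge 2$ branch of Lemma 8.2, after verifying the hypotheses (9.1) for the pair $(b_r,b)$. The exponent bookkeeping — the identity $\rho_j/(r-1)\cdot(r-1)/r=\rho_j/r$, the accumulation of $p^{rk^2\nu}\le p^{(r+1)k^2\nu}$, and the scaling of the $(p^{-b})$ factor — all matches the paper.
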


\begin{proof} We establish (\ref{9.2}) by induction on $r$. Observe that in the case 
$r=1$, it follows from Lemma \ref{lemma8.2} that
$$\Ktil_{a,b}^1\ll p^{2k^2\nu}(\Ktil_{b,kb}^{k-1})^{1/k}(p^{-b})^{(1-1/k)\Lam},$$
and this delivers (\ref{9.2}) in this initial case. Note here that the hypotheses (\ref{9.1}) 
together with the assumption $kb\le B$ imply the corresponding hypotheses (\ref{7.1}) 
implicitly assumed in the statement of Lemma \ref{lemma8.2}.\par

Suppose next that when $kb\le B$, the estimate (\ref{9.2}) has been confirmed for all 
indices $r<\Rho$, for some integer $\Rho$ with $2\le \Rho\le k-1$. In these circumstances, 
it follows from Lemma \ref{lemma8.2} that
\begin{equation}\label{9.3}
\Ktil_{a,b}^\Rho\ll p^{k^2\nu}(\Ktil_{b,b_\Rho}^{k-\Rho})^{\rho_\Rho/\Rho}
(\Ktil_{b_\Rho,b}^{\Rho-1})^{1-1/\Rho},
\end{equation}
with
$$b_\Rho=\left\lceil \frac{k-\Rho+1}{\Rho}b\right\rceil \ge \frac{2b}{k}>\del \tet .$$
The latter lower bound, together with the upper bound
$$(\Rho-1)b_\Rho \le (\Rho-1)\biggl( \frac{k-\Rho+1}{\Rho}b+1\biggr) 
<(k-\Rho+2)b,$$
ensures that the conditions are satisfied permitting the inductive hypothesis (\ref{9.2}) to 
be deployed to estimate $\Ktil_{b_\Rho,b}^{\Rho-1}$. Thus, we have
$$\Ktil_{b_\Rho,b}^{\Rho-1}\ll p^{\Rho k^2\nu}
(p^{-b})^{(1-1/k)\Lam/(\Rho-1)}\prod_{j=1}^{\Rho-1} 
(\Ktil_{b,b_j}^{k-j})^{\rho_j/(\Rho-1)}.$$
On substituting this bound into (\ref{9.3}), we deduce that
$$\Ktil_{a,b}^\Rho \ll p^{\Rho k^2\nu}(p^{-b})^{(1-1/k)\Lam/\Rho}
(\Ktil_{b,b_\Rho}^{k-\Rho})^{\rho_\Rho/\Rho}\prod_{j=1}^{\Rho-1} 
(\Ktil_{b,b_j}^{k-j})^{\rho_j/\Rho},$$
and the inductive hypothesis (\ref{9.2}) follows when $r=\Rho$. The desired conclusion 
(\ref{9.2}) therefore follows by induction for $1\le r\le k-1$.
\end{proof}

An immediate consequence of Lemma \ref{lemma9.1} provides an intermediate 
monograde iterative relation.

\begin{lemma}\label{lemma9.2} Suppose that $1\le r\le k-1$ and $kb\le B$. Suppose further 
that the conditions (\ref{9.1}) hold. Then there 
exists an integer $r^\prime$ with $1\le r^\prime \le r$ having the property that
$$\Ktil_{a,b}^r\ll (\Ktil_{b,b_{r^\prime}}^{k-r^\prime})^{\rho_{r^\prime}}
(p^{-b})^{\Lam/(2k)}.$$
\end{lemma}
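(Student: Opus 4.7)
The plan is to deduce Lemma \ref{lemma9.2} from Lemma \ref{lemma9.1} by a simple pigeonhole argument that collapses the $r$-fold product into a single dominant factor, followed by verifying that the various $p$-power prefactors are dwarfed by the savings supplied by our hierarchy (\ref{6.2}).

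First I would invoke Lemma \ref{lemma9.1} directly: the hypotheses (\ref{9.1}) together with $kb\le B$ are precisely what is needed, so one obtains
$$\Ktil_{a,b}^r\ll p^{(r+1)k^2\nu}(p^{-b})^{(1-1/k)\Lam/r}\prod_{j=1}^r \bigl(\Ktil_{b,b_j}^{k-j}\bigr)^{\rho_j/r}.$$
The key maneuver is now to pick the index $r'\in\{1,\ldots,r\}$ that maximises $(\Ktil_{b,b_j}^{k-j})^{\rho_j}$ over $1\le j\le r$. For every $j$ one then has $(\Ktil_{b,b_j}^{k-j})^{\rho_j}\le (\Ktil_{b,b_{r'}}^{k-r'})^{\rho_{r'}}$, and raising to the $1/r$-th power and taking the product over $j=1,\ldots,r$ yields
$$\prod_{j=1}^r \bigl(\Ktil_{b,b_j}^{k-j}\bigr)^{\rho_j/r}\le \bigl(\Ktil_{b,b_{r'}}^{k-r'}\bigr)^{\rho_{r'}}.$$

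It remains to absorb the factor $p^{(r+1)k^2\nu}(p^{-b})^{(1-1/k)\Lam/r}$ into $(p^{-b})^{\Lam/(2k)}$. Since $1\le r\le k-1$, one has $(1-1/k)/r\ge (k-1)/(k(k-1))=1/k$, so the exponent of $p^{-b}$ on the left exceeds that on the right by at least $\Lam/(2k)$. It therefore suffices to establish
$$(r+1)k^2\nu \le \tfrac{\Lam}{2k}\cdot b.$$
Recalling (\ref{6.5}) we have $\nu \le 4\eps H/\Lam +1$, while the hypothesis $b\ge k\del\tet$ together with $\tet=\lceil \mu H\rceil$ in (\ref{6.10}) gives $b\ge k\del\mu H$. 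The required inequality thus reduces to a bound of the form $4(r+1)k^2\eps/\Lam \le \del\mu\Lam/2$, which holds because $\eps$ is chosen sufficiently small in terms of $k$, $\Lam$, $\mu$ and $\del$ in our hierarchy (\ref{6.2}).

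Assembling the three steps delivers
$$\Ktil_{a,b}^r \ll \bigl(\Ktil_{b,b_{r'}}^{k-r'}\bigr)^{\rho_{r'}}(p^{-b})^{\Lam/(2k)},$$
as required. There is no substantive obstacle: the only point requiring care is the verification that the implicit constant on $\nu$ coming from (\ref{6.5}) and the constant on $b$ coming from $\tet=\lceil \mu H\rceil$ can be balanced against the factor $\Lam/(2k)$, and this is guaranteed by having fixed the hierarchy $\eps<\tau<\del<\mu$ with $\eps$ sufficiently small relative to the rest.
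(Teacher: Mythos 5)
Your argument is correct and follows the same route as the paper's proof: invoke Lemma~\ref{lemma9.1}, extract a single dominant factor from the $r$-fold product by pigeonhole, and absorb the $p$-power prefactor using the hierarchy (\ref{6.2}) together with (\ref{6.5}), (\ref{6.10}) and (\ref{9.1}). The only cosmetic difference is that you bound the geometric mean $\prod_j w_j^{1/r}$ directly by $\max_j w_j$, whereas the paper passes through the elementary inequality $|z_1\cdots z_r|\le |z_1|^r+\cdots+|z_r|^r$ before pigeonholing on the resulting sum; both yield the same conclusion, with your version avoiding an inconsequential factor of $r$.
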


\begin{proof} By employing the elementary inequality
$$|z_1\cdots z_n|\le |z_1|^n+\ldots +|z_n|^n,$$
it follows from the relation (\ref{9.2}) of Lemma \ref{lemma9.1} that
$$\Ktil_{a,b}^r\ll p^{(r+1)k^2\nu }(p^{-b})^{(1-1/k)\Lam/r}\sum_{j=1}^r
(\Ktil_{b,b_j}^{k-j})^{\rho_j}.$$
Thus we deduce that there exists an integer $r^\prime$ with $1\le r^\prime\le r$ such that
\begin{equation}\label{9.4}
\Ktil_{a,b}^r\ll p^{(r+1)k^2\nu}(p^{-b})^{(1-1/k)\Lam /r}
(\Ktil_{b,b_{r^\prime}}^{k-r^\prime})^{\rho_{r^\prime}}.
\end{equation}
Note that when $1\le r\le k-1$, one has $(1-1/k)/r\ge 1/k$. Moreover, in view of 
(\ref{6.5}), (\ref{6.10}), (\ref{9.1}) and the hierarchy (\ref{6.2}), one has
$$b\Lam/k\ge \del \tet \Lam \ge \del \mu H\Lam>2k^3\nu\ge 2(r+1)k^2\nu.$$
Thus we infer that
$$p^{(r+1)k^2\nu}(p^{-b})^{(1-1/k)\Lam/r}\le (p^{-b})^{\Lam/(2k)}.$$
The conclusion of Lemma \ref{lemma9.2} follows by substituting this bound into 
(\ref{9.4}).
\end{proof}

The estimate supplied by Lemma \ref{lemma9.2} bounds $\Ktil_{a,b}^r$ in terms of 
$\Ktil_{b,b_{r^\prime}}^{k-r^\prime}$, for a suitable integer $r^\prime$ with 
$1\le r^\prime\le r$. It is possible that the parameter $b_{r^\prime}$ might be 
substantially smaller than $b$, and thus one might fear that continued iteration of such a 
relation might be limited by this shrinking parameter. At this point, we take the opportunity 
to dispell such fears once and for all.

\begin{lemma}\label{lemma9.3} Suppose that
\begin{equation}\label{9.5}
1\le r\le k-1,\quad a\ge \del \tet,\quad b\ge k^2\del \tet\quad \text{and}\quad 
ra\le (k-r+1)b.
\end{equation}
Then, whenever $k^2b\le B$, there exist integers $a^\prime$, $b^\prime$, $r^\prime$, 
and there exists a positive number $\rho$, having the property that
\begin{equation}\label{9.6}
\Ktil_{a,b}^r\ll (\Ktil_{a^\prime,b^\prime}^{r^\prime})^\rho p^{-b\Lam /(2k)},
\end{equation}
with
\begin{equation}\label{9.7}
a^\prime \ge \del \tet,\quad b^\prime \ge k^2\del \tet,\quad r^\prime a^\prime 
\le (k-r^\prime +1)b^\prime ,
\end{equation}
\begin{equation}\label{9.8}
1\le r^\prime \le k-1,\quad 0<\rho<(1-1/k)^2,
\end{equation}
\begin{equation}\label{9.9}
(1+2/k)b\le b^\prime \le k^2b,\quad b^\prime =\left\lceil \frac{r^\prime+1}{k-r^\prime}
a^\prime \right\rceil,\quad \rho b'\ge b.
\end{equation} 
\end{lemma}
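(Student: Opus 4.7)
The plan is to iterate Lemma~\ref{lemma9.2} at most twice and select $(a', b', r', \rho)$ from whichever iteration first satisfies all the conditions (\ref{9.7})--(\ref{9.9}). A single application of Lemma~\ref{lemma9.2} to $\Ktil_{a,b}^r$ produces some $r_1$ with $1 \le r_1 \le r$ and
$$\Ktil_{a,b}^r \ll (\Ktil_{b, b_{r_1}}^{k-r_1})^{\rho_{r_1}} p^{-b\Lam/(2k)}.$$
With the candidate $(a', b', r') = (b, b_{r_1}, k-r_1)$ and $\rho = \rho_{r_1}$, the identity $b' = \lceil \frac{r'+1}{k-r'} a' \rceil$ is literally the definition of $b_{r_1}$, the inequality $\rho b' \ge \rho_{r_1} b_{r_1} \ge b$ is automatic, and the upper bound $b' \le kb$ follows from $1/\rho_j \le k$ for $1 \le j \le k-1$. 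The remaining structural inequalities in (\ref{9.7}) and (\ref{9.8}) reduce to routine consequences of (\ref{9.5}).

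Thus the substantive content of the lemma is the pair $\rho < (1-1/k)^2$ and $b' \ge (1+2/k) b$. In the first case, if $\rho_{r_1} < (1-1/k)^2$, then since $(1-1/k)^2 < k/(k+2)$ for $k \ge 2$ (equivalent to $(k-1)^2(k+2) < k^3$), one has $1/\rho_{r_1} > (k+2)/k$ and hence $b_{r_1} \ge (1+2/k) b$, completing the proof.

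Otherwise $\rho_{r_1} \ge (1-1/k)^2$, and I would apply Lemma~\ref{lemma9.2} a second time to $\Ktil_{b, b_{r_1}}^{k-r_1}$, with hypotheses verified from $b \ge k^2\del\tet$, $b_{r_1} \le kb$, and $k^2 b \le B$. This yields $r_2$ with $1 \le r_2 \le k-r_1$, and after composing with the earlier bound and using $\rho_{r_1} b_{r_1} \ge b$ to absorb the additional $p$-adic loss into $p^{-b\Lam/(2k)}$, one obtains
$$\Ktil_{a,b}^r \ll (\Ktil_{b_{r_1}, (b_{r_1})_{r_2}}^{k-r_2})^{\rho_{r_1}\rho_{r_2}} p^{-b\Lam/(2k)}.$$
I would then take $(a', b', r', \rho) = (b_{r_1}, (b_{r_1})_{r_2}, k-r_2, \rho_{r_1}\rho_{r_2})$, so that $b' \le k b_{r_1} \le k^2 b$. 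The crucial estimate is
$$\rho_{r_1}\rho_{r_2} \le \frac{r_1}{k-r_1+1} \cdot \frac{k-r_1}{r_1+1} \le \frac{k^2}{(k+2)^2},$$
where the first inequality uses $r_2 \le k-r_1$ together with the monotonicity $j \mapsto j/(k-j+1)$, and the second is the maximum of $x(k-x)/((x+1)(k-x+1))$ over $[1,k-1]$, attained at $x = k/2$. For $k \ge 3$, the comparison $k^2/(k+2)^2 < (1-1/k)^2$ is equivalent to $k^2 < (k-1)(k+2)$ and holds, so $\rho < (1-1/k)^2$; combined with $\rho b' \ge b$ and $\rho < k/(k+2)$, this also delivers $b' > (1+2/k) b$.

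The main technical concern is the edge case $k = 2$: here $r_1 = r_2 = 1$ forces $\rho_{r_1}\rho_{r_2} = 1/4 = (1-1/k)^2$ exactly, so the strict inequality fails after two iterations. A third iteration would achieve $\rho = 1/8$ but risks violating $b' \le k^2 b = 4b$, so this case requires a separate argument or a slightly strengthened hypothesis on $b$ that the ambient induction can supply. For $k \ge 3$, the two-iteration scheme above suffices, and all of the clauses of (\ref{9.7})--(\ref{9.9}) then follow by routine verification.
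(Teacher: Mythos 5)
Your argument is correct for $k \ge 3$ and the core mechanism --- composing Lemma~\ref{lemma9.2} with itself to depth two and then reading off $(a',b',r',\rho)$ and verifying the side conditions --- is the same as the paper's. The only real difference is your conditional branching: you stop after one application whenever $\rho_{r_1} < (1-1/k)^2$, whereas the paper simply applies Lemma~\ref{lemma9.2} twice unconditionally, taking $a' = b_{r_1}$, $b' = \bigl\lceil \tfrac{k-r_2+1}{r_2}b_{r_1}\bigr\rceil$, $r' = k - r_2$, $\rho = \rho_{r_1}\rho_{r_2}$, with the accumulated $p$-adic exponent $\sigma = b + \rho_{r_1}b_{r_1} \ge b$. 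The unconditional two-step version is marginally cleaner to typeset, but your one-iteration branch is equally valid and the case analysis is harmless.

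Your worry about $k=2$ is legitimate, and it is worth noting that the paper's own proof does not escape it: the paper asserts $\tfrac{r_1}{r_1+1}\cdot\tfrac{k-r_1}{k-r_1+1} < (1-1/k)^2$, but at $k=2$ one necessarily has $r_1=r_2=1$ and $\rho=1/4$, so this is an equality, not a strict inequality. The resolution, however, is not a third iteration (which, as you say, would break $b' \le k^2 b$) or an extra hypothesis, but simply the observation that strictness in (\ref{9.8}) is never used downstream. In the proof of Theorem~\ref{theorem3.1}, what is actually invoked from (\ref{10.5}) is only $\rho_n \le 1$ (so that $(\Ktil_{a_N,b_N}^{r_N})^{\rho_1\cdots\rho_N} \ll p^\tet$ at (\ref{10.9})) and $\rho_n b_n \ge b_{n-1}$ (which drives the linear growth of the exponent in (\ref{10.6})), with $b' \le k^2 b$ controlling the size of $b_n$. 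Relaxing $<$ to $\le$ in (\ref{9.8}) and (\ref{10.5}) therefore costs nothing, and with that relaxation your proof (and the paper's) covers $k=2$ without further hypotheses.
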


\begin{proof} We recall that we are permitted the assumption that $a$, $b$, $r$ satisfy 
(\ref{9.5}). Thus, as a consequence of Lemma \ref{lemma9.2}, there exists an integer 
$r_1$ with $1\le r_1\le r$ having the property that
\begin{equation}\label{9.10}
\Ktil_{a,b}^r\ll (\Ktil_{b,b_{r_1}}^{k-r_1})^{\rho_{r_1}}(p^{-b})^{\Lam/(2k)},
\end{equation}
where
$$b_{r_1}=\left\lceil \frac{k-r_1+1}{r_1}b\right\rceil \quad \text{and}\quad \rho_{r_1}
=\frac{r_1}{k-r_1+1}.$$
Notice here that $b\ge k^2\del \tet\ge \del \tet$,
$$b_{r_1}\ge b/k\ge k\del \tet \quad \text{and}\quad b_{r_1}\le kb\le k^{-1}B.$$
Moreover, one has
$$(k-r_1)b\le (r_1+1)\left\lceil \frac{k-r_1+1}{r_1}b\right\rceil =(r_1+1)b_{r_1}.$$
We are therefore at liberty to apply Lemma \ref{lemma9.2} to estimate $\Ktil_{b,b_{r_1}}
^{k-r_1}$.
Thus, there exists an integer $r_2$ with $1\le r_2\le k-r_1$ having the property that
\begin{equation}\label{9.11}
\Ktil_{b,b_{r_1}}^{k-r_1}\ll (\Ktil_{b_{r_1},b_{r_2}}^{k-r_2})^{\rho_{r_2}}
(p^{-b_{r_1}})^{\Lam/(2k)},
\end{equation}
where
$$b_{r_2}=\left\lceil \frac{k-r_2+1}{r_2}b_{r_1}\right\rceil \quad \text{and}\quad 
\rho_{r_2}=\frac{r_2}{k-r_2+1}.$$

\par On substituting (\ref{9.11}) into (\ref{9.10}), we find that
\begin{equation}\label{9.12}
\Ktil_{a,b}^r\ll (\Ktil_{a^\prime,b^\prime}^{r^\prime})^\rho p^{-\sig \Lam/(2k)},
\end{equation}
where
$$a^\prime =b_{r_1},\quad b^\prime =b_{r_2},\quad r^\prime =k-r_2,\quad 
\rho=\rho_{r_1}\rho_{r_2}\quad \text{and}\quad \sig=b+\rho_{r_1}b_{r_1}.$$
Since $\sig\ge b$, the upper bound (\ref{9.12}) will deliver the conclusion of the lemma 
provided that we are able to verify the conditions (\ref{9.7})-(\ref{9.9}), a matter that 
we now address.\par

Observe first that since $1\le r_2\le k-r_1$ and $1\le r_1\le k-1$, one has
$$1\le r_1\le r^\prime =k-r_2\le k-1.$$
Moreover,
$$\rho=\rho_{r_1}\rho_{r_2}=\frac{r_1}{k-r_1+1}\cdot \frac{r_2}{k-r_2+1}\le 
\frac{r_1}{k-r_1+1}\cdot \frac{k-r_1}{r_1+1}.$$
Thus
$$\rho\le \frac{r_1}{r_1+1}\cdot \frac{k-r_1}{k-r_1+1}<(1-1/k)^2.$$
We have therefore verified that the conditions (\ref{9.8}) are satisfied.\par

Next, we have
$$b^\prime=b_{r_2}\le kb_{r_1}\le k^2b,$$
and since $1\le r_2\le k-r_1$, we also have
\begin{align*}
b^\prime &=b_{r_2}\ge \frac{k-r_2+1}{r_2}b_{r_1}\ge \frac{r_1+1}{k-r_1}b_{r_1}\\
&\ge \frac{r_1+1}{k-r_1}\cdot \frac{k-r_1+1}{r_1}b=\frac{r_1+1}{r_1}\cdot 
\frac{k-r_1+1}{k-r_1}b\\
&\ge (k/(k-1))^2b>(1+2/k)b.
\end{align*}
Thus $(1+2/k)b\le b^\prime \le k^2b$. Also, since $r^\prime =k-r_2$ and 
$a^\prime =b_{r_1}$, one finds that
$$\left\lceil \frac{r^\prime+1}{k-r^\prime}a^\prime \right\rceil =\left\lceil 
\frac{k-r_2+1}{r_2}b_{r_1}\right\rceil =b^\prime .$$
In addition, we have
$$\frac{b^\prime}{b}\ge \frac{b_{r_1}/\rho_{r_2}}{b}\ge 
\frac{b/\rho_{r_1}}{\rho_{r_2}b}=\frac{1}{\rho_{r_1}\rho_{r_2}}=\frac{1}{\rho}.$$
Then the conditions (\ref{9.9}) are satisfied.\par

Finally, we have
$$a^\prime =b_{r_1}=\left\lceil \frac{k-r_1+1}{r_1}b\right\rceil \ge \frac{b}{k}
\ge k\del \tet$$
and
$$b^\prime \ge (1+2/k)b\ge k^2\del \tet.$$
Meanwhile,
$$r^\prime a^\prime =(k-r_2)b_{r_1}<(k-r_2+1)b_{r_1},$$
whilst
$$(k-r^\prime +1)b^\prime =(r_2+1)b_{r_2}\ge \frac{k-r_2+1}{r_2}(r_2+1)b_{r_1},$$
so that $r^\prime a^\prime \le (k-r^\prime+1)b^\prime$. This confirms that the conditions 
(\ref{9.7}) are satisfied. All of the conditions (\ref{9.7})-(\ref{9.9}) having been 
confirmed, the proof of the lemma is complete.
\end{proof}

\section{The proof of Theorem \ref{theorem3.1}} Our apparatus for the main $p$-adic 
concentration argument has now been assembled, in the shape of Lemma \ref{lemma9.3}. 
The lower bound (\ref{6.3}) may be exploited to show that an initial mean value 
$\Ktil_{\tet,\tet}^1$ is large. Then, whenever $\Ktil_{a,b}^r$ is large, the estimate 
(\ref{9.6}) of Lemma \ref{lemma9.3} shows that a related mean value 
$\Ktil_{a^\prime,b^\prime}^{r^\prime}$ is larger still, and inflated by an additional factor 
$p^{b\Lam/(2\rho k)}$. After iteration of this idea, this overabundance of ``$p$-adic 
energy'' blows up to the point that it exceeds even the trivial estimate supplied by Lemma 
\ref{lemma4.2}, delivering a contradiction to the hypothesis that $\Lam>0$.

\begin{proof}[The proof of Theorem \ref{theorem3.1}] Throughout, we consider a 
natural number $k$ and we put $s=k(k+1)/2$. In view of Lemma \ref{lemma5.1}, we may 
suppose that $k\ge 2$, and we may also suppose that the conclusion of Theorem 
\ref{theorem3.1} has already been established for exponents smaller than $k$.\par

We aim to show that $\lam(s,k)\le 0$. We may 
therefore work throughout under the assumption that $\lam(s,k)=\Lam$ with $\Lam>0$, 
and seek a contradiction. Of course, should $\lam(s,k)\le 0$, then there is nothing to prove. 
We initiate the iteration with an appeal to Lemma \ref{lemma6.3}, which shows that
$$U_{s,k}^B(\bfgra)\ll p^{s\tet}K_{\tet,\tet}^1(\bfgra).$$
In view of the assumption (\ref{6.3}), we deduce from (\ref{3.24}) that
$$\llbracket K_{\tet,\tet}^1(\bfgra)\rrbracketsub{\Lam}\gg 
\frac{p^{-s\tet}U_{s,k}^B(\bfgra)}{p^{\Lam H}U_{s,k}^{B,H}(\bfgra)}\ge 
p^{-s\tet-H\eps}.$$
Our hierarchy (\ref{6.2}) combines with (\ref{6.10}), therefore, to ensure that
\begin{equation}\label{10.1}
\Ktil_{\tet,\tet}^1\gg p^{-2s\tet}.
\end{equation}

\par Next, we apply Lemma \ref{lemma9.3} repeatedly. Put
\begin{equation}\label{10.2}
N=\lceil 16sk/\Lam\rceil .
\end{equation}
Our assumption that $B$, and hence also $H$, is sufficiently large in terms of our 
hierarchy of parameters (\ref{6.2}) ensures that $2k^{2N+2}\tet<H$. We claim that 
sequences $(a_n)$, $(b_n)$, $(r_n)$, $(\rho_n)$ may be defined for $0\le n\le N$ in 
such a manner that
\begin{equation}\label{10.3}
1\le r_n\le k-1,\quad k^2\del\tet \le b_n\le k^{2n+2}\tet ,
\end{equation}
\begin{equation}\label{10.4}
\del \tet \le a_n\le (k-r_n+1)b_n/r_n,
\end{equation}
\begin{equation}\label{10.5}
0<\rho_n<(1-1/k)^2,\quad \rho_nb_n\ge b_{n-1}\quad (n\ge 1),
\end{equation}
and so that
\begin{equation}\label{10.6}
\Ktil_{\tet,\tet}^1\ll (\Ktil_{a_n,b_n}^{r_n})^{\rho_1\cdots \rho_n}
(p^{-\Lam/(2k)})^{nb_0}.
\end{equation}
We initiate these sequences by putting $a_0=b_0=\tet$ and $\rho_0=r_0=1$, so that 
(\ref{10.6}) is immediate in the case $n=0$ from the usual convention that an empty 
product is $1$, whence $\rho_1\cdots \rho_n=1$ for $n=0$.\par

We now attend to the task of confirming this claim, proceeding by induction on $n$. 
Suppose that such has already been confirmed for $0\le n<m$, with $1\le m\le N$. Then we 
have
\begin{equation}\label{10.7}
\Ktil_{\tet,\tet}^1\ll (\Ktil_{a_{m-1},b_{m-1}}^{r_{m-1}})^{\rho_1\cdots \rho_{m-1}}
(p^{-\Lam/(2k)})^{(m-1)b_0}.
\end{equation}
We estimate $\Ktil_{a_{m-1},b_{m-1}}^{r_{m-1}}$ by appealing to Lemma 
\ref{lemma9.3}. The conditions (\ref{10.3})-(\ref{10.5}) may be assumed to hold with 
$n=m-1$. Thus, since
$$k^2b_{m-1}\le k^{2m+2}\tet\le k^{2N+2}\tet<H/2,$$
we see that the hypotheses required to apply Lemma \ref{lemma9.3} are satisfied. 
Consequently, there exist integers $a_m$, $b_m$, $r_m$, $\rho_m$ having the property 
that
\begin{equation}\label{10.8}
\Ktil_{a_{m-1},b_{m-1}}^{r_{m-1}}\ll (\Ktil_{a_m,b_m}^{r_m})^{\rho_m}
p^{-b_{m-1}\Lam/(2k)},
\end{equation}
with
$$a_m\ge \del \tet,\quad b_m\ge k^2\del \tet,\quad r_ma_m\le (k-r_m+1)b_m,$$
$$1\le r_m\le k-1,\quad 0<\rho_m<(1-1/k)^2,$$
$$(1+2/k)b_{m-1}\le b_m\le k^2b_{m-1},\quad 
b_m=\left\lceil \frac{r_m+1}{k-r_m}a_m\right\rceil ,\quad \rho_mb_m\ge b_{m-1}.$$
Since we may suppose from (\ref{10.3}) in the case $n=m-1$ that 
$b_{m-1}\le k^{2m}\tet$, we see that $b_m\le k^2b_{m-1}\le k^{2m+2}\tet$, and so 
the conditions (\ref{10.3})-(\ref{10.5}) are all met with $n=m$. Moreover, on substituting 
(\ref{10.8}) into (\ref{10.7}), we obtain the bound
$$\Ktil_{\tet,\tet}^1\ll (\Ktil_{a_m,b_m}^{r_m})^{\rho_1\cdots \rho_m}
(p^{-\Lam/(2k)})^{(m-1)b_0+\rho_1\cdots \rho_{m-1}b_{m-1}}.$$
However, the condition (\ref{10.5}) for $1\le n\le m-1$ ensures that
$$\rho_1\cdots \rho_{m-1}b_{m-1}\ge \rho_1\cdots \rho_{m-2}b_{m-2}\ge \ldots 
\ge \rho_1b_1\ge b_0,$$
and so
$$(m-1)b_0+\rho_1\cdots \rho_{m-1}b_{m-1}\ge mb_0.$$
We therefore conclude that (\ref{10.6}) holds for $n=m$, and hence our claimed 
assertion follows by induction for $0\le n\le N$.\par

At this point in our argument, we may combine the lower bound (\ref{10.1}) with the upper 
bound (\ref{10.6}) in the case $n=N$. Thus we see that
\begin{equation}\label{10.9}
p^{-2s\tet}\ll \Ktil_{\tet,\tet}^1\ll (\Ktil_{a_N,b_N}^{r_N})^\rho 
(p^{-\Lam /(2k)})^{N\tet},
\end{equation}
where $\rho=\rho_1\cdots \rho_N<1$. On recalling the definition (\ref{3.24}) and the 
assumption that $\lam(s,k)=\Lam$, the conclusion of Lemma \ref{lemma4.2} shows that
$$\llbracket K_{a_N,b_N}^{r_N}\rrbracketsub{\Lam} \ll (p^H)^{\Lam+\eps}/p^{\Lam H}
=p^{H\eps}.$$
Then, again employing the properties of our hierarchy (\ref{6.2}), we may suppose that 
$\Ktil_{a_N,b_N}^{r_N}\ll p^\tet$, whence (\ref{10.9}) delivers the bound
$$p^{-2s\tet}\ll p^{\tet (1-N\Lam/(2k)}),$$
and hence
\begin{equation}\label{10.10}
(p^\tet)^{4s}\gg (p^\tet)^{N\Lam /(2k)}.
\end{equation}
Observe that the definition (\ref{6.10}) of $\tet$ shows that $p^\tet$ is sufficiently large 
in terms of $s$, $k$ and $\Lam$. Hence, the upper bound (\ref{10.10}) can hold only when
$$4s\ge N\Lam/(2k).$$
In view of (\ref{10.2}), we therefore obtain the bound
$$\Lam\le 8sk/N\le \Lam/2,$$
which yields a contradiction to the assumption that $\Lam>0$.\par

We are therefore forced to conclude that $\Lam$ cannot be positive, whence 
$\lam(s,k)=0$. This completes the proof of Theorem \ref{theorem3.1} for the exponent 
$k$, and the theorem follows in full by induction on $k$.
\end{proof}

Corollary \ref{corollary3.2} follows from Theorem \ref{theorem3.1} by simply interpreting 
the definitions (\ref{3.12}) and (\ref{3.13}) of $\lam^*(s,\tet;\tau)$ and $\lam(s,\tet)$. 
Thus, since $\lam(s,k)=0$, we may suppose that for every $\eps>0$, whenever $\tau>0$ 
is sufficiently small, one has $\lam^*(s,k;\tau)<\eps$. But for each $\eps>0$, one has
$$U_{s,k}^B(\bfgra)\ll p^{(\lam^*(s,k;\tau)+\eps)H}U_{s,k}^{B,H}(\bfgra)$$
for all sequences $(\gra_n)\in \dbD$, all $\bfvarphi\in \Phi_\tau(B)$ and all large enough 
values of $B$. The corresponding conclusion for the zero sequence 
$(\gra_n)\in \dbD_0\setminus \dbD$ is, of course, trivial.\par

The inquisitive reader might wonder at what point in the proof of Theorem \ref{theorem3.1} 
and Corollary \ref{corollary3.2} did we find ourselves limited to the situation in which 
$H=\lceil B/k\rceil$, rather than allowing the possibility that $H$ might exceed 
$\lceil B/k\rceil$. This is a little subtle, since at face value our argument does not involve 
estimates for $K_{a,b,c}^{r,\bfvarphi,\nu}(\bfgra)$ with $a$ and $b$ close in size to $B/k$. 
However, in the proof of Lemma \ref{lemma4.1}, in equation (\ref{4.9}), one encounters a 
situation wherein certain congruences would be trivially satisfied were $h$ to exceed $B/k$. 
This failure of independence amongst the congruences would compromise our estimates, and 
implicitly generate associated difficulties in \S8. Since we are imposing the condition 
$h\le (1-\del)H$ in Lemma \ref{lemma4.1}, one finds it necessary to restrict $H$ to be no 
larger than about $B/k$.

\section{Solutions of congruences in short intervals} Rather than embark at once on the 
proof of Theorem \ref{theorem1.1} and its corollaries, we spend some time in this section 
on a more immediate application of Theorem \ref{theorem3.1} to the topic of solutions of 
congruences in short intervals. We are interested in rational functions $\chi_j\in \dbQ(t)$ 
$(1\le j\le k)$, and the number of integral solutions of systems of congruences of the shape
\begin{equation}\label{11.1}
\sum_{i=1}^s\chi_j(x_i)\equiv \sum_{i=1}^s\chi_j(y_i)\mmod{p^B}\quad (1\le j\le k),
\end{equation}
with $X<x_i,y_i\le X+Y$. Writing $\chi_j(t)=\varphi_j(t)/\gam_j(t)$ for suitable polynomials 
$\varphi_j,\gam_j\in \dbZ[t]$ with $(\varphi_j(t),\gam_j(t))=1$, it is apparent that it is 
sensible to exclude choices for the variables $x$ with $\gam_j(x)\equiv 0\mmod{p}$. With 
such choices for $x$ excluded, there is a multiplicative inverse for $\gam_j(x)$, say 
$\gam_j(x)^{-1}$ modulo $p^B$, and this counting problem makes sense. We are also 
able to make sense of the Wronskian $W(t;\bfchi)$ defined by (\ref{1.1}) in these 
circumstances. Indeed, as a rational number $A/Q$ in lowest terms, the Wronskian 
$W(x;\bfchi)$ has denominator $Q$ not divisible by $p$ in the situation under discussion. If 
$Q^{-1}$ denotes an integer defining the multiplicative inverse of $Q$ modulo $p^B$, then 
we shall always replace $W(x;\bfchi)$ by the integer $AQ^{-1}$.\par

We obtain strong estimates for the number of solutions of the system (\ref{11.1}) when 
$s\le k(k+1)/2$ and $Y\le p^{B/k}$, and we restrict to solutions with both denominators 
$\gam_j$ and the Wronskian non-vanishing modulo $p$.\par

\begin{theorem}\label{theorem11.1} When $k\in \dbN$ and $1\le j\le k$, suppose that 
$\chi_j\in \dbQ(t)$ is a rational function with $\chi_j=\varphi_j/\gam_j$ for suitable 
polynomials $\varphi_j,\gam_j\in \dbZ[t]$ satisfying $(\varphi_j,\gam_j)=1$. Let $s$ be 
a positive number with $s\le k(k+1)/2$, and let $p$ be a prime number with $p>k$. Also, 
suppose that $(\gra_n)_{n\in \dbZ}$ is a sequence of complex numbers. Define
$$h(\bfalp;X,Y)=\sideset{}{^*}\sum_{X<n\le X+Y}\gra_ne(\alp_1\chi_1(n)+\ldots 
+\alp_k\chi_k(n)),$$
where the summation is restricted by the conditions
\begin{equation}\label{11.2}
(W(n;\bfchi),p)=1\quad \text{and}\quad (\gam_j(n),p)=1\quad (1\le j\le k).
\end{equation}
Then whenever $\eps>0$ and $B$ is sufficiently large in terms of $\eps$ and $k$, one has
\begin{equation}\label{11.3}
\oint_{p^B}|h(\bfalp;X,Y)|^{2s}\d\bfalp \ll p^{B\eps}\biggl(1+\frac{Y}{p^{B/k}}\biggr)^s
\biggl( \sum_{X<n\le X+Y}|\gra_n|^2\biggr)^s.
\end{equation}
\end{theorem}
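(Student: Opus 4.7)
The plan is to reduce Theorem \ref{theorem11.1} to Corollary \ref{corollary3.2} by transforming the rational-function phase into a $p^c$-spaced polynomial system via a two-stage change of variables. First I would substitute $n = X + u$ with $0 < u \le Y$ and fix a basepoint $X$ satisfying the starred conditions in (11.2), so that $W(X; \bfchi) \not\equiv 0 \pmod{p}$. Together with $p > k$, this forces the scaled derivative matrix $M = (\chi_j^{(i)}(X)/i!)_{1 \le i,j \le k}$ to be invertible modulo $p^B$ with integral inverse. The linear substitution $\bfalp \mapsto M^{-T}\bfbet$ preserves the $\oint_{p^B}$-measure and converts the phase $\sum_j \alpha_j \chi_j(X+u)$ into $\sum_j \beta_j \psi_j(u)$, where each $\psi_j$ is a $p$-adic rational function with Taylor expansion $\psi_j(u) = u^j + O(u^{k+1})$ at $u = 0$.

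Second, to absorb the higher-order Taylor residuals into the $p^c$-spacing framework required by Corollary \ref{corollary3.2}, I would decompose $u = u_0 + p^a w$ with $u_0$ running over residues modulo $p^a$ and $w$ over an interval of length at most $\lceil Y/p^a \rceil$, then repeat the Wronskian rescaling at the shifted basepoint $X + u_0$. Under this substitution the Taylor term of $\psi_j(u_0 + p^a w)$ of degree $i$ in $w$ acquires the factor $p^{ai}$; after the inner rescaling (which uses $W(X + u_0; \bfchi) \not\equiv 0 \pmod{p}$, enforced by the starred summation), the resulting system $\tilde \varphi_j(w) \in \dbZ[w]$ satisfies $\tilde \varphi_j(w) \equiv w^j \pmod{p^{a(k+1) - O(a)}}$. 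Choosing $a = \lceil \tau B/(k+1) \rceil$ puts $\tilde \bfvarphi$ in $\Phi_\tau(B)$ with room to spare, so Corollary \ref{corollary3.2} applied to each inner sum indexed by $u_0$ yields an upper bound of the form $p^{B\eps} U_{s,k}^{B,H}(\bfgra)$ with $H = \lceil B/k \rceil$.

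Finally, within each residue class $\xi \pmod{p^H}$ the interval $(X, X+Y]$ contains at most $1 + Y/p^H$ valid $n$-values and $p^H$ is of order $p^{B/k}$, so a direct diagonal count in the defining integral (3.8) gives
$$U_{s,k}^{B,H}(\bfgra) \ll (1 + Y/p^{B/k})^s \Bigl( \sum_{X<n\le X+Y}|\gra_n|^2 \Bigr)^s.$$
Summing over the $p^a$ residue classes $u_0$ via H\"older's inequality introduces a factor $p^{a(2s-1)}$, which is swallowed by the $p^{B\eps}$ slack once $a$ is chosen of order $\eps B / k$. The main obstacle is verifying that the double Wronskian inversion at $X$ and $X + u_0$, combined with the truncation of the $p$-adic power series expansions of $\psi_j$, genuinely produces an element of $\Phi_\tau(B)$ uniformly in $u_0$; balancing the parameter $a$ so that the effective spacing $a(k+1)$ comfortably exceeds $\tau B$ while keeping the outer losses from the $p^a$ residue classes absorbable into $p^{B\eps}$ constitutes the delicate calibration at the heart of the argument.
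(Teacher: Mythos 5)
Your overall strategy matches the paper's: partition the short interval into residue classes modulo a small power $p^c$ of $p$, use the non-vanishing Wronskian at the class representative to take integral linear combinations of the phase functions, obtain a $p^c$-spaced polynomial system, invoke Corollary~\ref{corollary3.2}, and finish with a Cauchy--Schwarz diagonal count. However, there is a substantive gap in the treatment of the rational functions, and a few structural problems that would need fixing.

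The crucial omission is the $p$-integrality of the scaled Taylor coefficients. You assert that the matrix $M=(\chi_j^{(i)}(\cdot)/i!)_{1\le i,j\le k}$ is invertible modulo $p^B$ with integral inverse, but for rational functions (as opposed to integer polynomials, where $\varphi_j^{(i)}/i!$ automatically has integer coefficients) it is not even clear a priori that the entries $\chi_j^{(i)}(\xi)/i!$ are $p$-adic integers. The paper proves this by a non-routine induction: writing $\varphi_j=\gamma_j\chi_j$, the Leibniz rule gives
\begin{equation*}
\gamma_j(\xi)\frac{\chi_j^{(l)}(\xi)}{l!}
=\frac{\varphi_j^{(l)}(\xi)}{l!}
-\sum_{m=1}^{l}\binom{l}{m}\Bigl(\frac{\gamma_j^{(m)}(\xi)}{m!}\Bigr)
\Bigl(\frac{\chi_j^{(l-m)}(\xi)}{(l-m)!}\Bigr),
\end{equation*}
and since $m!$ divides the coefficients of $\gamma_j^{(m)}$ and $l!$ those of $\varphi_j^{(l)}$, induction on $l$ together with $(\gamma_j(\xi),p)=1$ gives $\mathrm{ord}_p(l!)\le\mathrm{ord}_p(\chi_j^{(l)}(\xi))$. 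Without this step your invertibility claim, and hence the whole reduction, is unjustified. The Wronskian condition $p>k$ alone does not supply it; the $(\gamma_j(\xi),p)=1$ hypothesis from the starred sum is essential.

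Two further issues. First, your initial rescaling is anchored at the interval endpoint $X$, which is \emph{not} required to satisfy the conditions~(\ref{11.2}) --- those are imposed on $n\in(X,X+Y]$, not on $X$. If $W(X;\bfchi)\equiv 0\pmod p$, your first-stage matrix $M$ is singular modulo $p$ and the argument fails at the outset. This first stage is in fact redundant: your second stage already rescales at a point $X+u_0$ that \emph{does} satisfy~(\ref{11.2}), because otherwise the corresponding $\gra_n$ vanish. The paper performs a single decomposition modulo $p^c=p^{\lceil\tau B\rceil}$ and rescales at the class representative $\xi$, dropping precisely the classes with $W(\xi;\bfchi)\gamma_1(\xi)\cdots\gamma_k(\xi)\equiv 0\pmod p$, which carry zero weight. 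Second, your spacing claim $\tilde\varphi_j(w)\equiv w^j\pmod{p^{a(k+1)-O(a)}}$ is too strong: after normalising the $j$th phase by $p^{aj}$, the remaining terms carry a factor $p^{a(k+1-j)}$, and for $j=k$ this is only $p^a$. The system is $p^a$-spaced, not $p^{a(k+1)-O(a)}$-spaced, so with your choice $a=\lceil\tau B/(k+1)\rceil$ you would only land in $\Phi_{\tau/(k+1)}(B)$; that is recoverable since $\tau$ is at your disposal, but the spacing accounting needs correcting. Finally, the rational-function Taylor expansion is an infinite $p$-adic power series, and one must remark (as the paper does) that terms of sufficiently high degree vanish modulo $p^B$, allowing truncation to a genuine polynomial before applying Corollary~\ref{corollary3.2}.
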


\begin{corollary}\label{corollary11.2} With the hypotheses of Theorem \ref{theorem11.1}, 
denote by $N_B(X,Y)$ the number of integral solutions of the system of congruences 
(\ref{11.1}) subject for $1\le i\le s$ to the conditions $X<x_i,y_i\le X+Y$ and
$$(W(x_i;\bfchi)W(y_i;\bfchi),p)=1\quad \text{and}\quad (\gam_j(x_i)\gam_j(y_i),p)=1
\quad (1\le j\le k).$$
Then whenever $\eps>0$ and $B$ is sufficiently large in terms of $\eps$ and $k$, one has
$$N_B(X,Y)\ll p^{B\eps}(Y+1)^s\biggl(\frac{Y}{p^{B/k}}+1\biggr)^s.$$
In particular, when $p^B\ge Y^k$, one has $N_B(X,Y)\ll p^{B\eps}(Y+1)^s$.
\end{corollary}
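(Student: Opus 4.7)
The plan is to deduce Corollary \ref{corollary11.2} directly from Theorem \ref{theorem11.1} by choosing the coefficient sequence to be the indicator function of the admissible integers in $(X,X+Y]$. Specifically, I would define $(\gra_n)_{n\in \dbZ}$ by setting $\gra_n=1$ when $X<n\le X+Y$ and both $(W(n;\bfchi),p)=1$ and $(\gam_j(n),p)=1$ for every $j$ with $1\le j\le k$, and $\gra_n=0$ otherwise. With this choice, the restricted exponential sum in the statement of Theorem \ref{theorem11.1} simplifies to
\begin{equation*}
h(\bfalp;X,Y)=\sum_{n\in \dbZ}\gra_ne(\alp_1\chi_1(n)+\ldots +\alp_k\chi_k(n)),
\end{equation*}
since the starred summation and the support condition on $(\gra_n)$ encode exactly the same Wronskian and denominator restrictions (\ref{11.2}).

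The next step is to exploit orthogonality on the finite torus $\{ \bfu/p^B:\bfu\in (\dbZ/p^B\dbZ)^k\}$ underlying the discrete integral $\oint_{p^B}$ defined in (\ref{3.5}). For integral $s$, which is the implicit setting in which the counting function $N_B(X,Y)$ is defined, expanding $|h(\bfalp;X,Y)|^{2s}$ and integrating reveals that $\oint_{p^B}|h(\bfalp;X,Y)|^{2s}\d\bfalp $ counts precisely those $2s$-tuples $(x_1,\ldots ,x_s,y_1,\ldots ,y_s)$ of integers in $(X,X+Y]$, all satisfying the coprimality conditions (\ref{11.2}), for which the congruence system (\ref{11.1}) holds. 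Since $\chi_j(n)$ is interpreted via $\varphi_j(n)\gam_j(n)^{-1}\mmod{p^B}$, the conditions $(\gam_j(n),p)=1$ imposed on the support of $\gra_n$ ensure that this interpretation is unambiguous, so that the integral is exactly $N_B(X,Y)$.

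Finally, I would bound $\sum_{n\in \dbZ}|\gra_n|^2$ trivially. Since the sequence is $\{0,1\}$-valued and supported on integers in $(X,X+Y]$, one has $\sum_{n\in \dbZ}|\gra_n|^2\le Y+1$. Substituting this into the conclusion of Theorem \ref{theorem11.1} delivers
\begin{equation*}
N_B(X,Y)\ll p^{B\eps}\biggl( 1+\frac{Y}{p^{B/k}}\biggr)^s(Y+1)^s,
\end{equation*}
which is the first claim of the corollary. For the second assertion, the hypothesis $p^B\ge Y^k$ is equivalent to $Y\le p^{B/k}$, so the factor $(1+Y/p^{B/k})^s$ is bounded by $2^s$ and may be absorbed into the implicit constant, yielding $N_B(X,Y)\ll p^{B\eps}(Y+1)^s$.

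Essentially no genuine obstacle arises here; the proof is a packaging exercise. The only subtlety worth noting is the correct interpretation of the rational values $\chi_j(n)$ as elements of $\dbZ/p^B\dbZ$, and this is precisely what the coprimality conditions in the support of $\gra_n$ are designed to guarantee, so that orthogonality applies cleanly.
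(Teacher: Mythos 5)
Your proof is correct and follows exactly the route the paper intends: the paper states only that Corollary \ref{corollary11.2} is immediate from Theorem \ref{theorem11.1} via orthogonality, and your argument — taking $(\gra_n)$ to be the indicator of admissible integers in $(X,X+Y]$, identifying $\oint_{p^B}|h|^{2s}\,\d\bfalp$ with $N_B(X,Y)$ by orthogonality, and bounding $\sum|\gra_n|^2\le Y+1$ — is precisely the packaging the author has in mind.
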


It is apparent that, in general, the diagonal solutions in (\ref{11.1}) make a contribution of 
order $Y^s$ to $N_B(X,Y)$, so the conclusion of Corollary \ref{corollary11.2} is essentially 
best possible.

\begin{proof}[The proof of Theorem \ref{theorem11.1}] Our goal is to transform the mean 
value on the left hand side of (\ref{11.3}) into one amenable to Theorem \ref{theorem3.1}. 
We may plainly suppose that the sequence $(\gra_n)$ satisfies the condition that 
$\gra_n=0$ for $n\le X$ and for $n>X+Y$. We are also at liberty, moreover, to assume that 
$\gra_n=0$ unless the conditions (\ref{11.2}) all hold. Recalling the notation of \S3, we may 
then define
$$F_p(\bfalp)=\rho_0(\bfgra)^{-1}\sum_{n\in \dbZ}\gra_n e(\psi(n;\bfalp)),$$
where we now write
$$\psi(n;\bfalp)=\alp_1\chi_1(n)+\ldots +\alp_k\chi_k(n).$$
With this notation, the claimed bound (\ref{11.3}) translates into the assertion that
\begin{equation}\label{11.4}
\oint_{p^B}|F_p(\bfalp)|^{2s}\d\bfalp \ll p^{B\eps}\biggl( \frac{Y}{p^{B/k}}+1\biggr)^s,
\end{equation}
and it is this that we now seek to establish. An application of H\"older's inequality delivers 
the conclusion (\ref{11.4}) for $0<s\le k(k+1)/2$ from that in the special case 
$s=k(k+1)/2$. We therefore restrict attention henceforth to the case $s=k(k+1)/2$.\par

Some preparation is required prior to the proof of the estimate (\ref{11.4}) via Corollary 
\ref{corollary3.2}. Let $\tau>0$ be sufficiently small in terms of $s$ and $k$. Then we may 
suppose that $B$ is sufficiently large in terms of $\tau$, as well as $s$, $k$ and $\eps$. We 
put $c=\lceil \tau B\rceil$. Next we sort the implicit summation in $F_p(\bfalp)$ into 
arithmetic progressions modulo $p^c$. In view of our assumptions on the sequence 
$(\gra_n)$ implied by the conditions (\ref{11.2}), we may suppose that $\gra_n=0$ 
whenever $n\equiv \xi\mmod{p^c}$ and
$$W(\xi;\bfchi)\gam_1(\xi)\cdots \gam_k(\xi) \equiv 0\mmod{p}.$$
With this assumption, one finds that
$$F_p(\bfalp)=\rho_0(\bfgra)^{-1}\sum_{\xi\nmod{p^c}}\rho_c(\xi)\grf_c(\bfalp;\xi),$$
and hence Lemma \ref{lemma6.2} delivers the bound
$$\rho_0(\bfgra)^2|F_p(\bfalp)|^{2s}\le p^{sc}\sum_{\xi\nmod{p^c}}\rho_c(\xi)^2
|\grf_c(\bfalp;\xi)|^{2s}.$$
Thus we obtain the estimate
\begin{equation}\label{11.5}
\oint_{p^B}|F_p(\bfalp)|^{2s}\d\bfalp \le p^{sc}\rho_0(\bfgra)^{-2}
\sum_{\xi\nmod{p^c}}\rho_c(\xi)^2I_p(\xi),
\end{equation}
where
$$I_p(\xi)=\oint_{p^B}|\grf_c(\bfalp;\xi)|^{2s}\d\bfalp .$$

\par The mean value $I_p(\xi)$ counts the integral solutions $\bfy,\bfz$ of the system of 
congruences
\begin{equation}\label{11.6}
\sum_{i=1}^s\left( \chi_j(p^cy_i+\xi)-\chi_j(p^cz_i+\xi)\right) \equiv 0\mmod{p^B}\quad 
(1\le j\le k),
\end{equation}
with each solution being counted with weight
\begin{equation}\label{11.7}
\rho_c(\xi)^{-2s}\prod_{i=1}^s \gra_{p^cy_i+\xi}{\overline \gra_{p^cz_i+\xi}}.
\end{equation}
Here, we note that we may restrict attention to the situation in which
$$W(\xi;\bfchi)\gam_1(\xi)\cdots \gam_k(\xi)\not\equiv 0\mmod{p} \quad \text{and}\quad 
\rho_c(\xi)>0.$$
In particular, it follows from (\ref{3.8}) and (\ref{11.5}) that
$$\oint_{p^B}|F_p(\bfalp)|^{2s}\d\bfalp \le p^{sc}U_{s,k}^{B,c}(\bfgra).$$

\par We reinterpret the system (\ref{11.6}) by applying Taylor's theorem to expand the 
rational functions $\chi_j(p^ct+\xi)$. When $1\le j\le k$, we find that for suitable 
polynomials $\Phi_j\in \dbZ[t]$, one has
$$\chi_j(p^ct+\xi)-\chi_j(\xi)\equiv \sum_{l=1}^k 
\ome_{lj}(p^ct)^l+(p^ct)^{k+1}\Phi_j(p^ct)\mmod{p^B},$$
in which the integral coefficients $\ome_{lj}$ are defined by taking
$$\ome_{lj}=(l!)^{-1}\chi_j^{(l)}(\xi)\quad (1\le l,j\le k).$$
A few words of explanation are in order here. First, since $\varphi_j=\gam_j\chi_j$, it 
follows from the differentiation rule of Leibniz that
$$\varphi_j^{(l)}=\sum_{m=0}^l\binom{l}{m}\gam_j^{(m)}\chi_j^{(l-m)},$$
whence one obtains the iterative relation
$$\gam_j(\xi)\frac{\chi_j^{(l)}(\xi)}{l!}=\frac{\varphi_j^{(l)}(\xi)}{l!}-\sum_{m=1}^l
\binom{l}{m}\left( \frac{\gam_j^{(m)}(\xi)}{m!}\right) \left( \frac{\chi_j^{(l-m)}(\xi)}
{(l-m)!}\right) .$$
But since $\gam_j,\varphi_j\in \dbZ[t]$, one finds that $m!$ divides every coefficient of 
$\gam_j^{(m)}(t)$, and likewise $l!$ divides every coefficient of $\varphi_j^{(l)}(t)$. In 
addition, we may suppose that $\gam_j(\xi)\not \equiv 0\mmod{p}$. An inductive argument 
therefore conveys us from this iterative relation to the conclusion that 
$\text{ord}_p(l!)\le \text{ord}_p(\chi_j^{(l)}(\xi))$ for all non-negative integers $l$. By 
multiplying through by appropriate multiplicative inverses modulo $p^B$, we may thus 
suppose that $(l!)^{-1}\chi_j^{(l)}(\xi)$ is an integer for $l\ge 0$. Second, the expansion of 
$\chi_j(p^ct+\xi)$ might be expected to be a non-terminating infinite series. However, the 
terms $(l!)^{-1}\chi_j^{(l)}(\xi) (p^ct)^l$ are necessarily congruent to $0$ modulo $p^B$ 
whenever $l$ is sufficiently large in terms of $B$.\par

The determinant of the coefficient matrix $\Ome =(\ome_{lj})_{1\le l,j\le k}$ is given by 
the formula
$$\det(\Ome)=W(\xi;\bfchi)\biggl( \prod_{l=1}^kl!\biggr)^{-1}.$$
Since we may suppose that $p>k$, the hypothesis $(W(\xi;\bfchi),p)=1$ permits us to 
conclude that $(\det(\Ome),p)=1$, whence $\Ome$ possesses a multiplicative inverse 
$\Ome^{-1}$ modulo $p^B$ having integral coefficients. We now replace $\bfchi$ by 
$\Ome^{-1}\bfchi$ and $\bfPhi$ by $\Ome^{-1}\bfPhi$. This amounts to taking suitable 
integral linear combinations of the congruences comprising (\ref{11.6}). In this way, we see 
that there is no loss of generality in supposing that the coefficient matrix $\Ome$ is equal to 
$I_k$. Hence, there exist polynomials $\Xi_j\in \dbZ[t]$ having the property that whenever 
the system (\ref{11.6}) is satisfied, then
$$\sum_{i=1}^s(p^c)^j\left( \Psi_j(y_i)-\Psi_j(z_i)\right)\equiv 0\mmod{p^B}\quad 
(1\le j\le k),$$
in which $\Psi_j(t)=t^j+p^c\Xi_j(t)$. In particular, the system of polynomials $\bfPsi$ is 
$p^c$-spaced.\par

The discussion of the previous paragraph shows that in any solution $\bfy,\bfz$ of the 
system (\ref{11.6}), counted with weight (\ref{11.7}), one has the additional constraints
$$\sum_{i=1}^s(\Psi_j(y_i)-\Psi_j(z_i))\equiv 0\mmod{p^{B-kc}}\quad (1\le j\le k).$$
Define the coefficients
$$\grc_y(\bfalp)=\gra_{p^cy+\xi}e(\psi(p^cy+\xi;\bfalp)).$$
Also, write
$$\grg_\bfgrc(\bfalp,\bfbet)=\rho_0(\bfgrc)^{-1}\sum_{y\in \dbZ}\grc_y(\bfalp)
e(\bet_1\Psi_1(y)+\ldots +\bet_k\Psi_k(y)),$$
and define the mean value
$$J(\bfalp)=\oint_{p^{B-kc}}|\grg_\bfgrc(\bfalp,\bfbet)|^{2s}\d\bfbet .$$
Note that $\rho_0(\bfgrc)=\rho_c(\xi;\bfgra)$. Then, just as in the argument leading to 
(\ref{7.18}) above, one sees that
\begin{equation}\label{11.8}
I_p(\xi)=\oint_{p^B}|\grg_\bfgrc(\bfalp;{\mathbf 0})|^{2s}\d\bfalp =\oint_{p^B}
J(\bfalp)\d\bfalp .
\end{equation}

Observe that, when written using the notation defined in (\ref{3.6}), one has 
$J(\bfalp)=U_{s,k}^{B-kc, \bfPsi}(\bfgrc)$. Moreover, the system $\bfPsi$ is $p^c$-spaced 
and $B-kc$ is sufficiently large in terms of $k$, $\tau$ and $\eps$. Then Corollary 
\ref{corollary3.2} yields the bound
$$J(\bfalp)\ll p^{B\eps}U_{s,k}^{B-kc,H,\bfPsi}(\bfgrc),$$
where $H=\lceil B/k\rceil -c$. On substituting this bound into (\ref{11.8}), and thence into 
(\ref{11.5}), we infer that
\begin{equation}\label{11.9}
\oint_{p^B}|F_p(\bfalp)|^{2s}\d\bfalp \ll p^{sc+B\eps}
\rho_0(\bfgra)^{-2}\sum_{\xi\nmod{p^c}}
\rho_c(\xi)^2U_{s,k}^{B-kc,H,\bfPsi}(\bfgrc).
\end{equation}

\par Temporarily, we abbreviate $\grf_H(\bfbet;\eta;\bfgrc;\bfPsi)$ 
to $\grf_H(\bfbet;\eta)$. Thus, we have
$$\grf_H(\bfbet;\eta)=\rho_H(\eta;\bfgrc)^{-1}\sum_{y\equiv \eta\mmod{p^H}}
\grc_y(\bfalp)e(\bet_1\Psi_1(y)+\ldots +\bet_k\Psi_k(y)),$$
wherein the coefficients $\grc_y(\bfalp)$ are $0$ whenever $p^cy+\xi\le X$ or 
$p^cy+\xi>X+Y$. It follows via Cauchy's inequality, therefore, that
$$\rho_H(\eta;\bfgrc)^2|\grf_H(\bfbet;\eta)|^2\le \biggl( 
\sum_{\substack{y\equiv \eta \mmod{p^H}\\ X<p^cy+\xi\le X+Y}}1\biggr) 
\sum_{y\equiv \eta\mmod{p^H}}|\grc_y(\bfalp)|^2.$$
Moreover, one has
$$\rho_H(\eta;\bfgrc)^2=\sum_{y\equiv \eta\mmod{p^H}}|\grc_y(\bfalp)|^2,$$
so that
$$|\grf_H(\bfbet;\eta)|^2\le 1+Y/p^{c+H}.$$
We therefore infer from (\ref{3.8}) that
$$U_{s,k}^{B-kc,H,\bfPsi}(\bfgrc)\ll (1+Y/p^{c+H})^s.$$
Since $c+H=\lceil B/k\rceil$, we conclude from (\ref{11.9}) that
\begin{align*}
\oint_{p^B}|F_p(\bfalp)|^{2s}\d\bfalp &\ll p^{sc+B\eps}(1+Y/p^{B/k})^s
\rho_0(\bfgra)^{-2}\sum_{\xi\nmod{p^c}}\rho_c(\xi)^2\\
&\ll p^{(2s\tau+\eps)B}(1+Y/p^{B/k})^s.
\end{align*}
We recall that $\tau$ was chosen sufficiently small in terms of $s$ and $k$. Thus, for each 
positive number $\del$, we have
$$\oint_{p^B}|F_p(\bfalp)|^{2s}\d\bfalp \ll p^{B\del}(1+Y/p^{B/k})^s,$$
so that the estimate (\ref{11.4}) does indeed hold. This completes the proof of the 
theorem.
\end{proof}

Corollary \ref{corollary11.2} is immediate from Theorem \ref{theorem11.1} via 
orthogonality.

\section{Mean values of exponential sums} We now explain how Theorem \ref{theorem3.1}
 and Corollary \ref{corollary3.2} may be applied to bound mean values of exponential sums. 
In particular, we establish Theorem \ref{theorem1.1} and its corollaries. Much of the 
necessary work is already accomplished in the shape of Theorem \ref{theorem11.1}.

\begin{proof}[The proof of Theorem \ref{theorem1.1}] Let $\varphi_j\in \dbZ[t]$ 
$(1\le j\le k)$ be polynomials with non-vanishing Wronskian $W(t;\bfvarphi)$, and let 
$\eps>0$ be a small positive number. Let $\calZ$ denote the set of integral zeros of 
$W(t;\bfvarphi)$, and let $X$ be sufficiently large in terms of $\bfvarphi$, $s$, $k$ and 
$\eps$. We note that
$$\text{card}(\calZ)\le \text{deg}(W(t;\bfvarphi))\ll 1.$$
We suppose that $s=k(k+1)/2$. Finally, when $(\gra_n)_{n\in \dbZ}$ is a 
sequence of complex numbers, we write
$$F(\bfalp;X)=\rho_0^{-1}\sum_{|n|\le X}\gra_ne(\psi(n;\bfalp))$$
and
$$F_0(\bfalp;X)=\rho_0^{-1}\sum_{\substack{|n|\le X\\ n\not\in \calZ}}\gra_n
e(\psi(n;\bfalp)),$$
where $\psi(n;\bfalp)$ is defined as in (\ref{3.3}). In the argument to come, there will be 
no loss of generality in supposing that $\gra_n=0$ for $|n|>X$.\par

As a consequence of Cauchy's inequality, one has
$$\biggl| \sum_{\substack{|n|\le X\\ n\in \calZ}}\gra_n e(\psi(n;\bfalp))\biggr|^2\le 
\text{card}(\calZ)\sum_{|n|\le X}|\gra_n|^2\le \rho_0^2\,\text{card}(\calZ),$$
whence
$$|F(\bfalp;X)|\ll 1+|F_0(\bfalp;X)|.$$
Thus, one has
$$|F(\bfalp;X)|^{2s}\ll 1+|F_0(\bfalp;X)|^{2s},$$
so that
\begin{equation}\label{12.1}
\oint |F(\bfalp;X)|^{2s}\d\bfalp \ll 1+\oint |F_0(\bfalp;X)|^{2s}\d\bfalp .
\end{equation}
In this way, we discern that it suffices to restrict attention to situations in which the 
underlying variables possess non-vanishing Wronskians. By orthogonality, the mean value on 
the right hand side of (\ref{12.1}) counts the number of integral solutions of the system of 
equations (\ref{1.4}) with $|\bfx|,|\bfy|\le X$ and $x_i,y_i\not\in \calZ$, and with each 
solution $\bfx,\bfy$ being counted with weight
\begin{equation}\label{12.2}
\rho_0^{-2s}\prod_{i=1}^s\gra_{x_i}{\overline \gra_{y_i}}.
\end{equation}

\par We impose a non-vanishing condition modulo $p$ on the Wronskian in each of these 
solutions, for a suitable prime number $p$. Given a solution $\bfx,\bfy$ of (\ref{1.4}) of the 
type in question, the integer
$$\Xi(\bfx,\bfy)=\prod_{i=1}^sW(x_i;\bfvarphi)W(y_i;\bfvarphi)$$
is non-zero. Moreover, one has $|\Xi(\bfx,\bfy)|\le CX^D$, for some $C>0$ depending at 
most on $s$, $k$ and the coefficients of $\bfvarphi$, and $D$ a positive integer with
$$D\le 2s\sum_{j=1}^k\deg(\varphi_j).$$
Let $\calP$ denote the set of prime numbers $p$ with
$$(\log X)^2<p\le 3(\log X)^2.$$
Thus, when $X$ is large, it is a consequence of the prime number theorem that
$$\prod_{p\in \calP}p>(\log X)^{(\log X)^2/\log \log X}>CX^D.$$
We therefore deduce that for each solution $\bfx,\bfy$ of (\ref{1.4}) counted by the 
integral on the right hand side of (\ref{12.1}), there exists $p\in \calP$ with
$$\prod_{i=1}^sW(x_i;\bfvarphi)W(y_i;\bfvarphi)\not \equiv 0\mmod{p}.$$
In particular, one has
$$\oint |F_0(\bfalp;X)|^{2s}\d\bfalp \le \sum_{p\in \calP}\oint |F_p(\bfalp;X)|^{2s}
\d\bfalp ,$$
where
\begin{equation}\label{12.3}
F_p(\bfalp;X)=\rho_0^{-1}\sum_{W(n;\bfvarphi)\not\equiv 0\mmod{p}}|\gra_n|
e(\psi(n;\bfalp)).
\end{equation}
Thus, we conclude from (\ref{12.1}) that
\begin{equation}\label{12.4}
\oint |F(\bfalp;X)|^{2s}\d\bfalp \ll 1+(\log X)^2\max_{p\in \calP}\oint 
|F_p(\bfalp;X)|^{2s}\d\bfalp .
\end{equation}

\par Our goal is to establish that for each fixed prime $p\in \calP$, one has
\begin{equation}\label{12.5}
\oint |F_p(\bfalp;X)|^{2s}\d\bfalp \ll X^{\eps/2 }.
\end{equation}
It follows by substituting this estimate into (\ref{12.4}) that
$$\oint |F(\bfalp;X)|^{2s}\d\bfalp \ll 1+X^{\eps/2}(\log X)^2\ll X^\eps,$$
whence
\begin{equation}\label{12.6}
\oint \biggl| \sum_{|n|\le X}\gra_ne(\psi(n;\bfalp))\biggr|^{2s}\d\bfalp \ll X^\eps
\rho_0^{2s}=X^\eps \biggl( \sum_{|n|\le X}|\gra_n|^2\biggr)^s.
\end{equation}
This establishes the first conclusion (\ref{1.2}) of Theorem \ref{theorem1.1} in the special 
case $s=k(k+1)/2$. Of course, the desired conclusion for smaller values of $s$ follows by 
applying H\"older's inequality. Meanwhile, the final conclusion (\ref{1.3}) of Theorem 
\ref{theorem1.1} follows from (\ref{1.2}) by merely specialising the sequence $(\gra_n)$ 
to be $(1)$.\par

We focus now on the proof of the estimate (\ref{12.5}), and this involves preparation for 
the application of Theorem \ref{theorem11.1}. In our application of Theorem 
\ref{theorem11.1} we take $\gam_j=1$, so that $\chi_j=\varphi_j$ $(1\le j\le k)$. In view 
of the definition (\ref{12.3}), we may suppose that $\gra_n$ is real with $\gra_n\ge 0$ for 
each $n$. Also, we take
$$B=\left\lceil \frac{k\log X}{\log p}\right\rceil .$$
Thus $X\le p^{B/k}\le pX$.\par

The mean value on the left hand side of (\ref{12.5}) counts the integral solutions of the 
system of equations (\ref{1.4}) with $|\bfx|,|\bfy|\le X$ satisfying
$$W(x_i;\bfvarphi)W(y_i;\bfvarphi)\not \equiv 0\mmod{p}\quad (1\le i\le s),$$
and in which each solution is counted with weight (\ref{12.2}). Since these weights are now 
assumed to be non-negative, one sees that an upper bound for this weighted number of 
solutions is given by
$$\oint_{p^B}|F_p(\bfalp;X)|^{2s}\d\bfalp ,$$
for by orthogonality this mean value counts the integral solutions $\bfx,\bfy$ of the system 
of congruences
$$\sum_{i=1}^s(\varphi_j(x_i)-\varphi_j(y_i))\equiv 0\mmod{p^B}\quad (1\le j\le k),$$
with $\bfx,\bfy$ satisfying the same attendant conditions, and again counted with weight 
(\ref{12.2}).\par

By Theorem \ref{theorem11.1}, one has
$$\oint_{p^B} |F_p(\bfalp;X)|^{2s}\d\bfalp \ll \rho_0^{-2s}p^{B\eps/(4k)}\biggl( 
\frac{X}{p^{B/k}}+1\biggr)^s\biggl( \sum_{|n|\le X}|\gra_n|^2\biggr)^s\ll 
p^{B\eps/(4k)}.$$
Since $p^{B\eps}\ll X^{(k+1)\eps}$, we confirm the estimate (\ref{12.5}). Thus we deduce 
from (\ref{12.4}) that one has the bound (\ref{12.6}). The proof of Theorem 
\ref{theorem1.1}, as previously discussed, is now complete.
\end{proof}

The proof of Corollary \ref{corollary1.2} involves only a computation involving the 
Wronskian.

\begin{proof}[The proof of Corollary \ref{corollary1.2}] In order to establish Corollary 
\ref{corollary1.2}, we have merely to note that when $1\le d_1<d_2<\ldots <d_k$ and 
$\varphi_j(t)=t^{d_j}$ $(1\le j\le k)$, then the Wronskian $W(t;\bfvarphi)$ is non-zero as 
a polynomial. In order to see this, observe that
$$W(t;\bfvarphi)=\det\left(d_j(d_j-1)\cdots (d_j-i+1)t^{d_j-i}\right)_{1\le i,j\le k}.$$
Every monomial in this determinant is an integer multiple of $t^D$, where
$$D=\biggl( \sum_{j=1}^kd_j\biggr) -k(k+1)/2=\sum_{j=1}^k(d_j-j).$$
Thus $W(t;\bfvarphi)=t^D\det(\Ome)$, where
$$\Ome=\left( d_j(d_j-1)\cdots (d_j-i+1)\right)_{1\le i,j\le k}.$$
By taking appropriate linear combinations of the rows of the matrix $\Ome$, one sees that
$$\det(\Ome)=\det(d_j^i)_{1\le i,j\le t}=d_1\cdots d_k\prod_{1\le i<j\le k}(d_i-d_j).$$
Thus we see that $\det(\Ome)\ne 0$, and hence $W(t;\bfvarphi)\ne 0$. We therefore 
conclude from Theorem \ref{theorem1.1} that when $0<s\le k(k+1)/2$, one has
$$\oint \biggl| \sum_{1\le x\le X}e(\alp_1x^{d_1}+\ldots +\alp_kx^{d_k})\biggr|^{2s}
\d\bfalp \ll X^{s+\eps}.$$
This completes the proof of Corollary \ref{corollary1.2}.
\end{proof}

We address the proof of Corollary \ref{corollary1.4} before moving on to consider Corollary 
\ref{corollary1.3}, this being logically speaking a more direct course of action.

\begin{proof}[The proof of Corollary \ref{corollary1.4}] Write
$$\grh(\bfalp)=\sum_{|n|\le X}\gra_ne(n\alp_1+\ldots +n^k\alp_k).$$
When $\varphi_j(t)=t^j$ $(1\le j\le k)$, one finds that the Wronskian determinant is 
triangular, whence
$$W(t;\varphi)=\prod_{j=1}^kj!\ne 0.$$
Thus the conditions required to apply Theorem \ref{theorem1.1} hold, and one obtains the 
bound
\begin{equation}\label{12.7}
\oint |\grh(\bfalp)|^{2s}\d\bfalp \ll X^\eps \biggl( \sum_{|n|\le X}|\gra_n|^2\biggr)^s
\end{equation}
for $0<s\le k(k+1)/2$. Meanwhile, an application of Cauchy's inequality shows that
$$|\grh(\bfalp)|^2\le (2X+1)\sum_{|n|\le X}|\gra_n|^2.$$
Thus, when $X$ is large, we deduce from the special case $s=k(k+1)/2$ of (\ref{12.7}) 
that when $t>k(k+1)/2$, then
\begin{align*}
\oint |\grh(\bfalp)|^{2t}\d\bfalp &\ll X^{t-k(k+1)/2}\biggl( \sum_{|n|\le X}
|\gra_n|^2\biggr)^{t-k(k+1)/2}\oint |\grh(\bfalp)|^{k(k+1)}\d\bfalp \\
&\ll X^{t-k(k+1)/2+\eps}\biggl( \sum_{|n|\le X}|\gra_n|^2\biggr)^t,
\end{align*}
and the proof of the corollary is complete.
\end{proof}

\begin{proof}[The proof of Corollary \ref{corollary1.3}] For each $\eps>0$, the estimate
$$J_{s,k}(X)\ll X^\eps (X^s+X^{2s-k(k+1)/2})$$
is immediate from the special case of Corollary \ref{corollary1.4} in which $(\gra_n)=(1)$. 
Meanwhile, the asymptotic formula claimed in the corollary for $s>k(k+1)/2$ follows from 
this estimate by the standard literature in Vinogradov's mean value theorem, and permits 
the factor $X^\eps$ to be omitted from this upper bound when $s>k(k+1)/2$. The 
asymptotic formula for $J_{s,k}(X)$ asserted in Corollary \ref{corollary1.3} is a well-known 
consequence of the main conjecture in Vinogradov's mean value theorem. We briefly outline 
how to apply the circle method to prove this formula.\par

Write $L=X^{1/(4k)}$. Then, when $1\le q\le L$, $1\le a_j\le q$ $(1\le j\le k)$ and 
$(q,a_1,\ldots ,a_k)=1$, define the major arc $\grM(q,\bfa)$ by
$$\grM(q,\bfa)=\{ \bfalp \in [0,1)^k: \text{$|\alp_j-a_j/q|\le LX^{-j}$ $(1\le j\le k)$}\}.$$
The arcs $\grM(q,\bfa)$ are disjoint, as is easily verified. Let $\grM$ denote their union, and 
put $\grm=[0,1)^k\setminus \grM$.\par

Write
$$f(\bfalp;X)=\sum_{1\le x\le X}e(\alp_1x+\ldots +\alp_kx^k).$$
Also, when $\bfalp\in \grM(q,\bfa)\subseteq \grM$, put
$$V(\bfalp;q,\bfa)=q^{-1}S(q,\bfa)I(\bfalp-\bfa/q;X),$$
where
$$S(q,\bfa)=\sum_{r=1}^qe((a_1r+\ldots +a_kr^k)/q)$$
and
$$I(\bfbet;X)=\int_0^Xe(\bet_1\gam+\ldots +\bet_k\gam^k)\d\gam .$$
We then define the function $V(\bfalp)$ to be $V(\bfalp;q,a)$ when $\bfalp\in \grM(q,\bfa)
\subseteq \grM$, and to be $0$ otherwise.\par

The contribution of the minor arcs $\grm$ is easily estimated. Thus, as in 
\cite[equation (7.1)]{Woo2017a} (based on \cite[\S9]{Woo2012}), we find that for each 
$\eps>0$ one has
$$\sup_{\bfalp\in \grm}|f(\bfalp;X)|\ll X^{1-\tau+\eps},$$
where $\tau^{-1}=8k^2$. In this way, when $2s>k(k+1)$, one finds that
\begin{align*}
\int_\grm|f(\bfalp;X)|^{2s}\d\bfalp &\ll \Bigl( \sup_{\bfalp\in \grm}|f(\bfalp;X)|\Bigr)^{
2s-k(k+1)}\oint |f(\bfalp;X)|^{k(k+1)}\d\bfalp \\
&\ll X^{2s-k(k+1)/2+2\eps -(2s-k(k+1))\tau}.
\end{align*}
Thus
\begin{equation}\label{12.8}
\int_\grm |f(\bfalp;X)|^{2s}\d\bfalp =o(X^{2s-k(k+1)/2}),
\end{equation}
provided that we take $\eps$ sufficiently small in terms of $s$ and $k$.\par

On the other hand, as in the proof of \cite[Lemma 7.1]{Woo2017a}, one finds that 
when $\bfalp\in \grM(q,\bfa)\subseteq \grM$, one has
$$f(\bfalp;X)-V(\bfalp;q,\bfa)\ll L^2.$$
Using the decomposition
$$z{\overline z}-w{\overline w}=(z-w){\overline z}+w({\overline z}-{\overline w}),$$
it follows that
$$|f(\bfalp;X)|^2-|V(\bfalp;q,\bfa)|^2\ll L^2X.$$
Hence, as a consequence of the mean value theorem, one obtains the bound
\begin{align*}
|f(\bfalp;X)|^{2s}-|V(\bfalp;q,\bfa)|^{2s}&\ll 
(|f(\bfalp;X)|^2-|V(\bfalp;q,\bfa)|^2)X^{2s-2}\\
&\ll L^2X^{2s-1}.
\end{align*}
Since $\text{mes}(\grM)\ll L^{2k+1}X^{-k(k+1)/2}$, we deduce that
\begin{align}
\int_\grM |f(\bfalp;X)|^{2s}\d\bfalp -\int_\grM |V(\bfalp)|^{2s}\d\bfalp &\ll (L^{2k+3}/X)
X^{2s-k(k+1)/2}\notag \\
&=o(X^{2s-k(k+1)/2}).\label{12.9}
\end{align}

\par By applying \cite[Theorems 1.3 and 2.4]{ACK2004} as in the argument concluding the 
proof of \cite[Lemma 7.1]{Woo2017a}, one sees that
$$\int_\grM |V(\bfalp)|^{2s}\d\bfalp =\grS \grJ,$$
where, when $2s\ge \frac{1}{2}k(k+1)+1$, one has
\begin{equation}\label{12.10}
\grJ=X^{2s-k(k+1)/2}\int_{\dbR^k}|I(\bfbet;1)|^{2s}\d\bfbet +o(X^{2s-k(k+1)/2}),
\end{equation}
and, when $2s\ge \frac{1}{2}k(k+1)+2$, one has
\begin{equation}\label{12.11}
\grS=\sum_{q=1}^\infty \sum_{\substack{1\le \bfa\le q\\ (q,a_1,\ldots ,a_k)=1}}
\left| q^{-1}S(q,\bfa)\right|^{2s}+o(1).
\end{equation}
Here, the integral on the right hand side of (\ref{12.10}) is absolutely convergent, and the 
sum on the right hand side of (\ref{12.11}) is also absolutely convergent. Hence, there 
exists a real number $C_{s,k}\ge 0$ for which
$$\int_\grM |V(\bfalp)|^{2s}\d\bfalp =C_{s,k}X^{2s-k(k+1)/2}+o(X^{2s-k(k+1)/2}),$$
and we conclude from (\ref{12.8}) and (\ref{12.9}) that
\begin{align}
\oint |f(\bfalp;X)|^{2s}\d\bfalp &=\int_\grM |f(\bfalp;X)|^{2s}\d\bfalp +
\int_\grm |f(\bfalp;X)|^{2s}\d\bfalp \notag \\
&=C_{s,k}X^{2s-k(k+1)/2}+o(X^{2s-k(k+1)/2}).\label{12.12}
\end{align}
Making use of the familiar lower bound $J_{s,k}(X)\gg X^{2s-k(k+1)/2}$, we conclude that 
when $s>k(k+1)/2$, the asymptotic formula (\ref{12.12}) holds for some $C_{s,k}>0$. 
This completes the proof of Corollary \ref{corollary1.3}.
\end{proof}

\section{A remark on Tarry's problem} The resolution of the main conjecture in Vinogradov's 
mean value theorem provides a means of delivering a definitive result concerning Tarry's 
problem. When $h$, $k$ and $s$ are positive integers with $h\ge 2$, consider the 
Diophantine system
\begin{equation}\label{13.1}
\sum_{i=1}^sx_{i1}^j=\sum_{i=1}^sx_{i2}^j=\ldots =\sum_{i=1}^sx_{ih}^j\quad 
(1\le j\le k).
\end{equation}
Let $W(k,h)$ denote the least natural number $s$ having the property that the simultaneous 
equations (\ref{13.1}) possess an integral solution $\bfx$ with
$$\sum_{i=1}^sx_{iu}^{k+1}\ne \sum_{i=1}^sx_{iv}^{k+1}\quad (1\le u<v\le h).$$
The problem of estimating $W(k,h)$ has been the subject of extensive investigation by 
E.~M.~Wright and L.-K.~Hua (see \cite{Hua1938}, \cite{Hua1949} and \cite{Wri1948}). 
There are numerous applications that need not detain us here. However, we note in 
particular that Croot and Hart \cite{CH2010} have found application of these ideas in work 
on the sum-product conjecture. Classically, Hua \cite{Hua1949} was able to show that 
$W(k,h)\le k^2(\log k+O(1))$. In recent work \cite[Theorem 12.1]{Woo2017} based on 
efficient congruencing, the author was able to improve this conclusion, showing that 
$W(k,h)\le \tfrac{1}{2}k(k+1)+1$ for $k$ sufficiently large. We now show that the latter 
hypothesis on $k$ may be dropped.

\begin{theorem}\label{theorem13.1} When $h$ and $k$ are natural numbers with $h\ge 2$, 
one has $W(k,h)\le \tfrac{1}{2}k(k+1)+1$.
\end{theorem}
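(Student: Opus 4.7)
The plan is to set $s = \tfrac{1}{2}k(k+1) + 1$ and produce, for some large integer $X$, an $h$-tuple $(\bfx_1, \ldots, \bfx_h) \in ([1,X]^s)^h$ which solves (\ref{13.1}) and whose $h$ values $\sum_i x_{iu}^{k+1}$ are pairwise distinct. For each $X$, I would write $r_k(\bfn)$ for the number of $\bfx \in [1, X]^s$ with $\sum_i x_i^j = n_j$ for $1 \le j \le k$, and $r_{k+1}(\bfn, m)$ for the number of such $\bfx$ additionally satisfying $\sum_i x_i^{k+1} = m$. Setting $D(\bfn) = \#\{m : r_{k+1}(\bfn, m) > 0\}$, the problem reduces to exhibiting a single $\bfn^*$ with $D(\bfn^*) \ge h$: the required $h$-tuple can then be assembled by choosing $\bfx_u$ in $h$ distinct fibres of the $(k+1)$-th power sum over $\bfn^*$.

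The first key step is to bound from below the cardinality of the total support $\Sigma = \{(\bfn, m) : r_{k+1}(\bfn, m) > 0\}$. Since $\sum_{(\bfn,m) \in \Sigma} r_{k+1}(\bfn, m) = X^s$, Cauchy--Schwarz delivers
$$X^{2s} \le |\Sigma| \cdot \sum_{\bfn, m} r_{k+1}(\bfn, m)^2 = |\Sigma| \cdot J_{s, k+1}(X).$$
For our choice of $s$, a short calculation gives $2s - (k+1)(k+2)/2 = s - k$, which is strictly less than $s$ for all $k \ge 1$, so the main conjecture in degree $k+1$ (Corollary \ref{corollary1.3}) yields $J_{s, k+1}(X) \ll X^{s + \eps}$ for every $\eps > 0$, and hence $|\Sigma| \gg X^{s - \eps}$.

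The second key step is a pigeon-hole comparison. Were $D(\bfn) \le h - 1$ to hold for every $\bfn$, then combined with the trivial bound $|\mathrm{supp}(r_k)| \ll X^{k(k+1)/2}$ (coming from the box $|n_j| \le s X^j$ enclosing the support) one would have $|\Sigma| \le (h-1) |\mathrm{supp}(r_k)| \ll (h-1) X^{k(k+1)/2}$. Since $s - k(k+1)/2 = 1$, comparing the two bounds on $|\Sigma|$ would force $X^{1-\eps} \ll h - 1$, which fails for $X$ sufficiently large in terms of $h$, $k$ and $\eps$. Hence some $\bfn^*$ with $D(\bfn^*) \ge h$ must exist.

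The only substantial ingredient is the estimate $J_{s, k+1}(X) \ll X^{s + \eps}$ at the subcritical exponent $s$, furnished by Corollary \ref{corollary1.3} applied in degree $k+1$. This is precisely the input that was unavailable for small $k$ at the time of \cite[Theorem 12.1]{Woo2017} and restricted that bound to $k$ sufficiently large; now that the main conjecture is known for every $k$, the restriction falls away, and everything else in the argument is pure counting --- one application of Cauchy--Schwarz, a trivial box estimate, and pigeon-hole. The potential obstacle of producing many distinct $(k+1)$-th power sums \emph{within a single fibre} of $\bfP_k$ is neatly bypassed: we compare sizes of $\Sigma$ and $\mathrm{supp}(r_k)$ globally, and the unit gap $s - k(k+1)/2 = 1$ absorbed into the Cauchy--Schwarz lower bound on $|\Sigma|$ is exactly what powers the contradiction.
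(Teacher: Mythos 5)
Your argument is correct and follows essentially the same route as the paper, which simply invokes the criterion ``$W(k,h)\le s$ whenever $J_{s,k+1}(X)=o(X^{2s-k(k+1)/2})$'' from the proof of \cite[Theorem 1.3]{Woo2012} and then feeds in $J_{s,k+1}(X)\ll X^{s+\eps}$ from Corollary \ref{corollary1.3}. Your Cauchy--Schwarz lower bound $|\Sigma|\gg X^{2s}/J_{s,k+1}(X)$, box estimate $|\mathrm{supp}(r_k)|\ll X^{k(k+1)/2}$, and pigeonhole step are exactly what that cited reduction consists of; noting $2s-k(k+1)/2=s+1$ your computation reproduces the criterion verbatim, so what you have added is a self-contained account of the step the paper delegates to the earlier reference.
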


\begin{proof} The argument of the proof of \cite[Theorem 1.3]{Woo2012} shows that 
$W(k,h)\le s$ whenever one can establish the bound
$$J_{s,k+1}(X)=o(X^{2s-k(k+1)/2}).$$
However, as a consequence of Corollary \ref{corollary1.3}, for all natural numbers $k$ one 
has $J_{s,k+1}(X)\ll X^{s+\eps}$ whenever $1\le s\le (k+1)(k+2)/2$. Thus, with 
$s=\tfrac{1}{2}k(k+1)+1$, one finds that
$$J_{s,k+1}(X)\ll X^{\eps+1+k(k+1)/2}=X^{\eps-1}\cdot X^{2s-k(k+1)/2}.$$
We therefore conclude that $W(k,h)\le \tfrac{1}{2}k(k+1)+1$, and the proof of the theorem 
is complete.
\end{proof}

The bound obtained in Theorem \ref{theorem13.1} apparently achieves the limits of this 
kind of analytic argument. The best available lower bound for $W(k,h)$ is the trivial bound 
$W(k,h)\ge k+1$, one that for large values of $k$ seems unlikely to represent the true state 
of affairs. For small values of $k$, however, explicit numerical examples show that 
$W(k,2)=k+1$ for $2\le k\le 9$ and $k=11$ (see 
http://euler.free.fr/eslp/eslp.htm).

\section{Analogues of Hua's lemma, and Waring's problem} Estimates of the shape 
(\ref{1.3}) in Theorem \ref{theorem1.1}, and Corollary \ref{corollary1.2}, although 
exhibiting diagonal behaviour, in general fail to provide sufficiently strong mean value 
estimates that they may be applied directly in Diophantine applications such as Waring's 
problem. The idea of augmenting the underlying systems with additional low degree 
equations offers a means of bounding larger moments at a modest cost. As far as the 
author is aware, this idea seems to date from at least as far back as the work of Arkhipov 
and Karatsuba from the 1970's, although we have been unable to identify a suitable 
reference in the literature. In the context of recent advances made via efficient 
congruencing, this idea has also been noted in conference talks (such as the author's Tur\'an 
Conference talk in 2011). Most recently, this idea played a pivotal role in the discussion of 
\cite[Theorem 10]{Bou2017} associated with an analogue of Hua's lemma. Bourgain reports 
that an analogue of the proof of the main conjecture in Vinogradov's mean value theorem 
may be applied to confirm the special case of Corollary \ref{corollary1.2} above in which 
$(d_1,\ldots ,d_k)=(1,2,\ldots ,k-1,d)$ (see especially \cite[equation (6.5)]{Bou2017}). In 
this way, he obtains the bound
\begin{equation}\label{14.1}
\int_0^1\biggl| \sum_{1\le x\le X}e(\alp x^d)\biggr|^{r(r+1)}\d\alp \ll X^{r^2+\eps}
\quad (1\le r\le d),
\end{equation}
which may be regarded as an analogue of Hua's lemma (see \cite{Hua1938b}). In this 
section we apply related ideas to obtain estimates similar in shape to those of estimate 
(\ref{1.3}) of Theorem \ref{theorem1.1} and Corollary \ref{corollary1.2}, though for 
higher moments potentially of use in Diophantine applications. These results not only 
generalise those of Bourgain \cite[Theorem 10]{Bou2017} to systems of equations, but 
also generalise them in the case of a single equation in addition to describing the details 
suppressed in the former treatment (i.e.~the proof of Corollary \ref{corollary1.2} in the 
special case mentioned above).\par

Throughout this section, we suppose that $\varphi_j\in \dbZ[t]$ $(1\le j\le k)$ 
is a system of polynomials with $\deg(\varphi_j)=d_j$ satisfying the condition
\begin{equation}\label{14.2}
1\le d_k<d_{k-1}<\ldots <d_1.
\end{equation}
It is convenient to adopt the convention that $d_0=+\infty$ and $d_{k+1}=0$. We put
$$D=d_1+\ldots +d_k,$$
and when $r\in \dbN$, we define
\begin{equation}\label{14.3}
\Del_{r,\bfd}=\sum_{i=1}^k\max \{ 0,d_i-(r-i+1)\}.
\end{equation}
Finally, we define the exponential sum $F(\bfalp)=F(\bfalp;\bfvarphi)$ by
$$F(\bfalp;\bfvarphi)=\sum_{1\le n\le X}e(\psi(n;\bfalp)),$$
where $\psi(n;\bfalp)=\alp_1\varphi_1(n)+\ldots +\alp_k\varphi_k(n)$.

\begin{theorem}\label{theorem14.1} For each $r\in \dbN$ and $\eps>0$, one has
$$\int_{[0,1)^k}|F(\bfalp;\bfvarphi)|^{r(r+1)}\d\bfalp \ll X^\eps 
\left( X^{r(r+1)/2}+X^{r(r+1)-D+\Del_{r,\bfd}}\right).$$
\end{theorem}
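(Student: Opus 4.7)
Write $s = r(r+1)/2$ throughout, and split the argument according to whether $r \le k$ or $r > k$. In the former range I would apply Theorem \ref{theorem1.1} directly to $\bfvarphi$ with the constant sequence $\gra_n = 1$ for $1 \le n \le X$ (zero otherwise); since $s \le k(k+1)/2$, this yields $\oint |F(\bfalp)|^{2s} \d\bfalp \ll X^{s + \eps}$. A short calculation, using $d_i \ge k - i + 1 \ge r - i + 1$ for $1 \le i \le r \le k$, shows that $D - \Del_{r, \bfd} = \sum_{i=1}^r (r - i + 1) = r(r+1)/2$, so both summands in the asserted bound collapse to $X^{r(r+1)/2}$ and the conclusion follows.

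For $r > k$ I would augment the system. Let $e_1 < e_2 < \ldots < e_{r-k}$ denote the $r - k$ smallest positive integers avoiding $\{d_1, \ldots, d_k\}$, set $\bfpsi = (\varphi_1, \ldots, \varphi_k, t^{e_1}, \ldots, t^{e_{r-k}})$, and define
\begin{equation*}
\calF(\bfalp, \bfbet) = \sum_{1 \le n \le X} e\Bigl( \sum_{j=1}^k \alp_j \varphi_j(n) + \sum_{l=1}^{r-k} \bet_l n^{e_l} \Bigr).
\end{equation*}
Since $\bfpsi$ is a system of $r$ polynomials with pairwise distinct degrees, $W(t; \bfpsi) \not\equiv 0$, and Theorem \ref{theorem1.1} applied to $\bfpsi$ at the critical exponent $s = r(r+1)/2$ produces
\begin{equation*}
\oint \oint |\calF(\bfalp, \bfbet)|^{2s} \, \d\bfalp \, \d\bfbet \ll X^{s + \eps}.
\end{equation*}
To transfer this estimate to $\oint |F|^{2s} \d\bfalp$, I would group the tuples it counts according to the vector $\bfq = (q_1, \ldots, q_{r-k})$ with $q_l = \sum_{i=1}^s (x_i^{e_l} - y_i^{e_l})$. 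Writing $N(\bfq)$ for the number of such solutions, Fourier expansion in $\bfbet$ supplies $N(\bfq) = \oint\oint |\calF|^{2s} e(-\bfq \cdot \bfbet) \, \d\bfalp \, \d\bfbet$, whence $N(\bfq) \le N(\mathbf 0) = \oint\oint |\calF|^{2s} \, \d\bfalp \, \d\bfbet$. Only $\bfq$ with $|q_l| \le sX^{e_l}$ can contribute, so summing over the $\ll X^{\sum_l e_l}$ relevant vectors yields
\begin{equation*}
\oint |F(\bfalp)|^{2s} \, \d\bfalp \ll X^{\sum_l e_l + s + \eps}.
\end{equation*}

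The residual task is the elementary identity $\sum_{l=1}^{r-k} e_l = r(r+1)/2 - D + \Del_{r, \bfd}$, equivalent to $\min\{\sum S : S \subset \dbN,\, |S| = r,\, \{d_j\} \subseteq S\} = r(r+1)/2 + \Del_{r, \bfd}$. I would verify this by a greedy argument on the increasingly ordered tuple $\tilde d_1 < \ldots < \tilde d_k$ of the $d_j$: when every $\tilde d_m \le r - k + m$ one may take $S = \{1, \ldots, r\}$, while each violation $\tilde d_m > r - k + m$ forces an extra contribution of precisely $\tilde d_m - (r - k + m)$ to the minimum sum. Substituting gives $\oint |F(\bfalp)|^{2s} \, \d\bfalp \ll X^{r(r+1) - D + \Del_{r, \bfd} + \eps}$, matching the second summand of the target bound. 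The only step requiring genuine care is this combinatorial identity; the Parseval-type bound $N(\bfq) \le N(\mathbf 0)$ and the invocation of Theorem \ref{theorem1.1} at the critical exponent are otherwise routine.
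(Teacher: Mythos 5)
Your proposal follows essentially the same strategy as the paper: apply Theorem \ref{theorem1.1} directly when $r\le k$, and for $r>k$ augment the system with $t^{e_1},\ldots,t^{e_{r-k}}$, apply Theorem \ref{theorem1.1} to the augmented system at $s=r(r+1)/2$, and count back over the $\ll X^{e_1+\cdots+e_{r-k}}$ Fourier boxes via $N(\bfq)\le N(\mathbf 0)$. Your description of the $e_l$ as the $r-k$ smallest positive integers avoiding $\{d_1,\ldots,d_k\}$ coincides with the paper's explicit set $\{1,\ldots,r-l\}\setminus\{d_{l+1},\ldots,d_k\}$, and the combinatorial identity $\sum_l e_l = r(r+1)/2-D+\Del_{r,\bfd}$ you invoke is correct, equivalent to the paper's direct evaluation of $\Tet$. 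One small slip: for $r\le k$ you do not in general have $D-\Del_{r,\bfd}=r(r+1)/2$. Since $d_i\ge k-i+1\ge r-i+1$ for all $1\le i\le k$, one has $D-\Del_{r,\bfd}=\sum_{i=1}^k(r-i+1)=r(r+1)/2-\tfrac12(k-r)(k-r-1)$, so for $r<k-1$ the second summand in the asserted bound strictly exceeds $X^{r(r+1)/2}$ rather than collapsing to it. The conclusion is unaffected, since Theorem \ref{theorem1.1} gives $\ll X^{r(r+1)/2+\eps}$ and the statement only requires an upper bound by the maximum of the two terms.
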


We briefly extract a couple of corollaries.

\begin{corollary}\label{corollary14.2} Suppose that $\varphi\in \dbZ[t]$ is a polynomial of 
degree $d$. Then for each natural number $r$ with $1\le r\le d$, and for each $\eps>0$, 
one has
\begin{equation}\label{14.4}
\int_0^1 \biggl| \sum_{1\le n\le X}e(\alp \varphi(n))\biggr|^{r(r+1)}\d\bfalp 
\ll X^{r^2+\eps}.
\end{equation}
\end{corollary}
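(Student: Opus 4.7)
The plan is to obtain Corollary \ref{corollary14.2} as an immediate specialization of Theorem \ref{theorem14.1} to the case $k=1$. Take $\bfvarphi=(\varphi)$, so that there is a single polynomial of degree $d_1=d$, and the hypothesis (\ref{14.2}) holds trivially. In this case one has $D=d$ and the associated exponential sum is
$$F(\alp;\bfvarphi)=\sum_{1\le n\le X}e(\alp\varphi(n)),$$
which is precisely the sum appearing on the left-hand side of (\ref{14.4}).

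Next, I would compute $\Del_{r,\bfd}$ from (\ref{14.3}) in the specialized setting. Since the sum collapses to a single term,
$$\Del_{r,d}=\max\{0,d-r\}=d-r,$$
where the last equality uses the hypothesis $1\le r\le d$. Substituting into the exponent $r(r+1)-D+\Del_{r,\bfd}$ appearing in Theorem \ref{theorem14.1} gives
$$r(r+1)-d+(d-r)=r^2.$$
Theorem \ref{theorem14.1} therefore delivers
$$\int_0^1|F(\alp;\varphi)|^{r(r+1)}\d\alp\ll X^\eps\left(X^{r(r+1)/2}+X^{r^2}\right).$$

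Finally, I would observe that for every integer $r\ge 1$ the inequality $r^2\ge r(r+1)/2$ holds, with equality only at $r=1$. Consequently the second term dominates (or equals) the first, and the displayed bound simplifies to $X^{r^2+\eps}$, which is precisely (\ref{14.4}). There is no genuine obstacle here: the entire content of the corollary has been absorbed into Theorem \ref{theorem14.1}, and what remains is the routine verification of the two arithmetic identities $D=d$ and $\Del_{r,d}=d-r$, together with the trivial comparison $r^2\ge r(r+1)/2$. The only mild point worth flagging is that Theorem \ref{theorem14.1} is stated with the convention $d_0=+\infty$ and $d_{k+1}=0$; these boundary conventions play no role when $k=1$.
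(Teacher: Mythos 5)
Your proposal is correct and is precisely the specialization the paper has in mind: Corollary \ref{corollary14.2} is the case $k=1$ of Theorem \ref{theorem14.1}, with $D=d$, $\Del_{r,d}=d-r$, and the observation $r^2\ge r(r+1)/2$ for $r\ge 1$ collapsing the two-term bound to $X^{r^2+\eps}$. The paper leaves these verifications implicit, and your write-up supplies exactly the routine arithmetic that fills the gap.
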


In the case $\varphi(t)=t^d$, this conclusion is just the bound (\ref{14.1}) described by 
Bourgain \cite[Theorem 10]{Bou2017}. The earlier work of Arkhipov and Karatsuba 
mentioned above would deliver a similar conclusion with the exponent $r(r+1)$ on the left 
hand side of (\ref{14.4}) replaced by an integer $s_0\sim 4r^2\log r$, and the exponent 
$r^2$ on the right hand side replaced by $s_0-r$. In general, this bound may be applied as 
a substitute for Hua's lemma \cite{Hua1938b}, which shows under the same hypotheses as 
in Corollary \ref{corollary14.2} that
$$\int_0^1\biggl| \sum_{1\le n\le X}e(\alp \varphi(n))\biggr|^{2^r}\d\alp \ll 
X^{2^r-r+\eps}.$$

\begin{corollary}\label{corollary14.3} For each $r\in \dbN$ and $\eps>0$, one has
$$\int_{[0,1)^k}|F(\bfalp;\bfvarphi)|^{r(r+1)}\d\bfalp \ll X^{r(r+1)/2+\eps},\quad 
\text{when $1\le r\le k$},$$
and when $1\le m\le k$ and $d_{m+1}+m\le r\le d_m+m-1$, one has
$$\int_{[0,1)^k}|F(\bfalp;\bfvarphi)|^{r(r+1)}\d\bfalp \ll 
X^{r^2-(m-1)(2r-m)/2-d_{m+1}-\ldots -d_k+\eps}.$$
\end{corollary}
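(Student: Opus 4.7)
The plan is to derive the two parts of Corollary \ref{corollary14.3} separately: the first from Theorem \ref{theorem1.1}, and the second from Theorem \ref{theorem14.1}. Since the $\varphi_j$ have strictly decreasing degrees, they are linearly independent over $\dbC$, so the Wronskian $W(t;\bfvarphi)$ is a nonzero polynomial and Theorem \ref{theorem1.1} applies. For Part 1 ($1 \le r \le k$), I take $s = r(r+1)/2$ in (\ref{1.3}); the constraint $s \le k(k+1)/2$ is exactly $r \le k$, and the conclusion follows immediately.

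For Part 2, I apply Theorem \ref{theorem14.1} and compute $\Del_{r,\bfd}$ under the hypothesis $d_{m+1} + m \le r \le d_m + m - 1$. Introducing $f(i) = d_i + i - 1$, the bound $d_i \ge d_{i+1} + 1$ makes $f$ non-increasing, so the hypothesis reads $f(m+1) \le r \le f(m)$. For $i \le m$, one has $f(i) \ge r$, whence $d_i - (r - i + 1) = f(i) - r \ge 0$ contributes $f(i) - r$ to $\Del_{r,\bfd}$. For $i \ge m + 1$, one has $d_i \le r - i + 1$; since $d_i \ge 1$, this forces $r - i + 1 \ge 1 > 0$, so the max in (\ref{14.3}) vanishes. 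Summation gives
\begin{equation*}
\Del_{r,\bfd} = \sum_{i=1}^m (d_i + i - 1 - r) = (d_1 + \ldots + d_m) + \frac{m(m-1)}{2} - mr,
\end{equation*}
and rearrangement yields $r(r+1) - D + \Del_{r,\bfd} = r^2 - (m-1)(2r-m)/2 - (d_{m+1} + \ldots + d_k)$, which is precisely the claimed exponent.

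It remains to verify that this second term dominates $X^{r(r+1)/2}$ in Theorem \ref{theorem14.1}, i.e., that $D - \Del_{r,\bfd} \le r(r+1)/2$; a direct calculation reduces this to $d_{m+1} + \ldots + d_k \le (r-m)(r-m+1)/2$. The $k-m$ distinct positive integers $d_{m+1} > \ldots > d_k$ are all at most $d_{m+1} \le r - m$, so their sum is at most $(k-m)(r-m) - \binom{k-m}{2}$. Setting $s = r - m$ and $\ell = k - m$, the desired inequality $\ell s - \binom{\ell}{2} \le \binom{s+1}{2}$ is equivalent to $(s - \ell)(s - \ell + 1) \ge 0$, which holds because $s \ge d_{m+1} \ge \ell$ by distinctness and positivity of the $d_i$. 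The main subtlety is this last step: one has to combine the hypothesis $r - m \ge d_{m+1}$ with the combinatorial lower bound $d_{m+1} \ge k - m$ to force $r \ge k$, which is exactly what makes the polynomial-dependent term the binding one throughout the range. The edge case $m = k$ is handled smoothly by the convention $d_{k+1} = 0$, under which the sum $d_{m+1} + \ldots + d_k$ is empty and the inequality is trivial.
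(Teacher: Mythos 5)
Your argument is correct and follows the paper's route exactly: apply Theorem \ref{theorem14.1}, identify that the summand $\max\{0,d_i-(r-i+1)\}$ in (\ref{14.3}) is positive precisely for $1\le i\le m$ under the stated range of $r$, and simplify the resulting exponent. The one thing you add beyond what the paper writes out is the verification that the non-diagonal exponent $r(r+1)-D+\Del_{r,\bfd}$ is at least $r(r+1)/2$ — equivalently $d_{m+1}+\dots+d_k\le\binom{r-m+1}{2}$, which you prove cleanly by bounding the sum of $k-m$ distinct positive integers below $r-m$ and using $r\ge k$. This check is genuinely needed for the corollary to follow from Theorem \ref{theorem14.1} (and indeed for the stated bound to be consistent with the trivial diagonal lower bound), so making it explicit is a worthwhile improvement even though the paper treats it as immediate.
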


\begin{proof} The first conclusion is immediate from Theorem \ref{theorem14.1}, or 
indeed Theorem \ref{theorem1.1}. As for the second, suppose that $1\le m\le k$ and
$$d_{m+1}+m\le r\le d_m+m-1.$$
Then one sees that the summands in (\ref{14.3}) contribute if and only if $1\le i\le m$, and 
thus
\begin{align*}
\Del_{r,\bfd}&=\sum_{i=1}^m(d_i-(r-i+1))\\
&=(D-d_{m+1}-\ldots -d_k)-mr+m(m-1)/2.
\end{align*}
Hence, we have
$$r(r+1)-D+\Del_{r,\bfd}=r^2-(m-1)r+m(m-1)/2-d_{m+1}-\ldots -d_k,$$
and the desired conclusion follows from Theorem \ref{theorem14.1}.
\end{proof}

\begin{proof}[The proof of Theorem \ref{theorem14.1}] When $1\le r\le k$, the conclusion 
of Theorem \ref{theorem14.1} is immediate from the case $s=r(r+1)/2$ of the bound 
(\ref{1.3}) of Theorem \ref{theorem1.1}. We may therefore suppose that $r>k$. The 
hypothesis (\ref{14.2}) implies that
$$(d_{i+1}-(r-i))-(d_i-(r-i+1))=d_{i+1}-d_i+1\le 0,$$
whence
$$d_{i+1}-(r-i)\le d_i-(r-i+1)\quad (0\le i\le k).$$
Since $d_0-(r+1)>0$ and $d_{k+1}-(r-k)<0$, it follows that there exists an integer $l$ with 
$0\le l\le k$ for which
$$d_{l+1}-(r-l)\le 0\quad \text{and}\quad d_l-(r-l+1)\ge 0.$$
We fix any integer $l$ with this property. One then has
\begin{equation}\label{14.5}
d_i\le r-i+1\quad (l+1\le i\le k).
\end{equation}

Let $e_1,\ldots ,e_{r-k}$ denote the distinct positive integers for which
$$\{e_1,\ldots ,e_{r-k}\}=\{1,2,\ldots ,r-l\}\setminus \{d_{l+1},\ldots ,d_k\}.$$
Notice that the condition (\ref{14.5}) ensures that there are indeed $r-k$ such integers. 
We then define
$$G(\bfalp)=\sum_{1\le n\le X}e(\psi(n;\bfalp)+\alp_{k+1}n^{e_1}+\ldots 
+\alp_rn^{e_{r-k}}).$$
Finally, for the sake of concision, we write $s=r(r+1)/2$.\par

By orthogonality, the mean value
\begin{equation}\label{14.6}
\oint |F(\bfalp;\bfvarphi)|^{2s}\d\bfalp 
\end{equation}
counts the integral solutions of the system of equations
\begin{equation}\label{14.7}
\sum_{i=1}^s(\varphi_j(x_i)-\varphi_j(y_i))=0\quad (1\le j\le k),
\end{equation}
with $1\le \bfx,\bfy\le X$. The mean value (\ref{14.6}) is therefore equal to the number of 
integral solutions of the augmented system of equations (\ref{14.7}) simultaneous with
$$\sum_{i=1}^s(x_i^{e_l}-y_i^{e_l})=h_l\quad (1\le l\le r-k),$$
with $1\le \bfx,\bfy\le X$ and $|h_l|\le sX^{e_l}$. The point here is that the range for the 
auxiliary variables $h_l$ is sufficiently large that this new system accommodates all possible 
choices for $\bfx$ and $\bfy$ satisfying (\ref{14.7}) alone. Thus, by orthogonality and an 
application of the triangle inequality, we find that the mean value (\ref{14.6}) is equal to
\begin{align*}
\sum_{|h_1|\le sX^{e_1}}\ldots \sum_{|h_{r-k}|\le sX^{e_{r-k}}}&
\oint |G(\bfbet)|^{2s}e(-\bet_{k+1}h_1-\ldots -\bet_rh_{r-k})\d\bfbet \\
&\ll X^{e_1+\ldots +e_{r-k}}\oint |G(\bfbet)|^{2s}\d\bfbet .
\end{align*}
Consequently, one has
\begin{equation}\label{14.8}
\oint |F(\bfalp;\bfvarphi)|^{2s}\d\bfalp \ll X^{(r-l)(r-l+1)/2-d_{l+1}-\ldots -d_k}\oint 
|G(\bfbet)|^{2s}\d\bfbet .
\end{equation}

\par Next we observe that the Wronskian of the system of polynomials
\begin{equation}\label{14.9}
\varphi_1(t),\ldots ,\varphi_k(t),t^{e_1},\ldots ,t^{e_{r-k}}
\end{equation}
may be rearranged so that the polynomials are of increasing degree. The leading monomials 
are then non-zero integral multiples of
$$t,t^2,\ldots ,t^{r-l},t^{d_l},t^{d_{l-1}},\ldots,t^{d_1}.$$
The Wronskian of the system (\ref{14.9}) is consequently non-zero, and so it follows from 
the estimate (\ref{1.3}) of Theorem \ref{theorem1.1} that for $s=r(r+1)/2$, one has
$$\oint |G(\bfbet)|^{2s}\d\bfbet \ll X^{s+\eps}.$$
On substituting this conclusion into (\ref{14.8}), we deduce that
$$\oint |F(\bfalp;\bfvarphi)|^{2s}\d\bfalp \ll X^{\Tet+\eps},$$
where
\begin{align*}
\Tet &=\tfrac{1}{2}(r-l)(r-l+1)-d_{l+1}-\ldots -d_k+\tfrac{1}{2}r(r+1)\\
&=r(r+1)-lr+\tfrac{1}{2}l(l-1)-D+d_1+\ldots +d_l\\
&=r(r+1)-D+\sum_{i=1}^l(d_i-r+i-1).
\end{align*}
Thus we have $\Tet=r(r+1)-D+\Del_{r,\bfd}$, where $\Del_{r,\bfd}$ is defined via 
(\ref{14.3}), and the conclusion of the theorem follows.
\end{proof}

One additional idea in the current repertoire of specialists may, on occasion, offer 
improvement in the bounds supplied by Theorem \ref{theorem14.1} and its corollaries. In 
order to describe this idea, we introduce a Hardy-Littlewood dissection. Let 
$\grm=\grm_\kap$ denote the set of real numbers $\alp\in [0,1)$ satisfying the property 
that, whenever $a\in \dbZ$ and $q\in \dbN$ satisfy $(a,q)=1$ and 
$|q\alp-a|\le (2\kap)^{-1}X^{1-\kap}$, then $q>(2\kap)^{-1}X$. Also, denote by 
$\grM_\kap$ the union of the intervals
$$\grM_\kap(q,a)=\{\alp\in [0,1):|q\alp-a|\le (2\kap)^{-1}X^{1-\kap}\},$$
with $0\le a\le q\le (2\kap)^{-1}X$ and $(q,a)=1$. Thus, the unit interval $[0,1)$ is the 
disjoint union of $\grm_\kap$ and $\grM_\kap$. A variant of the proof of 
\cite[Theorem 1.3]{Woo2012b} yields the following conclusion.

\begin{theorem}\label{theorem14.4} Suppose that $d_2\le d_1-2$. Then one has the 
following.
\begin{enumerate}
\item[(i)] When $s$ is a natural number with $2s\ge d_1(d_1+1)$, one has
$$\int_{\grm_{d_1}}\oint |F(\alp_1,\bfbet;\bfvarphi)|^{2s}\d\bfbet \d\alp_1 \ll 
X^{2s-D-1+\eps}.$$
\item[(ii)] When $s$ is a natural number with $2s>d_1(d_1-1)$, one has
$$\int_{\grM_{d_1}}\oint |F(\alp_1,\bfbet;\bfvarphi)|^{2s}\d\bfbet \d\alp_1 \ll 
X^{2s-D+\eps}.$$
\end{enumerate}
\end{theorem}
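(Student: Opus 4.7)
The plan is to adapt the Hardy-Littlewood dissection argument underlying Theorem~1.3 of \cite{Woo2012b}, applied solely to the variable $\alp_1$, which couples to the leading polynomial $\varphi_1$ of maximal degree $d_1$. The key structural observation is that with $\bfbet$ regarded as fixed, $F(\alp_1,\bfbet;\bfvarphi)$ is an exponential sum over $n$ with polynomial phase whose leading monomial is $\alp_1c_1n^{d_1}$ (where $c_1$ denotes the leading coefficient of $\varphi_1$); hence the classical Weyl machinery for degree $d_1$ applies uniformly in $\bfbet$.

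For part~(i), the minor arc estimate, I would combine two ingredients. First, Weyl's inequality on $\grm_{d_1}$ yields the pointwise bound
$$\sup_{\alp_1\in \grm_{d_1}}\sup_{\bfbet\in [0,1)^{k-1}}|F(\alp_1,\bfbet;\bfvarphi)|\ll X^{1-\sig+\eps},\quad \sig=2^{1-d_1}.$$
Second, take $r=d_1$ in Theorem~\ref{theorem14.1}. The hypothesis $d_2\le d_1-2$ combined with the strict ordering $d_1>d_2>\ldots>d_k$ forces $d_i\le d_1-i$ for all $i\ge 2$, whence $\Del_{d_1,\bfd}=0$; a short calculation using $k\le d_1-1$ further gives $D\le d_1(d_1+1)/2$, so Theorem~\ref{theorem14.1} delivers
$$\int_0^1\oint |F|^{d_1(d_1+1)}\d\bfbet\d\alp_1\ll X^{d_1(d_1+1)-D+\eps}.$$
H\"older interpolation between these two bounds produces $X^{2s-D-\sig(2s-d_1(d_1+1))+\eps}$, which matches the desired $X^{2s-D-1+\eps}$ provided $\sig(2s-d_1(d_1+1))\ge 1$.

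The main obstacle is that the Weyl saving $\sig=2^{1-d_1}$ is exponentially smaller than $1$ in $d_1$, so the naive H\"older interpolation requires $2s-d_1(d_1+1)\ge 2^{d_1-1}$, vastly more than the hypothesised $2s\ge d_1(d_1+1)$. To close this exponential gap, I would follow the pruning template of \cite{Woo2012b}: decompose $\grm_{d_1}$ dyadically according to the denominator $q\asymp Q$ of the best rational approximation to $\alp_1$, exploit the $q$-explicit form of Weyl's inequality on each dyadic piece, and insert a mean value in $\bfbet$ (supplied by Theorem~\ref{theorem14.1} or by the efficient congruencing estimates of Corollary~\ref{corollary3.2}) in place of a pointwise $L^\infty$ bound. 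After summation in $Q$, the geometric series telescopes to produce the missing factor of $X^{-1}$, yielding the estimate uniformly across the range $2s\ge d_1(d_1+1)$.

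For part~(ii), the major arc estimate, one proceeds via the standard template illustrated in the proof of Corollary~\ref{corollary1.3}. On each arc $\grM_{d_1}(q,a)$, one approximates $F(\alp_1,\bfbet;\bfvarphi)$ by
$$q^{-1}S(q,a;\varphi_1)\int_0^Xe((\alp_1-a/q)\varphi_1(\gam)+\alp_2\varphi_2(\gam)+\ldots+\alp_k\varphi_k(\gam))\d\gam,$$
with the substitution error controlled by a routine sum-integral comparison. Raising to the $2s$-th power and summing over arcs factorises the contribution into the singular series $\sum_{q\le (2d_1)^{-1}X}\sum_{(a,q)=1}|q^{-1}S(q,a;\varphi_1)|^{2s}$, which is absolutely convergent precisely when $2s>d_1(d_1-1)$ by virtue of the classical bound $|S(q,a)|\ll q^{1-1/d_1+\eps}$, times a mean value of the smooth integral. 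The latter is estimated by rescaling $\gam=X\del$ and invoking the continuous analogue of Theorem~\ref{theorem1.1} (obtained by a standard dyadic approximation of the integral by sums), producing the announced bound $X^{2s-D+\eps}$.
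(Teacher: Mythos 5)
Your proposal for both parts diverges materially from what the paper actually does, and each divergence is a genuine gap.

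For part (i), you propose bounding $F$ pointwise on $\grm_{d_1}$ by Weyl's inequality (saving $\sigma=2^{1-d_1}$) and interpolating with Theorem~\ref{theorem14.1}, then invoking the ``pruning template of \cite{Woo2012b}'' to close the exponential gap. But that is a mischaracterisation of \cite{Woo2012b}: the argument there is not a dyadic decomposition in $q$ with $q$-explicit Weyl. Rather, after using the hypothesis $d_2\le d_1-2$ to replace $\varphi_2,\ldots,\varphi_k$ by the monomials $t,\ldots,t^{\kap-2}$ (with $\kap=d_1$), leaving the degree-$(\kap-1)$ slot vacant, one introduces an auxiliary variable $u$ via $|H|^{2s}=\sum_u\int\oint|h|^{2s}e(-\bet_{\kap-1}u)\,\d\bfbet\,\d\alp_1$, shifts by $y$, averages in $y$, and extracts a linear Dirichlet-type sum $\Psi(\alp_1,\bet_{\kap-1})=X^{-1}\sum_{|z|\ll X}\min\{X^{\kap-1},\|z\alp_1+\bet_{\kap-1}\|^{-1}\}$. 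On $\grm_{d_1}$, where any rational $a/r$ with $|r\alp_1-a|\le (2d_1)^{-1}X^{1-d_1}$ forces $r>(2d_1)^{-1}X$, this $\Psi$ satisfies $\Psi\ll X^{\kap-2}\log X$, which is a full $X^{-1}$ saving off the trivial $X^{\kap-1}$, available already at $2s=d_1(d_1+1)$. Weyl on the degree-$d_1$ sum plus dyadic pruning would not get you a saving of $X^{-1}$ at this $s$; the bilinear linearisation is essential and you have not supplied it.

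For part (ii), your proposed major arc approximation $F(\alp_1,\bfbet;\bfvarphi)\approx q^{-1}S(q,a;\varphi_1)\cdot I(\cdots)$ is invalid in the setting at hand, because the major/minor dissection here is in the single variable $\alp_1$ only, while $\bfbet$ ranges over the full torus. The sum-integral comparison controlling the substitution error requires the derivative of the phase in $n$ to be small, but for generic $\bfbet$ the contributions $|\alp_j|d_jn^{d_j-1}$ are of size $O(1)$, so the error is as large as the main term; there is no factorisation into singular series times singular integral. The paper instead carries through the same $\Psi$-function machinery as in case (i), uses a transference principle (\cite[Lemma 14.1]{Woo2015b}) to obtain $\Psi(\alp_1,\bet_{\kap-1})\ll X^{\kap-1+\eps}\Phi(\alp_1)$ with $\Phi$ a standard major arc weight, and then applies Br\"udern's lemma (\cite[Lemma 2]{Bru1988}) to the weighted integral $\int_{\grM_{d_1}}\Phi(\alp_1)\oint|\grh_0|^{2s}\,\d\bfbet\,\d\alp_1$, which reduces matters to $J_{s,\kap}(2X)$ and $J_{s,\kap-1}(2X)$; the hypothesis $2s\ge d_1(d_1-1)+2$ enters there, not through convergence of a singular series $\sum_q|q^{-1}S(q,a)|^{2s}$.
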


\begin{proof} Both conclusions follow by applying the argument of the proof of 
\cite[Theorem 2.1]{Woo2012b}, mutatis mutandis. We will be concise with the details in 
order to save space. Initially, we preserve the option of pursuing either case (i) or case (ii) 
of the theorem. Thus, we consider $\grB\subseteq [0,1)$, and we define the mean value
\begin{equation}\label{14.10}
I(\grB)=\int_\grB \oint |F(\alp_1,\bfbet;\bfvarphi)|^{2s}\d\bfbet \d\alp_1.
\end{equation}
For the sake of clarity and concision, we write $\kap$ for $d_1$. The reader should 
experience no difficulty in following the argument of the proof of 
\cite[Theorem 2.1]{Woo2012b} as far as \cite[equation (12)]{Woo2012b}, obtaining the 
bound
\begin{equation}\label{14.11}
I(\grB)\ll X^{(\kap-1)(\kap-2)/2-(D-\kap)}\int_\grB\oint |H(\alp_1,\bftet)|^{2s}
\d\bftet\d\alp_1,
\end{equation}
where
$$H(\alp_1,\bftet)=\sum_{1\le n\le X}e(\alp_1\varphi_1(n)+\tet_1n+\ldots +
\tet_{\kap-2}n^{\kap-2}).$$
Here, we have taken integral linear combinations of equations underlying the inner integral 
of (\ref{14.11}) so as to reduce to the monomials $n^j$ $(1\le j\le \kap-2)$. Such 
man\oe uvring also permits us to assume that $\varphi_1(n)$ takes the shape
$$\varphi_1(n)=An^\kap+Bn^{\kap-1},$$
for suitable integers $A$ and $B$ with $A>0$. Thus, as in \cite[equation (13)]{Woo2012b}, 
we discern that
$$\int_\grB \oint |H(\alp_1,\bftet)|^{2s}\d\bftet \d\alp_1=\sum_{|u|\le sX^{\kap-1}}
\int_\grB \oint |h(\alp_1,\bfbet;X)|^{2s}e(-\bet_{\kap-1}u)\d\bfbet \d\alp_1,$$
where
$$h(\alp_1,\bfbet;X)=\sum_{1\le n\le X}e(\psi(n;\alp_1,\bfbet)),$$
and
$$\psi(n;\alp_1,\bfbet)=\alp_1\varphi_1(n)+\bet_1n+\ldots +\bet_{\kap-1}n^{\kap-1}.$$

\par Write
$$K(\gam)=\sum_{1\le z\le X}e(-\gam z)\quad \text{and}\quad \Ktil(\bfgam)=
\prod_{i=1}^sK(\gam_i)K(-\gam_{s+i}).$$
In addition, put
$$\grh_y(\alp_1,\bfbet;\gam)=\sum_{1\le x\le 2X}e(\psi(x-y;\alp_1,\bfbet)+\gam(x-y))$$
and
\begin{equation}\label{14.12}
\grH_y(\alp_1,\bfbet;\bfgam)=\prod_{i=1}^s \grh_y(\alp_1,\bfbet;\gam_i)\grh_y(-\alp_1,
-\bfbet;-\gam_{s+i}).
\end{equation}
Then, just as in the argument leading to \cite[equation (18)]{Woo2012b}, one finds that
\begin{equation}\label{14.13}
\int_\grB \oint |H(\alp_1,\bftet)|^{2s}\d\bftet \d\alp_1 =\sum_{|u|\le sX^{\kap-1}}
\oint I_u(\bfgam,y)\Ktil(\bfgam)\d\bfgam ,
\end{equation}
where
$$I_u(\bfgam,y)=\int_\grB \oint \grH_y(\alp_1,\bfbet;\bfgam)e(-\bet_{\kap-1}u)\d\bfbet 
\d\alp_1.$$

\par On noting the correction made in \cite{Woo2015c}, the argument leading to 
\cite[equation (22)]{Woo2012b} yields the bound
$$\sum_{|u|\le sX^{\kap-1}}I_u(\bfgam,y)\ll \int_\grB\oint |\grH_0(\alp_1,\bfbet;\bfgam)|
\Psi_y(\alp_1,\bet_{\kap-1})\d\bfbet \d\alp_1,$$
where
$$\Psi_y(\alp_1,\bet_{\kap-1})=\biggl| \sum_{|u|\le sX^{\kap-1}}e(-(\kap Ay-B)u\alp_1-
u\bet_{\kap-1})\biggr|.$$
Thus we deduce that
\begin{equation}\label{14.14}
X^{-1}\sum_{1\le y\le X}\sum_{|u|\le sX^{\kap-1}}I_u(\bfgam,y)\ll \int_\grB \oint 
|\grH_0(\alp_1,\bfbet;\bfgam)|\Psi(\alp_1,\bet_{\kap-1})\d\bfbet \d\alp_1,
\end{equation}
where
\begin{align*}
\Psi(\alp_1,\bet_{\kap-1})&=X^{-1}\sum_{1\le y\le X}\min \{ X^{\kap-1},
\|(\kap Ay-B)\alp_1+\bet_{\kap-1}\|^{-1}\}\\
&\le X^{-1}\sum_{|z|\le \kap AX+|B|}\min \{ X^{\kap-1},\|z\alp_1 +\bet_{\kap-1}\|^{-1}\}.
\end{align*}
Suppose that $\alp_1\in \dbR$, and that $b\in \dbZ$ and $r\in \dbN$ satisfy $(b,r)=1$ 
and $|\alp_1-b/r|\le r^{-2}$. Then, just as in \cite[equation (23)]{Woo2012b}, one 
obtains the estimate
\begin{equation}\label{14.15}
\Psi(\alp_1,\bet_{\kap-1})\ll X^{\kap-1}(X^{-1}+r^{-1}+rX^{-\kap})\log (2r).
\end{equation}

\par It is at this point that our argument diverges according to whether we are in case (i) 
or case (ii). We first consider case (i), in which case we put $\grB=\grm_\kap$. Here, by 
Dirichlet's approximation theorem, given $\alp_1\in \grm_\kap$, one may find $b\in \dbZ$ 
and $r\in \dbN$ with $(b,r)=1$, $|r\alp_1-b|\le (2\kap)^{-1}X^{1-\kap}$ and 
$r\le 2\kap X^{\kap-1}$. The definition of $\grm_\kap$ ensures that $r>(2\kap)^{-1}X$, 
and hence it follows from (\ref{14.15}) that
$$\Psi(\alp_1,\bet_{\kap-1})\ll X^{\kap-2}\log X.$$
From here, the argument leading from \cite[equation (23)]{Woo2012b} to the conclusion of 
the proof of \cite[Theorem 2.1]{Woo2012b} conveys us via (\ref{14.13}) and (\ref{14.14}) 
to the bound
\begin{align*}
\int_{\grm_\kap}\oint &|H(\alp_1,\bftet)|^{2s}\d\bftet \d\alp_1\\
&\ll X^{\kap-2}(\log X)\sup_{\gam\in [0,1)}\oint |\grh_0(\alp_1,\bfbet;\gam)|^{2s}
\d\bfbet \d\alp_1 \oint |\Ktil(\bfgam)|\d\bfgam \\
&\ll X^{\kap-2}(\log X)^{2s+1}J_{s,\kap}(2X).
\end{align*}
When $2s\ge \kap(\kap+1)=d_1(d_1+1)$, we find from Corollary \ref{corollary1.3} that
$$J_{s,\kap}(2X)\ll X^{2s-\kap(\kap+1)/2+\eps},$$
and thus
$$\int_{\grm_\kap}\oint |H(\alp_1,\bftet)|^{2s}\d\bftet \d\alp_1\ll 
X^{2s-\kap(\kap-1)/2-2+\eps}.$$
We therefore conclude from (\ref{14.11}) that
$$I(\grm_\kap)\ll X^{2s-D-1+\eps}.$$
In view of the definition (\ref{14.10}), the first case of the theorem now follows.\par

Our starting point for the proof of case (ii) of the theorem is again the upper bound 
(\ref{14.15}). By appealing to a standard transference principle (see 
\cite[Lemma 14.1]{Woo2015b}), one deduces that whenever $b\in \dbZ$ and $r\in \dbN$ 
satisfy $(b,r)=1$ and $|\alp_1-b/r|\le r^{-2}$, then one has
$$\Psi(\alp_1,\bet_{\kap-1})\ll X^{\kap-1+\eps}(\lam^{-1}+X^{-1}+\lam X^{-\kap}),$$
where $\lam=r+X^\kap |r\alp_1-b|$. When $\alp\in \grM_\kap (r,b)\subseteq \grM_\kap$, 
moreover, one has $r\le X$ and $X^\kap |r\alp_1-b|\le X$, so that $\lam\le 2X$. We 
therefore see that, under such circumstances, one has
$$\Psi(\alp_1,\bet_{\kap-1})\ll X^{\kap-1+\eps}\Phi(\alp_1),$$
where $\Phi(\alp_1)$ is the function taking the value $(q+X^\kap |q\alp_1-a|)^{-1}$, 
when one has $\alp_1\in \grM_\kap(q,a)\subseteq \grM_\kap$, and otherwise 
$\Phi(\alp_1)=0$.\par

It follows from the above discussion that
\begin{align}
\int_{\grM_\kap}\oint |\grh_0(\alp_1,\bfbet;\gam)|^{2s}&\Psi(\alp_1,\bet_{\kap-1})
\d\bfbet \d\alp_1\notag \\
&\ll X^{\kap-1+\eps}\int_{\grM_\kap}\Phi(\alp_1)
\oint |\grh_0(\alp_1,\bfbet;\gam)|^{2s}\d\bfbet \d\alp_1\notag \\
&\ll X^{\kap-1+\eps}\int_{\grM_\kap}\Phi(\alp_1)
\oint |\grh_0(\alp_1,\bfbet;0)|^{2s}\d\bfbet \d\alp_1.\label{14.16}
\end{align}
Moreover, as a consequence of \cite[Lemma 2]{Bru1988}, we find that
\begin{equation}\label{14.17}
\int_{\grM_\kap} \Phi(\alp_1)\oint |\grh_0(\alp_1,\bfbet;0)|^{2s}\d\bfbet \d\alp_1 \ll 
X^{\eps-\kap}(XI_1+I_2),
\end{equation}
where
$$I_1=\int_0^1\oint |\grh_0(\alp_1,\bfbet;0)|^{2s}\d\bfbet \d\alp_1$$
and
$$I_2=\oint |\grh_0(0,\bfbet;0)|^{2s}\d\bfbet .$$
By appealing to Corollary \ref{corollary1.3}, one finds that whenever $2s\ge \kap(\kap-1)$, 
one has
$$I_1\ll J_{s,\kap}(2X)\ll X^{s+\eps}+X^{2s-\kap(\kap+1)/2+\eps}.$$
On the other hand, when $2s\ge \kap(\kap-1)$, it also follows from Corollary 
\ref{corollary1.3} that
$$I_2\ll J_{s,\kap-1}(2X)\ll X^{2s-\kap(\kap-1)/2+\eps}.$$
Provided that $2s\ge \kap(\kap-1)+2$, therefore, we deduce from (\ref{14.17}) that
$$\int_{\grM_\kap}\Phi(\alp_1)\oint |\grh_0(\alp_1,\bfbet;0)|^{2s}\d\bfbet \d\alp_1\ll 
X^{2s-\kap(\kap+1)/2+\eps},$$
whence (\ref{14.16}) yields the estimate
\begin{equation}\label{14.18}
\int_{\grM_\kap}\oint |\grh_0(\alp_1,\bfbet;\gam)|^{2s}\Psi(\alp_1,\bet_{\kap-1})
\d\bfbet \d\alp_1\ll X^{2s-\kap(\kap-1)/2-1+\eps}.
\end{equation}

On recalling (\ref{14.12}), we deduce from (\ref{14.18}) via H\"older's inequality that
$$\int_{\grM_\kap}\oint |\grH_0(\alp_1,\bfbet;\bfgam)|\Psi(\alp_1,\bet_{\kap-1})\d\bfbet 
\d\alp_1\ll X^{2s-\kap(\kap-1)/2-1+\eps},$$
and so (\ref{14.14}) yields the bound
$$X^{-1}\sum_{1\le y\le X}\sum_{|u|\le sX^{\kap-1}}I_u(\bfgam,y)\ll 
X^{2s-\kap(\kap-1)/2-1+\eps}.$$
Consequently, much as in the treatment of the previous case, we deduce from (\ref{14.13}) 
that
\begin{align*}
\int_{\grM_\kap}\oint |H(\alp_1,\bftet)|^{2s}\d\bftet \d\alp_1&\ll 
X^{2s-\kap(\kap-1)/2-1+\eps}\oint |\Ktil(\bfgam)|\d\bfgam \\
&\ll X^{2s-\kap(\kap-1)/2-1+2\eps}.
\end{align*}
On substituting this estimate into (\ref{14.11}), we find that
$$I(\grM_\kap)\ll X^{2s-D+2\eps}.$$
In view of the definition (\ref{14.10}), the conclusion of the theorem now follows in 
case (ii).
\end{proof}

By combining the two conclusions of Theorem \ref{theorem14.4}, we obtain a slight 
improvement on Theorem \ref{theorem14.1} for larger moments.

\begin{theorem}\label{theorem14.5} Suppose that $d_2\le d_1-2$. Put
\begin{equation}\label{14.19}
u=d_1(d_1+1)-\max_{k\le r\le d_1}\frac{d_1(d_1+1)-r(r+1)}{1+\Del_{r,\bfd}}.
\end{equation}
Then whenever $s$ is a real number with $2s\ge \max\{u,d_1(d_1-1)+2\}$, one has
$$\oint |F(\bfalp;\bfvarphi)|^{2s}\d\bfalp \ll X^{2s-D+\eps}.$$
\end{theorem}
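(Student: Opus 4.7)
The plan is to split the outer integration in $\alp_1$ over $[0,1)$ into the Hardy--Littlewood major arcs $\grM_{d_1}$ and minor arcs $\grm_{d_1}$ introduced just before Theorem \ref{theorem14.4}, and handle the two contributions separately. For the major-arc piece, the hypothesis $2s\ge d_1(d_1-1)+2$ places us directly in the range of Theorem \ref{theorem14.4}(ii), which delivers
$$\int_{\grM_{d_1}}\oint|F(\alp_1,\bfbet;\bfvarphi)|^{2s}\d\bfbet\d\alp_1\ll X^{2s-D+\eps}$$
with no further work required.

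The real work lies in controlling the minor-arc contribution. Here the idea is to interpolate via H\"older's inequality between the moment $2s=d_1(d_1+1)$, where Theorem \ref{theorem14.4}(i) yields $X^{d_1(d_1+1)-D-1+\eps}$, and a smaller moment $r(r+1)$, where Theorem \ref{theorem14.1} gives $X^{r(r+1)-D+\Del_{r,\bfd}+\eps}$ whenever the non-diagonal term dominates over $X^{r(r+1)/2}$. Writing $2s=\tet\, d_1(d_1+1)+(1-\tet)r(r+1)$ with $\tet\in[0,1]$ and applying H\"older gives
$$\int_{\grm_{d_1}}\oint|F|^{2s}\d\bfbet\d\alp_1\le \biggl(\int_{\grm_{d_1}}\oint|F|^{d_1(d_1+1)}\d\bfbet\d\alp_1\biggr)^{\tet}\biggl(\oint|F|^{r(r+1)}\d\bfalp\biggr)^{1-\tet},$$
which after substitution yields a minor-arc bound of order $X^{2s-D-\tet+(1-\tet)\Del_{r,\bfd}+\eps}$.

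The requirement that this exponent does not exceed $2s-D$ reduces to $\tet\ge \Del_{r,\bfd}/(1+\Del_{r,\bfd})$, which after expanding $\tet=(2s-r(r+1))/(d_1(d_1+1)-r(r+1))$ is precisely
$$2s\ge d_1(d_1+1)-\frac{d_1(d_1+1)-r(r+1)}{1+\Del_{r,\bfd}}.$$
Optimising over admissible $r$ by maximising the subtracted fraction for $k\le r\le d_1$ recovers exactly the threshold $2s\ge u$ appearing in (\ref{14.19}), thereby completing the estimate on the minor arcs and hence the theorem.

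The main obstacle to overcome is verifying that the interpolation is legitimate for the optimal choice of $r$: one must confirm $\tet\in[0,1]$ (equivalently, $r(r+1)\le 2s\le d_1(d_1+1)$) and that the non-diagonal term in Theorem \ref{theorem14.1} is the effective bound at that $r$. In the alternative regime where the diagonal term $X^{r(r+1)/2}$ dominates, the resulting H\"older interpolation can only improve upon the analysis above, so the conclusion is never weakened, but confirming the optimisation and handling the endpoints $r=k$ and $r=d_1$ requires careful bookkeeping.
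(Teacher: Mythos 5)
Your proposal takes essentially the same route as the paper: both split the $\alp_1$-integration into $\grM_{d_1}$ and $\grm_{d_1}$, handle the major arcs with Theorem \ref{theorem14.4}(ii) (using $2s\ge d_1(d_1-1)+2$), and on the minor arcs apply H\"older to interpolate between the $d_1(d_1+1)$-th moment estimate of Theorem \ref{theorem14.4}(i) and the $r(r+1)$-th moment estimate of Theorem \ref{theorem14.1} at the maximising index $r$; your threshold $\tet\ge \Del_{r,\bfd}/(1+\Del_{r,\bfd})$ corresponds exactly to the split $T_1^{\Del/(1+\Del)}T_2^{1/(1+\Del)}$ in the paper, and the verification $\tet\in[0,1]$ (i.e.\ $r(r+1)\le u\le d_1(d_1+1)$) is immediate since $\Del_{r,\bfd}\ge 0$ and $r\le d_1$.

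One caution on your closing remark. The assertion that in the regime where the diagonal term $X^{r(r+1)/2}$ dominates the H\"older interpolation ``can only improve'' the analysis is the wrong way round: if the diagonal term dominated, then the true bound on the $r(r+1)$-th moment would be \emph{larger} than the one you fed into the interpolation, and the resulting minor-arc exponent would be worse, not better. The reason there is in fact no problem is simply that this regime never arises on the range $k\le r\le d_1$ of the maximum in \eqref{14.19}: one has
$$D-\Del_{r,\bfd}=\sum_{j=1}^k\min\{d_j,\,r-j+1\}\le \sum_{j=1}^k(r-j+1)=kr-\tfrac{1}{2}k(k-1)\le \tfrac{1}{2}r(r+1)$$
for $r\ge k$, so $r(r+1)-D+\Del_{r,\bfd}\ge r(r+1)/2$ and the non-diagonal term is always the effective one. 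With that observation in place the rest of the bookkeeping you flag is routine, and the extension from $2s=u$ to all larger $s$ follows from the trivial bound $|F(\bfalp;\bfvarphi)|\ll X$.
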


\begin{proof} Suppose that the maximum in the definition (\ref{14.19}) occurs for the 
index $r$, and put $\Del=\Del_{r,\bfd}$. Then it follows from H\"older's inequality that
$$\int_{\grm_{d_1}}\oint |F(\alp_1,\bfbet;\bfvarphi)|^u\d\bfbet \d\alp_1 \le 
T_1^{\Del/(1+\Del)}T_2^{1/(1+\Del)},$$
where
$$T_1=\int_{\grm_{d_1}}\oint |F(\alp_1,\bfbet;\bfvarphi)|^{d_1(d_1+1)}\d\bfbet 
\d\alp_1$$
and
$$T_2=\int_0^1\oint |F(\alp_1,\bfbet;\bfvarphi)|^{r(r+1)}\d\bfbet \d\alp_1.$$
Thus, from Theorems \ref{theorem14.1} and \ref{theorem14.4}(i), one finds that
\begin{align*}
\int_{\grm_{d_1}}\oint |F(\alp_1,\bfbet;\bfvarphi)|^u\d\bfbet \d\alp_1&\ll X^{\eps-D}
\left( X^{d_1(d_1+1)-1} \right)^{\Del/(1+\Del)}\left( X^{r(r+1)+\Del}
\right)^{1/(1+\Del)}\\
&\ll X^{u-D+\eps}.
\end{align*}
Meanwhile, provided that $u\ge d_1(d_1-1)+2$, one finds from Theorem 
\ref{theorem14.4}(ii) that
$$\int_{\grM_{d_1}}\oint |F(\alp_1,\bfbet;\bfvarphi)|^u\d\bfbet \d\alp_1\ll X^{u-D+\eps}.
$$
Combining these two estimates, we see that
$$\oint |F(\bfalp;\bfvarphi)|^u\d\bfalp \ll X^{u-D+\eps},$$
and the conclusion of the theorem follows.
\end{proof}

Three corollaries of Theorem \ref{theorem14.5} may be of interest. First we consider the 
mean value
$$I_{s,d}(X)=\int_{[0,1)^{d-1}}\biggl| \sum_{1\le x\le X}e(\alp_dx^d+\alp_{d-2}x^{d-2}
+\ldots +\alp_1x)\biggr|^{2s}\d\bfalp ,$$
in which the argument of the exponential sum is a polynomial of degree $d$ in which there 
is no monomial of degree $d-1$. Hua investigated the problem of determining the smallest 
positive integer $S_d$ having the property that whenever $2s\ge S_d$, then
\begin{equation}\label{14.20}
I_{s,d}(X)\ll X^{2s-(d^2-d+2)/2+\eps}.
\end{equation}
Here, since the sum of the degrees in the associated Diophantine system of equations is 
$$1+2+\ldots +(d-2)+d=(d^2-d+2)/2,$$
the bound Hua sought is essentially best possible for $s\ge (d^2-d+2)/2$. This mean value 
played a critical role in his approach to Vinogradov's mean value theorem for small degrees 
(see \cite[Chapter 5]{Hua1965}). Thus, Hua obtained the bounds
$$S_3\le 10,\quad S_4\le 32,\quad S_5\le 86,\ldots .$$
More recently, as a consequence of progress on Vinogradov's mean value theorem 
stemming from the efficient differencing method, the author obtained the bounds 
$S_k\le 2k^2-2k$ (see \cite[Theorem 11.6]{Woo2013}) and $S_3\le 9$ (see 
\cite[Theorem 1.1]{Woo2015d}).

\par As a consequence of Theorem \ref{theorem14.5}, we obtain new bounds for $S_d$ 
for $d\ge 4$.

\begin{corollary}\label{corollary14.6} When $d\ge 3$, one has $S_d\le d^2$.
\end{corollary}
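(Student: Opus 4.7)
The plan is to apply Theorem \ref{theorem14.5} to the system $\bfvarphi=(\varphi_1,\ldots ,\varphi_{d-1})$ defined by $\varphi_1(t)=t^d$ and $\varphi_i(t)=t^{d-i}$ for $2\le i\le d-1$. With the degrees listed in decreasing order as required by (\ref{14.2}), one has $k=d-1$, $d_1=d$ and $d_i=d-i$ for $2\le i\le d-1$; in particular $d_2=d_1-2$, so the hypothesis $d_2\le d_1-2$ of Theorem \ref{theorem14.5} is satisfied. By orthogonality, the mean value $\oint |F(\bfalp;\bfvarphi)|^{2s}\d\bfalp$ coincides with $I_{s,d}(X)$ after a harmless relabelling of the dual variables $\alp_j$. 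A direct sum gives
$$D=d+\sum_{i=2}^{d-1}(d-i)=d+\tfrac{(d-2)(d-1)}{2}=\tfrac{d^2-d+2}{2},$$
so that the bound produced by Theorem \ref{theorem14.5} is precisely the one sought in (\ref{14.20}).

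The core computation is the evaluation of the exponent $u$ defined in (\ref{14.19}). The index $r$ ranges over $\{k,k+1,\ldots ,d_1\}=\{d-1,d\}$. At $r=d$ the numerator $d(d+1)-d(d+1)$ vanishes and the term contributes $0$. At $r=d-1$ the numerator equals $d(d+1)-(d-1)d=2d$, and a term-by-term inspection of
$$\Del_{d-1,\bfd}=\sum_{i=1}^{d-1}\max\{0,d_i-(d-i)\}$$
shows that for $i\ge 2$ one has $d_i-(d-i)=(d-i)-(d-i)=0$, while for $i=1$ one has $d_1-(d-1)=1$. Hence $\Del_{d-1,\bfd}=1$, and the $r=d-1$ term evaluates to $2d/2=d$. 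The maximum in (\ref{14.19}) is therefore $d$, yielding $u=d(d+1)-d=d^2$.

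Since $d_1(d_1-1)+2=d^2-d+2\le d^2$ for every $d\ge 3$, the threshold $\max\{u,d_1(d_1-1)+2\}$ in Theorem \ref{theorem14.5} reduces simply to $d^2$. Invoking the theorem, we conclude that whenever $2s\ge d^2$ one has
$$I_{s,d}(X)\ll X^{2s-(d^2-d+2)/2+\eps},$$
which is exactly (\ref{14.20}) and yields $S_d\le d^2$. There is no substantial obstacle beyond the arithmetic bookkeeping above; the only step that requires any real observation is that the gap created by the missing degree $d-1$ in $\bfd$ produces just one non-zero summand in $\Del_{d-1,\bfd}$, which in turn forces the maximum in (\ref{14.19}) to be small enough that $u$ matches the ``obvious'' threshold $d_1^2$.
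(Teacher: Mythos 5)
Your proposal is correct and follows essentially the same route as the paper: take the degree sequence $d_1=d$, $d_i=d-i$ $(2\le i\le d-1)$ with $k=d-1$, verify that $\Del_{d-1,\bfd}=1$, and evaluate $u=d^2$ from (\ref{14.19}) before invoking Theorem \ref{theorem14.5}. The only difference is that you explicitly check both admissible values $r\in\{d-1,d\}$ in the maximum defining $u$, whereas the paper simply takes $r=d-1$; this is a harmless extra verification and does not change the argument.
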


\begin{proof} Take $d_1=d$ and $d_i=d-i$ $(2\le i\le d-1)$, and put $k=d-1$. We apply 
Theorem \ref{theorem14.5} with $r=d-1$. In such circumstances, we find from (\ref{14.3}) 
that
$$\Del_{r,\bfd}=d-r+\sum_{i=2}^{d-1}\left( (d-i)-(r+1-i)\right)=1.$$
Thus, on putting
$$u=d(d+1)-\frac{d(d+1)-d(d-1)}{2}=d^2,$$
it follows from Theorem \ref{theorem14.5} that the upper bound (\ref{14.20}) holds 
whenever $2s\ge u$. Thus $S_d\le u=d^2$, and the proof of the corollary is complete.
\end{proof}

Next we consider Waring's problem. When $s$ and $d$ are natural numbers, let 
$R_{s,d}(n)$ denote the number of representations of the natural number $n$ as the sum 
of $s$ $d$th powers of positive integers. A formal application of the circle method suggests 
that for $d\ge 3$ and $s\ge d+1$, one should have
\begin{equation}\label{14.21}
R_{s,d}(n)=\frac{\Gam(1+1/d)^s}{\Gam(s/d)}\grS_{s,d}(n)n^{s/d-1}+o(n^{s/d-1}),
\end{equation}
where
$$\grS_{s,d}(n)=\sum_{q=1}^\infty\sum^q_{\substack{a=1\\ (a,q)=1}}\biggl( 
q^{-1}\sum_{r=1}^qe(ar^d/q)\biggr)^se(-na/q).$$
Granted appropriate congruence conditions on $n$, one has $1\ll \grS_{s,d}(n)\ll n^\eps$, 
so that the conjectured relation (\ref{14.21}) is a legitimate asymptotic formula. Let 
$\Gtil(d)$ denote the least integer $t$ with the property that, for all $s\ge t$, and all 
sufficiently large natural numbers $n$, one has the asymptotic formula (\ref{14.21}). 
We numerically sharpen the conclusion $\Gtil(d)\le d^2-d+O(\sqrt{d})$ recorded by 
Bourgain \cite[Theorem 11]{Bou2017}, achieving the limit of the method.\par

We define the integer $\tet=\tet(d)$ by
\begin{equation}\label{14.22}
\tet(d)=\begin{cases} 1,&\text{when $2d+2\ge \lfloor \sqrt{2d+2}\rfloor^2+
\lfloor \sqrt{2d+2}\rfloor$,}\\
2,&\text{when $2d+2< \lfloor \sqrt{2d+2}\rfloor^2+
\lfloor \sqrt{2d+2}\rfloor$.}
\end{cases}
\end{equation}

\begin{corollary}\label{corollary14.7} Let
$$s_0=d(d-1)+\min_{0\le m<d}\frac{2d+m(m-1)}{m+1}.$$
Then, whenever $s\ge s_0$, one has
\begin{equation}\label{14.23}
\int_0^1\biggl| \sum_{1\le n\le X}e(\alp n^d)\biggr|^s\d\bfalp \ll X^{s-d+\eps}.
\end{equation}
Thus, one has $\Gtil(d)\le \lfloor s_0\rfloor +1$, and in particular
$$\Gtil(d)\le d^2-d+2\lfloor \sqrt{2d+2}\rfloor -\tet(d).$$
\end{corollary}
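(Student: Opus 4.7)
The plan is to prove Corollary \ref{corollary14.7} in two stages. First, I would establish the mean value bound (\ref{14.23}) by applying Theorem \ref{theorem14.5} to an appropriate auxiliary polynomial system, and then I would deduce $\Gtil(d)\le\lfloor s_0\rfloor+1$ via the classical Hardy-Littlewood circle method.

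For the first stage, fix an integer $m$ with $1\le m\le d-2$ and consider the polynomial system $\bfvarphi_m=(t^d,t^m,t^{m-1},\ldots,t)$ of $k=m+1$ polynomials. Since $d_2=m\le d_1-2$ and the Wronskian of $\bfvarphi_m$ is non-vanishing (by the determinant computation from the proof of Corollary \ref{corollary1.2}), Theorem \ref{theorem14.5} applies. Writing $F_m(\bfalp)$ for the associated $k$-variable exponential sum, the augmentation argument used in the proof of Theorem \ref{theorem14.1} (notably the passage from (\ref{14.7}) to (\ref{14.8})) delivers
\begin{equation*}
\int_0^1|f(\alp)|^s\,\d\alp \ll X^{m(m+1)/2}\oint |F_m(\bfalp)|^s\,\d\bfalp ,
\end{equation*}
where the factor $X^{m(m+1)/2}=\prod_{j=2}^{m+1}X^{d_j}$ arises from the ranges of the auxiliary integer vectors. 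A routine calculation gives $\Del_{r,\bfd}=d-r$ for $m+1\le r\le d$, so Theorem \ref{theorem14.5} yields $\oint|F_m|^s\ll X^{s-D+\eps}$ with $D=d+m(m+1)/2$ for $s\ge u(m)$, where
\begin{equation*}
u(m)=d(d+1)-\max_{0\le q\le d-m-1}\frac{q(2d-q+1)}{q+1}\qquad (q=d-r).
\end{equation*}
Combining these two inequalities cancels the factor $X^{m(m+1)/2}$ and yields (\ref{14.23}) for $s\ge u(m)$. The algebraic identity $d(d+1)-q(2d-q+1)/(q+1)=d(d-1)+(q^2-q+2d)/(q+1)$ then identifies $\min_m u(m)$ with $s_0$ after reparametrisation, establishing (\ref{14.23}) in the stated range.

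For the second stage, the bound $\Gtil(d)\le\lfloor s_0\rfloor+1$ follows from the circle method along familiar lines: the major arcs contribute the expected main term of order $X^{s-d}$, by analysis parallel to that in Corollary \ref{corollary1.3}, while on the minor arcs a Weyl-type bound $\sup_\grm|f(\alp)|\ll X^{1-\rho}$ (for some $\rho=\rho(d)>0$) together with (\ref{14.23}) evaluated at the threshold $s_0$ yields
$$\int_\grm|f(\alp)|^s\,\d\alp \le \bigl(\sup_\grm|f|\bigr)^{s-s_0}\int_0^1|f|^{s_0}\,\d\alp \ll X^{s-d-\rho(s-s_0)+\eps},$$
which is $o(X^{s-d})$ for every $s>s_0$, hence for every integer $s\ge\lfloor s_0\rfloor+1$. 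Finally, the explicit numerical bound $\Gtil(d)\le d^2-d+2\lfloor\sqrt{2d+2}\rfloor-\tet(d)$ follows by rewriting $s_0=d^2-d-2+\min_m\bigl(m+(2d+2)/(m+1)\bigr)$ and observing that the integer minimiser of $g(m)=m+(2d+2)/(m+1)$ is $m=M$ when $M(M+1)\le 2d+2$ and $m=M-1$ otherwise, where $M=\lfloor\sqrt{2d+2}\rfloor$; these two alternatives correspond exactly to the dichotomy in the definition (\ref{14.22}) of $\tet(d)$, and a direct evaluation of $\lfloor s_0\rfloor+1$ then yields the claimed bound. The main technical obstacle to executing this plan is the optimization step in the first stage: one must verify that the maximiser $q^*\approx\sqrt{2d+2}-1$ lies within the admissible range $[0,d-m-1]$ enforced by $m\ge 1$, which is automatic for $d\ge 5$ but requires more delicate bookkeeping for the smaller values $d=3,4$ where $q^*$ is near the boundary of the permitted range.
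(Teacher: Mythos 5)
Your plan re-derives Theorem~\ref{theorem14.5} rather than applying it: the paper applies Theorem~\ref{theorem14.5} \emph{directly} to the single polynomial $\varphi(t)=t^d$ with $k=1$. The hypothesis $d_2\le d_1-2$ is then vacuously satisfied via the convention $d_{k+1}=0$ recorded immediately after~(\ref{14.2}), so no preliminary augmentation is needed; with $D=d$ and $\Del_{r,\bfd}=d-r$ one obtains (\ref{14.23}) for $s\ge\max\{u,d(d-1)+2\}$, where the internal maximum in the definition of $u$ runs over the full range $1\le r\le d$ (equivalently $0\le q=d-r\le d-1$), giving $u=s_0$ on the nose. Your two-stage route (first augment $f$ to $F_m$ with $k=m+1\ge 2$, then invoke Theorem~\ref{theorem14.5} for $F_m$) is a legitimate unwinding of the augmentation already built into Theorems~\ref{theorem14.1} and~\ref{theorem14.5}, and for $d\ge 4$ it recovers the same exponent. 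But by insisting on $m\ge 1$ you truncate the internal optimisation to $m+1\le r\le d$, i.e.\ $0\le q\le d-m-1\le d-2$, thereby losing access to $q=d-1$.

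This is not a mere bookkeeping nuisance for small $d$: it is a genuine failure at $d=3$. There the maximum of $q(2d-q+1)/(q+1)$ over $0\le q\le 2$ is attained at $q=2$, giving $s_0=12-10/3=26/3$, whereas your accessible range $q\le 1$ gives only $u(1)=12-3=9>s_0$, hence the strictly weaker conclusion $\Gtil(3)\le 10$ rather than the stated $\Gtil(3)\le 9$. You flag this as ``more delicate bookkeeping'' but do not resolve it, and within your framework it cannot be resolved, because no admissible $m\ge 1$ reaches $q=d-1$ when $d=3$. The repair is simply to drop the preliminary augmentation and apply Theorem~\ref{theorem14.5} at $k=1$ as the paper does. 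The circle-method deduction of $\Gtil(d)\le\lfloor s_0\rfloor+1$ and the algebraic reduction of $s_0$ to the explicit bound involving $\lfloor\sqrt{2d+2}\rfloor$ and $\tet(d)$ are essentially as in the paper, though you should also record, as the paper does, the verification that $s_0\ge d(d-1)+2$ before discarding the second entry of the $\max$ in the hypothesis of Theorem~\ref{theorem14.5}.
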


\begin{proof} It is apparent from Corollary \ref{corollary14.2} that $\Del_{r,d}=d-r$ 
$(1\le r\le d)$. Then it follows from Theorem \ref{theorem14.5} that the estimate 
(\ref{14.23}) holds whenever $s\ge \max\{u,d(d-1)+2\}$, where
\begin{align*}
u&=d(d+1)-\max_{0\le m<d}\frac{d(d+1)-(d-m)(d-m+1)}{m+1}\\
&=d(d+1)-\max_{0\le m<d}\frac{2dm-m(m-1)}{m+1}=s_0.
\end{align*}
It is apparent that $s_0\ge d(d-1)+2$, and hence the conclusion (\ref{14.23}) holds 
whenever $s\ge s_0$. Moreover, granted the estimate (\ref{14.23}) in the case $s=s_0$, 
the methods of \cite[Chapter 4]{Vau1997} show that $\Gtil(k)\le \lfloor s_0\rfloor +1$.\par

All that remains to complete the proof of the corollary is the confirmation of the final bound 
on $\Gtil(d)$, and this we obtain by deriving an explicit bound on $s_0$. Take 
$m=\lfloor \sqrt{2d+2}\rfloor$, and define $\ome$ via the relation 
$\sqrt{2d+2}=m+\ome$. Then we have $0\le \ome<1$. With this choice of $m$, one finds 
that
\begin{align*}
\frac{2d+m(m-1)}{m+1}&=
\frac{(m+\ome)^2-2+m(m-1)}{m+1}\\
&=\frac{2m(m+1)-(3-2\ome )(m+1)+(1-\ome)^2}{m+1},\\
&=2m-3+\del,\end{align*}
where
\begin{equation}\label{14.24}
\del=2\ome+\frac{(1-\ome)^2}{m+1}.
\end{equation}
In all circumstances, one has
$$\del<2\ome+(1-\ome)^2/2=(1+\ome)^2/2<2,$$
whence
$$\frac{2d+m(m-1)}{m+1}<2m-1.$$
Then we may take $s_0=w$ with $w<d^2-d+2\lfloor \sqrt{2d+2}\rfloor-1$, yielding the 
bound
$$\Gtil(d)\le \lfloor w\rfloor +1\le d^2-d+2\lfloor \sqrt{2d+2}\rfloor -1.$$

\par On the other hand, provided that $\del<1$, we instead obtain
$$\frac{2d+m(m-1)}{m+1}<2m-2.$$
In such circumstances, we may take $s_0=w$ with 
$w<d^2-d+2\lfloor \sqrt{2d+2}\rfloor-2$, delivering the bound
$$\Gtil(d)\le \lfloor w\rfloor +1\le d^2-d+2\lfloor \sqrt{2d+2}\rfloor -2.$$
It follows from (\ref{14.24}) that $\del<1$ if and only if
$$(2m+2)\ome +(1-\ome)^2<m+1,$$
or equivalently
$$2d+2=(m+\ome)^2<m^2+m.$$
This completes the proof of the corollary.
\end{proof}

We remark that it seems that no improvement in the conclusion of Corollary 
\ref{corollary14.7} is gained by taking $m=\lceil \sqrt{2d+2}\rceil$, so that the stated 
bounds on $\Gtil(d)$ are the sharpest obtainable using this circle of ideas. The formula for 
$s_0$ in the statement of the corollary is equivalent to that given by Bourgain 
\cite[Theorem 11]{Bou2017}. We note that, as shown in \cite[Theorem 4.1]{Woo2012b}, 
the truth of the main conjecture in Vinogradov's mean value theorem (Corollary 
\ref{corollary1.3} or \cite{BDG2016}) delivers the bounds $\Gtil(4)\le 15$, $\Gtil(5)\le 23$, 
$\Gtil(6)\le 34$, $\Gtil(7)\le 47$, $\Gtil(8)\le 61$, $\Gtil(9)\le 78$, $\Gtil(10)\le 97$, and so 
on. The conclusion of Corollary \ref{corollary14.7} matches or improves on these bounds for 
$k\ge 10$.\par

We finish by briefly outlining how Theorem \ref{theorem14.5} may be applied to treat 
systems with one large and a number of smaller degree terms with an efficiency matching 
Corollary \ref{corollary14.7}. We again make use of the definition (\ref{14.22}) of the 
integer $\tet(d)$.

\begin{corollary}\label{corollary14.8} Suppose that
\begin{equation}\label{14.25}
d_2\le d_1-\lfloor \sqrt{2d_1+2}\rfloor-1.
\end{equation}
Then there is a positive number $\tau$ having the property that, with
$$s_0=d_1^2-d_1+2\lfloor \sqrt{2d_1+2}\rfloor -\tet(d_1)-\tau,$$
one has
$$\oint |F(\bfalp;\bfvarphi)|^{s_0}\d\bfalp \ll X^{s_0-D}.$$
\end{corollary}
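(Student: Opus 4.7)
The plan is to apply Theorem \ref{theorem14.5}, using the gap condition \eqref{14.25} to reduce the computation of the threshold $u$ to the monomial analysis performed in Corollary \ref{corollary14.7}. First, I would show that under \eqref{14.25}, for $r = d_1 - m + 1$ with $1 \le m \le \lfloor \sqrt{2d_1+2}\rfloor + 1$, one has $\Delta_{r,\bfd} = d_1 - r$. The $i = 1$ summand in \eqref{14.3} contributes $m - 1$, while for $i \ge 2$, the strict decrease of $(d_i)$ from \eqref{14.2} yields $d_i \le d_2 - (i-2)$, whence $d_i - (r - i + 1) \le d_2 - r + 1 \le m - 1 - \lfloor \sqrt{2d_1+2}\rfloor \le 0$ by \eqref{14.25}. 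Thus $\Delta_{r,\bfd} = m - 1 = d_1 - r$, precisely as in the monomial case with $d = d_1$.

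With this identification, the quantity $u$ from Theorem \ref{theorem14.5}, evaluated over this favourable range of $r$, coincides with the expression optimised in the proof of Corollary \ref{corollary14.7}. That computation shows that the discrete maximum is attained near $m = \lfloor \sqrt{2d_1+2}\rfloor$ and yields a strict inequality $u < d_1^2 - d_1 + 2\lfloor \sqrt{2d_1+2}\rfloor - \tet(d_1)$, with a fixed positive gap $u_* > 0$ determined by $d_1$. Taking $\tau \in (0, u_*)$ ensures both $s_0 > u$ with a fixed gap $u_* - \tau$ and $s_0 \ge d_1(d_1-1) + 2$, so that Theorem \ref{theorem14.5} applies to deliver $\oint |F(\bfalp;\bfvarphi)|^{s_0}\d\bfalp \ll X^{s_0 - D + \eps}$.

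To eliminate the $X^\eps$ loss from the conclusion, I would re-examine the H\"older interpolation in the proof of Theorem \ref{theorem14.5}, now with the moment $s_0$ in place of $u$. The weight combination $\lambda(1+\Delta) - \Delta$ acquires the strictly positive value $(s_0 - u)(1+\Delta)/(d_1(d_1+1) - r(r+1))$, giving a minor arc bound $\int_{\grm_{d_1}} \oint |F|^{s_0}\d\bfbet\d\alpha_1 \ll X^{s_0 - D - \eta + \eps}$ with $\eta = \eta(\tau) > 0$. Choosing $\eps < \eta$ absorbs the loss on the minor arcs. The main obstacle is an analogous treatment of the major arc contribution from Theorem \ref{theorem14.4}(ii): since $s_0/2 < d_1(d_1+1)/2$, the bound $I_1 \ll J_{s_0/2, d_1}(2X) \ll X^{s_0/2 + \eps}$ from Corollary \ref{corollary1.3} carries an unremovable $X^\eps$. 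Overcoming this will require interpolating the major arc bound against a larger moment, exploiting the fixed gap $s_0 > u$ to extract a net save of $X^{-\eta'}$ for some $\eta' > 0$, which absorbs the loss uniformly in a suitably chosen $\eps$.
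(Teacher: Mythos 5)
Your setup is correct and matches the paper through the computation of $\Delta_{r,\bfd}$, the identification of $u$, and the observation that there is a fixed positive gap below $d_1^2-d_1+2\lfloor\sqrt{2d_1+2}\rfloor-\theta(d_1)$, so that one may choose $\tau>0$ and still have $2s_0\ge\max\{u,d_1(d_1-1)+2\}$. But the endgame, where you try to remove the $X^\eps$ loss, has a genuine gap, and your own text flags it. You propose to propagate a power saving through the $\grm_{d_1}/\grM_{d_1}$ dissection of Theorem \ref{theorem14.4}, but on $\grM_{d_1}$ only the variable $\alp_1$ is constrained; the integrals over $\bfbet$ remain full moments, and the bounds $I_1\ll J_{s,d_1}(2X)$ and $I_2\ll J_{s,d_1-1}(2X)$ carry the $X^\eps$ irreducibly, since Corollary \ref{corollary1.3} cannot remove it at or below the critical exponent. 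The closing sentence of your proposal (``interpolating the major arc bound against a larger moment \ldots to extract a net save \ldots which absorbs the loss uniformly in a suitably chosen $\eps$'') is precisely the missing step, and it does not follow from what precedes it: there is no larger-moment input available on $\grM_{d_1}$ beyond what Theorem \ref{theorem14.4}(ii) already uses.

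The paper closes the gap by a different device. Having established the $X^{2s-D+\eps}$ bound for $2s\ge u$ via Theorem \ref{theorem14.5}, it launches a fresh Hardy-Littlewood dissection of the full $k$-dimensional torus into genuine major arcs $\grN$ (where \emph{every} $\alp_i$ is near a rational with denominator $q\le X^\delta$) and a minor-arc complement $\grn$. On $\grn$, the Weyl-type bound \cite[Theorem 1.6]{Woo2012} gives a sup-norm saving $X^{-\delta/d_1^3}$; raising this to the power $\tau$ and pairing it with the $u_0$-th moment bound absorbs the $X^\eps$. On the small major arcs $\grN$, the singular series and singular integral analysis \`a la \cite{ACK2004} delivers a genuine asymptotic, free of $X^\eps$. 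The key point you are missing is that $\grN$ is much smaller than $\grM_{d_1}$, which is what makes a power-free major-arc bound attainable; trying to squeeze the power saving out of the pruned-in-one-variable dissection of Theorem \ref{theorem14.4} alone is not enough.
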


\begin{proof} Write $m=\lfloor \sqrt{2d_1+2}\rfloor$. Under the hypothesis (\ref{14.25}), 
it is apparent that $d_1\ge d_2+2$. Also, on taking $r=d_1-m$, we find that 
$d_2-r+1\le 0$. We therefore deduce from (\ref{14.3}) that $\Del_{r,\bfd}=d_1-r=m$. 
Thus, just as in the proof of Corollary \ref{corollary14.7}, if we put
$$u=d_1(d_1+1)-\frac{d_1(d_1+1)-(d_1-m)(d_1-m+1)}{m+1},$$
then we find that $u<d_1(d_1-1)+2m-\tet$. Since $u\ge d_1(d_1-1)+2$, then again as in 
the proof of Corollary \ref{corollary14.7}, we see that when $s$ is a real number with 
$2s\ge u$, one has
$$\oint |F(\bfalp;\bfvarphi)|^{2s}\d\bfalp \ll X^{2s-D+\eps}.$$

\par In order to complete the proof of the corollary, we have now only to apply the 
Hardy-Littlewood method. The details are standard, and so we offer only the briefest outline 
of the necessary argument. Take $u_0$ to be a real number satisfying
$$u<u_0<d_1(d_1-1)+2\lfloor \sqrt{2d_1+2}\rfloor-\tet,$$
and put
$$\tau=d_1(d_1-1)+2\lfloor \sqrt{2d_1+2}\rfloor -\tet-u_0.$$
We then take $\del=\tau/(100d_1)$. We define the set of major arcs $\grN$ to be the 
union of the arcs
$$\grN(q,\bfa)=\{ \bfalp \in [0,1)^k:\text{$|\alp_i-a_i/q|\le X^{\del-d_i}$ $(1\le i\le k)$}\},
$$
with
$$0\le \bfa\le q,\quad q\le X^\del\quad \text{and}\quad (q,a_1,\ldots ,a_k)=1.$$
Also, we put 
$\grn=[0,1)^k\setminus \grN$. Then it follows from \cite[Theorem 1.6]{Woo2012} that 
whenever $|F(\bfalp;\bfvarphi)|>X^{1-\del/d_1^3}$, then $\bfalp \in \grN$. Thus
\begin{align*}
\int_\grn |F(\bfalp;\bfvarphi)|^{u_0+\tau}\d\bfalp &\ll (X^{1-\del /d_1^3})^\tau 
\oint |F(\bfalp;\bfvarphi)|^{u_0}\d\bfalp \\
&\ll X^{u_0+\tau-D}.
\end{align*}
Meanwhile, by applying the methods based on \cite[Theorems 1.3 and 2.4]{ACK2004}, just 
as in the proof of Corollary \ref{corollary1.3}, one obtains the bound
$$\int_\grN |F(\bfalp;\bfvarphi)|^{u_0+\tau}\ll X^{u_0+\tau-D}.$$
By combining these estimates, the conclusion of the corollary follows.
\end{proof}

When $k=2$, $d_1=d$ and $d_2=1$, the conclusion of Corollary \ref{corollary14.8} shows 
that the estimate
\begin{equation}\label{14.26}
\int_{[0,1)^2} \biggl| \sum_{1\le n\le X}e(\alp_1n^d+\alp_2n)\biggr|^s\d\bfalp 
\ll X^{s-d-1}
\end{equation}
holds whenever $s>u_0$, for some real number $u_0$ with
$$u_0<d(d-1)+2\lfloor \sqrt{2d+2}\rfloor -\tet(d),$$
provided at least that one has $d-\lfloor \sqrt{2d+2}\rfloor \ge 2$. This condition is satisfied 
for $d\ge 5$, as is readily confirmed. For small values of $d$, the methods of Hua 
\cite{Hua1965} play a role (see also \cite[Lemma 5]{BR2015}), for one has the bound
\begin{equation}\label{14.27}
\int_{[0,1)^2}\biggl|\sum_{1\le n\le X}e(\alp_1n^d+\alp_2n)\biggr|^{2^j+2}\d\bfalp 
\ll X^{2^j-j+1+\eps}\quad (2\le j\le d).
\end{equation}
By applying this estimate as a substitute for Theorem \ref{theorem14.1} in the proof of 
Theorem \ref{theorem14.5}, we find by applying H\"older's inequality that the estimate 
(\ref{14.26}) holds for $s>s_0(d)$, where
$$s_0(4)=15,\quad s_0(5)=23\tfrac{1}{3},\quad s_0(6)=34,\quad s_0(7)=46\tfrac{1}{2},
$$
$$s_0(8)=61\tfrac{1}{5},\quad s_0(9)=78,\quad s_0(10)=96\tfrac{6}{7}.$$
Here, one makes use of the case $j=3$ of the bound (\ref{14.27}) for $d\le 6$, and $j=4$ 
for $7\le d\le 10$. Meanwhile, the work of \cite[Theorem 1.1]{Woo2015d} shows that when 
$d=3$, then (\ref{14.26}) holds whenever $s>9$. We remark that, in this special case 
$k=2$, $d_1=d$ and $d_2=1$, very slightly weaker bounds could be extracted from Table 1 of the paper \cite{ACHK2017} that was submitted to the arXiv just prior to the 
submission of this memoir. The underlying minor arc bounds can be seen to be morally 
equivalent, though the major arc treatment differs.

\section{Vinogradov's mean value theorem in number fields} The application of the 
Hardy-Littlewood (circle) method in number fields is frequently complicated by the 
dependence of exponential sum estimates on the degree of the ambient field extension. In 
Diophantine problems of all but the lowest degrees $d$, existing methods for circumventing 
such difficulties demand the availability of a number of variables exponentially large in terms 
of $d$. Thus, for example, Birch \cite{Bir1961} has shown that in any algebraic number 
field, the rational solutions of a diagonal form of degree $d$ in $s$ variables have the 
expected asymptotic density whenever $s\ge 2^d+1$. The approach of Birch owes its 
success to the efficiency of Hua's lemma with $2^d$ variables. Indeed, so efficient is the 
latter that, equipped with even a weak version of Weyl's inequality having poor dependence 
on the degree of the field extension at hand, a satisfactory outcome can be derived with 
just one additional variable. Hitherto, such efficiency has been absent from versions of 
Vinogradov's mean value theorem that might otherwise be expected to deliver superior 
bounds for the number of variables (see, for example, the work of K\"orner \cite{Kor1962} 
and Eda \cite{Eda1967}). Our primary goal in this section is to establish such an efficient 
version of Vinogradov's mean value theorem in number fields, thereby opening access to 
sharp Diophantine applications in number fields. Indeed, we establish the main conjecture in 
number fields, and this delivers an analogue of Birch's theorem whenever $s\ge d^2+d+1$.

\par In order to be more concrete concerning our conclusions, we require some notation, 
beginning with the infrastructure for algebraic number fields. We refer the reader to 
\cite{Wan1991} for an introduction to the circle method in number fields. We consider an 
algebraic extension $K$ of degree $n$ over $\dbQ$. Let $K^{(l)}$ $(1\le l\le n_1)$ be 
the real conjugate fields associated with $K$, and let $K^{(m)}$ and $K^{(m+n_2)}$ 
$(n_1+1\le m\le n_1+n_2)$ be the pairs of complex conjugate fields associated with $K$. 
Here, one has $n_1+2n_2=n$. We write $\grO_K$ for the ring of integers of $K$, and we 
fix a basis $\Ome =\{\ome_1,\ldots ,\ome_n\}$ for $\grO_K$ over $\dbZ$. We then denote 
by $\calB(X)\subset \grO_K$ the unit cube
$$\calB(X)=\left\{ r_1\ome_1+\ldots +r_n\ome_n: r_i\in 
[-{\textstyle{\frac{1}{2}}}X^{1/n},{\textstyle{\frac{1}{2}}}X^{1/n})\cap \dbZ\ 
(1\le i\le n)\right\}.$$
It is apparent that $\text{card}(\calB(X))\asymp X$.\par

When $\gam\in K$, we denote by $\gam^{(i)}$ the conjugates of $\gam$, where 
$\gam^{(i)}\in K^{(i)}$ $(1\le i\le n)$. Then, as usual, we define the trace map 
$\text{Tr}=\text{Tr}_{K/\dbQ}$ and norm map $\text{N}=\text{N}_{K/\dbQ}$ by taking
$$\text{Tr}(\gam)=\gam^{(1)}+\ldots +\gam^{(n)}\quad \text{and}\quad 
\text{N}(\gam)=\gam^{(1)}\cdots \gam^{(n)}.$$
When $\gam_j\in K$ and $\tet_j\in \dbR$ for $1\le j\le n$, and
$$\lam(\bftet)=\tet_1\gam_1+\ldots +\tet_n\gam_n,$$
we define
$$\lam^{(i)}(\bftet)=\tet_1\gam_1^{(i)}+\ldots +\tet_n\gam_n^{(i)}\quad (1\le i\le n).$$
The trace map on $K$ can then be extended by defining
$$\text{Tr}(\lam(\bftet))=\lam^{(1)}(\bftet)+\ldots +\lam^{(n)}(\bftet).$$
The analogue of the function $e(\alp)=e^{2\pi i\alp}$ in this number field setting is then 
defined by taking $E(\lam(\bftet))=e(\text{Tr}(\lam(\bftet)))$.\par

Next, let $\grd^{-1}$ denote the inverse different, so that
$$\grd^{-1}=\{\eta\in K:\text{$\text{Tr}(\eta \xi)\in \dbZ$ for all $\xi\in \grO_K$}\}.$$ 
There is a basis $\Rho=\{ \rho_1,\ldots ,\rho_n\}$ of $\grd^{-1}$ dual to $\Ome$ having 
the property that
$$\text{Tr}(\rho_i\ome_j)=\begin{cases}1,&\text{when $i=j$,}\\
0,&\text{when $i\ne j$.}\end{cases}.$$
As a special case of the linear form $\lam(\bftet)$ defined above, we define 
$\alp=\alp(\bftet)$ by
$$\alp(\bftet)=\tet_1\rho_1+\ldots +\tet_n\rho_n.$$
When such a form occurs in an $n$-fold integral, we use the symbol $\d\alp$ to denote the 
$n$-fold differential $\d\tet_1\ldots \d\tet_n$. It is convenient then to write $\dbT$ for 
$[0,1)^n$. Now that we are equipped with this notation, we may record the fundamental 
orthogonality relation that underpins the circle method in number fields. Thus, when 
$\gam\in \grO_K$, one has
\begin{equation}\label{15.1}
\int_\dbT E(\alp \gam)\d\alp =\begin{cases} 1,&\text{when $\gam=0$,}\\
0,&\text{when $\gam\in \grO_K\setminus \{0\}$.}\end{cases}
\end{equation}

We may now announce the analogue of Theorem 1.1 in number fields.

\begin{theorem}\label{theorem15.1} Suppose that $\varphi_j\in \grO_K[t]$ $(1\le j\le k)$ is 
a system of polynomials with $W(t;\bfvarphi)\ne 0$. Let $s$ be a positive real number with 
$s\le k(k+1)/2$. Also, suppose that $(\gra_\nu)_{\nu\in \grO_K}$ is a sequence of 
complex numbers. Then for each $\eps>0$, one has
$$\int_{\dbT^k}\biggl| \sum_{\nu\in \calB(X)}\gra_\nu 
E(\alp_1\varphi_1(\nu)+\ldots +\alp_k\varphi_k(\nu))\biggr|^{2s}\d\bfalp \ll X^\eps 
\biggl( \sum_{\nu\in \calB(X)}|\gra_\nu|^2\biggr)^s.$$
In particular, one has
$$\int_{\dbT^k}\biggl| \sum_{\nu\in \calB(X)}E(\alp_1\varphi_1(\nu)+\ldots 
+\alp_k\varphi_k(\nu))\biggr|^{2s}\d\bfalp \ll X^{s+\eps}.$$
\end{theorem}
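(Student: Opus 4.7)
The plan is to derive Theorem \ref{theorem15.1} by first establishing a number field analogue of Theorem \ref{theorem3.1} (and its Corollary \ref{corollary3.2}) via an essentially verbatim port of the nested efficient congruencing machinery of \S\S3--10, and then to deduce the stated inequality from that analogue through the Wronskian-removal argument of \S12. Concretely, I would fix a prime ideal $\grp$ of $\grO_K$ lying over a rational prime $p>k$ that is unramified in $K$, and redefine a $\grp^c$-spaced system $\bfvarphi\in \grO_K[t]^k$ by $\varphi_j(t)\equiv t^j\mmod{\grp^c}$; exponential sums $\grf_h(\bfalp;\xi)$ are formed as in (\ref{3.4}) with $n\equiv \xi\mmod{\grp^h}$ and with the additive character $E(\alp\cdot)$ replacing $e(\alp\cdot)$, using the dual basis $\Rho$ to discretise congruences on the torus $\dbT^k$ via the orthogonality (\ref{15.1}). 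The normalisations $\rho_h(\xi)$, the mean values $U_{s,k}^B(\bfgra)$, $U_{s,k}^{B,h}(\bfgra)$, and the auxiliary $K_{a,b,c}^{r,\bfvarphi,\nu}(\bfgra)$ are defined identically, but with summation over residues modulo $\grp^a,\grp^b,\grp^h$ and with $\oint_{p^B}$ replaced by an average over the $N(\grp)^{kB}$ points $\bfalp\in \grd^{-1}\grp^{-B}/\grd^{-1}$ that serve as the discrete dual to $\grO_K/\grp^B$.

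Once this infrastructure is in place, I would carry out the induction on $k$ exactly as in \S\S5--10. The base case $k=1$ reduces to the single-congruence Hensel-type argument of Lemma \ref{lemma5.1}, which works verbatim in $\grO_{K,\grp}$ since $x-y\mid \psi(x)-\psi(y)$ in $\grO_K$. For $k\ge 2$, the translation-dilation invariance Lemma \ref{lemma4.1}, the initial conditioning Lemmas \ref{lemma6.1} and \ref{lemma6.3}, the approximate-TDI step Lemma \ref{lemma7.1}, the interchange Lemma \ref{lemma8.1}, and the monograde distillation Lemmas \ref{lemma9.1}--\ref{lemma9.3} all go through identically; the only algebraic input required is that the matrices $A_1$, $A_2$, $A$ arising from the binomial theorem and the Wronskian-like computation on p.~\pageref{7.11} remain invertible modulo $\grp^B$, which follows from $p>k$ together with $\Ome\equiv \binom{\cdot}{\cdot}\mmod{\grp^c}$. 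The $p$-adic concentration argument in the proof of Theorem \ref{theorem3.1} then delivers the contradiction to $\Lam>0$ without modification, yielding $\lam_K(s,k)=0$ for $s=k(k+1)/2$, whose formal consequence (paralleling Corollary \ref{corollary3.2}) is that for every $\eps>0$ and every $\grp^c$-spaced $\bfvarphi$, one has $U_{s,k}^B(\bfgra)\ll N(\grp)^{B\eps}U_{s,k}^{B,H}(\bfgra)$ with $H=\lceil B/k\rceil$.

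To pass from this to Theorem \ref{theorem15.1}, I would mimic \S12. It suffices to treat $s=k(k+1)/2$, since Hölder handles smaller $s$. The contribution from $\nu\in \calB(X)$ with $W(\nu;\bfvarphi)=0$ is $O(1)$ by Cauchy-Schwarz since $W$ has finitely many zeros. For the remainder, I select a set $\calP$ of unramified rational primes $p\in ((\log X)^2, 3(\log X)^2]$ with $p>k$, each admitting a degree-one prime $\grp\mid p$ in $\grO_K$ (available by Chebotarev for a positive density of $p$), so that $\prod_{p\in \calP}p$ exceeds a polynomial bound on $\prod_i W(\nu_i;\bfvarphi)W(\mu_i;\bfvarphi)$; this produces a prime $\grp$ for which $W(\nu;\bfvarphi)\not\equiv 0\mmod{\grp}$ on all variables in a given solution, losing only $(\log X)^2$ in the union bound. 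Choosing $B$ with $N(\grp)^{B/k}\asymp X^{1/n}$ so that the box $\calB(X)$ meets each residue class modulo $\grp^H$ in $\ll 1+X/N(\grp)^H\asymp 1$ points, I would run the $\grp^c$-spacing sorting argument from the proof of Theorem \ref{theorem11.1} (with $c=\lceil \tau B\rceil$) and apply the number field Corollary \ref{corollary3.2} to the resulting $U_{s,k}^{B-kc,H,\bfPsi}(\bfgrc)$, concluding that the mean value in question is $\ll X^\eps(\sum |\gra_\nu|^2)^s$. The main obstacle is the careful geometric accounting at the last step: verifying that the $n$-dimensional box $\calB(X)$ distributes evenly across residues modulo $\grp^H$ up to the exact scale $N(\grp)^H\asymp X$ needed so that the trivial bound on $|\grf_H|^2$ contributes only a bounded factor per class; this requires the basis-adapted decomposition of $\calB(X)$ and the usual covering of $\grO_K/\grp^H$ by $H$-th powers of uniformisers, which is standard but does rely on $\grp$ being unramified.
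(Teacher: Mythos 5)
Your overall strategy --- port the efficient congruencing apparatus of \S\S3--10 to the $\grp$-adic setting, obtain a number-field analogue of Theorem \ref{theorem3.1} and Corollary \ref{corollary3.2}, and then remove the Wronskian degeneracy as in \S12 --- coincides with the paper's route (Theorem \ref{theorem15.4}, Corollary \ref{corollary15.5}, and the proof of Theorem \ref{theorem15.1}). However, your parameter choice in the final step is off by a factor that matters, and the step it is meant to support fails. You take $B$ so that $N(\grp)^{B/k}\asymp X^{1/n}$, and claim that then each residue class modulo $\grp^{H}$ meets $\calB(X)$ in $\ll 1+X/N(\grp)^{H}\asymp 1$ points. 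But with $H+c=\lceil B/k\rceil$ and $N(\grp)^{H}\asymp X^{1/n}$, one has $X/N(\grp)^{H}\asymp X^{1-1/n}$, which is nowhere near $1$ for $n\ge 2$. Since $\mathrm{card}(\calB(X))\asymp X$ while $\grO_K/\grp^{H}$ has only $\asymp X^{1/n}$ classes, the pigeonhole principle forces some class to contain $\gg X^{1-1/n}$ elements of $\calB(X)$, and the crucial estimate $U_{s,k}^{B-kc,H,\bfPsi}(\bfgrc)\ll 1$ is lost by a factor of order $X^{s(1-1/n)}$. The correct normalisation is $N(\grp)^{H+c}\asymp pX$, that is $N(\grp)^{B/k}\asymp X$ up to factors of $p=N(\grp)\asymp(\log X)^2$, exactly as in the paper, which takes $B=\lceil k\log(pX)/\log p\rceil$.

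With the scale corrected, the ``geometric accounting'' you identify as the main obstacle --- equidistribution of $\calB(X)$ across residue classes, together with alleged reliance on $\grp$ being unramified of degree one --- is not only untreated in your sketch but is in fact unnecessary. The lattice $\grp^{H+c}\subset\grO_K$ can be arbitrarily skewed relative to the integral basis $\Ome$, so even distribution would be genuinely delicate; but all one needs is that each residue class modulo $\grp^{H+c}$ meets $\calB(X)$ in at most one point, and this follows from a norm argument insensitive to the lattice shape. If $\nu,\nu'\in\calB(X)$ are distinct with $\nu\equiv\nu'\mmod{\grp^{H+c}}$, then $\grp^{H+c}\mid(\nu-\nu')$, whence $N(\grp)^{H+c}\le\bigl|N_{K/\dbQ}(\nu-\nu')\bigr|\ll X$ (since $\nu-\nu'\in\calB(2X)$); this contradicts $N(\grp)^{H+c}\ge pX$ once $p>(\log X)^2$ is large. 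Consequently $|\grf_H(\bfbet;\zeta)|\le 1$ for every $\zeta$ and the bound $U_{s,k}^{B-kc,H,\bfPsi}(\bfgrc)\ll 1$ is immediate. For the same reason, the restriction to unramified degree-one primes via Chebotarev is superfluous: the induction of \S\S3--10 requires only that $k!$ be a unit modulo $\grp$, which the paper secures by taking $N(\grp)>(k!)^n$ (or, equivalently in spirit, any $\grp$ lying over a rational prime exceeding $k$), and the paper therefore builds $\calP$ from arbitrary principal prime ideals with $N(\grp)\in((\log X)^2,3(\log X)^2]$, counted by Landau's prime ideal theorem.
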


Throughout this section, we adopt the convention that implicit constants in Vinogradov's 
notation $\ll$ and $\gg$ may depend on $s$, $k$, $\bfvarphi$, $K$, $\Ome$, and also the 
small positive number $\eps$. As an immediate consequence of the orthogonality relation 
(\ref{15.1}), one obtains the following corollary.

\begin{corollary}\label{corollary15.2} When $s\in \dbN$, denote by 
$\calN_{s,\bfvarphi}(X;K)$ 
the number of solutions of the system of equations
$$\sum_{i=1}^s\left( \varphi_j(x_i)-\varphi_j(y_i)\right)=0\quad (1\le j\le k),$$
with $x_i,y_i\in \calB(X)$. Then whenever $W(t;\bfvarphi)\ne 0$ and $s\le k(k+1)/2$, one 
has
$$\calN_{s,\bfvarphi}(X;K)\ll X^{s+\eps}.$$
\end{corollary}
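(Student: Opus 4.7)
The plan is to derive Corollary \ref{corollary15.2} as an essentially immediate consequence of the second (unweighted) conclusion of Theorem \ref{theorem15.1}, using the number-field orthogonality relation (\ref{15.1}) to convert the counting function $\calN_{s,\bfvarphi}(X;K)$ into a $2s$-th moment of an exponential sum. First, I would specialise the sequence $(\gra_\nu)_{\nu \in \grO_K}$ in Theorem \ref{theorem15.1} by taking $\gra_\nu = 1$ for $\nu \in \calB(X)$ and $\gra_\nu = 0$ otherwise, so that the exponential sum in play is
$$F(\bfalp) = \sum_{\nu \in \calB(X)} E(\alp_1\varphi_1(\nu) + \ldots + \alp_k\varphi_k(\nu)).$$

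Next, I would expand $|F(\bfalp)|^{2s}$ for a positive integer $s$ as a $2s$-fold sum over tuples $(x_1,\ldots,x_s,y_1,\ldots,y_s) \in \calB(X)^{2s}$, grouping the resulting character into $\prod_{j=1}^k E(\alp_j \gam_j)$, where $\gam_j = \sum_{i=1}^s(\varphi_j(x_i) - \varphi_j(y_i)) \in \grO_K$. Integrating over $\bfalp \in \dbT^k$ and applying (\ref{15.1}) in each coordinate $\alp_j$ separately, each $2s$-tuple contributes $1$ precisely when every $\gam_j$ vanishes in $\grO_K$, and $0$ otherwise. This gives the identity
$$\calN_{s,\bfvarphi}(X;K) = \int_{\dbT^k} |F(\bfalp)|^{2s} \d\bfalp .$$
The hypothesis $W(t;\bfvarphi) \ne 0$ and $s \le k(k+1)/2$ then delivers $\calN_{s,\bfvarphi}(X;K) \ll X^{s+\eps}$ directly from the second conclusion of Theorem \ref{theorem15.1}.

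Because this is a short orthogonality argument rather than a substantive estimation, there is essentially no obstacle. The only point requiring care is verifying that the number-field orthogonality (\ref{15.1}) factorises cleanly across the $k$ coordinates $\alp_1,\ldots,\alp_k$. Since each $\alp_j$ is parametrised independently by its own $\bftet^{(j)} \in [0,1)^n$ via the dual basis $\Rho$, Fubini's theorem reduces the integral over $\dbT^k$ to a product of $k$ integrals each of the form appearing in (\ref{15.1}), and the condition $\gam = 0$ in $\grO_K$ is equivalent to the vanishing of all $n$ coordinates relative to $\Ome$, which is exactly what the integral over $\dbT$ picks out. Thus the corollary follows with no additional input beyond Theorem \ref{theorem15.1}.
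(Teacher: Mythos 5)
Your proposal is correct and coincides with the paper's intended argument: the paper states that Corollary \ref{corollary15.2} follows as an immediate consequence of the orthogonality relation (\ref{15.1}) applied to the second, unweighted bound of Theorem \ref{theorem15.1}, exactly as you describe. Your verification that the orthogonality factorises coordinate-by-coordinate via Fubini and the dual basis $\Rho$ is the only detail needing attention, and you handle it correctly.
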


Finally, when $k\in \dbN$ and $s>0$, we define
$$J_{s,k}(X;K)=\int_{\dbT^k}\biggl| \sum_{\nu\in \calB(X)}E(\alp_1\nu+\ldots 
+\alp_k\nu^k)\biggr|^{2s}\d\bfalp .$$
Theorem \ref{theorem15.1} delivers an analogue of the main conjecture in Vinogradov's 
mean value theorem for algebraic number fields.

\begin{corollary}\label{corollary15.3} Suppose that $k\in \dbN$ and $s>0$. Then for each 
$\eps>0$, one has
$$J_{s,k}(X;K)\ll X^\eps (X^s+X^{2s-k(k+1)/2}).$$
\end{corollary}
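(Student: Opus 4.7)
The plan is to deduce Corollary \ref{corollary15.3} from Theorem \ref{theorem15.1} by an elementary two-step procedure, exactly parallel to the transition from Theorem \ref{theorem1.1} to the upper-bound portion of Corollary \ref{corollary1.3}. Set $s_0 = k(k+1)/2$ and split according to whether $s$ lies below or above this critical value.

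In the subcritical range $0 < s \le s_0$, specialise Theorem \ref{theorem15.1} to $\varphi_j(t) = t^j$ $(1\le j\le k)$ together with the indicator sequence $\gra_\nu = 1$ for $\nu\in\calB(X)$ and $\gra_\nu = 0$ otherwise. The Wronskian computation at the start of Section 3 (or equivalently the determinantal calculation inside the proof of Corollary \ref{corollary1.2}) shows that $W(t;\bfvarphi)$ is the nonzero rational integer $\prod_{j=1}^{k} j!$, so the hypothesis of Theorem \ref{theorem15.1} is fulfilled. Since $\text{card}(\calB(X)) \ll X$, the theorem delivers
$$J_{s,k}(X;K) \ll X^\eps \bigl(\text{card}(\calB(X))\bigr)^s \ll X^{s+\eps},$$
which matches the claimed bound, as $X^s$ dominates $X^{2s-k(k+1)/2}$ throughout this range.

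For the supercritical range $s > s_0$, interpolate between the critical bound just established and the trivial pointwise estimate. Writing $s = s_0 + t$ with $t > 0$, the inequality $\text{card}(\calB(X)) \ll X$ yields
$$\sup_{\bfalp\in\dbT^k}\biggl|\sum_{\nu\in\calB(X)} E(\alp_1\nu + \ldots + \alp_k\nu^k)\biggr| \ll X.$$
Pulling out $2t$ factors under this uniform bound and applying the critical case gives
$$J_{s,k}(X;K) \ll X^{2t}\, J_{s_0,k}(X;K) \ll X^{2t + s_0 + \eps} = X^{2s - k(k+1)/2 + \eps},$$
in agreement with the corollary in this range as well.

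There is no substantive obstacle: all of the analytic depth has already been concentrated inside Theorem \ref{theorem15.1}, and what remains is a routine Hölder-type interpolation between the critical moment and the trivial $\ell^\infty$ majorisation. Note also that, in contrast to Corollary \ref{corollary1.3}, we make no attempt here at an asymptotic formula when $s > s_0$, so a number-field analogue of the Hardy--Littlewood major-arc analysis (as in \cite{Wan1991}) is not required, and the circle method may be sidestepped entirely.
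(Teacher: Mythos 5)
Your proof is correct and follows essentially the same route the paper takes (or leaves implicit, since the paper does not write out a proof of Corollary \ref{corollary15.3} but instead mirrors the passage from Theorem \ref{theorem1.1} to Corollary \ref{corollary1.4} and Corollary \ref{corollary1.3}): specialise Theorem \ref{theorem15.1} to $\varphi_j(t)=t^j$ with unit weights for the subcritical range, then interpolate against the trivial $L^\infty$ bound $\ll X$ for $s>k(k+1)/2$. Your observation that no major-arc analysis is needed, because only an upper bound and not an asymptotic formula is claimed, is also accurate.
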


The literature concerning Vinogradov's mean value theorem in number fields begins with the 
work of K\"orner \cite{Kor1962} more than half a century ago. When $[K:\dbQ]=n$ and 
$s\ge \frac{1}{4}nk(k+1)+rk$ $(r\in \dbN)$, K\"orner \cite[Satz 1]{Kor1962} delivers an 
estimate tantamount to
$$J_{s,k}(X;K)\ll X^{2s-\frac{1}{2}k(k+1)+\eta_{s,k}}(\log X)^r,$$
where $\eta_{s,k}=\frac{1}{2}k(k+1)(1-1/k)^r$. This was improved by Eda \cite{Eda1967} 
(see also \cite[Lemma 4]{Eda1975}) to the extent that the power of $\log X$ may be 
deleted, and the condition on $s$ relaxed to
$$s\ge \frac{n}{n-1}k(k+1)+rk-1.$$
Recent work of Kozlov \cite{Koz2001} and Sorokin \cite{Sor2007} provides some slight 
improvement in these results for the special case $K=\dbQ(\sqrt{-1})$, although their 
estimates are constrained to possess the same salient features. Thus, in all of this previous 
work, the exponent $\eta_{s,k}$ behaves roughly like $k^2e^{-s/k^2}$. In consequence, 
one must take $s$ to be at least as large as $k^2(2\log k+\log \log k+c)$, for a suitable 
positive constant $c$, in order that the quality of available estimates of Weyl type permit 
sufficient control of the mean value implicit in $J_{s,k}(X;K)$ necessary for applications. 
Indeed, the dependence of available Weyl estimates on the degree $n$ of the field 
extension may necessitate that this constant $c$ grow with $n$ at least as fast as $\log n$. 
By contrast, Corollary \ref{corollary15.3} permits full control to be exercised as soon as 
$s\ge k(k+1)/2$. Not only is the dependence on the degree of the ambient field extension 
entirely removed, but the dependence on $k$ is also substantially improved.\par

The conclusion of Theorem \ref{theorem15.1} follows from an analogue of Theorem 
\ref{theorem3.1}, as we now describe. Since the details are strikingly similar to those in the 
situation over $\dbZ$ described in \S\S3-12, we will economise on space by indicating only 
the places in the argument where special care must be taken. We again take $k$ to be an 
integer with $k\ge 1$, and consider polynomials $\varphi_1,\ldots ,\varphi_k\in \grO_K[t]$. 
Throughout the argument, the ring of integers $\grO_K$ replaces $\dbZ$. Let $\grp=(\pi)$ 
be a prime ideal of $\grO_K$, and put $p={\rm N}(\pi)$. We will be implicitly working in the 
$\grp$-adic field completing $K$ at the place $\grp$. We assume throughout that 
$p>(k!)^n$. The definitions of \S3 must now be made, mutatis mutandis, where throughout 
we emphasise that $\dbZ$ is replaced by $\grO_K$, congruences$\nmod{p^h}$ are 
replaced by congruences$\nmod{\grp^h}$, and the function $e(z)$ is replaced by $E(z)$. 
The definitions (\ref{1.9}) and (\ref{3.5}) must be adjusted to this new setting. First, when 
$F:\dbT^k\rightarrow \dbC$ is integrable, we write
$$\oint F(\bfalp)\d\bfalp =\int_{\dbT^k}F(\bfalp)\d\bfalp .$$
Next, when $B$ is a positive integer, we define
\begin{equation}\label{15.2}
\oint_{\grp^B}F(\bfalp)\d\bfalp =p^{-kB}\sum_{u_1\nmod{\grp^B}}\ldots 
\sum_{u_k\nmod{\grp^B}}F(\bfu \pi^{-B}).
\end{equation}
Some words of explanation are in order here. First, the summations on the right hand side 
of (\ref{15.2}) are taken over complete sets of residues modulo $\grp^B$. Next, each 
coordinate $u_i\pi^{-B}$ of the argument on the right hand side of (\ref{15.2}) may be 
written in the shape
$$u_i=\bet_{i1}\rho_1+\ldots +\bet_{in}\rho_n,$$
with $\bet_{ij}\in \dbR$, and we then reduce each coefficient $\bet_{ij}$ modulo $1$. With 
this convention, we may regard $\bfu \pi^{-B}$ as belonging to $\dbT^k$. Integrals with 
subscripts $p^B$ in \S\S3-12 are now replaced by this newly defined integral with subscript 
$\grp^B$, in the obvious fashion. With this definition, one may verify that for $\nu \in 
\grO_K$, one has the orthogonality relation
$$\oint_{\grp^B}E(\alp \nu)\d\alp =\begin{cases} 1,&\text{when 
$\nu\equiv 0\mmod{\grp^B}$,}\\
0,&\text{when $\nu\not\equiv 0\mmod{\grp^B}$.}\end{cases}$$

\par Equipped with these modified definitions, a version of Theorem \ref{theorem3.1} may 
now be stated in the number field setting. We recall in advance the definitions (\ref{3.6}) 
and (\ref{3.8}) in their modified manifestations.

\begin{theorem}\label{theorem15.4} Let $K$ be an algebraic extension of $\dbQ$ with 
$[K:\dbQ]<\infty$. Suppose that $k\in \dbN$, and that $\grp=(\pi)$ is a prime ideal of 
$\grO_K$ with $p={\rm N}_{K/\dbQ}(\pi)>(k!)^n$. Then one has $\lam(k(k+1)/2,k)=0$.
\end{theorem}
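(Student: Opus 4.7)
The plan is to transcribe the proof of Theorem \ref{theorem3.1} from Sections 3--10 to the number field setting. First, define $\grp^c$-spaced systems $\bfvarphi\in \grO_K[t]^k$ by the condition $\varphi_j(t)\equiv t^j\mmod{\grp^c}$ for $1\le j\le k$, and for a sequence $(\gra_\nu)_{\nu\in \grO_K}$ of finite total absolute value introduce the normalised exponential sums $\grf_h(\bfalp;\xi)$, the mean values $U_{s,k}^B(\bfgra)$ and $U_{s,k}^{B,h}(\bfgra)$, and the mixed mean values $K_{a,b,c}^{r,\bfvarphi,\nu}(\bfgra)$ exactly as in Section 3, with integrals over $\dbT^k$ or their $\grp^B$-analogues interpreted via (\ref{15.2}) and the additive character $E$ replacing $e$. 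Define $\lam^*(s,\tet;\tau)$ and $\lam(s,\tet)$ as in (\ref{3.12})--(\ref{3.13}) and proceed by induction on $k$, targeting a contradiction to the assumption that $\Lam=\lam(s,k)>0$.

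The critical observation is that the hypothesis $p=\text{N}(\pi)>(k!)^n$ forces the rational prime $p_0$ lying below $\grp$ to satisfy $p_0\ge p^{1/n}>k!$, so every factorial $j!$ with $j\le k$, and every binomial coefficient $\binom{j}{i}$ with $0\le i\le j\le k$, is a unit in $\grO_K/\grp$. Consequently the number field analogue of Lemma \ref{lemma4.1} carries over verbatim: the matrices $A_1\equiv I_k\mmod{\grp^c}$ and $A_2$ (triangular modulo $\grp^c$ with unit diagonal) arising from the Taylor expansion of $\varphi_j(\pi^h y+\xi)$ admit integral inverses modulo $\grp^B$, so one may take $\grO_K$-linear combinations of the underlying congruences to reduce to a fresh $\grp^{c+h}$-spaced system. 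The base case $k=1$ of Lemma \ref{lemma5.1} likewise translates: from $x+\pi^c\psi(x)\equiv y+\pi^c\psi(y)\mmod{\grp^B}$ one deduces $x\equiv y\mmod{(\grp^c,\grp^B)}$, and since $(x-y)\mid (\psi(x)-\psi(y))$ in $\grO_K$, the usual Hensel-style bootstrap yields $x\equiv y\mmod{\grp^B}$ after at most $\lceil 1/\tau\rceil$ iterations, whence $\lam(1,1)=0$.

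With Lemmas \ref{lemma4.1}, \ref{lemma4.2} and \ref{lemma5.1} secured in the number field setting, the conditioning of Section 6, the extraction of Vinogradov-type congruences in Section 7, the bilinear interchange of Section 8, the monograde distillation of Section 9, and the final $\grp$-adic concentration argument of Section 10 transcribe essentially unchanged. The main step requiring care is the determinant computation inside Lemma \ref{lemma7.1}: one needs $(1!\cdot 2!\cdots r!)\det(A)$ to be coprime to $\grp$ for the coefficient matrix $A=(\Ome_{il})_{1\le i,l\le r}$, which again follows from our running hypothesis. The principal obstacle is therefore not conceptual but bookkeeping: one must verify that complete residue systems modulo $\grp^h$, together with the orthogonality relation following from (\ref{15.2}), partition and count the relevant tuples $\bfx,\bfy\in \grO_K$ in exactly the way that $\dbZ/p^B\dbZ$ does in the rational case, and that the substitutions $x_i=\pi^a u_i+\xi$ in Lemma \ref{lemma7.1} interact correctly with the $\grp$-adic valuation $p^\gam\|(\xi-\eta)$. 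Once these routine verifications are in place, the inductive blow-up of the normalised quantities $\Ktil_{a,b}^r$ contradicts the a priori upper bound supplied by the number field analogue of Lemma \ref{lemma4.2}, forcing $\Lam\le 0$ and completing the induction on $k$.
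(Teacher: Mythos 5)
Your proposal matches the paper's own proof, which likewise reduces Theorem 15.4 to a verbatim transcription of the argument of Sections 3--10 with $\dbZ$ replaced by $\grO_K$, $p$ by $\grp$, and $e$ by $E$, and invokes the hypothesis $p=\mathrm{N}(\pi)>(k!)^n$ precisely to guarantee that $k!$ (and all the binomial coefficients and Vandermonde-type determinants entering Lemmas \ref{lemma4.1} and \ref{lemma7.1}) are units modulo $\grp$; your route to this via the residue characteristic $p_0\ge p^{1/n}>k!$ is equivalent to the paper's norm-comparison $\mathrm{N}(\pi)>(k!)^n\ge \mathrm{N}(k!)$. The remaining checklist you give — the base case $\lam(1,1)=0$, the coefficient-matrix inversions, the valuation bookkeeping with $\pi^\gam\,\|\,(\xi-\eta)$, and the closing $\grp$-adic concentration contradiction — is exactly what the paper signals as the requisite ``mutatis mutandis'' care.
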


We remark that the condition $p>(k!)^n$ is imposed in order to ensure that any factor 
$k!$, occurring in a binomial expansion en route, is necessarily non-zero modulo $\grp$. 
If one were to have $k!\equiv 0\mmod{\grp}$, then one would have that ${\rm N}(\pi)$ 
divides ${\rm N}(k!)$ over $\dbZ$. But by the aforementioned hypothesis, one has 
${\rm N}(\pi)>(k!)^n\ge {\rm N}(k!)$, leading to a contradiction, and so the desired 
conclusion follows.

\begin{corollary}\label{corollary15.5} Let $K$ be an algebraic extension of $\dbQ$ with 
$[K:\dbQ]<\infty$. Suppose that $k\in \dbN$, and that $\grp=(\pi)$ is a prime ideal of 
$\grO_K$ with $p={\rm N}_{K/\dbQ}(\pi)>(k!)^n$. Suppose that $\tau>0$ and $\eps>0$. 
Let $B$ be sufficiently large in terms of $n$, $k$, $\tau$ and $\eps$. Put $s=k(k+1)/2$ and 
$H=\lceil B/k\rceil$. Then for every $\bfvarphi\in \Phi_\tau(B)$, and every sequence 
$(\gra_\nu)\in \dbD_0$, one has
$$U_{s,k}^B(\bfgra)\ll p^{B\eps}U_{s,k}^{B,H}(\bfgra).$$
\end{corollary}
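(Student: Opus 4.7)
The plan is to derive Corollary \ref{corollary15.5} directly from Theorem \ref{theorem15.4} by unpacking the definitions (\ref{3.12}) and (\ref{3.13}) in their number-field manifestations, exactly as Corollary \ref{corollary3.2} was obtained from Theorem \ref{theorem3.1}. First I would reduce to the case $(\gra_\nu)\in \dbD$, since the conclusion for the zero sequence is trivial (both sides of the claimed inequality are zero), and scale-invariance of the exponential sums $\grf_h(\bfalp;\xi)$ in the sequence $(\gra_\nu)$ means we may assume $|\gra_\nu|\le 1$.

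Next I would invoke Theorem \ref{theorem15.4}, which gives $\lam(s,k) = 0$ for $s = k(k+1)/2$. Unpacking definition (\ref{3.13}) (adapted to the number field setting), this says
$$\limsup_{\tau\to 0}\lam^*(s,k;\tau) = 0.$$
Hence, given $\eps > 0$, I may choose $\tau_0 > 0$ sufficiently small (depending on $n$, $k$, $\eps$) so that $\lam^*(s,k;\tau) < \eps/2$ for all $\tau\le \tau_0$. Shrinking $\tau$ if necessary (remaining below the $\tau$ in the hypothesis), the definition (\ref{3.12}) of $\lam^*(s,k;\tau)$ as a $\limsup$ over $B\to\infty$ then gives, for all $B$ sufficiently large in terms of $n$, $k$, $\tau$, $\eps$,
$$\sup_{\bfvarphi\in \Phi_\tau(B)}\sup_{(\gra_\nu)\in \dbD}
\frac{\log\bigl(U_{s,k}^B(\bfgra)/U_{s,k}^{B,H}(\bfgra)\bigr)}{\log(p^H)}
\le \lam^*(s,k;\tau) + \eps/2 < \eps,$$
with $H = \lceil B/k\rceil$. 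Rearranging yields $U_{s,k}^B(\bfgra)\le p^{H\eps}U_{s,k}^{B,H}(\bfgra)$ uniformly in $\bfvarphi\in \Phi_\tau(B)$ and $(\gra_\nu)\in \dbD$. Since $H\le B$, this gives the desired bound $U_{s,k}^B(\bfgra)\ll p^{B\eps}U_{s,k}^{B,H}(\bfgra)$, with the implicit constant absorbing any subpolynomial loss in the reduction.

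There is essentially no obstacle in this step: all the substantive work lies inside Theorem \ref{theorem15.4} itself, whose proof would proceed by the number-field translation of the argument of \S\S3--10. The only point requiring mild care is ensuring that the denominator $U_{s,k}^{B,H}(\bfgra)$ is strictly positive when $(\gra_\nu)\not\equiv 0$, so that the logarithmic ratio in (\ref{3.12}) is well-defined; this holds because the diagonal contribution to (\ref{3.8}) is positive whenever some $\gra_\nu \ne 0$. Since this reduction involves only unpacking notation, a proof of no more than a few lines should suffice, and the real content of the corollary is its role as the workable concentration estimate --- suitable for direct input into the proof of Theorem \ref{theorem15.1} --- rather than the somewhat abstract vanishing of the exponent $\lam(s,k)$ asserted by Theorem \ref{theorem15.4}.
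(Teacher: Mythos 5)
Your proposal is correct and follows exactly the paper's approach: Corollary \ref{corollary15.5} is derived from Theorem \ref{theorem15.4} by unpacking the definitions (\ref{3.12}) and (\ref{3.13}) in their number-field forms, precisely as the paper derives Corollary \ref{corollary3.2} from Theorem \ref{theorem3.1}, with the zero-sequence case handled trivially. (One small remark: the map $\tau\mapsto\Phi_\tau(B)$ is inclusion-reversing, so $\lam^*(s,k;\tau)$ is non-increasing in $\tau$; since $\lam(s,k)=\limsup_{\tau\to0}\lam^*(s,k;\tau)=0$ and $\lam^*\ge 0$, one in fact has $\lam^*(s,k;\tau)=0$ for every $\tau>0$, so the "shrinking $\tau$" manoeuvre, while harmless, is not really needed.)
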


The proof of Theorem \ref{theorem15.4} and its corollary follow just as in the 
corresponding argument of \S\S3-10 above in the rational case. Given our adjustments in 
notation, the argument follows verbatim, provided that the reader exercises care in ensuring 
that these notational perturbations are correctly administered. We therefore move on 
directly to consider the proof of Theorem \ref{theorem15.1}.

\begin{proof}[The proof of Theorem \ref{theorem15.1}] In all essentials, the proof of 
Theorem \ref{theorem15.1} follows the proof of Theorem \ref{theorem1.1} given in \S11 
and \S12, mutatis mutandis, and so we shall be very brief concerning the details. Here, with 
the notational modifications in hand, the only parts of these sections that require further 
discussion are located in \S12. Let $\calZ$ denote the set of zeros of $W(t;\bfvarphi)$ lying 
in $\grO_K$, and let $X$ be sufficiently large in terms of $\bfvarphi$, $k$, $\eps$, $K$ and 
$\Ome$. We suppose that $s=k(k+1)/2$. We note that one again has
$$\text{card}(\calZ)\le \text{deg}(W(t;\bfvarphi))\ll 1.$$
Define
$$F(\bfalp;X)=\rho_0^{-1}\sum_{\nu \in \calB(X)}\gra_\nu E(\psi(\nu;\bfalp))$$
and
$$F_0(\bfalp;X)=\rho_0^{-1}\sum_{\substack{\nu \in \calB(X)\\ \nu \not\in \calZ}}\gra_\nu 
E(\psi(\nu;\bfalp)).$$
We may suppose that the sequence $(\gra_\nu)$ satisfies the property that $\gra_\nu=0$ 
whenever $\nu\not\in \calB(X)$. Our task is to cover the exponential sum $F_0(\bfalp;X)$ 
by analogous exponential sums with variables constrained by appropriate non-singularity 
conditions modulo $\grp$, for suitable prime ideals $\grp=(\pi)$ in $\grO_K$.\par

Given a solution $\bfx,\bfy\in (\calB(X)\setminus \calZ)^s$ of the system (\ref{1.4}), the 
algebraic integer
$$\prod_{i=1}^sW(x_i;\bfvarphi)W(y_i;\bfvarphi)$$
is non-zero, and has norm bounded above by $CX^D$, for some $C>0$ depending at most 
on $K$, $s$ and the coefficients of $\bfvarphi$, and $D$ a positive integer with
$$D\le 2s\sum_{j=1}^k\text{deg}(\varphi_j).$$
Let $\calP$ denote the set of elements $\pi\in \grO_K$ having the property that $(\pi)$ is a 
prime ideal satisfying the condition
$$(\log X)^2<{\rm N}(\pi)\le 3(\log X)^2.$$
By the prime ideal theorem in number fields (see \cite{Lan1903}), when $X$ is sufficiently 
large, the number of such elements is at least $\tfrac{1}{2}(\log X)^2/\log \log X$. One 
therefore has
$${\rm N}\biggl( \prod_{\pi\in \calP}\pi\biggr) >(\log X)^{(\log X)^2/\log\log X}>
(CX^D)^n.$$
Thus we deduce that for each solution $\bfx,\bfy\in (\calB(X)\setminus \calZ)^s$ of 
(\ref{1.4}) counted by the mean value
$$\oint |F_0(\bfalp;X)|^{2s}\d\bfalp,$$
there exists $\pi\in \calP$ with
$$\prod_{i=1}^sW(x_i;\bfvarphi)W(y_i;\bfvarphi)\not\equiv 0\mmod{\grp},$$
in which we write $\grp=(\pi)$. In particular, one has
\begin{equation}\label{15.3}
\oint |F_0(\bfalp;X)|^{2s}\d\bfalp \le \sum_{\pi \in \calP}\oint |F_\pi(\bfalp;X)|^{2s}
\d\bfalp ,
\end{equation}
where
$$F_\pi (\bfalp;X)=\rho_0^{-1}
\sum_{\substack{\nu\in \calB(X)\\ W(\nu;\bfvarphi)\not\equiv 0\mmod{\grp}}}|
\gra_\nu|\,E(\psi(\nu;\bfalp)).$$

\par The argument of \S12 now resumes. Let $\tau>0$ be sufficiently small in terms of $s$ 
and $k$. We take
$$B=\left\lceil \frac{k\log (pX)}{\log p}\right\rceil,\quad c=\lceil \tau B\rceil \quad 
\text{and}\quad H=\lceil B/k\rceil -c,$$
which ensures that $pX\le {\rm N}(\pi^{H+c})\le p^2X$. Then we may suppose that $B$ is 
sufficiently large in terms of $\tau$, as well as $s$, $k$ and $\eps$. By orthogonality, one 
has
$$\oint |F_\pi(\bfalp;X)|^{2s}\d\bfalp \le \oint_{\grp^B}|F_\pi(\bfalp;X)|^{2s}\d\bfalp .$$
Thus, as a consequence of Corollary \ref{corollary15.5}, just as in the argument of the 
proof of Theorem \ref{theorem11.1} leading to (\ref{11.9}), one finds that there is a 
$\grp^c$-spaced system $\bfPsi$ and complex sequence $\bfgrc$ with 
$|\grc_y|=|\gra_{\pi^cy+\xi}|$ for which
$$\oint_{\grp^B}|F_\pi(\bfalp;X)|^{2s}\d\bfalp \ll p^{sc+B\eps}
\rho_0(\bfgra)^{-2}\sum_{\xi\nmod{\grp^c}}\rho_c(\xi)^2U_{s,k}^{B-kc,H,\bfPsi}
(\bfgrc).$$

\par Notice that whenever $\nu,\nu'\in \calB(X)$, one has $\nu-\nu'\in \calB(2X)$, and 
hence ${\rm N}(\nu-\nu')\ll X$. Then since $X$ is sufficiently large and $p>(\log X)^2$, we 
obtain the implication
\begin{equation}\label{15.4}
\nu\equiv \nu'\mmod{\grp^{H+c}}\quad \implies \quad \nu=\nu'.
\end{equation}
Consequently, just as in the concluding paragraph of the proof of Theorem 
\ref{theorem11.1}, we find that
$$U_{s,k}^{B-kc,H,\bfPsi}(\bfgrc)\ll (1+X/p^{c+H})^s\ll 1.$$
Thus we deduce that
$$\oint_{\grp^B}|F_\pi(\bfalp;X)|^{2s}\d\bfalp \ll p^{sc+B\eps}\rho_0(\bfgra)^{-2}
\sum_{\xi\nmod{\grp^c}}\rho_c(\xi)^2\ll p^{(2s\tau+\eps)B}.$$
Since $\tau$ was chosen sufficiently small in terms of $s$ and $k$, it follows that for each 
positive number $\del$, one has
$$\oint |F_\pi(\bfalp;X)|^{2s}\d\bfalp \ll X^\del,$$
and thus, on recalling (\ref{15.3}),
\begin{align*}
\oint |F(\bfalp;X)|^{2s}\d\bfalp &\ll 1+\oint |F_0(\bfalp;X)|^{2s}\d\bfalp \\
&\ll 1+X^\del\sum_{\pi \in \calP}1\ll X^{2\del}.
\end{align*}
The conclusion of the theorem follows on recalling the definitions of $F(\bfalp;X)$ and the 
weight $\rho_0$.
\end{proof}

It is difficult to resist announcing an easy consequence of Theorem \ref{theorem15.1} that 
follows by a straightforward application of the circle method in number fields. This supplies 
a Hasse principle for diagonal forms.

\begin{theorem}\label{theorem15.6} Let $K$ be an algebraic extension of $\dbQ$ of finite 
degree. Let $k,s\in \dbN$, and suppose that $s\ge k^2+k+1$. Suppose also that 
$a_1,\ldots ,a_s\in K$, and that the equation
$$a_1x_1^k+\ldots +a_sx_s^k=0$$
has non-zero solutions in every completion $K_v$ of $K$. Then this equation has a solution 
$\bfx\in K^s\setminus \{{\mathbf 0}\}$.
\end{theorem}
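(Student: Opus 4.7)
The plan is to apply the Hardy-Littlewood circle method over $K$, using Corollary \ref{corollary15.3} as the engine for the minor arc estimate. After clearing denominators, I will assume without loss of generality that $a_1,\ldots,a_s\in\grO_K$. For a large parameter $X$, set
\[
F_j(\alp)=\sum_{\nu\in\calB(X)}E(\alp a_j\nu^k)\quad(1\le j\le s),
\]
so that by the orthogonality relation (\ref{15.1}), the quantity $N(X)=\int_\dbT\prod_{j=1}^sF_j(\alp)\,\d\alp$ counts the solutions of $a_1x_1^k+\ldots+a_sx_s^k=0$ with $x_j\in\calB(X)$. I will introduce a Hardy-Littlewood dissection $\dbT=\grM\cup\grm$ into major and minor arcs in the standard number-field sense.

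On the major arcs, the local solvability hypothesis at every completion $K_v$ forces both the singular series $\grS(\bfa)$ and the singular integral $\grJ(\bfa)$ to be bounded below by a positive constant, yielding the standard asymptotic $\int_\grM\prod_j F_j\,\d\alp\gg X^{s-k}$. The extraction of this lower bound is routine circle method in number fields as developed in Wang's monograph \cite{Wan1991}, and requires only that $s$ exceed a small constant depending on $k$, which is ample under the hypothesis $s\ge k^2+k+1$.

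The minor arc estimate is the decisive point. By H\"older's inequality,
\[
\Bigl|\int_\grm\prod_{j=1}^sF_j(\alp)\,\d\alp\Bigr|\le\Bigl(\sup_{\alp\in\grm}|F_1(\alp)|\Bigr)\prod_{j=2}^s\Bigl(\int_\dbT|F_j(\alp)|^{s-1}\,\d\alp\Bigr)^{1/(s-1)},
\]
and with $s-1=k(k+1)$, the integral $\int_\dbT|F_j|^{k(k+1)}\,\d\alp$ equals $\int_\dbT|G|^{k(k+1)}\,\d\alp$ for $G(\alp)=\sum_\nu E(\alp\nu^k)$, since both count the same Diophantine system. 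I will then import the auxiliary-variable device of \S14 to the number-field setting: introduce slack variables $h_1,\ldots,h_{k-1}\in\grO_K$, each ranging over $\ll X^j$ values, and invoke Corollary \ref{corollary15.3} to bound $J_{k(k+1)/2,k}(X;K)\ll X^{k(k+1)/2+\eps}$. This furnishes the number-field analogue of Corollary \ref{corollary14.2}, namely
\[
\int_\dbT|F_j(\alp)|^{k(k+1)}\,\d\alp\ll X^{k^2+\eps}.
\]
Combined with a classical number-field Weyl bound $\sup_\grm|F_1(\alp)|\ll X^{1-\sig_k+\eps}$ for some $\sig_k>0$ depending only on $k$ (as in \cite{Wan1991}, or derivable from Corollary \ref{corollary15.3} via standard approximation arguments), the minor arc contribution is $\ll X^{k^2+1-\sig_k+2\eps}=o(X^{s-k})$, since $s-k=k^2+1$.

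Combining the major and minor arc contributions yields $N(X)\gg X^{s-k}\to\infty$, and the diagonal equation therefore has nontrivial solutions in $\grO_K^s$, hence in $K^s$. The arithmetic of the bound $s\ge k^2+k+1$ is now transparent: one variable absorbs the Weyl saving on the minor arcs, and the remaining $s-1=k(k+1)$ variables feed the H\"older/Vinogradov mean value. The main obstacle will be transcribing Corollary \ref{corollary14.2} to the number-field setting, but this is a direct adaptation of the argument in \S14 with Corollary \ref{corollary15.3} standing in for Corollary \ref{corollary1.3}; no genuinely new idea is required.
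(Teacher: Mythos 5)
Your proposal is correct and follows essentially the same route as the paper: the key step — using the auxiliary-variable device to lift the $k(k+1)$-th moment of $\sum_\nu E(\alp\nu^k)$ to the Vinogradov mean value $J_{k(k+1)/2,k}(X;K)$, then invoking Corollary \ref{corollary15.3} — is identical. The paper simply compresses the remaining circle-method infrastructure by citing Birch's Theorem 3 with the improved Hua-type estimate substituted for his Lemma 2, whereas you spell out the major/minor arc dissection, the H\"older reduction, and the Weyl bound explicitly; the content is the same.
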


\begin{proof} Write
$$G(\bfalp;X)=\sum_{\nu\in \calB(X)}E(\alp_1\nu+\ldots +\alp_k\nu^k).$$
Then it follows from the triangle inequality that
\begin{align*}
\int_\dbT \biggl| \sum_{\nu\in \calB(X)}E(\alp \nu^k)\biggr|^{2s}\d\alp &=
\sum_\bfh \int_{\dbT^k}|G(\bfalp;X)|^{2s}E(-\alp_1h_1-\ldots -\alp_{k-1}h_{k-1})\d\bfalp 
\\
&\ll X^{k(k-1)/2}J_{s,k}(X;K),
\end{align*}
in which the $(k-1)$-tuples $\bfh$ are summed over the boxes
$$h_j\in 2sC\calB(X^j)\quad (1\le j\le k-1),$$
for a positive number $C$ sufficiently large in terms of $\Ome$ and $k$. Thus, when 
$s\ge k(k+1)$, one finds from Corollary \ref{corollary15.3} that
$$\int_\dbT \biggl| \sum_{\nu\in \calB(X)}E(\alp \nu^k)\biggr|^{2s}\d\alp \ll 
X^{2s-k+\eps}.$$
Using this estimate as a substitute for \cite[Lemma 2]{Bir1961} in the proof of 
\cite[Theorem 3]{Bir1961}, the proof of the theorem follows via a standard application of 
the circle method in algebraic number fields.
\end{proof}

\begin{corollary}\label{corollary15.7} Let $L$ be an algebraic extension of $\dbQ$, possibly 
of infinite degree. Let $k,s\in \dbN$ with $k$ odd, and suppose that 
$s\ge \exp(8(\log k)^2)$. Suppose also that $a_1,\ldots ,a_s\in L$. Then the equation
\begin{equation}\label{15.5}
a_1x_1^k+\ldots +a_sx_s^k=0
\end{equation}
has a solution $\bfx\in L^s\setminus \{{\mathbf 0}\}$. When $L$ is a totally 
imaginary extension of $\dbQ$, the same conclusion holds also for even $k$
\end{corollary}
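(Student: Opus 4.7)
The plan is to deduce Corollary \ref{corollary15.7} from Theorem \ref{theorem15.6} by reducing to a number field of finite degree contained in $L$ and verifying local solubility. Since (\ref{15.5}) involves only finitely many coefficients, I would first set $K_0 = \dbQ(a_1, \ldots, a_s)$, a finite extension of $\dbQ$ contained in $L$. When $k$ is odd, take $K = K_0$. When $k$ is even and $L$ is totally imaginary, $L$ is not formally real, so by Artin--Schreier theory there exist $c_1, \ldots, c_n \in L$ with $-1 = c_1^2 + \ldots + c_n^2$; setting $K = K_0(c_1, \ldots, c_n) \subset L$ yields a finite extension of $\dbQ$ in which $-1$ is a sum of squares, so $K$ has no ordering and is therefore totally imaginary. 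Any non-trivial solution of (\ref{15.5}) in $K$ then yields one in $L$.

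With $K$ a finite extension of $\dbQ$ containing all the $a_j$, I would verify local solubility of (\ref{15.5}) at every completion $K_v$ in order to apply Theorem \ref{theorem15.6}. Complex archimedean places are automatic since $\dbC$ is algebraically closed. Real archimedean places occur only when $k$ is odd (by construction when $k$ is even), and there $x \mapsto x^k$ is surjective on $\dbR$, so $a_1 x_1^k + a_2 x_2^k = 0$ admits an obvious non-trivial real solution. For each non-archimedean place $\grp$, the completion $K_\grp$ is a finite extension of $\dbQ_p$ for some prime $p$, and I would invoke a classical uniform bound of Davenport--Lewis type: any diagonal form of degree $k$ in $s \ge \Gamma(k)$ variables over any $\grp$-adic field has a non-trivial zero, where $\Gamma(k) \le \exp(8(\log k)^2)$. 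Since our hypothesis $s \ge \exp(8(\log k)^2)$ comfortably exceeds $k^2 + k + 1$, Theorem \ref{theorem15.6} then produces a non-trivial solution of (\ref{15.5}) in $K$, hence in $L$.

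The main obstacle is the uniform non-archimedean bound $\Gamma(k) \le \exp(8(\log k)^2)$, which must hold across all residue characteristics $p$, all residue degrees, and all ramification indices simultaneously. The standard derivation sorts the coefficients $a_i$ by $\grp$-adic valuation modulo $k$, so that pigeonhole extracts some $s/k$ coefficients lying in one residue class, which can be normalised to $\grp$-adic units after substitutions $x_i \mapsto \pi^{e_i} x_i$; one then seeks a residue-field zero via a Chevalley--Warning style count and Hensel-lifts it to a solution in $K_\grp$. The quantitatively delicate case is $p \mid k$ with high ramification, where the non-singularity condition for Hensel's lemma forces a solution modulo a high power of $\grp$, and iterating the unit-extraction step absorbs the extra lifting cost, producing the factor $\exp(O((\log k)^2))$ in the final bound.
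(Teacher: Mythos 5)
Your argument is correct and takes essentially the same route as the paper: reduce to a finite extension $K\subset L$ containing the coefficients, verify local solubility at every completion, and invoke Theorem~\ref{theorem15.6}. The paper simply takes $K=\dbQ(a_1,\ldots,a_s)$ and quotes \cite[Theorem 1]{BGR2008} (alongside the proof of \cite[Theorem 4.4]{Woo2016b}) for the local solubility; your sketch of the $\grp$-adic argument (valuation-sorting modulo $k$, normalising to units, Chevalley--Warning plus Hensel, iterating when $p\mid k$) is exactly the mechanism behind that cited bound of shape $\exp(O((\log k)^2))$. The one place where you add genuine value is the archimedean step for even $k$: as you observe, a subfield $K$ of a totally imaginary $L$ need not itself be totally imaginary (e.g.\ $\dbQ(\sqrt{5})\subset\dbQ(\zeta_5)$), so local solubility at a real place of $K=\dbQ(a_1,\ldots,a_s)$ is not automatic when $k$ is even. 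Your Artin--Schreier enlargement --- adjoining $c_1,\ldots,c_n\in L$ with $-1=c_1^2+\cdots+c_n^2$ so that $K$ becomes formally non-real, hence totally imaginary --- is precisely the step needed, and the paper's terse ``this is trivially inferred'' is really deferring to that construction as carried out in \cite{Woo2016b}. In short: same strategy, but you have made explicit the enlargement of $K$ that the paper leaves implicit in its citation.
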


\begin{proof} Since $L$ is an algebraic extension of $\dbQ$, the coefficients 
$a_1,\ldots ,a_s$ are algebraic. Put $K=\dbQ(a_1,\ldots ,a_s)$. Then $K:\dbQ$ is a finite 
algebraic extension of $\dbQ$, and it follows from \cite[Theorem 1]{BGR2008}, just as in 
the proof of \cite[Theorem 4.4]{Woo2016b}, that in every completion $K_v$ of $K$ the 
equation (\ref{15.5}) has a non-zero solution. Note that when the place $v$ is infinite, this 
is trivially inferred from the hypothesis that, either $k$ is odd, or else $L$ is a totally 
imaginary extension of $\dbQ$ and $k$ is even. It therefore follows from Theorem 
\ref{theorem15.6} that the equation (\ref{15.5}) possesses a non-zero $K$-rational 
solution, and hence also a non-zero $L$-rational solution.
\end{proof}

\begin{corollary}\label{corollary15.8} Let $L$ be a totally imaginary algebraic extension of 
$\dbQ$, possibly of infinite degree. Let $k,s,r\in \dbN$ with $k\ge 3$, and suppose that
$$s>r^{2^{k-1}}\exp(2^{k+2}(\log k)^2).$$
Then, whenever $F_i\in L[x_1,\ldots ,x_s]$ $(1\le i\le r)$ are homogeneous of degree $k$, 
the system of equations
$$F_i(\bfx)=0\quad (1\le i\le r)$$
possess a simultaneous solution $\bfx\in L^s\setminus \{{\mathbf 0}\}$.
\end{corollary}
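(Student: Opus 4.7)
The plan is to deduce this multi-form result from the diagonal case supplied by Corollary \ref{corollary15.7} by means of a Birch-style diagonalisation argument, i.e.~by passing to a large rational linear subspace on which each of the $r$ forms $F_i$ simultaneously assumes diagonal shape. Thus, first I would seek a linear subspace $V\subseteq L^s$ of dimension
$$s_0=\lceil\exp(8(\log k)^2)\rceil+1,$$
together with a basis $v_1,\ldots ,v_{s_0}$ of $V$, such that for every choice of coefficients $c_1,\ldots ,c_{s_0}\in L$, one has
$$F_i\bigl(c_1v_1+\ldots +c_{s_0}v_{s_0}\bigr)=\sum_{j=1}^{s_0}c_j^k F_i(v_j)\quad (1\le i\le r).$$
Once such a basis is produced, each $F_i$ restricted to $V$ is a diagonal form of degree $k$ in $s_0$ variables over $L$. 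Since $L$ is totally imaginary, the full hypothesis of Corollary \ref{corollary15.7} (in either parity of $k$) is met, so the diagonal system $F_1|_V=\ldots =F_r|_V=0$ has a non-trivial $L$-rational solution in the $c_j$. The vector $\bfx=c_1v_1+\ldots +c_{s_0}v_{s_0}\in L^s\setminus\{{\mathbf 0}\}$ then furnishes the desired solution of the original system.

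The production of the basis $v_1,\ldots ,v_{s_0}$ is the crux of the argument, and is carried out by the classical iterative scheme of Birch (see \cite{Bir1957} in the rational setting, and note that Birch's method is purely algebraic and transfers without change to any infinite field $L$). The procedure adjoins vectors one at a time: given $v_1,\ldots ,v_m$ already spanning a subspace on which every $F_i$ is diagonal, one adjoins a vector $v_{m+1}$ lying in the common zero locus of the $r\binom{k-1}{1}m$ forms of degree at most $k-1$ obtained by polarising each $F_i$ against all non-trivial sub-monomials in $v_1,\ldots ,v_m$. To find such a $v_{m+1}$ in turn one invokes the lemma recursively on a smaller system of lower-degree forms. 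Tracking the resulting recursion, and using the trivial fact that a system of homogeneous forms of degree $1$ in more variables than forms has a non-trivial solution, one obtains the standard bound: to guarantee a diagonal basis of length $s_0$ for $r$ forms of degree $k$ one needs
$$s\ge (r\cdot s_0)^{2^{k-1}}.$$
Unwinding, one has $(r\cdot s_0)^{2^{k-1}}\le r^{2^{k-1}}\exp(2^{k-1}\cdot 8(\log k)^2)=r^{2^{k-1}}\exp(2^{k+2}(\log k)^2)$, which is precisely the hypothesis on $s$ in the statement of the corollary.

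The main obstacle is entirely in the diagonalisation step, not in the analytic input. The recursion on $k$ produces at each stage a set of auxiliary systems of forms of strictly smaller degree, and the bookkeeping needed to verify that the number of variables required to solve all of these auxiliary systems is no larger than the claimed tower bound must be carried out carefully. Two technical points warrant attention: first, one must work over the field $L$ rather than a completion, but since Birch's argument is purely linear-algebraic and the only genuinely non-trivial solvability input is Corollary \ref{corollary15.7}, no further archimedean hypothesis is needed beyond the stipulation that $L$ be totally imaginary; and second, one must ensure that the zero-locus of the intermediate systems is not trivially empty, which is handled by applying Corollary \ref{corollary15.7} to each such intermediate system as necessary---provided the bounds in the recursion are set up so that sufficiently many variables are available at each stage, an inductive hypothesis that the chosen tower bound is arranged to guarantee.
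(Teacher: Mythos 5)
Your strategy — pass to a linear subspace of $L^s$ on which every $F_i$ is diagonal, then invoke Corollary \ref{corollary15.7} — is exactly the route taken in the paper, which cites \cite[Theorem 1]{Woo1998} (together with \cite[Corollary 1.3]{Woo1998} and \cite[Theorem 4.4]{Woo2016b}) to supply the diagonalisation. The difference is that the paper treats the diagonalisation as a black box, whereas you attempt to re-derive it. That re-derivation is where the gaps are. First, the bound $s\ge (r s_0)^{2^{k-1}}$ is asserted ("one obtains the standard bound") rather than established. This is not a mere piece of linear algebra: at each adjunction step one must find a non-trivial common zero of the polarised forms, which have degrees $1,2,\ldots,k-1$, and one must therefore invoke the solubility result recursively on auxiliary systems of \emph{every} lower degree, all the while tracking how the number of forms and the number of free variables evolve. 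Making this converge to a tower of the shape $(r\Phi)^{2^{k-1}}$ is precisely the (non-trivial) content of \cite[Theorem 1]{Woo1998}; your sketch compresses it to one sentence and does not show that the recursion terminates within the claimed variable budget.

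Second, the closing arithmetic is incorrect as written. You set $s_0=\lceil\exp(8(\log k)^2)\rceil+1$, which is \emph{strictly greater} than $\exp(8(\log k)^2)$, so
$$(r s_0)^{2^{k-1}} = r^{2^{k-1}} s_0^{2^{k-1}} > r^{2^{k-1}}\exp\bigl(2^{k-1}\cdot 8(\log k)^2\bigr)=r^{2^{k-1}}\exp\bigl(2^{k+2}(\log k)^2\bigr),$$
with the gap amplified by the $2^{k-1}$-th power. Thus your sufficient condition on $s$ is strictly stronger than the hypothesis of the corollary, and the stated hypothesis does not ensure it; the inequality you claim runs the wrong way. (The fix is to take the diagonal dimension equal to the threshold $\exp(8(\log k)^2)$ itself, since Corollary \ref{corollary15.7} requires only $s\ge\exp(8(\log k)^2)$, and to use the strict inequality in the hypothesis on $s$ to absorb the rounding.) Finally, the reference \cite{Bir1957} does not appear in this paper's bibliography; the relevant item here is \cite{Bir1961}, and in any case the quantitative diagonalisation lemma needed is that of \cite{Woo1998}, not of Birch's original paper.
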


\begin{proof} One may follow the argument of the proof of \cite[Corollary 1.3]{Woo1998}, 
mutatis mutandis, noting the proof of \cite[Theorem 4.4]{Woo2016b}, and substituting 
Corollary \ref{corollary15.7} into \cite[Theorem 1]{Woo1998}.
\end{proof}

The conclusion of this corollary provides an explicit version of a theorem of Peck 
\cite{Pec1949}. We note that \cite[Corollary 1.3]{Woo1998} establishes a similar conclusion 
subject to the stronger constraint $s>r^{2^{k-1}}\exp(2^kk)$. We intend to explore 
further applications of Theorem \ref{theorem15.1} in a later paper.

\section{Multidimensional analogues via restriction of scalars} The 
conclusions of \S15 may be employed to establish a class of mean value estimates 
associated with multidimensional systems. In simplest terms, what we have in mind is that a 
mean value estimate of the shape given in Corollary \ref{corollary15.2}, may be 
reinterpreted as a multidimensional mean value estimate over a lower degree field 
extension of $\dbQ$. Perhaps this is best illustrated with a concrete example. Thus, consider 
the Vinogradov system of degree $3$ over $K=\dbQ(\sqrt{-2})$. Theorem 
\ref{theorem15.1} shows that whenever $s\ge 1$, the number $J_{s,3}(X;K)$ of solutions 
of the system
\begin{equation}\label{16.1}
\sum_{i=1}^s(x_i^j-y_i^j)=0\quad (1\le j\le 3),
\end{equation}
with $\bfx,\bfy\in \calB(X)^s$, satisfies
$$J_{s,3}(X;K)\ll X^\eps(X^s+X^{2s-6}).$$
However, for each such solution $\bfx,\bfy$, one may write
$$x_i=u_i+v_i\sqrt{-2}\quad \text{and}\quad y_i=z_i+w_i\sqrt{-2},$$
with $\bfu,\bfv,\bfz,\bfw\in [-\tfrac{1}{2}X^{1/2},\tfrac{1}{2}X^{1/2})^s\cap \dbZ^s$. 
Define
$$\phi_3(x,y)=x^3-6xy^2,\quad \psi_3(x,y)=3x^2y-2y^3,$$
$$\phi_2(x,y)=x^2-2y^2,\quad \psi_2(x,y)=xy,$$
$$\phi_1(x,y)=x,\quad \psi_1(x,y)=y.$$
Then by expanding the expressions
$$(u_i+v_i\sqrt{-2})^j\quad \text{and}\quad (z_i+w_i\sqrt{-2})^j,$$
for $1\le j\le 3$, and writing the result in terms of the integral coordinate basis 
$\{1,\sqrt{-2}\}$, one sees that (\ref{16.1}) holds if and only if the system
\begin{align*}
\sum_{i=1}^s(\phi_j(u_i,v_i)-\phi_j(z_i,w_i))&=0,\\
\sum_{i=1}^s(\psi_j(u_i,v_i)-\psi_j(z_i,w_i))&=0,
\end{align*}
is satisfied simultaneously for $1\le j\le 3$.\par

Denote by $J_s(Y;\bfvarphi,\bfpsi)$ the number of integral solutions of the latter system 
with $1\le \bfu,\bfv,\bfz,\bfw\le Y$. Then from the estimate
$$J_{s,3}(X;\dbQ(\sqrt{-2}))\ll X^\eps(X^s+X^{2s-6}),$$
available via Corollary \ref{corollary15.3}, we deduce that
$$J_s(Y;\bfvarphi,\bfpsi)\ll Y^\eps (Y^{2s}+Y^{4s-12}).$$
This estimate delivers the main conjecture for this two dimensional system.\par

In order to describe this phenomenon in wider generality, we introduce some notation. Let 
$K$ be an algebraic extension of $\dbQ$ with $[K:\dbQ]=d$. Let $L$ be an algebraic 
extension of $K$ with $[L:K]=n$, and let $\grO_{L/K}$ denote the ring of integers 
associated with the field extension $L:K$. Write $\{\ome_1,\ldots ,\ome_n\}$ for an 
$\grO_K$-integral coordinate basis of $L:K$. Consider polynomials 
$\varphi_1,\ldots ,\varphi_k\in \grO_{L/K}[t]$ with $W(t;\bfvarphi)\ne 0$. We say that the 
system of polynomials $\psi_{lj}(t_1,\ldots ,t_n)\in \grO_{K/\dbQ}[\bft]$ $(1\le l\le n)$ is 
{\it generated from $\bfvarphi$ by restriction of $L$ down to $K$} when, for some 
$\lam_{i1},\ldots ,\lam_{in}\in K$, with $\text{det}(\lam_{il})_{1\le i,l\le n}\ne 0$, one has
$$\varphi_j(t_1\ome_1+\ldots +t_n\ome_n)=
\sum_{i=1}^n\ome_i\sum_{l=1}^n\lam_{il}\psi_{lj}(t_1,\ldots ,t_n)\quad (1\le j\le k).$$

\par We are now equipped to announce an analogue of the second conclusion of 
Theorem \ref{theorem1.1} for certain multidimensional systems. Here, we interpret 
$\calB(X)=\calB_K(X)$, as before, as a subset of $\grO_{K/\dbQ}$ relative to a fixed 
coordinate basis for $\grO_{K/\dbQ}$ over $\dbZ$.

\begin{theorem}\label{theorem16.1} Let $L:K:\dbQ$ be a tower of algebraic field 
extensions with $[L:\dbQ]<\infty$ and $[L:K]=n$. Given a system of polynomials 
$\varphi_1,\ldots ,\varphi_k\in \grO_{L/K}[t]$ with $W(t;\bfvarphi)\ne 0$, suppose that 
$$\psi_{lj}(t_1,\ldots ,t_n)\in \grO_{K/\dbQ}[\bft]\quad (1\le l\le n,\, 1\le j\le k)$$
is generated from $\bfvarphi$ by restriction of $L$ down to $K$. Finally, suppose that 
$1\le s\le k(k+1)/2$ and $\eps>0$. Then the number $J_s(X;\bfpsi;K)$ of solutions of the 
system
\begin{equation}\label{16.2}
\sum_{i=1}^s\left( \psi_{lj}(x_{i1},\ldots ,x_{in})-\psi_{lj}(y_{i1},\ldots ,y_{in})\right)=0
\quad (1\le l\le n, 1\le j\le k),
\end{equation}
with $\bfx,\bfy\in \calB(X)^{ns}$, satisfies 
$J_s(X;\bfpsi;K)\ll ({\rm card}(\calB(X)))^{ns+\eps}$.
\end{theorem}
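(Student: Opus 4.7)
The plan is to reduce Theorem \ref{theorem16.1} to Corollary \ref{corollary15.2} applied over the larger field $L$, via the standard restriction-of-scalars dictionary between the multidimensional system (\ref{16.2}) over $K$ and a one-dimensional Vinogradov-type system defined by $\bfvarphi$ over $L$.

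First I would establish the bijective correspondence between solutions. For each tuple $(x_{i1},\ldots,x_{in})\in \calB(X)^n$ form $x_i=x_{i1}\ome_1+\ldots+x_{in}\ome_n\in \grO_{L/K}$, and analogously form $y_i$. By the defining identity for $\bfpsi$ one has
$$\sum_{i=1}^s\left(\varphi_j(x_i)-\varphi_j(y_i)\right)=\sum_{l=1}^n\ome_l\sum_{m=1}^n\lam_{lm}\sum_{i=1}^s\left(\psi_{mj}(x_{i1},\ldots,x_{in})-\psi_{mj}(y_{i1},\ldots,y_{in})\right).$$
Since $\{\ome_1,\ldots,\ome_n\}$ is a $K$-basis of $L$, the left side vanishes in $L$ if and only if every inner $K$-coefficient vanishes; and since the matrix $(\lam_{lm})_{1\le l,m\le n}$ is invertible over $K$, this is in turn equivalent to the simultaneous vanishing conditions (\ref{16.2}). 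Hence solutions of (\ref{16.2}) with $\bfx,\bfy\in \calB(X)^{ns}$ correspond bijectively to solutions of
$$\sum_{i=1}^s(\varphi_j(x_i)-\varphi_j(y_i))=0\quad (1\le j\le k)$$
in which $x_i,y_i$ range over the set $\calC(X)=\{z_1\ome_1+\ldots+z_n\ome_n:z_l\in \calB(X)\ (1\le l\le n)\}\subset \grO_{L/K}$, a set of cardinality $\asymp X^n$.

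Next I would apply Theorem \ref{theorem15.1} over the field $L$ with the sequence $(\gra_\nu)_{\nu\in \grO_{L/\dbQ}}$ taken to be the indicator function of $\calC(X)$, noting that the Wronskian hypothesis $W(t;\bfvarphi)\ne 0$ is precisely the hypothesis needed there. On fixing a $\dbZ$-basis of $\grO_{L/\dbQ}$ compatible with the bases chosen for $K/\dbQ$ and for $L/K$, one finds $\calC(X)\subset \calB_L(Y)$ for some $Y\asymp X^n$, with implied constant depending only on the bases. Theorem \ref{theorem15.1} then yields
$$J_s(X;\bfpsi;K)\ll Y^{s+\eps/n}\ll X^{ns+\eps},$$
and since $\text{card}(\calB(X))\asymp X$ this is the desired estimate. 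The one genuine technical nuisance — really the main obstacle — is the harmonisation of bases: the product of a $\dbZ$-basis of $\grO_{K/\dbQ}$ with $\{\ome_i\}$ need not be a $\dbZ$-basis of $\grO_{L/\dbQ}$, so either one inflates $\calB_L(Y)$ by a constant factor (absorbed into $\ll$), or one re-runs the proof of Theorem \ref{theorem15.1} verbatim with $\calB(X)$ replaced by $\calC(X)$. The latter route is legitimate because that proof uses $\calB(X)$ only to deduce the implication (\ref{15.4}), which holds on any discrete subset of $L$ of diameter polynomially bounded in $X$.
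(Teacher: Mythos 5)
Your proof is correct and follows essentially the same route as the paper: identify solutions of the system~(\ref{16.2}) over $K$ with solutions of the Vinogradov-type system defined by $\bfvarphi$ over $L$ via the linear isomorphism supplied by $(\lam_{il})$, then apply Corollary~\ref{corollary15.2} (equivalently Theorem~\ref{theorem15.1}) over $L$. Your discussion of the base-compatibility issue and the two ways to resolve it (enclosing $\calC(X)$ in a dilated $\calB_L$-box, or re-running the proof of Theorem~\ref{theorem15.1} with $\calC(X)$ directly) is a welcome elaboration of a point the paper passes over silently, and both resolutions are sound.
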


\begin{proof} The system of equations (\ref{16.2}) over $\grO_{K/\dbQ}$ is satisfied if 
and only if the system
\begin{equation}\label{16.3}
\sum_{i=1}^s\left( \varphi_j(u_i)-\varphi_j(v_i)\right)=0\quad (1\le j\le k),
\end{equation}
is satisfied over $\grO_{L/K}$ by
$$u_i=x_{i1}\ome_1+\ldots +x_{in}\ome_n\quad \text{and}\quad 
v_i=y_{i1}\ome_1+\ldots +y_{in}\ome_n\quad (1\le i\le s).$$
By Corollary \ref{corollary15.2}, when $1\le s\le k(k+1)/2$ and $\eps>0$, the total number 
$T$ of solutions of (\ref{16.3}) with 
$$\bfu,\bfv\in\{ r_1\ome_1+\ldots +r_n\ome_n:\bfr\in \calB(X)^n\}^s$$
satisfies
$$T\ll X^{ns+\eps}\ll (\text{card}(\calB(X)))^{ns+\eps}.$$
The conclusion of the theorem follows.
\end{proof}

This theorem supplies infinitely many examples of multidimensional systems for which the 
main conjecture holds. We have merely to examine systems of polynomials generated from 
$(t,t^2,\ldots ,t^k)$ by restriction of one number field to a subfield (perhaps $\dbQ$). 
Other multidimensional conclusions can be found in \cite{PPW2013}, where the authors 
aimed for completely general (though non-optimal) results for arbitrary translation-dilation 
invariant systems in many variables. The most recent work based on decoupling in certain 
two dimensional problems may be found in \cite{BDG2017, Guo2017}.

\section{Vinogradov's mean value theorem in function fields} One of the key messages to 
be extracted from this memoir is that the nested efficient congruencing method is 
sufficiently robust that it may be employed in a myriad environments with minimal 
adjustment. In particular, in contrast with the $l^2$-decoupling method of \cite{BDG2016}, 
we require no multilinear Kakeya estimates that might be unavailable, or even inherently 
mysterious in nature, when contemplated in different settings. Nested efficient congruencing 
is a method likely to achieve success for analogues of Vinogradov's mean value theorem 
involving discrete sets of points in any Henselian field. All that one requires are appropriate 
characters with which to engineer certain orthogonality relations. One need not be 
constrained to non-archimedean environments, moreover, since with additional effort 
involved in handling non-ultrametric inequalities, the same arguments apply equally well in 
archimedean environments such as the real or complex numbers. Indeed, the use of real 
short intervals underpinned the original approach of Vinogradov \cite{Vin1935} (see also 
\cite{Hua1949b}, \cite[Chapter VI]{Tit1986}, and \cite{Ste2017} for recent developments 
associated with efficient congruencing). In this section we illustrate this robustness with one 
final example, namely that of function fields.\par

We explain the consequences of the work of \S\S3-12 in the most basic situation of a 
function field $\dbF_q(t)$ of characteristic $p>k$, where $k$ is the number of equations at 
hand. We emphasise that this is very far from the strongest type of result available in the 
function field setting, since the situation with small characteristic $p\le k$ is considerably 
more complicated. Comprehensive conclusions achieving the main conjecture in Vinogradov's 
mean value theorem are the subject of work in progress by the author joint with Y.-R.~Liu. 
In this section we avoid intruding on the latter work, and instead extract only the results 
that may be obtained with essentially no effort from our analysis  in \S\S3-12.\par

We begin by recalling the infrastructure required for harmonic analysis in the function field 
setting. Our coefficients come from the finite field $\dbF_q$ of characteristic $p$ having 
$q=p^l$ elements. Associated with the polynomial ring $\dbO=\dbF_q[t]$ defined over the 
field $\dbF_q$ is its field of fractions $\dbK=\dbF_q(t)$. In this section, we take $d$ to be 
the main parameter, a sufficiently large natural number, and we put
$$\dbO_d=\{ \nu \in \dbF_q[t]: \text{deg}(\nu)\le d\}.$$
We write $\dbK_\infty =\dbF_q((1/t))$ for the completion of $\dbF_q(t)$ at $\infty$. One 
may write each element $\alp \in \dbK_\infty $ in the shape $\alp =\sum_{i\le n}a_it^i$ for 
some $n\in \dbZ$ and coefficients $a_i=a_i(\alp )$ in $\dbF_q$ $(i\le n)$. We define 
$\text{ord }\alp $ to be the largest integer $i$ for which $a_i(\alp )\ne 0$. We then write 
$\langle \alp \rangle$ for $q^{\text{ord }\alp}$. In this context, we adopt the convention 
that $\text{ord }0=-\infty$ and $\langle 0\rangle =0$. Consider next the compact additive 
subgroup $\dbT $ of $\dbK_\infty $ defined by 
$\dbT =\{ \alp \in \dbK_\infty \,:\, \langle \alp \rangle <1\}$. Every element $\alp $ of 
$\dbK_\infty $ can be written uniquely in the shape $\alp =[\alp ]+\| \alp \|$, where 
$[\alp ]\in \dbF_q[t]$ and $\| \alp \| \in \dbT $, and we may normalise any Haar measure 
$\d\alp $ on $\dbK_\infty $ in such a manner that $\int_\dbT 1\d\alp =1$.\par

We are now equipped to define an analogue of the exponential function. There is a 
non-trivial additive character $e_q:\dbF_q\rightarrow \dbC^\times$ defined for each 
$a\in \dbF_q$ by taking $e_q(a)=e(\text{tr}(a)/p)$, where we write $e(z)$ for 
$e^{2\pi iz}$, and where $\text{tr}\!:\!\dbF_q\rightarrow \dbF_p$ denotes the familiar 
trace map. This character induces a map $e:\dbK_\infty \rightarrow \dbC^\times$ by 
defining, for each element $\alp \in \dbK_\infty$, the value of $e(\alp)$ to be 
$e_q(a_{-1}(\alp ))$. The orthogonality relation underlying the Fourier analysis of 
$\dbF_q[t]$, established for example in \cite[Lemma 1]{Kub1974}, takes the shape
\begin{equation}\label{17.1}
\int_\dbT e(h\alp)\,d\alp =\begin{cases} 0,&\text{when $h\in \dbF_q[t]\setminus 
\{ 0\}$,}\\1,&\text{when $h=0$}.\end{cases}
\end{equation}

\begin{theorem}\label{theorem17.1} Suppose that $\varphi_j\in \dbO[x]$ $(1\le j\le k)$ is a 
system of polynomials with $W(x;\bfvarphi)\ne 0$. Let $s$ be a positive real number with 
$s\le k(k+1)/2$. Also, suppose that $(\gra_\nu)_{\nu\in \dbO}$ is a sequence of complex 
numbers. Then provided that ${\rm ch}(\dbF_q)>k$ and $\eps>0$, one has
$$\int_{\dbT^k}\biggl| \sum_{\nu\in \dbO_d}\gra_\nu e(\alp_1\varphi_1(\nu)+\ldots 
+\alp_k\varphi_k(\nu))\biggr|^{2s}\d\bfalp \ll (q^d)^\eps \biggl( \sum_{\nu \in 
\dbO_d}|\gra_\nu|^2\biggr)^s.$$
In particular, one has
$$\int_{\dbT^k}\biggl| \sum_{\nu\in \dbO_d}e(\alp_1\varphi_1(\nu)+\ldots 
+\alp_k\varphi_k(\nu))\biggr|^{2s}\d\bfalp \ll (q^d)^{s+\eps }.$$
\end{theorem}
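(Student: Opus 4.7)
The plan is to transport the entire machinery of \S\S3--12 to the function field setting essentially verbatim, working $\pi$-adically for an irreducible polynomial $\pi\in\dbO$ of degree approximately $d/k$ in place of a rational prime $p$. First I would introduce the parallel infrastructure: for each non-negative integer $h$ and each residue $\xi\in\dbO/(\pi^h)$, define $\rho_h(\xi)$, the normalised exponential sums $\grf_h(\bfalp;\xi)$, and the mean values $U_{s,k}^B(\bfgra)$, $U_{s,k}^{B,h}(\bfgra)$ and $K_{a,b,c}^{r,\bfvarphi,\nu}(\bfgra)$ exactly as in (\ref{3.2})--(\ref{3.24}), with $\dbT^k$ in place of $[0,1)^k$ and with the characters furnished by the orthogonality relation (\ref{17.1}). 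The notion of a $\pi^c$-spaced system $\bfvarphi\in\dbO[t]^k$ and the limiting exponent $\lam(s,\tet)$ are defined analogously, taking the limit superior over $B=d\to\infty$.

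Next I would establish the function field analogue of Theorem \ref{theorem3.1}, namely $\lam(k(k+1)/2,k)=0$ whenever $\mathrm{ch}(\dbF_q)>k$. The argument of \S\S4--10 transports verbatim because each of its ingredients is algebraic in nature: the translation--dilation invariance lemma (Lemma \ref{lemma4.1}) rests only on binomial expansions and on the invertibility modulo $\pi^B$ of triangular matrices whose diagonal entries are $1$ and whose off-diagonal entries are divisible by $\pi^c$; the base case $k=1$ follows because $x+\pi^c\psi(x)\equiv y+\pi^c\psi(y)\mmod{\pi^B}$ iterates to $x\equiv y\mmod{\pi^B}$; the conditioning of \S6, the extraction of Vinogradov-type congruences in \S7, the bilinear swap of \S8, and the tree analysis of \S\S9--10 each require only that certain Vandermonde-type determinants in the coefficients $\binom{j}{i}$ be non-vanishing modulo $\pi$. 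The hypothesis $\mathrm{ch}(\dbF_q)>k$ ensures that $k!$ is a unit in $\dbF_q$, which supplies precisely this non-vanishing. The analogue of Corollary \ref{corollary3.2} follows.

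With the function field version of Corollary \ref{corollary3.2} in hand, I would next establish an analogue of Theorem \ref{theorem11.1} for short-interval mean values, following \S11 step for step. The replacement of the bound $|\grf_H(\bfbet;\eta)|^2\le 1+Y/p^{c+H}$ in the rational case proceeds through an even cleaner argument here: once $B=\lceil kd/\deg\pi\rceil$ and $H+c=\lceil B/k\rceil$, any two $\nu,\nu'\in\dbO_d$ with $\nu\equiv\nu'\mmod{\pi^{H+c}}$ must satisfy $\deg(\nu-\nu')\le d<(\deg\pi)(H+c)$, forcing $\nu=\nu'$. Thus each residue class modulo $\pi^{H+c}$ contains at most one element of $\dbO_d$, and the short-interval mean value $U_{s,k}^{B-kc,H,\bfPsi}(\bfgrc)$ is $O(1)$. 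The proof of Theorem \ref{theorem17.1} then concludes as in \S12: one discards the $O(1)$ zeros of $W(t;\bfvarphi)$ via Cauchy's inequality, and one covers the remaining solutions by imposing $W(x_i;\bfvarphi)\not\equiv 0\mmod{\pi}$ for some irreducible $\pi$ drawn from a range $(\log_q d)^2\le\deg\pi\le 3(\log_q d)^2$. The prime polynomial theorem for $\dbF_q[t]$, which guarantees that the number of monic irreducibles of degree $D$ is $q^D/D+O(q^{D/2}/D)$, supplies sufficiently many such $\pi$ that the product of their norms exceeds a crude upper bound for $\prod_iW(x_i;\bfvarphi)W(y_i;\bfvarphi)$; applying the short-interval estimate prime-by-prime and summing yields the bound $(q^d)^\eps$.

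The principal obstacle is neither conceptual nor technical in any single step, but rather the careful verification that every algebraic manipulation in \S\S4--10 --- most delicately the inversion of the matrix $A=(\Ome_{il})_{1\le i,l\le r}$ in the proof of Lemma \ref{lemma7.1}, whose determinant is a Vandermonde evaluated at $k-j+1$ --- retains its validity when one works modulo an irreducible polynomial over a field of positive characteristic. This is precisely the content of the hypothesis $\mathrm{ch}(\dbF_q)>k$: it ensures that $k!$ and the relevant Vandermonde determinants are units. Weakening this hypothesis to $\mathrm{ch}(\dbF_q)\le k$ would require the binomial-polynomial device $\binom{t+j-1}{j}$ alluded to in the remark following Corollary \ref{corollary3.2}, a refinement that it seems best to defer to the forthcoming joint work with Y.-R.~Liu announced in the introduction.
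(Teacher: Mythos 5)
Your proposal follows essentially the same route as the paper: set up the $\pi$-adic infrastructure parallel to \S3, prove the function field analogue of Theorem~\ref{theorem3.1} (i.e.~Theorem~\ref{theorem17.4}) by observing that the algebraic manipulations of \S\S4--10 require only that $k!$ be a unit, then descend to the short-interval estimate and cover the non-singular solutions by a small collection of irreducible polynomials of polylogarithmic degree, exactly as in the paper's \S17. The one blemish is an internal inconsistency in your choice of the degree of $\pi$: your opening paragraph asserts that one works with an irreducible $\pi$ of degree approximately $d/k$, but that would force $B=\lceil k(d+1)/\deg\pi\rceil$ to be bounded, destroying the requirement that $B\to\infty$; in fact one must take $\deg\pi$ in the range $(\log d)^2<\deg\pi\le 3(\log d)^2$, precisely as you write (correctly) two paragraphs later, so that the set $\calP$ of usable irreducibles is large enough for the covering-by-congruences trick and yet small enough that $\text{card}(\calP)\ll (q^d)^\eps$ is harmless. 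Reading the third paragraph as the operative one, the argument matches the paper; you should simply delete or fix the ``degree approximately $d/k$'' remark.
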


In this section, implicit constants in Vinogradov's notation may depend on $s$, $k$, 
$\bfvarphi$, $q$, and also the small positive number $\eps$. As an immediate consequence 
of the orthogonality relation (\ref{17.1}), one obtains the following corollary.

\begin{corollary}\label{corollary} When $s\in \dbN$, denote by $N_{s,\bfvarphi}(d,q)$ 
the number of solutions of the system of equations
$$\sum_{i=1}^s (\varphi_j(x_i)-\varphi_j(y_i))=0\quad (1\le j\le k),$$
with $x_i,y_i\in \dbO_d$ $(1\le i\le s)$. Suppose that ${\rm ch}(\dbF_q)>k$. Then 
whenever $W(x;\bfvarphi)\ne 0$ and $s\le k(k+1)/2$, one has 
$N_{s,\bfvarphi}(d,q)\ll (q^d)^{s+\eps}$.
\end{corollary}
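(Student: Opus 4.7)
The plan is to deduce the corollary directly from the second conclusion of Theorem \ref{theorem17.1} via the orthogonality relation (\ref{17.1}), exactly in the spirit of the analogous deductions in Corollary \ref{corollary1.2} and Corollary \ref{corollary15.2}. Since $s\in \dbN$, the strategy is to recognize $N_{s,\bfvarphi}(d,q)$ as the mean value appearing on the left of the second display of Theorem \ref{theorem17.1} evaluated at the trivial sequence.

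Concretely, first I would expand the $2s$-th power of the exponential sum
$$F(\bfalp)=\sum_{\nu\in \dbO_d}e(\alp_1\varphi_1(\nu)+\ldots+\alp_k\varphi_k(\nu))$$
as a $2s$-fold sum over $(\bfx,\bfy)\in \dbO_d^s\times \dbO_d^s$, obtaining
$$|F(\bfalp)|^{2s}=\sum_{\bfx,\bfy\in \dbO_d^s}e\biggl( \sum_{j=1}^k\alp_j h_j(\bfx,\bfy)\biggr),\quad h_j(\bfx,\bfy)=\sum_{i=1}^s\bigl(\varphi_j(x_i)-\varphi_j(y_i)\bigr)\in \dbO.$$
Each $h_j(\bfx,\bfy)$ lies in $\dbF_q[t]$ since $\bfvarphi\in \dbO[x]^k$. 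Integrating over $\bfalp\in \dbT^k$ and applying (\ref{17.1}) separately in each coordinate $\alp_j$, the integral $\int_\dbT e(\alp_j h_j)\d\alp_j$ equals $1$ when $h_j=0$ in $\dbF_q[t]$ and vanishes otherwise. Hence only those tuples $(\bfx,\bfy)$ satisfying $h_j(\bfx,\bfy)=0$ for every $1\le j\le k$ contribute, each with weight $1$, so that
$$\int_{\dbT^k}|F(\bfalp)|^{2s}\d\bfalp =N_{s,\bfvarphi}(d,q).$$

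Finally, under the standing hypotheses $W(x;\bfvarphi)\ne 0$, $\mathrm{ch}(\dbF_q)>k$, and $s\le k(k+1)/2$, the second conclusion of Theorem \ref{theorem17.1} (applied with $\gra_\nu=1$ for every $\nu\in \dbO_d$) yields
$$\int_{\dbT^k}|F(\bfalp)|^{2s}\d\bfalp \ll (q^d)^{s+\eps}.$$
Combining these two displays delivers the claimed bound $N_{s,\bfvarphi}(d,q)\ll (q^d)^{s+\eps}$. No substantive obstacle arises: all of the analytic content resides in Theorem \ref{theorem17.1}, and the corollary is purely a dictionary between Diophantine counting and the corresponding $L^{2s}$ mean value, guaranteed by the fact that the additive characters $\alp\mapsto e(h\alp)$ on $\dbT$ separate elements of $\dbF_q[t]$.
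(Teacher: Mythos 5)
Your proof is correct and takes exactly the same route the paper indicates: the paper states that the corollary follows ``as an immediate consequence of the orthogonality relation (\ref{17.1})'' from Theorem \ref{theorem17.1}, which is precisely your argument, just written out in full. No issues.
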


Finally, when $k\in \dbN$ and $s>0$, we define
$$J_{s,k}(d;\dbF_q)=\int_{\dbT^k}\biggl| \sum_{\nu\in \dbO_d}e(\alp_1\nu+\ldots 
+\alp_k\nu^k)\biggr|^{2s}\d\bfalp .$$
Theorem \ref{theorem17.1} delivers an analogue of the main conjecture in Vinogradov's 
mean value theorem in function fields of large characteristic.

\begin{corollary}\label{corollary17.3}
Suppose that $k\in \dbN$ and $s>0$. Then whenever ${\rm ch}(\dbF_q)>k$ and $\eps>0$, 
one has
$$J_{s,k}(d;\dbF_q)\ll (q^d)^\eps \left( (q^d)^s+(q^d)^{2s-k(k+1)/2}\right) .$$
\end{corollary}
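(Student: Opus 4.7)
The plan is to deduce Corollary \ref{corollary17.3} directly from Theorem \ref{theorem17.1} by specializing the polynomial system and then combining two easy cases, exactly as Corollary \ref{corollary1.3} was deduced from Theorem \ref{theorem1.1} in \S12. Choose $\varphi_j(t)=t^j$ for $1\le j\le k$. The first step is to check the Wronskian hypothesis: since the Wronskian $W(t;\bfvarphi)$ is (up to sign) the triangular determinant $\det(j(j-1)\cdots(j-i+1)t^{j-i})_{1\le i,j\le k}$, it equals $\prod_{j=1}^k j!$ times a nonzero monomial. This is nonzero in $\dbF_q$ precisely because $\mathrm{ch}(\dbF_q)>k$, which is the crucial place where the characteristic hypothesis is used. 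Hence the hypotheses of Theorem \ref{theorem17.1} are met.

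For the range $0<s\le k(k+1)/2$, Theorem \ref{theorem17.1} (applied with this choice of $\bfvarphi$ and the constant sequence $\gra_\nu=1$) immediately yields
\[
J_{s,k}(d;\dbF_q)\ll (q^d)^{s+\eps},
\]
which is the first term in the bound asserted by Corollary \ref{corollary17.3}. For the range $s\ge k(k+1)/2$, I would use the standard lift via the trivial sup-norm estimate. Writing $f(\bfalp)=\sum_{\nu\in\dbO_d}e(\alp_1\nu+\cdots+\alp_k\nu^k)$, we have $|f(\bfalp)|\le\mathrm{card}(\dbO_d)=q^{d+1}\ll q^d$, so
\[
J_{s,k}(d;\dbF_q)\le \Bigl(\sup_{\bfalp\in\dbT^k}|f(\bfalp)|\Bigr)^{2s-k(k+1)}J_{k(k+1)/2,k}(d;\dbF_q)\ll (q^d)^{2s-k(k+1)}(q^d)^{k(k+1)/2+\eps},
\]
which gives the second term $(q^d)^{2s-k(k+1)/2+\eps}$.

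Combining the two ranges and absorbing $(q^d)^\eps$ into the common factor delivers the bound stated in the corollary. There is essentially no obstacle here beyond verifying the Wronskian calculation; all of the analytic content has been packaged into Theorem \ref{theorem17.1}, which itself rests on the nested efficient congruencing apparatus of \S\S3--12 adapted to the function field orthogonality relation (\ref{17.1}). The one place to be vigilant is the characteristic hypothesis: it enters both in ensuring non-vanishing of the Wronskian (as above) and, implicitly, in the function field analogue of Theorem \ref{theorem3.1} that underlies Theorem \ref{theorem17.1}, where the proofs of Lemmas \ref{lemma4.1} and \ref{lemma7.1} require that factorials and binomial coefficients up to degree $k$ be invertible modulo the chosen prime. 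Both requirements are satisfied under $\mathrm{ch}(\dbF_q)>k$, so no further adjustment is needed.
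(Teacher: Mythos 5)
Your proposal is correct and follows the same route the paper takes for the analogous Corollary \ref{corollary1.3} in \S12: specialize $\varphi_j(t)=t^j$, verify the Wronskian is $\prod_{j=1}^k j!\ne 0$ under $\mathrm{ch}(\dbF_q)>k$, apply Theorem \ref{theorem17.1} for $s\le k(k+1)/2$, and lift to $s>k(k+1)/2$ via the trivial sup-norm bound $|f(\bfalp)|\le q^{d+1}\ll q^d$ (this is what the paper's Cauchy step reduces to once $\gra_\nu\equiv 1$). Your remarks on where the characteristic hypothesis enters are also accurate.
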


There is an unpublished manuscript, more than a decade old, of Y.-R. Liu and the author in 
the function field setting that has been quoted from time to time, and was updated to 
reflect developments arising from the early efficient congruencing methods. This was 
subsumed by the multidimensional work \cite{KLZ2014}, which would achieve an analogue 
of Corollary \ref{corollary17.3} for $s>k(k+1)$ when $\text{ch}(\dbF_q)>k$, with sharper 
conclusions available for $\text{ch}(\dbF_q)\le k$. Forthcoming work of Y.-R.~Liu and the 
author removes the hypothesis $\text{ch}(\dbF_q)>k$ from an analogue of Corollary 
\ref{corollary17.3} in which the main conjecture is proved in general for Vinogradov's mean 
value theorem in function fields. This forthcoming joint work represents the definitive 
statement on the subject.\par

The conclusion of Theorem \ref{theorem17.1} follows from an analogue of Theorem 
\ref{theorem3.1}, as we now sketch. The details are again strikingly similar to those in the 
situation over $\dbZ$ described in \S\S3--12, so we are skimpy on details. Once more, the 
parameter $k$ is an integer with $k\ge 1$, and we consider polynomials 
$\varphi_j\in \dbO[x]$ $(1\le j\le k)$. Throughout the argument, the ring of polynomials 
$\dbO$ replaces $\dbZ$. Let $\pi\in \dbO$ be a monic irreducible polynomial, and hence of 
positive degree. The definitions of \S3 must be made once again, mutatis mutandis, where 
we replace $\dbZ$ by $\dbO$, congruences modulo $p^h$ by congruences modulo $\pi^h$, 
and the function $e(z)$ from \S3 by its doppelg\"anger defined in this section, throughout. 
The definitions (\ref{1.9}) and (\ref{3.5}) must again be adjusted to the present setting. 
First, when $F:\dbT^k\rightarrow \dbC$ is integrable, we write
$$\oint F(\bfalp)\d\bfalp =\int_{\dbT^k}F(\bfalp)\d\bfalp .$$
Next, when $B$ is a positive integer, we define
$$\oint_{\pi^B}F(\bfalp)\d\bfalp =\langle \pi\rangle^{-kB}\sum_{u_1\mmod{\pi^B}}
\ldots \sum_{u_k\mmod{\pi^B}}F(\bfu \pi^{-B}),$$
where the summations are taken over complete sets of residues modulo $\pi^B$. With this 
definition, one may verify that for $\nu \in \dbO$, one has the orthogonality relation
$$\oint_{\pi^B}e(\alp \nu)\d\alp =\begin{cases} 1,&
\text{when $\nu\equiv 0\mmod{\pi^B}$,}\\
0,&\text{when $\nu\not\equiv 0\mmod{\pi^B}$.}\end{cases}$$

\par These definitions permit an analogue of Theorem \ref{theorem3.1} to be stated in the 
function field setting. We recall the definitions (\ref{3.6}) and (\ref{3.8}), now adjusted to 
their function field manifestations.

\begin{theorem}\label{theorem17.4} Let $\dbO=\dbF_q[t]$. Suppose that $k\in \dbN$, and 
that $\pi\in \dbO$ is a monic irreducible polynomial. Then, under the assumption that 
${\rm ch}(\dbF_q)>k$, one has $\lam(k(k+1)/2,k)=0$.
\end{theorem}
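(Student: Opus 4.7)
The plan is to transplant the entire argument of Sections 3--10 to the function field setting, with $\dbO=\dbF_q[t]$ replacing $\dbZ$, the monic irreducible $\pi$ replacing the prime $p$, and the additive character $e:\dbK_\infty\to \dbC^\times$ defined in this section replacing the rational exponential. The framework of $\lam^*(s,\tet;\tau)$ and $\lam(s,\tet)$, the families $\Phi_\tau(B)$ of $\pi^c$-spaced systems, and the mean values $U_{s,k}^B(\bfgra)$, $U_{s,k}^{B,H}(\bfgra)$, $K_{a,b,c}^{r,\bfvarphi,\nu}(\bfgra;\xi,\eta)$ are defined verbatim, interpreting all congruences as congruences in $\dbF_q[t]$ modulo powers of $\pi$ and using the orthogonality relation (\ref{17.1}). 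The overall strategy is induction on $k$: the base case $k=1$ is the function field analogue of Lemma \ref{lemma5.1}, proved by the same ``Hensel-type'' bootstrapping argument which shows that $\varphi(x)\equiv \varphi(y)\mmod{\pi^B}$ with $\varphi(t)=t+\pi^c\psi(t)$ forces $x\equiv y\mmod{\pi^B}$.

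The inductive step then proceeds through the same chain of lemmas. First I would establish the approximate translation-dilation invariance of Lemma \ref{lemma4.1}, which relies only on Taylor expansion for $\varphi_j(\pi^h y+\xi)$ via the binomial theorem, together with the observation that the resulting coefficient matrix is triangular modulo $\pi$ with diagonal entries $1$, hence invertible. I would then reproduce the conditioning lemmas (Lemma \ref{lemma6.1} and Lemma \ref{lemma6.3}) and the pivotal approximate-TDI extraction of Lemma \ref{lemma7.1}, in which the crucial ingredient is the non-vanishing modulo $\pi$ of the Vandermonde-type determinant
\[
\det\left((k-j+1)^i\right)_{1\le i,j\le r} = \biggl(\prod_{l=1}^{r}(k-l+1)\biggr)\prod_{1\le i<j\le r}\bigl((k-i+1)-(k-j+1)\bigr).
\]
All the differences $(k-j+1)-(k-i+1)$ are integers of absolute value at most $k-1$, and the products $\prod_{l=1}^{r}(k-l+1)$ and $\prod_{i=1}^{k} i!$ are non-zero in $\dbF_q$ precisely because ${\rm ch}(\dbF_q)>k$. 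This is where the characteristic hypothesis enters, playing exactly the role that $p>k$ played in the rational case. From here, Lemma \ref{lemma8.1} (H\"older rearrangement), Lemma \ref{lemma9.1}--\ref{lemma9.3} (distillation of multigrade estimates into monograde iterations), and finally the $\pi$-adic concentration argument of Section \S 10 carry over word for word.

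The hard part, and the reason to write the proof rather than merely gesture at it, is to verify that none of the estimates tacitly relied on archimedean geometry. Two points deserve care. First, the measure-theoretic manipulations in Sections \S 4 and \S 7 use only the Haar measure on $\dbT=\{\alp\in\dbK_\infty:\langle\alp\rangle<1\}$ normalised to have total mass $1$ and the orthogonality relation (\ref{17.1}); these are the only facts about $\int_{\dbT}$ used in the rational case, and they are exactly what is available here. Second, the arithmetic steps (Taylor expansion of polynomials in $\dbO[t]$, inversion of triangular matrices modulo $\pi^B$, extraction of multiplicative inverses of non-zero coefficients modulo $\pi$) all carry over because ${\rm ch}(\dbF_q)>k$ ensures that every integer factor of absolute value at most $k!$ encountered in the argument is a unit modulo $\pi$. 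Consequently $\lam(k(k+1)/2,k)=0$ follows by induction on $k$, exactly as in the proof of Theorem \ref{theorem3.1}.
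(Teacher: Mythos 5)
Your proposal matches the paper's own treatment: the paper likewise observes that the argument of \S\S3--10 carries over verbatim once $\dbZ$, $p$, and $e^{2\pi iz}$ are replaced by $\dbO=\dbF_q[t]$, the monic irreducible $\pi$, and the additive character on $\dbK_\infty$, and that the only role of the hypothesis ${\rm ch}(\dbF_q)>k$ is to keep the triangular Wronskian (with diagonal entries $j!$) and the Vandermonde-type determinant in the analogue of Lemma \ref{lemma7.1} non-vanishing modulo $\pi$. You have correctly identified both the transplantation strategy and the precise points where the characteristic restriction enters, so this is the same proof.
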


\begin{corollary}\label{corollary17.5} Let $\dbO=\dbF_q[t]$. Suppose that $k\in \dbN$, and 
that $\pi\in \dbO$ is a monic irreducible polynomial. Suppose also that $\tau>0$ and 
$\eps>0$. Let $B$ be sufficiently large in terms of $k$, $\tau$ and $\eps$. Put 
$s=k(k+1)/2$ and $H=\lceil B/k\rceil$. Finally, suppose that ${\rm ch}(\dbF_q)>k$. Then 
for every $\bfvarphi\in \Phi_\tau(B)$ and every sequence $(\gra_n)\in \dbD_0$, one has
$$U_{s,k}^B(\bfgra)\ll \langle \pi\rangle^{B\eps}U_{s,k}^{B,H}(\bfgra).$$
\end{corollary}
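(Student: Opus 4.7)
The plan is to deduce Corollary \ref{corollary17.5} from Theorem \ref{theorem17.4} by essentially unpacking the limiting definitions of $\lam^*(s,\tet;\tau)$ and $\lam(s,\tet)$, exactly in parallel with how Corollary \ref{corollary3.2} is deduced from Theorem \ref{theorem3.1}. The point is that the function field analogues of the definitions (\ref{3.12}) and (\ref{3.13}) (with $p$ replaced by $\langle \pi\rangle$ and the mean values $U_{s,k}^B,U_{s,k}^{B,H}$ interpreted in their function field manifestations) are formally identical, so the deduction goes through verbatim.

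First I would fix $s=k(k+1)/2$. Theorem \ref{theorem17.4} asserts that $\lam(s,k)=0$ in this setting. By the definition (\ref{3.13}), for any given $\eps>0$ one may therefore choose $\tau>0$ sufficiently small in terms of $\eps$ that $\lam^*(s,k;\tau)<\eps/2$. Appealing next to the definition (\ref{3.12}) (as a $\limsup$ over $B\to \infty$), whenever $B$ is sufficiently large in terms of $k$, $\tau$ and $\eps$, one has
$$\sup_{\bfvarphi\in \Phi_\tau(B)}\sup_{(\gra_n)\in \dbD}\frac{\log (U_{s,k}^B(\bfgra)/U_{s,k}^{B,H}(\bfgra))}{\log \langle \pi\rangle^H}<\lam^*(s,k;\tau)+\eps/2<\eps,$$
with $H=\lceil B/k\rceil$. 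Rearranging, this gives
$$U_{s,k}^B(\bfgra)\le \langle \pi\rangle^{H\eps}U_{s,k}^{B,H}(\bfgra),$$
uniformly for $\bfvarphi\in \Phi_\tau(B)$ and $(\gra_n)\in \dbD$. Since $H\le (B+k-1)/k\le B$ once $B\ge k$, one has $\langle \pi\rangle^{H\eps}\le \langle \pi\rangle^{B\eps}$ and the claimed inequality follows in this case.

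It remains only to handle the case $(\gra_n)\in \dbD_0\setminus \dbD$, which is trivial: here $\gra_n=0$ for every $n$, whence both sides of the asserted inequality vanish (both $U_{s,k}^B(\bfgra)$ and $U_{s,k}^{B,H}(\bfgra)$ are $0$) and the bound holds vacuously. There is no real obstacle here beyond verifying that the function field definitions of $U_{s,k}^B,U_{s,k}^{B,H}$ and the normalisations underlying $\lam^*$ and $\lam$ reproduce the rational case calculation step for step; once this is noted, the deduction is purely a matter of rewriting $\lam(s,k)=0$ as an effective inequality, together with the cosmetic observation that $H\eps\le B\eps$.
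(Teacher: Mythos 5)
Your proposal is correct and follows essentially the same route the paper takes: the paper deduces Corollary \ref{corollary3.2} from Theorem \ref{theorem3.1} by unpacking the definitions (\ref{3.12}) and (\ref{3.13}), and then simply remarks that Theorem \ref{theorem17.4} and its corollary "follow just as in the corresponding argument of \S\S3--10" once the notation (residues modulo $\pi^B$, the character $e(\cdot)$, and so on) is reinterpreted. Your account is, if anything, slightly more explicit than the paper's.

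One small point worth flagging: the corollary asserts the bound for \emph{every} $\tau>0$, whereas your argument (like the paper's prose for Corollary \ref{corollary3.2}) literally only handles $\tau$ sufficiently small. This is benign. Either note that $\Phi_{\tau_1}(B)\subseteq\Phi_{\tau_0}(B)$ whenever $\tau_0\le \tau_1$, so a bound uniform over $\Phi_{\tau_0}(B)$ is a fortiori uniform over $\Phi_{\tau_1}(B)$; or, more cleanly, observe that $\lam^*(s,k;\tau)$ is non-increasing in $\tau$ and non-negative, so $\lam(s,k)=0$ forces $\lam^*(s,k;\tau)=0$ for \emph{all} $\tau>0$, not merely small ones. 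Also, in passing from the $\sup$ inequality to the stated bound one is implicitly dividing by $U_{s,k}^{B,H}(\bfgra)$; this is harmless since the relation (\ref{3.10}) in its function-field form shows $U_{s,k}^{B,H}(\bfgra)=0$ forces $U_{s,k}^B(\bfgra)=0$, so the degenerate case is again vacuous. With these two remarks your deduction is complete and matches the paper.
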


The proof of Theorem \ref{theorem17.4} and its corollary follow just as in the 
corresponding argument of \S\S3--10 above in the rational integer case. Given our 
adjustments in notation, the argument follows verbatim, provided that the reader exercises 
due diligence in ensuring that these notational perturbations are correctly construed. 
Perhaps it is worth noting that the restriction to situations subject to the condition 
$\text{ch}(\dbF_q)>k$ arises from the implied assumption that $W(x;\bfvarphi)\ne 0$. If 
the system $\bfvarphi$ is $\pi^c$-spaced for some positive integer $c$ and monic 
irreducible polynomial $\pi$, then the determinant $W(x;\bfvarphi)$ is congruent modulo 
$\pi^c$ to a triangular determinant with entries $j!$ $(1\le j\le k)$ along the diagonal. 
When $\text{ch}(\dbF_q)\le k$, it follows that one of these diagonal entries is zero modulo 
$\pi$, and hence one fails to be able to engineer a non-vanishing Wronskian modulo $\pi$. 
The solution to this problem is to make use of a basis for Taylor expansions in small positive 
characteristic smaller than the na\"ive basis $\{1,x,x^2,\ldots \}$ of consecutive powers. 
This then entails adjusting also the definition of the Wronskian for function fields accordingly, 
and in this setting the environment has changed sufficiently that other adjustments are 
required in the discussion of \S\S3-10. This is a matter to which we return in our 
forthcoming joint work with Y.-R. Liu.\par

Having left the reader with the mechanical reproduction of this verbatim proof, we move on 
immediately to the proof of Theorem \ref{theorem17.1}

\begin{proof}[The proof of Theorem \ref{theorem17.1}] Our argument follows the proof of 
Theorem \ref{theorem1.1} given in \S\S11 and 12 with very few adjustments. We again 
denote by $\calZ$ the set of zeros of $W(x;\bfvarphi)$ lying in $\dbO$. Since 
$W(x;\bfvarphi)$ is non-zero, it follows that
$$\text{card}(\calZ)\le \text{deg}(W(x;\bfvarphi))\ll 1.$$
We define
$$F(\bfalp;d)=\rho_0^{-1}\sum_{\nu \in \dbO_d}\gra_\nu e(\psi(\nu;\bfalp))$$
and
$$F_0(\bfalp;d)=\rho_0^{-1}\sum_{\nu \in \dbO_d\setminus \calZ}\gra_\nu 
e(\psi(\nu;\bfalp)).$$
We may suppose that the sequence $(\gra_\nu)$ satisfies the property that $\gra_\nu=0$ 
whenever $\nu\not\in \dbO_d$. Our task is to cover the exponential sum $F_0(\bfalp;d)$ by 
exponential sums with variables constrained by non-singularity conditions modulo $\pi$, for 
suitable monic irreducible polynomials $\pi\in \dbO$.\par

Given a solution $\bfx,\bfy\in (\dbO_d\setminus \calZ)^s$ of the system (\ref{1.4}), the 
polynomial
$$\prod_{i=1}^sW(x_i;\bfvarphi)W(y_i;\bfvarphi)$$
is non-zero, and has degree bounded above by $C+Dd$, for some $C>0$ depending at 
most on $s$ and the coefficients of $\bfvarphi$, and $D$ a positive integer with
$$D\le 2s\sum_{j=1}^k\text{deg}(\varphi_j).$$
Let $\calP$ denote the set of elements $\pi \in \dbO$ with $\pi$ monic and irreducible, and 
satisfying
$$(\log d)^2<\text{deg}(\pi)\le 3(\log d)^2.$$
By the analogue of the prime number theorem over $\dbF_q[t]$, when $d$ is sufficiently 
large, the number of such elements is at least $q^{2(\log d)^2}/(\log d)^2$. 
One therefore has
$$\text{deg}\biggl( \prod_{\pi\in \calP}\pi\biggr) =\sum_{\pi \in \calP}\text{deg}(\pi)
>q^{2(\log d)^2}>d^{\log d}.$$
Thus $\text{deg}\left( \prod_{\pi\in \calP}\pi\right) >C+Dd$. Then for each solution 
$\bfx, \bfy\in \dbO^s$ of (\ref{1.4}) counted by the mean value
$$\oint |F_0(\bfalp;d)|^{2s}\d\bfalp ,$$
there exists $\pi \in \calP$ with
$$\prod_{i=1}^s W(x_i;\bfvarphi)W(y_i;\bfvarphi)\not \equiv 0\mmod{\pi}.$$
In particular, one has
\begin{equation}\label{17.2}
\oint |F_0(\bfalp;d)|^{2s}\d\bfalp \le \sum_{\pi \in \calP}\oint 
|F_\pi (\bfalp;d)|^{2s}\d\bfalp ,
\end{equation}
where
$$F_\pi (\bfalp;d)=\rho_0^{-1}\sum_{\nu \in \dbO_d\setminus \calZ}|\gra_\nu|
e(\psi(\nu;\bfalp )).$$

\par Resuming the argument of \S12, we take $\tau>0$ to be sufficiently small in terms of 
$s$ and $k$, and put
$$B=\left\lceil k(d+1)/\text{deg}(\pi)\right\rceil ,\quad c=\lceil \tau B\rceil \quad 
\text{and}\quad H=\lceil B/k\rceil -c,$$
which ensures that
$$d+1\le \text{deg}\left( \pi^{H+c}\right) \le d+1+\text{deg}(\pi).$$
Then we may suppose that $B$ is sufficiently large in terms of $\tau$, as well as $s$, $k$ 
and $\eps$. By orthogonality, one has
$$\oint |F_\pi(\bfalp;d)|^{2s}\d\bfalp \le \oint_{\pi^B}|F_\pi(\bfalp;d)|^{2s}\d\bfalp .$$
Then, by Corollary \ref{corollary17.5}, just as in the argument of the proof of Theorem 
\ref{theorem11.1} leading to (\ref{11.9}), one sees that there is a $\pi^c$-spaced system 
$\bfPsi$ and complex sequence $\bfgrc$ with $|\grc_y|=|\gra_{\pi^cy+\xi}|$ for which one 
has
$$\oint_{\pi^B}|F_\pi(\bfalp;d)|^{2s}\d\bfalp \ll \langle \pi\rangle^{sc+B\eps}
\rho_0(\bfgra)^{-2}\sum_{\xi\nmod{\pi^c}}\rho_c(\xi)^2U_{s,k}^{B-kc,H,\bfPsi}(\bfgrc).
$$

\par Observe that when $\nu,\nu'\in \dbO_d$, then
$$\nu\equiv \nu'\mmod{\pi^{H+c}}\quad \implies \quad \nu=\nu'.$$
Thus, as in the concluding paragraph of the proof of Theorem \ref{theorem11.1}, one infers 
that
$$U_{s,k}^{B-kc,H,\bfPsi}(\bfgrc)\ll (1+q^d/\langle \pi\rangle^{H+c})^s\ll 1.$$
Then we deduce that
$$\oint_{\pi^B}|F_\pi(\bfalp;d)|^{2s}\d\bfalp \ll \langle \pi\rangle^{sc+B\eps}
\rho_0(\bfgra)^{-2}\sum_{\xi\mmod{\pi^c}}\rho_c(\xi)^2\ll \langle 
\pi\rangle^{(2s\tau+\eps)B}.$$
Recall that $\tau$ was chosen sufficiently small in terms of $s$ and $k$. Then it follows that 
for each positive number $\del$, one has
$$\oint |F_\pi(\bfalp;d)|^{2s}\d\bfalp \ll (q^d)^\del,$$
whence, on recalling (\ref{17.2}),
\begin{align*}
\oint |F(\bfalp;d)|^{2s}\d\bfalp &\ll 1+\oint |F_0(\bfalp;d)|^{2s}\d\bfalp \\
&\ll 1+(q^d)^\del\sum_{\pi \in \calP}1\ll (q^d)^{2\del}.
\end{align*}
The conclusion of the theorem follows on recalling the definitions of $F(\bfalp;d)$ and the 
weight $\rho_0$.
\end{proof}

\bibliographystyle{amsbracket}
\providecommand{\bysame}{\leavevmode\hbox to3em{\hrulefill}\thinspace}

\end{document}